\newtheorem{theorem}{Theorem}[section]
\newtheorem{lemma}[theorem]{Lemma}
\newtheorem{proposition}[theorem]{Proposition}
\newtheorem{corollary}[theorem]{Corollary}
\newtheorem{definition}[theorem]{Definition}
\newtheorem{remark}[theorem]{Remark}
\newtheorem{conjecture}[theorem]{Conjecture}
\newtheorem{claim}[theorem]{Claim}
\numberwithin{equation}{section}
\def\a{{\alpha}}
\def\be{{\beta}}
\def\ga{\gamma}
\def\de{\delta}
\def\ep{\epsilon}
\def\la{\lambda}
\def\rh{\rho}
\def\si{\sigma}
\def\Si{\Sigma}
\def\om{\omega}
\def\Om{\Omega}
\def\th{\theta}
\def\Th{\Theta}
\def\ze{\zeta}
\def\BB{{\mathcal B}}
\def\MM{{\mathcal M}}
\def\II{{\mathcal I}}
\def\EE{{\mathcal E}}
\def\HH{{\mathcal H}}
\def\TT{{\mathcal T}}
\def\OO{{\mathcal O}}
\def\SS{{\mathcal S}}
\def\Lie{{\mathcal L}}
\def\DD{{\mathcal D}}
\def\RR{{\mathcal R}}
\def\Lie{{\mathcal L}}
\def\A{{\bf A}}
\def\D{{\bf D}}
\def\g{{\bf g}}
\def\SSS{{\mathbb S}}
\def\RRR{{\mathbb R}}
\def\Lb{{\,\underline{L}}}
\def\chih{{\widehat \chi}}
\def\chib{{\underline \chi}}
\def\chibh{{\underline{\chih}}}
\def\etab{{\underline \eta}}
\def\alphab{{\underline{\alpha}}}
\def\betab{{\underline{\beta}}}
\newcommand{\Divd}{\Div \mkern-17mu /\ }
\newcommand{\Curld}{\Curl \mkern-17mu /\ }
\newcommand{\Nd}{\nabla \mkern-13mu /\ }
\newcommand{\Ld}{\triangle \mkern-12mu /\ }
\newcommand{\gd}{{g \mkern-8mu /\ \mkern-5mu }}
\newcommand{\wt}[1]{\widetilde{#1}}
\newcommand{\RRt}{{\mathcal{R}}}
\newcommand{\Lt}{{{L}}}
\newcommand{\Lbt}{{{\Lb}}}
\newcommand{\Omt}{{{\Om}}}
\newcommand{\etA}{{{e}_A}}
\newcommand{\etB}{{{e}_B}}
\newcommand{\chit}{{{\chi}}}
\DeclareRobustCommand{\chibt}{{\underline{\chi}}}
\newcommand{\zet}{{{\ze}}}
\newcommand{\etabt}{{{\etab}}}
\newcommand{\chiht}{{\widehat{{\chi}}}}
\newcommand{\chibht}{{\widehat{{\underline{\chi}}}}}
\newcommand{\trchit}{{\tr \chit}}
\newcommand{\trchibt}{{\tr \chibt}}
\newcommand{\alphat}{{{\alpha}}}
\newcommand{\betat}{{{\beta}}}
\newcommand{\rhot}{{{\rho}}}
\newcommand{\sigmat}{{{\sigma}}}
\newcommand{\Ltt}{{\widetilde{L}}}
\newcommand{\etatt}{{\tilde{\eta}}}
\newcommand{\nutt}{{\tilde{\nu}}}
\providecommand{\lrpar}[1]{\left(#1\right)}
\providecommand{\ol}[1]{\overline{#1}}
\def\CMD{{\varep_{\mathrm{ball}}}}
\providecommand{\norm}[1]{\left\Vert#1\right\Vert}
\newcommand{\intSI}{{\int\limits_\Si}}
\newcommand{\intpSI}{{\int\limits_{\partial \Si}}}
\DeclareMathOperator{\Div}{\mathrm{div}}
\DeclareMathOperator*{\Curl}{\mathrm{curl}}
\def\nab{\nabla}
\def\varep{\varepsilon}
\def\pr{{\partial}}
\def\les{\lesssim}
\def\f12{{\frac 1 2}} 
\def\tr{\mathrm{tr}}
\def\f{\widetilde{f}}
\def\half{\frac{1}{2}}
\newcommand{\RRRic}{\mathrm{Ric}}
\newcommand{\Rbf}{\mathbf{R}}
\newcommand{\Rscal}{\mathrm{R}_{scal}}
\newcommand{\Lied}{\mathcal{L} \mkern-9mu/\ \mkern-7mu}
\newcommand{\R}{{\bf R}}
\newcommand{\Ric}{\text{Ric}}
\newcommand{\Lieh}{{\hat \Lie}}
\newcommand{\trTh}{\tr\Th}
\newcommand{\Thh}{\hat \Th}
\newcommand{\nut}{{{\nu}}}
\newcommand{\trchi}{\tr \chi}
\newcommand{\trchib}{\tr \chib}
\newcommand{\ttt}{{\tilde{t}}}
\newcommand{\ktt}{{\tilde{k}}}
\newcommand{\nabtt}{{\widetilde{\nab}}}
\newcommand{\ntt}{{\tilde{n}}}
\newcommand{\Ttt}{{\widetilde{T}}}
\newcommand{\ett}{{\tilde{e}}}
\newcommand{\Sitt}{{\widetilde{\Si}}}
\def\l@section{\@tocline{1}{0pt}{1pc}{}{}}
\renewcommand{\tocsection}[3]{%
\indentlabel{\@ifnotempty{#2}{\ignorespaces#1 #2.\quad}}#3}
\def\l@subsection{\@tocline{2}{0pt}{1pc}{5pc}{}}
\renewcommand{\tocsubsection}[3]{%
  \indentlabel{\hspace*{2.3em}\@ifnotempty{#2}{\ignorespaces#1 #2.\quad}}#3}
\renewcommand{\tocappendix}[3]{%
\indentlabel{#1\@ifnotempty{#2}{ #2}.\quad}#3}
\begin{document}

\title[The spacelike-characteristic Cauchy problem]{The spacelike-characteristic Cauchy problem \\ of general relativity in low regularity}

\address[Stefan Czimek]{Department of Mathematics, University of Toronto, Canada}

\author[Stefan Czimek]{Stefan Czimek}
\email{stefan.czimek@utoronto.ca}

\address[Olivier Graf]{Laboratoire Jacques-Louis Lions, Sorbonne University, France}

\author[Olivier Graf]{Olivier Graf}
\email{grafo@ljll.math.upmc.fr}

\begin{abstract} In this paper we study the spacelike-characteristic Cauchy problem for the Einstein vacuum equations. We prove that given initial data on a maximal compact spacelike hypersurface $\Si \simeq \ol{B(0,1)} \subset \RRR^3$ and the outgoing null hypersurface $\HH$ emanating from $\pr \Si$, the time of existence of a solution to the Einstein vacuum equations is controlled by low regularity bounds on the initial data at the level of curvature in $L^2$. \newline
The proof uses the bounded $L^2$ curvature theorem \cite{KRS}, the extension procedure for the constraint equations \cite{Czimek1}, Cheeger-Gromov theory in low regularity \cite{Czimek21}, the canonical foliation on null hypersurfaces in low regularity \cite{CzimekGraf1} and global elliptic estimates for spacelike maximal hypersurfaces.
\end{abstract}

\date{\today}
\maketitle
\setcounter{tocdepth}{2}
\tableofcontents

\section{Introduction}

\subsection{Einstein vacuum equations and the Cauchy problem of general relativity}

A Lorentzian $4$-manifold $(\mathcal{M},{\bf g})$ is called a \emph{vacuum spacetime} if it solves the Einstein vacuum equations
\begin{align} \label{EinsteinVacuumEquationsIntroJ}
\mathbf{Ric} = 0,
\end{align}
where $\mathbf{Ric}$ denotes the Ricci tensor of the Lorentzian metric ${ \bf g}$. The Einstein vacuum equations are invariant under diffeomorphisms, and therefore one considers equivalence classes of solutions. Expressed in general coordinates, \eqref{EinsteinVacuumEquationsIntroJ} is a non-linear geometric coupled system of partial differential equations of order $2$ for ${\bf g}$. In suitable coordinates, for example so-called \emph{wave coordinates}, it can be shown that \eqref{EinsteinVacuumEquationsIntroJ} is hyperbolic and hence admits an initial value formulation, see for example Chapter 10 in \cite{Wald} for background on the Cauchy problem of general relativity.\\

The corresponding initial data for the Einstein vacuum equations is given by specifying a triplet $(\Si,g,k)$ where $(\Si,g)$ is a Riemannian $3$-manifold and $k$ is a $g$-tracefree symmetric $2$-tensor on $\Si$ satisfying the \emph{constraint equations},
\begin{align} \begin{aligned}
\Rscal =& \, \vert k \vert_{g}^2-(\tr_g k)^2, \\
\Div k =& \, d(\tr_g k),
\end{aligned} \label{EQgeneralConstraints} \end{align}
where $\Rscal$ denotes the scalar curvature of $g$, $d$ the exterior derivative on $(\Si,g)$ and
\begin{align*}
\vert k \vert_g^2 := g^{ad}g^{bc} k_{ab} k_{cd}, \,\,\, \tr_g k := g^{ij}k_{ij}, \,\,\, (\Div k )_i := \nab^{j}k_{ij},
\end{align*}
where $\nab$ denotes the covariant derivative on $(\Si,g)$ and we use, as in the rest of this paper, the Einstein summation convention. In the future development $(\MM,\g)$ of such initial data $(\Si,g,k)$, $\Si \subset \MM$ is a spacelike hypersurface with induced metric $g$ and second fundamental form $k$.\\


For our purposes, it suffices to consider initial data posed on \emph{maximal} hypersurfaces, that is, satisfying $\tr k =0$, see also \cite{BartnikMaximal}. In this case, we say that $(\Si,g,k)$ is a \emph{maximal initial data set}, and the constraint equations \eqref{EQgeneralConstraints} reduce to
\begin{align*}
\Rscal =& \vert k \vert_{g}^2, \\
\Div k =& 0, \\
\tr_g k =&0.
\end{align*}

\subsection{Weak cosmic censorship and the bounded $L^2$ curvature theorem} 

One of the main open questions in general relativity is the so-called \emph{weak cosmic censorship conjecture} formulated by Penrose in 1969, see \cite{PenroseConjecture}.
\begin{conjecture}[Weak cosmic censorship conjecture] \label{conj2}
Generically, all singularities forming in the context of gravitational collapse are covered by black holes.
\end{conjecture}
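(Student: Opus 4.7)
The statement is one of the great open problems in mathematical general relativity, so what follows is a research program rather than a proof sketch. I would begin by fixing a class of initial data (asymptotically flat maximal sets $(\Si,g,k)$ at some Sobolev regularity) and a topology in which ``generically'' acquires meaning, typically requiring the exceptional set to be of positive codimension. For each initial datum I would consider the maximal globally hyperbolic development $(\MM,\g)$ and define the black hole region as $\MM \setminus J^-(\mathcal{I}^+)$, where $\mathcal{I}^+$ is future null infinity; the conjecture is then equivalent to the assertion that, for generic data, $\mathcal{I}^+$ is complete, with the singular set confined to the complement of $J^-(\mathcal{I}^+)$.

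The analytic backbone would be a low-regularity continuation criterion. Using the bounded $L^2$ curvature theorem \cite{KRS} together with the spacelike-characteristic extension whose foundations are laid in this paper, one would reduce the exclusion of naked singularities to uniform $L^2$ curvature flux bounds through a privileged family of hypersurfaces exhausting $J^-(\mathcal{I}^+)$. A natural candidate foliation combines the canonical null foliation of \cite{CzimekGraf1} on the outgoing cones with a maximal spacelike foliation, patched together by the techniques of this paper, exploiting the constraint extension procedure of \cite{Czimek1}, global elliptic estimates on maximal slices, and the low-regularity Cheeger--Gromov compactness of \cite{Czimek21}. On the black hole side, dynamical formation of trapped surfaces via Christodoulou's short-pulse method and its descendants (Klainerman--Rodnianski, An, An--Luk) would provide the horizon whose existence is asserted, and a separate argument (of peeling or geometric-classification type) would be required to identify the event horizon as a regular null hypersurface.

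The main obstacle, by a very wide margin, is the genericity step: one must exclude, for all data outside a codimension-one (or meagre) subset of the chosen initial data space, that the development produces a naked singularity visible from $\mathcal{I}^+$. Even in the vastly simpler spherically symmetric Einstein--scalar field model Christodoulou's resolution required an entire BV well-posedness theory together with a delicate trichotomy on the asymptotic behaviour of the scalar field, and no analogue is presently available in the vacuum case without symmetry. The contribution of the present paper lives strictly inside the continuation step: it enlarges the class of configurations in which one can extend a solution at $L^2$ curvature regularity past the Cauchy horizon of a spacelike-characteristic wedge, a necessary but quite insufficient ingredient in any future attack on Conjecture \ref{conj2}. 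Accordingly, I would not expect to produce a proof within this paper, but rather to install one more piece of the infrastructure that such a proof would eventually require.
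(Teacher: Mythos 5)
You have correctly recognised that this is a famous open conjecture of Penrose, not a theorem of the paper; the paper states it without proof, purely as motivation for studying the Einstein equations in low regularity. Your response — declining to offer a proof and instead sketching how the low-regularity machinery developed here (bounded $L^2$ curvature, constraint extension, canonical null foliation, Cheeger--Gromov compactness) might one day feed into a continuation criterion relevant to the conjecture — is exactly the appropriate reading, and your assessment of where the genuine difficulty lies (the genericity step, for which no vacuum analogue of Christodoulou's spherically symmetric trichotomy exists) is accurate and well calibrated.
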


In the pioneering work \cite{Chr9}, Christodoulou proves the weak cosmic censorship conjecture for the vacuum-scalar field \emph{in spherical symmetry}. In Christodoulou's proof, a low regularity control of the Einstein equations is essential for analysing the dynamical formation of black holes. This strongly suggests that a crucial step to prove the weak cosmic censorship in the absence of symmetry is to control the Einstein vacuum equations in very low regularity. \\

We remark that in the $(1+1)$-setting of spherical symmetry, Christodoulou bounds the regularity of initial data in a low scale-invariant BV-norm. Outside of spherical symmetry, however, this BV-norm is not suitable anymore and regularity should be measured with respect to $L^2$-based spaces; we refer the reader to the introduction of \cite{KRS}.\\






A breakthrough result in the low regularity control of the Einstein equations in absence of symmetry is the \emph{bounded $L^2$ curvature theorem} by Klainerman-Rodnianski-Szeftel \cite{KRS}. Before stating it, we define the volume radius of a Riemannian $3$-manifold.

\begin{definition}[Volume radius] Let $(\Si,g)$ be a Riemannian $3$-manifold, and let $r>0$ be a real. The \emph{volume radius of $\Si$ at scale $r$} is defined by
\begin{align*}
r_{vol}(\Si,r) := \inf\limits_{p \in \Si} \inf\limits_{0<r'<r} \frac{\mathrm{vol}_g( B_g(p,r') )}{r'^3},
\end{align*}
where $B_g(p,r')$ denotes the geodesic ball of radius $r'$ centered at $p \in \Si$.
\end{definition}

The following theorem is proved in \cite{KRS}, see also the companion papers \cite{J1}-\cite{J5}. We state a more technical version in Section \ref{SECliteratureResults2}, see Theorem \ref{THMsmalldataL2details}.
\begin{theorem}[The bounded $L^2$ curvature theorem, version 1] \label{thm:smallboundedL2thm} Let $(\Si, g,k)$ be asymptotically flat, maximal initial data for the Einstein vacuum equations such that $\Si \simeq \RRR^3$. Assume further that for some $\varep>0$, 
\begin{align*}
\Vert \RRRic \Vert_{L^2(\Si)} \leq \varep, \,\,\, \Vert k \Vert_{L^2(\Si)} + \Vert \nab k \Vert_{L^2(\Si)} \leq \varep \text{ and } r_{vol}(\Si,1) \geq \frac{1}{2}.
\end{align*}
Then:
\begin{enumerate}
\item {\bf $L^2$-regularity.} There is a universal constant $\varep_0>0$ such that if $0<\varep< \varep_0$, then the maximal globally hyperbolic development $(\MM,\g)$ of the initial data $(\Si,g,k)$ contains a foliation $(\Si_t)_{0\leq t \leq 1}$ of maximal spacelike hypersurfaces defined as level sets of a time function $t$ such that $\Si_0 = \Si$ and for $0 \leq t \leq 1$, 
\begin{align*}
\Vert \Rbf \Vert_{L^\infty_t L^2(\Si_t)} \lesssim \varep, \,\, \Vert k \Vert_{L^\infty_t L^2(\Si_t)} + \Vert \nab k \Vert_{L^\infty_t L^2(\Si_t)} \lesssim \varep, \,\,\inf\limits_{0\leq t \leq 1} r_{vol}(\Si_t,1) \geq \frac{1}{4}.
\end{align*}
\item {\bf Propagation of smoothness.} Smoothness of the initial data is propagated into the spacetime, and the spacetime remains smooth up to $\Si_1=\{t=1\}$.
\end{enumerate}
\end{theorem}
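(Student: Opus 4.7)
The plan is a bootstrap continuation argument for the maximal time of existence of a foliation $(\Si_t)$ by maximal ($\tr k=0$) spacelike slices. Define $T_\star\in(0,1]$ to be the supremum of $T\in[0,1]$ for which the maximal globally hyperbolic development of $(\Si,g,k)$ contains such a foliation on $[0,T]$ with $\Si_0=\Si$ satisfying the bootstrap assumptions
\begin{align*}
\Vert\Rbf\Vert_{L^\infty_t L^2(\Si_t)} + \Vert k\Vert_{L^\infty_t L^2(\Si_t)} + \Vert\nab k\Vert_{L^\infty_t L^2(\Si_t)} \le C\varep, \qquad \inf_{0\le t\le T}r_{vol}(\Si_t,1)\ge \tfrac{1}{4},
\end{align*}
for a large universal constant $C$. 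Local well-posedness at higher regularity together with smooth approximation of the data yields $T_\star>0$; the objective is to improve these bounds to $C\varep/2$ and $\tfrac{1}{3}$ respectively, which by continuity will force $T_\star=1$ and establish part (1).

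For the curvature bound I would contract the divergence identity of the Bel--Robinson tensor $Q[\Rbf]$ with the unit maximal normal $T$ over the slab $\bigcup_{0\le t\le T}\Si_t$; the Bianchi identities $\nab^\a\Rbf_{\a\b\ga\de}=0$ then give
\begin{align*}
\Vert\Rbf\Vert_{L^\infty_t L^2(\Si_t)}^2 \les \Vert\Rbf\Vert_{L^2(\Si)}^2 + \int_0^T \!\!\int_{\Si_t} |k|\,|\Rbf|^2\,dv_{g_t}\,dt.
\end{align*}
For $k$ and $\nab k$ the evolution $\pr_t k_{ij} = -\nab_i\nab_j n + n(\RRRic_{ij}-2k_{ia}k^a{}_j)$, combined with the elliptic lapse equation $\De n = n|k|^2$ (whose solvability and global bounds on a maximal slice are exactly the spacelike maximal elliptic estimates advertised in the abstract), produces analogous energy identities. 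All right-hand sides reduce, modulo quadratic-in-smallness contributions, to a single Strichartz-type bound $\Vert k\Vert_{L^2_tL^\infty(\Si_t)}\les\varep$.

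The main obstacle, and the heart of the proof, is precisely this Strichartz estimate: a classical commuting-vector-field argument would require one extra derivative of curvature beyond $L^2$. Instead, following \cite{KRS}, I would represent solutions of $\Box_\g\phi=F$ through a plane-wave parametrix
\begin{align*}
\phi(p) = \int_{\SSS^2}\!\!\int_0^\infty e^{i\la u_\om(p)}\,\widehat{f}(\la\om)\,\la^2\,d\la\,d\om,
\end{align*}
whose phase $u_\om$ solves the eikonal equation $\g^{\a\b}\pr_\a u_\om\pr_\b u_\om=0$ with initial data on $\Si$ parametrised by $\om\in\SSS^2$. The level sets of $u_\om$ are null hypersurfaces $\HH_{u,\om}$ sweeping the slab, and a $TT^\ast$ plus stationary-phase analysis reduces the Strichartz estimate to low-regularity geometric control on $\HH_{u,\om}$ of the null second fundamental forms $\chi,\chib$, the torsion $\ze$, and the associated conformal factor. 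This is where the canonical foliation on null hypersurfaces in low regularity \cite{CzimekGraf1} and the low-regularity Cheeger--Gromov theory \cite{Czimek21} enter, furnishing the geometric bounds and the volume / injectivity radius comparisons needed to control the parametrix errors.

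With the Strichartz bound at hand, the energy estimates above close the bootstrap on $\Rbf,k,\nab k$; a volume comparison driven by the $L^2$-Ricci control inherited from $\Rbf$ improves the volume-radius lower bound to $\tfrac{1}{3}$. Hence $T_\star=1$ and part (1) follows. For part (2), propagation of smoothness, I would perform higher-order energy estimates: commuting $\nab_T$ and $\nab$ through the Bianchi equations and the $k$-evolution produces commutators linear in the highest-order unknown multiplied by already-controlled low-regularity quantities, so Gronwall in the higher Sobolev norms propagates finiteness of these norms up to $\Si_1$ without further smallness assumptions.
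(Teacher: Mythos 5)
The paper does not prove Theorem~\ref{thm:smallboundedL2thm} at all: it is explicitly cited as a known black-box result from \cite{KRS} (see also the companion papers \cite{J1}--\cite{J5} and the more technical restatement in Theorem~\ref{THMsmalldataL2details}). Your proposal attempts to reconstruct the proof of the KRS theorem itself, which is a task that \cite{KRS} takes over two hundred pages plus several companion papers to carry out; the present paper simply invokes it.

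That said, a couple of substantive inaccuracies in your sketch are worth flagging. First, you claim that the low-regularity canonical foliation on null hypersurfaces \cite{CzimekGraf1} and the low-regularity Cheeger--Gromov theory \cite{Czimek21} are what furnish the geometric control needed for the parametrix in the proof of the KRS theorem. This is incorrect: those two references are companion papers to the \emph{present} work, developed for the spacelike-\emph{characteristic} problem treated here (where $\Si$ is a compact ball with boundary and $\HH$ is an outgoing null cone). The original KRS proof on $\Si\simeq\RRR^3$ uses its own null geodesic foliation analysis (from \cite{KlRod1,KlRod3} and \cite{J1}--\cite{J4}) and does not rely on either \cite{CzimekGraf1} or \cite{Czimek21}. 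Second, your reduction to a scalar Strichartz-type bound $\Vert k\Vert_{L^2_tL^\infty}\les\varep$ misses the central structural devices of \cite{KRS}: the passage to a Yang--Mills-type frame connection $\mathbf{A}$ with $A=\Curl B+E$ and a suitable Coulomb-like gauge, and the crucial \emph{trilinear} estimate (quoted as \eqref{EQtrilinearStatement1} in this paper) needed to close the Bel--Robinson energy estimate; the parametrix is used to prove bilinear and trilinear null-form estimates, not a plain $L^2_tL^\infty$ bound on $k$.

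If your goal was to summarize the proof strategy of \cite{KRS} as background, this is a serviceable but imprecise caricature; if your goal was to supply the proof the paper gives for Theorem~\ref{thm:smallboundedL2thm}, the correct answer is that the paper gives none and explicitly defers to \cite{KRS}.
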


{\em Remarks on Theorem \ref{thm:smallboundedL2thm}.}

\begin{enumerate}
\item As Theorem \ref{thm:smallboundedL2thm} is local in nature due to the finite speed of propagation for \eqref{EinsteinVacuumEquationsIntroJ}, we do not specify here further the \emph{asymptotic flatness} condition, see also Remark 2.3 in \cite{KRS}.
\item Theorem \ref{thm:smallboundedL2thm} is primarily to be understood as a continuation result for smooth solutions of the Einstein vacuum equations, see Remark 1.2 in the introduction of \cite{KRS}. This holds similarly for the results of this paper.
\item The proof of Theorem \ref{thm:smallboundedL2thm} relies crucially on a plane wave representation formula for the wave equation on low regularity spacetimes developed in \cite{J1}-\cite{J5}. This plane wave representation formula is constructed as a Fourier integral operator which necessitates the assumption $\Si \simeq \RRR^3$.
\end{enumerate}

However, Christodoulou's work \cite{Chr9} as well as related results on the formation of trapped surfaces \cite{ChrFormationNonSpherical} \cite{KlRodTrapped} \cite{KlLukRod} \cite{LukAn} and gravitational impulses \cite{LukRod1} \cite{LukRod2} consider initial data posed on null hypersurfaces rather than on a spacelike hypersurface as assumed in Theorem \ref{thm:smallboundedL2thm}. This motivates the study of the Cauchy problem of general relativity in low regularity with initial data posed on null hypersurfaces. 

\subsection{The spacelike-characteristic Cauchy problem of general relativity} In this paper, we consider the \emph{spacelike-characteristic Cauchy problem of general relativity}, where initial data is posed on
\begin{enumerate}
\item a compact spacelike maximal hypersurface with boundary $\Si \simeq \ol{B(0,1)} \subset \RRR^3$, 
\item the outgoing null hypersurface $\HH$ emanating from $\pr \Si$,
\end{enumerate}
satisfying straight-forward compatibility conditions on $\pr \Si$, see Section 7.6 in \cite{ChruscielPaetz2} for example for details. Local existence for the spacelike-characteristic Cauchy problem for smooth initial data follows from \cite{BruhatExistence} \cite{Rendall} \cite{LukChar}, see also Proposition \ref{thm:LocalExistenceMixedMaximal}.

\begin{remark} In general, initial data posed on a null hypersurface is subject to constraint equations, namely the so-called \emph{null constraint equations}, see for example \cite{CzimekGraf1}. We do not state them as they do not play a role in this paper. \end{remark}

\begin{figure}[h!]
  \centering
  \begin{subfigure}[b]{0.4\linewidth}
    \includegraphics[height=2.8cm]{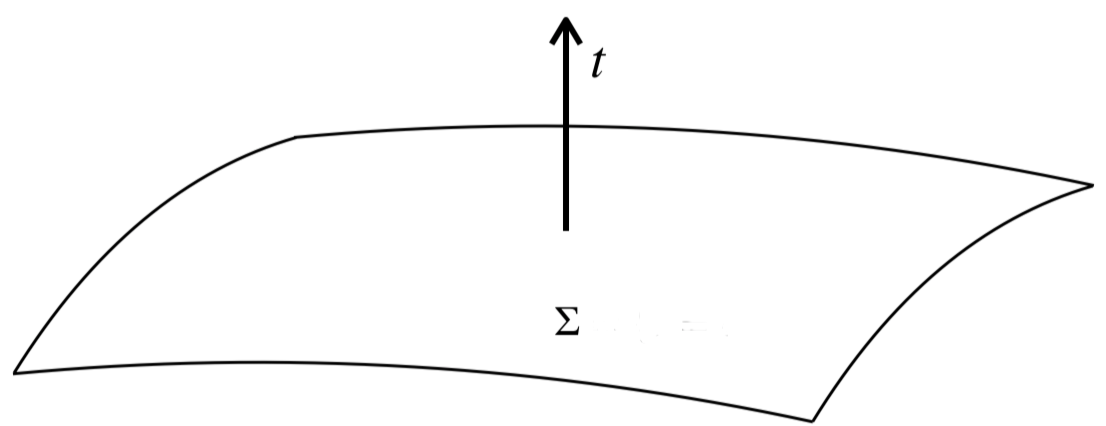}
    \caption{The spacelike Cauchy hypersurface of Theorem \ref{thm:smallboundedL2thm}.}
  \end{subfigure}
  \hspace{1cm}
  \begin{subfigure}[b]{0.4\linewidth}
    \includegraphics[width=\linewidth]{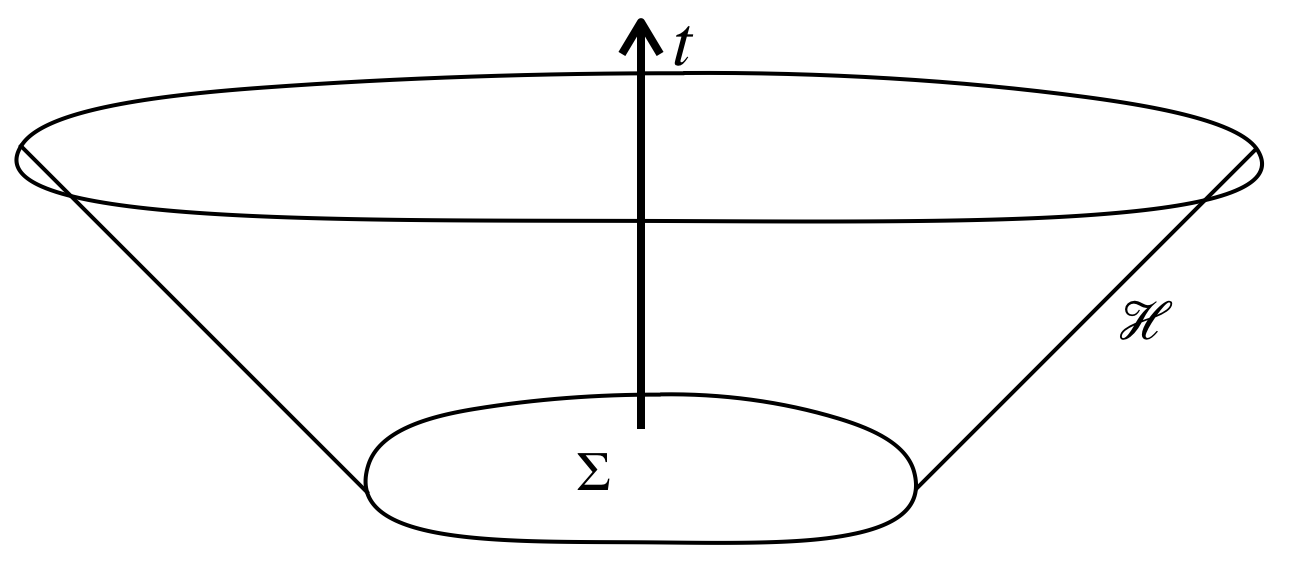}
    \caption{The spacelike-characteristic Cauchy hypersurface of Theorem \ref{thm:MainResultIntro1}.}
  \end{subfigure}
  \label{fig:IVPS}
\end{figure}

The next theorem is a rough version of our main result, see Theorem \ref{MixedTheoremVersion2} for a more precise statement.
\begin{theorem}[Main Theorem, version 1] \label{thm:MainResultIntro1} Consider initial data for the spacelike-characteristic Cauchy problem and let $(\MM,\g)$ denote its maximal globally hyperbolic development. Assume that for some real $\varep>0$,
\begin{align*}
\Vert \RRRic \Vert_{L^2(\Si)} \leq \varep, \,\, \Vert k \Vert_{L^2(\Si)} + \Vert \nab k \Vert_{L^2(\Si)} \leq \varep, \,\, r_{vol}(\Si,1/2) \geq 1/4, \,\, \mathrm{vol}_g(\Si) \leq 8 \pi,
\end{align*}
where $\RRRic$ and $k$ denote the intrinsic Ricci curvature and second fundamental form of $\Si \subset \MM$, respectively, and $\nab$ denotes the induced covariant derivative on $\Si$. Assume further that with respect to the so-called canonical foliation by spacelike $2$-spheres $(S_v)_{v\geq1}$ on $\HH$, see Definition \ref{DEFcanonicalFoliation}, it holds that
\begin{align} \begin{aligned}
\Vert \a \Vert_{L^2(\HH)} + \Vert \be \Vert_{L^2(\HH)} + \Vert \rh \Vert_{L^2(\HH)} + \Vert \si \Vert_{L^2(\HH)} + \Vert \betab \Vert_{L^2(\HH)} \leq& \,\varep, \\
\norm{ \trchi -\frac{2}{v} }_{L^\infty(\HH)} + \norm{ \trchib +\frac{2}{v} }_{L^\infty(\HH)} \leq&\, \varep,
\end{aligned} \label{EQRicciAssumptionsIntro1} \end{align}
where $(\a,\be,\rh,\si,\betab)$ denote null components on $\HH$ of the Riemann curvature tensor $\Rbf$ of $(\MM,\g)$, and $\trchi$ and $\trchib$ denote the two null expansions on $\HH$; see Section \ref{SECfoliationOfNullHypersurfaces} for definitions. Then:
\begin{enumerate}
\item {\bf $L^2$-regularity.} There is a universal constant $\varep_0>0$ such that if $0<\varep< \varep_0$, then $(\MM,\g)$ contains a foliation $(\Si_t)_{1\leq t \leq 2}$ by maximal spacelike hypersurfaces defined as level sets of a time function $t$ with $\Si_1 = \Si$ such that for $1\leq t \leq2$, 
\begin{align*}
\pr \Si_t = S_t
\end{align*}
and
\begin{align*}
&\Vert \Rbf \Vert_{L^\infty_t L^2(\Si_t)} \lesssim \varep, \,\, \Vert k \Vert_{L^\infty_t L^2(\Si_t)} + \Vert \nab k \Vert_{L^\infty_t L^2(\Si_t)} \lesssim \varep,\\
& \inf\limits_{1\leq t \leq 2} r_{vol}(\Si_t,1/2) \geq \frac{1}{8}, \,\, \mathrm{vol}_g(\Si_t) \leq 32 \pi.
\end{align*}
\item {\bf Propagation of regularity.} Smoothness of the initial data is propagated into the spacetime, and the spacetime remains smooth up to $\Si_2=\{t=2\}$.\end{enumerate}
\end{theorem}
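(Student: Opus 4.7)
The plan is to reduce Theorem \ref{thm:MainResultIntro1} to the bounded $L^2$ curvature theorem \cite{KRS} by first extending the ball initial data $(\Si,g,k)$ to asymptotically flat data on all of $\RRR^3$, applying Theorem \ref{thm:smallboundedL2thm} to the extension in order to obtain an ambient spacetime, and then solving a global elliptic Dirichlet problem inside this spacetime so that the resulting maximal foliation has boundary on the canonical foliation of $\HH$. The null data on $\HH$ enters only through the canonical foliation, which will serve as Dirichlet data for the re-foliation.

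\textbf{Extension and application of \cite{KRS}.} First, apply the extension procedure of \cite{Czimek1} to produce a maximal, asymptotically flat $(\ol\Si,\ol g,\ol k)$ with $\ol\Si\simeq\RRR^3$, whose restriction to $\Si$ coincides with $(g,k)$ and which preserves the smallness of $\Vert\RRRic\Vert_{L^2}$ and of $\Vert\ol k\Vert_{L^2}+\Vert\nab\ol k\Vert_{L^2}$, together with a lower bound on the volume radius at an appropriate scale. After a suitable time rescaling, Theorem \ref{thm:smallboundedL2thm} applied to this extended data produces a maximal development $(\ol\MM,\ol\g)$ foliated by maximal hypersurfaces $(\ol\Si_t)_{1\leq t\leq 2}$ with uniform $L^\infty_t L^2$ control of the spacetime curvature $\Rbf$, of $k$, of $\nab k$, and with a uniform lower bound on the volume radius.

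\textbf{Re-foliation with boundary on $\HH$.} Since the KRS slices $\ol\Si_t$ have no geometric relation to $\HH$, the next step is to construct the foliation $(\Si_t)$ required by the theorem by writing each $\Si_t$ as the graph over $\ol\Si_t$ of a function $u_t$ satisfying the maximal surface equation — a quasilinear elliptic equation whose coefficients depend on the ambient metric and are therefore only $L^2$-curvature regular — subject to the Dirichlet boundary condition $\Si_t\cap\HH = S_t$, where $(S_v)_{v\geq 1}$ is the canonical foliation of $\HH$ supplied by \cite{CzimekGraf1}. Global elliptic estimates for spacelike maximal hypersurfaces, one of the technical inputs announced in the abstract, then yield $H^2$-type control on $u_t$ in terms of the canonical foliation data on $\pr\Si_t=S_t$ and the controlled geometry of $\ol\Si_t$; this transfers into the required $L^\infty_t L^2$ bounds on $k$ and $\nab k$ along $\Si_t$. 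Cheeger--Gromov theory in low regularity \cite{Czimek21} enters to upgrade metric control to quantitative volume radius and volume bounds on $\Si_t$, and to justify the compactness steps implicit in the nonlinear elliptic construction.

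\textbf{Domain of dependence and main obstacle.} Finite speed of propagation guarantees that in the region $J^+(\Si)\cap J^-(\HH)$ the constructed foliation depends only on the original data on $\Si\cup\HH$, not on the chosen extension; propagation of smoothness is then inherited directly from Theorem \ref{thm:smallboundedL2thm}. I expect the central difficulty to be the re-foliation step: solving the maximal surface Dirichlet problem globally in a background whose curvature is only in $L^\infty_t L^2$, and matching the interior elliptic estimates with boundary estimates controlled via the low-regularity canonical foliation of \cite{CzimekGraf1}, requires closing all estimates against the bounded $L^2$ curvature with essentially no regularity to spare. A continuity/bootstrap argument on the time interval $[1,2]$ seems unavoidable in order to simultaneously control the re-foliation geometry and the canonical foliation on $\HH$.
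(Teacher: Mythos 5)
Your plan has the right ingredients listed in the abstract, but the order in which they are assembled is wrong, and the gap is structural rather than cosmetic.

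First, the forward extension does not produce the right spacetime. If you extend $(\Si,g,k)$ to $(\ol\Si,\ol g,\ol k)$ and apply Theorem \ref{thm:smallboundedL2thm} forward, the resulting spacetime $(\ol\MM,\ol\g)$ agrees with the true development of the mixed data only on the domain of dependence $D^+(\Si)$, which detaches from $\HH$ immediately above $S_1=\pr\Si$. The wedge between $D^+(\Si)$ and $\HH$ is determined by the characteristic data on $\HH$, which the extension knows nothing about; in particular, $\HH$ with its prescribed canonical foliation is simply not a subset of $(\ol\MM,\ol\g)$. The Dirichlet condition $\pr\Si_t=S_t$ therefore cannot be imposed inside $(\ol\MM,\ol\g)$, and the finite-speed argument you invoke is stated backwards: it gives agreement of the two developments away from $\HH$, not near it. The paper avoids this by never building the spacetime from the extension: the foliation $(\Si_t)$ is constructed directly inside the true maximal development of $\Si\cup\HH$ via Bruhat's shift-free perturbation theorem (Proposition \ref{thm:LocalExistenceMixedMaximal}, Theorem \ref{theorem:BruhatPerturbation}) plus a continuation argument.

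Second, and more importantly, elliptic estimates alone cannot deliver $\Vert\Rbf\Vert_{L^\infty_tL^2(\Si_t)}\lesssim\varep$. Curvature control is hyperbolic, not elliptic: the paper obtains it from a Bel--Robinson tensor energy identity (Proposition \ref{PROPenergyEstimateWeyl}) which expresses $\int_{\Si_{t^\ast}}Q(\Rbf)_{TTTT}$ in terms of the fluxes through $\Si_1$ and $\HH$ (controlled by the initial data norms $\RR_0^\Si$, $\RR_0^\HH$) plus a bulk error term $\int_{\MM}Q(\Rbf)_{\alpha\beta\wt T\wt T}\wt\pi^{\alpha\beta}$. That error is precisely where the bounded $L^2$ curvature theorem enters, via the trilinear estimate \eqref{EQtrilinearStatement1}. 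This is the crux of the argument and your proposal omits it entirely. Your description of $H^2$-control on the graph function $u_t$ from elliptic estimates implicitly presupposes curvature bounds on $\Si_t$; without the energy estimate, the argument is circular.

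Third, even the role of the extension procedure is different from what you propose. Inside the bootstrap, the paper applies Theorem \ref{THMextensionConstraintsCZ1} and Theorem \ref{THMsmalldataL2details} \emph{backward from each slice} $\Si_{t^\ast}$, producing an auxiliary background foliation $(\wt\Si_{\ttt})$ of the past $\MM_{t^\ast}$ whose geometry (in particular $\wt\nabla\wt n\in L^\infty$ and the trilinear estimate) is controlled by a constant $\CMD$ chosen independently of the bootstrap constant $D$. This decoupling of smallness scales is what allows error terms to be absorbed. Applying the machinery forward once from $\Si_1$, as you suggest, does not give this structure.

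What survives of your proposal: the role of the canonical foliation as boundary data, the use of Cheeger--Gromov theory to produce global coordinates and volume control (Theorem \ref{prop:CGestimation1}), and the recognition that a bootstrap on $[1,2]$ is unavoidable. But the centerpiece — energy estimates with the Bel--Robinson tensor closed via the KRS trilinear estimate applied backward, together with the boundary analysis of the elliptic system for $k$ (Section \ref{SectionKEllipticEstimateWithBoundaryTerm}, where the slope $\nu$ is controlled through the sign structure of the boundary integral) — is missing from the sketch.
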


{\em Remarks.}
\begin{enumerate}
\item Theorem \ref{thm:MainResultIntro1} assumes only initial data bounds at the level of curvature in $L^2$ and makes no symmetry assumptions. Until now, in the available literature the Cauchy problem for the Einstein vacuum equations with initial data on null hypersurfaces outside of symmetry is studied under the assumption of either
\begin{itemize}
\item higher regularity of the full initial data, see for example \cite{Rendall}, \cite{BruhatChruscielIVP}, \cite{ChruscielPaetz}, 
\item higher regularity of specific components of the initial data, see for example \cite{LukChar}, \cite{LukRod1}, \cite{LukRod2}. More precisely, in \cite{LukRod1} the null curvature component $\a$ is only assumed to be a distribution while $\be, \rh, \si$ and $\betab$ are assumed to be controlled up to two angular derivatives in $L^2$.
\end{itemize}
\item The assumed geometric control \eqref{EQRicciAssumptionsIntro1} of the canonical foliation $(S_v)_{v\geq1}$ on $\HH$ is essential for the regularity of the spacetime. In the authors' companion paper \cite{CzimekGraf1}, it is shown that assuming small bounded $L^2$ curvature flux on $\HH$ (in the geodesic foliation) and further low regularity geometry bounds on the initial sphere $S_1 = \Si \cap \HH$, the canonical foliation $(S_v)$ exists for $1 \leq v \leq 2$ and satisfies  \eqref{EQRicciAssumptionsIntro1}.
\item The assumptions $r_{vol}(\Si,1/2) \geq 1/4$ and $\mathrm{vol}_g(\Si) \leq 8 \pi$ on $\Si$ are solely used to invoke the Cheeger-Gromov theory developed in Section \ref{SectionGlobalExistence}, see Theorem \ref{prop:CGestimation1}.
\item The proof of Theorem \ref{thm:MainResultIntro1} uses the \emph{bounded $L^2$ curvature theorem}, see Theorem \ref{thm:smallboundedL2thm}, and the \emph{extension procedure for the constraint equations} \cite{Czimek1}, see Theorem \ref{THMextensionConstraintsCZ1}, as \emph{black boxes}.
\item The methods developed in this paper and \cite{Czimek1} \cite{Czimek21} \cite{Czimek22} \cite{CzimekGraf1} appear promising for a future study of the \emph{characteristic Cauchy problem of general relativity} where initial data is posed on two transversally intersecting null hypersurfaces.
\end{enumerate}
In the next section, we rigorously define the geometric setup, state the main results and give an overview of the proof of Theorem \ref{thm:MainResultIntro1}.

\subsection{Acknowledgements} Both authors are very grateful to J\'er\'emie Szeftel for many interesting and stimulating discussions. The second author is supported by the ERC grant ERC-2016 CoG 725589 EPGR.

\section{Geometric setup and main results} In this section, we introduce the notation and main equations of this paper, state the precise version of our main theorem (see Section \ref{sec:StatementMainResultandNORMS}) and give an overview of its proof (see Section \ref{sec:ProofOfMainTheorem}). \\

Lowercase Latin letters range over $\{ 1,2,3 \}$ and uppercase Latin letters over $\{ 1,2\}$. Greek letters range over $\{0,1,2,3\}$. We tacitly use the Einstein summation convention. In an inequality, a constant $C_{\a_1, \cdots, \a_k}$ depends on the quantities $\a_1, \cdots, \a_k$. We say that a scalar function is smooth if it is $k$-times continuously differentiable for each integer $k\geq0$.

\subsection{Weyl tensors on Lorentzian $4$-manifolds} \label{SECWeylTensors} In this section, we define Weyl tensors and the Bel-Robinson tensor of a Weyl tensor following the introduction and Sections 7 and 8 of \cite{ChrKl93}. The Bel-Robinson tensor is used in this paper to estimate the curvature tensor, see Sections \ref{sec:BelRobinsonCalculus}, \ref{SectionBAImprovementMAIN} and \ref{SectionHigherRegularity}.

\begin{definition}[Weyl tensor] Let $(\MM,\g)$ be a vacuum spacetime. A $4$-tensor $\mathbf{W}$ is a \emph{Weyl tensor} if it has the same symmetries as the Riemann curvature tensor and is tracefree, that is,
\begin{subequations}
\begin{align*}
\begin{aligned}
\mathbf{W}_{\a \be \ga \de} = - \mathbf{W}_{\be \a \ga \de} =&- \mathbf{W}_{\a \be \de \ga}, & \mathbf{W}_{\a \be \ga \de}=& \mathbf{W}_{\ga \de \a \be}, \\
\mathbf{W}_{\a \be \ga \de}+ \mathbf{W}_{\a \ga \de \be} + \mathbf{W}_{\a \de \be \ga} =&0, & \mathbf{W}_{\be \de}= \mathbf{W}^{\a}_{\,\,\,\be \a \de} =&0.
\end{aligned}
\end{align*}
\end{subequations}
Let the left dual ${}^\ast \mathbf{W}$ of a Weyl-tensor $W$ be
\begin{align*}
{}^\ast \mathbf{W}_{\a \be \ga \de} := \half \in_{\a \be \mu \nu} \mathbf{W}^{\mu\nu}_{\,\,\,\,\,\, \ga \de}
\end{align*}
where $\in$ denotes the volume form on $(\MM,\g)$.
\end{definition}


\begin{definition}[Bel-Robinson tensor] \label{def:BelRobTensor} Let $\mathbf{W}$ be a Weyl tensor on a vacuum spacetime $(\MM,\g)$. The Bel-Robinson tensor of $\mathbf{W}$ is defined by
\begin{align*}
Q(\mathbf{W})_{\a \be \ga \de}:= \mathbf{W}_{\a \nu \ga \mu} \mathbf{W}_{\be \,\,\, \de}^{\,\,\,\nu \,\,\, \mu} + {}^\ast \mathbf{W}_{\a \nu \ga \mu} {}^\ast \mathbf{W}_{\be \,\,\, \de}^{\,\,\,\nu \,\,\, \mu}.
\end{align*}
\end{definition}

The following \emph{modified Lie derivative} takes Weyl tensors into Weyl tensors, see Lemma 7.1.2 in \cite{ChrKl93}. Together with the Bel-Robinson tensor, it is used to derive higher regularity energy estimates for the Riemann curvature tensor in Section \ref{SectionHigherRegularity}.
\begin{definition}[Modified Lie derivative] \label{DEFmodifiedLiederivative} Let $\mathbf{W}$ be a Weyl field and $X$ a vectorfield on a vacuum spacetime $(\MM,\g)$. Define the modified Lie derivative by
\begin{align*}
\widehat{\Lie}_X \mathbf{W} := \Lie_X \mathbf{W} - \half {}^{(X)}[\mathbf{W}] + \frac{3}{8} \tr_\g {}^{(X)} \pi \, \mathbf{W},
\end{align*}
where ${}^{(X)}\pi := \Lie_X \g$ and
\begin{align*}
{}^{(X)}[\mathbf{W}]_{\a \be \ga \de} := \half {}^{(X)}\pi^\mu_{\,\,\, \a} \mathbf{W}_{\mu \be \ga \de} + \half {}^{(X)}\pi^\mu_{\,\,\, \be} \mathbf{W}_{\a \mu \ga \de}+ \half {}^{(X)}\pi^\mu_{\,\,\, \ga} \mathbf{W}_{\a \be \mu \de} + \half {}^{(X)}\pi^\mu_{\,\,\, \de} \mathbf{W}_{\a \be \ga \mu}.
\end{align*}
\end{definition}

\subsection{Foliations on null hypersurfaces $\HH$} \label{SECfoliationOfNullHypersurfaces} Let $(\MM,\g)$ be a vacuum spacetime and let $\HH$ be an outgoing null hypersurface emanating from a spacelike $2$-sphere $(S_1,\gd)$. Let moreover $T$ be a given timelike vectorfield on $S_1$. In the following we introduce the geometric setup of foliations on $\HH$ following the notations and normalisations of \cite{CzimekGraf1} and~\cite{KlRod1}.

\begin{definition}[Geodesic foliation on $\HH$] \label{defGeodesicFoliation} Let $L$ be the unique $\HH$-tangential null vectorfield on $S_1$ with $\g(L,T)=-1$. Extend $L$ as null geodesic vectorfield onto $\HH$. Let $s$ be the affine parameter of $L$ on $\HH$ defined by
\begin{align*}
Ls=1 \text{ on } \HH, \,\, s \vert_{S_1} =1.
\end{align*}
Denote the level sets of $s$ by $S'_s$ and the geodesic foliation by $(S'_s)$.
\end{definition}

\begin{definition}[General foliations on $\HH$]\label{def:nulllapse} Let $v$ be a given scalar function on $\HH$. We denote the level sets of $v$ by $S_{v_0} = \{v= v_0\}$ and the corresponding foliation by $(S_v)$. We define the \emph{null lapse} $\Om$ of $(S_v)$ on $\HH$ by 
\begin{align}\label{eq:subt}
  \Omt := \Lt v.
\end{align}
\end{definition}

\begin{remark} The geodesic foliation of $\HH$ corresponds to $\Omt=1$. \end{remark}

\begin{definition}[Orthonormal null frame] \label{def:orthonormalFrame}
  Let $(S_v)$ be a foliation on $\HH$. Let $\Lbt$ be the unique null vector field on $\HH$ orthogonal to each $S_v$ and such that $\g(L,\Lb) = -2$. The pair $(\Lt,\Lbt)$ is called a \emph{null pair for the foliation $(S_v)$}. Let $(e_1,e_2)$ be an orthonormal frame tangential to each $S_v$. The frame $(\Lt,\Lbt,e_1,e_2)$ is called an \emph{orthonormal null frame for the foliation $(S_v)$}.
\end{definition}

Let $(S_v)$ be a foliation on $\HH$ and let $(L,\Lb,e_1,e_2)$ be an orthonormal null frame for $(S_v)$.
\begin{itemize}

\item Denote by $\gd$ and $\Nd$ the induced metric and covariant derivative on $S_v$,

\item For a given $S_v$-tangential $k$-tensor $W$, define
\begin{align*}
\Nd_L W_{A_1\dots A_k} := \Pi_{A_1}^{\,\,\,\,\, \be_1} \cdots \Pi_{A_k}^{\,\,\,\,\, \be_k} \D_L W_{\be_1 \dots \be_k},
\end{align*}
where $\Pi$ denotes the projection operator onto the tangent space of $S_v$ and $\D$ is the covariant derivative on $(\MM,\g)$.

\item Let the \emph{null connection coefficients} be defined by
  \begin{align*} \begin{aligned} 
    \chit_{AB} & := \g(\D_A \Lt, \etB),& \chibt_{AB} & := \g(\D_A \Lbt, \etB), \\
    \zet_A & := \half \g(\D_A \Lt, \Lbt),& \etabt_A & := \half \g(\D_\Lt \Lbt, \etA).
  \end{aligned} 
    \end{align*}
Further decompose $\chit$ and $\chibt$ into their trace and tracefree parts,
\begin{align*}
\begin{aligned}
\trchit & := \gd^{AB}\chit_{AB},& \chiht_{AB} & := \chit_{AB} -\half \trchit \gd_{AB}, \\
\trchibt & := \gd^{AB}\chibt_{AB},& \chibht_{AB} & := \chibt_{AB} - \half \trchibt \gd_{AB}.
\end{aligned}
\end{align*}

\item For a given Weyl tensor $\mathbf{W}$ on $(\MM,\g)$, define its null decomposition by 
\begin{align*}
\alphab_{AB}(\mathbf{W}) :=& \mathbf{W}_{A\Lb B \Lb}, & \betab_A(\mathbf{W}) :=& \half \mathbf{W}_{A\Lb\Lb L}, & \rho(\mathbf{W}) :=& \frac{1}{4} \mathbf{W}_{\Lb L \Lb L}, \\
\si(\mathbf{W}) :=& \frac{1}{4} {}^\ast \mathbf{W}_{\Lb L \Lb L}, & \be_A(\mathbf{W}) :=& \half \mathbf{W}_{AL\Lb L}, & \a_{AB}(\mathbf{W}):=& \mathbf{W}_{A L B L}.
\end{align*}
In particular, for the Riemann curvature tensor in a vacuum spacetime, we denote the null curvature components by
\begin{align*}
\alphab_{AB} :=& \Rbf_{A\Lb B \Lb}, & \betab_A :=& \half \Rbf_{A\Lb\Lb L}, & \rho :=& \frac{1}{4} \Rbf_{\Lb L \Lb L}, \\
\si :=& \frac{1}{4} {}^\ast \Rbf_{\Lb L \Lb L}, & \be_A :=& \half \Rbf_{AL\Lb L}, & \a_{AB}:=& \Rbf_{A L B L}.
\end{align*}

\item For $S_v$-tangent vectorfields $X$ define
\begin{align*}
\Divd X:= \Nd_A X^A, \,\,\, \Curld X := \in_{AB}\Nd^A X^B,
\end{align*}
where $\in_{AB}:={\bf{\in}}_{ABL \Lb}$.

\item Define on $\HH$ the positive definite metric $\mathbf{h}^v$ with respect to the foliation $(S_v)$ by
\begin{align*}
\mathbf{h}^v_{\a \be} := \g_{\a \be} + \half (L+\Lb)_\a (L+\Lb)_\be.
\end{align*}
For a given $k$-tensor $\mathbf{W}$ on $\MM$, let on $\HH$
\begin{align} \label{DefTensorNormHH}
\vert \mathbf{W} \vert_{\mathbf{h}^v}^2 := \mathbf{W}_{\a_1 \dots \a_k} \mathbf{W}_{\a_1' \dots \a_k'} \left( \mathbf{h}^v \right)^{\a_1 \a_1'} \dots \left(\mathbf{h}^v \right)^{\a_k \a_k'}.
\end{align}

\end{itemize}

In a vacuum spacetime, the following \emph{Ricci equations} hold, see \cite{ChrKl93},
\begin{align}\begin{aligned}
    \D_\Lt \Lt & = 0,& \D_\Lt \Lbt & = 2 \etabt_A \etA, \\ 
    \D_A \Lt & = \chit_{AB} \etB -\zet_A \Lt,& \D_A \Lbt & = \chibt_{AB} \etB + \zet_A \Lbt, \\
    \D_\Lt \etA & = \Nd_\Lt \etA + \etabt_A \Lt, & \D_A \etB & = \Nd_A\etB + \half \chit_{AB} \Lbt + \half \chibt_{AB} \Lt.
  \end{aligned} \label{eq:Null_Id} \end{align}

In the rest of this paper, we choose the orthonormal frame $(e_A)_{A=1,2}$ tangential to a foliation $(S_v)$ on $\HH$ to be \emph{Fermi propagated}, that is, satisfying $\Nd_L \etA=0$. \\

We turn to the definition of the \emph{canonical foliation} on $\HH$. 
\begin{definition}[Canonical foliation on $\HH$] \label{DEFcanonicalFoliation} Let $(S_v)$ be a foliation on $\HH$. We say that $(S_v)$ is the \emph{canonical foliation} on $\HH$ if $v \vert_{S_1} =1$ and
\begin{align*}
\Ld \log \Om =& -\Divd \zeta + \left( \rho - \half \chih \cdot \chibh \right) - \left( \overline{\rho} - \half \overline{\chih \cdot \chibh}\right),\\
\overline{\log \Om} =&0,
\end{align*}
where for scalar functions $f$, $\overline{f}$ denotes the average of $f$ on the $2$-sphere $S_v$.
\end{definition}

In \cite{CzimekGraf1}, it is shown that the canonical foliation is well-defined under the assumption of small $L^2$ curvature flux and small low regularity foliation geometry on the initial sphere $S_1$, see the introduction of \cite{CzimekGraf1} for more details and background on the canonical foliation.

\subsection{Foliations of the spacetime $\MM$ by spacelike maximal hypersurfaces} \label{SECspacelikefoliations} Let $t$ be a scalar function on a vacuum spacetime $(\MM,\g)$ whose level sets $\Si_t$ constitute a foliation by spacelike maximal hypersurfaces.
\begin{itemize}

\item Let $g$ denote the induced metric on $\Si_t$ and $\nab$ its covariant derivative. Let $\triangle$ denote the Laplace-Beltrami operator of $g$.

\item Let $e_0:=T$ denote the future-pointing timelike unit normal to $\Si_t$. Define the second fundamental form $k$ of $\Si_t$ by
\begin{align*}
k_{ij} = & -\g(\D_{i} T,e_j),
\end{align*}
where $(e_i)_{i=1,2,3}$ is an orthonormal frame tangent to $\Si_t$. Define the foliation lapse $n$ by 
$$n^{-2} := \g(\D t, \D t),$$ 
satisfying in particular,
\begin{align} \label{eqRelationTandDt}
T = -n \D t.
\end{align}
We remark that the deformation tensor ${}^{(T)}\pi := \Lie_T \g$ can be expressed as
\begin{align} \label{eq:deformationTensorRelations}
{}^{(T)}\pi_{\a \be} = -2 k_{\a \be} - n^{-1} \left( T_\a \nab_\be n + T_\be \nab_\a n \right).
\end{align}
Moreover, define the connection $1$-form $\mathbf{A}$ by
\begin{align*}
(\mathbf{A}_\mu)_{\a \be} := \g(\D_\mu e_\be, e_\a).
\end{align*}

\item Let $\in_{abc}:={\in}_{abcT}$, and for two symmetric $g$-tracefree $2$-tensors $V$ and $W$, and a vectorfield $X$ on $\Si_t$ define
\begin{align*}
\Div V_{i} :=& \nabla^j V_{ji}, \\
\Curl V_{ij} :=& \half \Big( \in_{ilm} \nab^l V^{m}_{\,\,\,\,\, j} +\in_{jlm} \nab^l V^{m}_{\,\,\,\,\, i}  \Big), \\
 (V \times W)_{ij} :=& \in_{i}^{\,\,\,ab} \in_j^{\,\,\, cd} V_{ac} W_{bd} + \frac{1}{3} (V \cdot W)g_{ij},\\
  (V \wedge W)_{i} :=& \in_i^{\,\,\, mn} V_m^{\,\,\, l} W_{ln},\\
 (X \wedge V)_{ij} :=& \in_i^{\,\,\, mn} X_m A_{nj} + \in_j^{\,\,\, mn} X_m A_{in}.
\end{align*}

\item For a Weyl tensor $\mathbf{W}$, define its \emph{electric-magnetic decomposition with respect to $T$} as follows,
\begin{align*}
E(\mathbf{W})_{ab} := \mathbf{W}_{aTbT}, \,\, H(\mathbf{W})_{ab} := {}^\ast \mathbf{W}_{aTbT}.
\end{align*}
In particular, for the Riemann curvature tensor $\Rbf$ of a vacuum spacetime, let
\begin{align*}
E_{ab}  := \R_{aTbT}, \,\, H_{ab}  := {}^\ast \R_{aTbT}.
\end{align*}
The $2$-tensors $E(\mathbf{W})$ and $H(\mathbf{W})$ are $\Si$-tangent, symmetric and $g$-tracefree, see Section 7.2 in \cite{ChrKl93}. By definition of the modified Lie derivative, see Definition \ref{DEFmodifiedLiederivative}, it holds that
\begin{align} \begin{aligned}
\Lieh_T E(\mathbf{W})=& E\lrpar{\hat{\Lie}_T \mathbf{W}}-k \times E(\mathbf{W})+2 n^{-1} \nab n \wedge H(\mathbf{W}),\\
\Lieh_T H(\mathbf{W})=& H\lrpar{\hat{\Lie}_T \mathbf{W}}-k \times H(\mathbf{W})-2 n^{-1} \nab n \wedge E(\mathbf{W}),
\end{aligned} \label{EQTrelationEH} \end{align}
where $\Lieh_T H(\mathbf{W})$ and $\Lieh_T E(\mathbf{W})$ are the $g$-tracefree parts of $\Lie_T H(\mathbf{W})$ and $\Lie_T E(\mathbf{W})$, respectively. Moreover, by definition of the Bel-Robinson tensor, see Definition \ref{def:BelRobTensor},
\begin{align} \label{EQEquivalenceNORMS1}
\vert E(\mathbf{W}) \vert^2 + \vert H(\mathbf{W}) \vert^2=Q(\mathbf{W})_{TTTT}.
\end{align}

\item Define on $\MM$ the Riemannian metric $\mathbf{h}^t$ by
\begin{align*}
\mathbf{h}^t_{\a \be} := \g_{\a \be} + 2 T_\a T_\be,
\end{align*}
and for $k$-tensors $\mathbf{W}$ on $\Si_t$, let
\begin{align} \label{EQdefinitionOFhNorm}
\vert \mathbf{W} \vert_{\mathbf{h}^t}^2 := \mathbf{W}_{\a_1 \dots \a_k} \mathbf{W}_{\a_1' \dots \a_k'} \left( \mathbf{h}^t \right)^{\a_1 \a_1'} \dots \left( \mathbf{h}^t \right)^{\a_k \a_k'}.
\end{align}
In particular, for Weyl tensors $\mathbf{W}$ it holds by Section 7 in \cite{ChrKl93} that
\begin{align} \label{EQEquivalenceNORMS2}
\vert \mathbf{W} \vert_{\mathbf{h}^t}^2 \les Q(\mathbf{W})_{TTTT} \les \vert \mathbf{W} \vert_{\mathbf{h}^t}^2.
\end{align}

\end{itemize}

The Einstein vaccuum equations imply the following \emph{structure equations of the maximal foliation}, see equations (1.0.11a)-(1.0.14d) in~\cite{ChrKl93}. We have the \begin{subequations} \emph{first variation equation},
  \begin{align}\label{eq:firstvar}
    \Lie_Tg_{ij} = -\half k_{ij},
  \end{align}
the \emph{second variation equation},
  \begin{align}\label{eq:sndvar}
    \D_Tk_{ij} = E_{ij} -n^{-1} \nab_i\nab_j n +k_{il}k_{\,\,\,j}^l,
  \end{align}
 the \emph{Gauss-Codazzi equation}
  \begin{align}
    \Div k_i =& 0, \label{eq:divk} \\
        \Curl k_{ij}  =& H_{ij}, \label{eq:curlk}
  \end{align}
  the \emph{maximality of $\Si_t$},
\begin{align} \label{eqtrkiszero}
\tr_g k =0,
\end{align}
the \emph{lapse equation},
  \begin{align}\label{eq:Deltan}
    \triangle n = n \vert k\vert_g^2,
  \end{align}
the \emph{traced Gauss equation}, 
 \begin{align}\label{eq:RicE}
  \Ric_{ij} = E_{ij}+k_{ia}k^{a}_j,
\end{align}
and the \emph{twice-traced Gauss equation},
 \begin{align} \label{eq:HamiltonianConstraint}
  \Rscal = \vert k \vert_{g}^2.
 \end{align}
\end{subequations}

With respect to a folation $(\Si_t)$ by maximal hypersurfaces, the Bianchi equations can be written as follows, see Proposition 7.2.1 in \cite{ChrKl93}. 
\begin{proposition}[Maxwell's equations for $E(\mathbf{W})$ and $H(\mathbf{W})$] \label{prop:MaxwellsEq1} Let $(\MM,\g)$ be a vacuum spacetime. Let $E(\mathbf{W})$ and $H(\mathbf{W})$ be the electric-magnetic decomposition of a Weyl tensor $\mathbf{W}$ relative to a maximal foliation $(\Si_t)$ on $\MM$. Assume that $\mathbf{W}$ satisfies the inhomogeneous Bianchi equations
\begin{align*}
\D^\a \mathbf{W}_{\a \be \ga \de} = J_{\be \ga \de}.
\end{align*}
Then, with $J^\ast_{\be \ga \de} := \half J_{\be \mu \nu} \in^{\mu \nu}_{\,\,\,\,\,\, \ga \de}$,
\begin{align} \begin{aligned}
\Div E(\mathbf{W}) =& +k \wedge H(\mathbf{W}) + J, \\
\Div H(\mathbf{W}) =& -k \wedge E(\mathbf{W}) + J^\ast, \\
- \hat{\Lie}_T H(\mathbf{W}) + \Curl E(\mathbf{W}) =& - n^{-1} \nab n \wedge E(\mathbf{W}) - \half k \times H(\mathbf{W}) - J^\ast, \\
\hat{\Lie}_T E(\mathbf{W}) + \Curl H(\mathbf{W}) =& -n^{-1} \nab n \wedge H(\mathbf{W}) + \half k \times E(\mathbf{W}) - J.
\end{aligned} \label{eq:BianchiEH} \end{align}
\end{proposition}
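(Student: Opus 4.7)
The plan is to decompose the inhomogeneous second Bianchi identity $\D^\a \mathbf{W}_{\a\be\ga\de} = J_{\be\ga\de}$, together with its Hodge-dual version $\D^\a {}^\ast\mathbf{W}_{\a\be\ga\de} = J^\ast_{\be\ga\de}$, with respect to the orthogonal splitting $T\MM = \mathrm{span}(T) \oplus T\Si_t$. First I would collect the identities bridging the spacetime connection $\D$ and the induced spatial connection $\nab$: from $k_{ij} = -\g(\D_i T, e_j)$ one has $\D_i T = -k_{ij} e^j$, while \eqref{eqRelationTandDt} combined with $\g(T,T) = -1$ yields $\D_T T = n^{-1}\nab n$, and hence $\D_i e_j = \nab_i e_j + k_{ij} T$. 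These relations generate all the $k$ and $n^{-1}\nab n$ correction terms when derivatives hit $T$-contracted components of $\mathbf{W}$.

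For the two constraint equations I would contract Bianchi with $T^\ga T^\de$ and a spatial index, i.e.\ compute $T^\ga T^\de \D^\a \mathbf{W}_{\a i \ga \de}$. Writing $\D^\a = g^{ab}\D_a - T^\a T^\b \D_\b$ and using $\mathbf{W}_{TTiT} = 0$ from Weyl antisymmetry, the $\a = T$ contribution collapses and only the spatial part survives. Moving $\D_a$ past the two $T$'s via $\D_a T = -k_{ab} e^b$ produces $\nab^a E_{ai}$ plus terms of the form $k^{ab} \mathbf{W}_{aTib}$; expressing the latter in terms of $H$ using a relation $\mathbf{W}_{aTij} \sim \in^c{}_{aj} H_{ci}$ (derived from the definition of $H$ together with first-Bianchi and the trace-free property) yields exactly $(k \wedge H)_i$, and one gets $\Div E = k \wedge H + J$. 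The same manipulation applied to the dual Bianchi identity produces the $\Div H$ equation; the sign change in front of $k \wedge E$ traces back to $\bigl({}^\ast\bigr)^2 = -\mathrm{Id}$ on Weyl tensors.

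For the evolution equations I would contract Bianchi with only one copy of $T$, say $\be = T$, leaving $\ga = i$, $\de = j$ spatial. The $\a = T$ piece produces a term proportional to $\D_T \mathbf{W}(e_i, T, e_j, T) = \D_T E_{ij}$ plus $n^{-1}\nab n$-corrections (from $\D_T T = n^{-1}\nab n$), while the $\a = $ spatial piece gives $\nab^a \mathbf{W}_{aTij}$ which, by the same $\in$-$H$ relation, is exactly $(\Curl H)_{ij}$. The crucial step is then to convert the raw derivative $\D_T E$ into the modified Lie derivative $\hat\Lie_T E$ that enters \eqref{EQTrelationEH}: using Definition~\ref{DEFmodifiedLiederivative} together with \eqref{eq:deformationTensorRelations} and $\tr_g k = 0$, the correction $-\tfrac{1}{2}{}^{(T)}[\mathbf{W}] + \tfrac{3}{8}\tr\,{}^{(T)}\pi\, \mathbf{W}$ regroups precisely into $-n^{-1}\nab n \wedge H$ and $+\tfrac{1}{2} k \times E$ on the right-hand side. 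The parallel computation with ${}^\ast\mathbf{W}$ delivers the $\hat\Lie_T H$ equation.

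The main obstacle is not any hard analysis but the algebraic bookkeeping: reducing every spatial-index component of $\mathbf{W}$ (such as $\mathbf{W}_{ijkT}$ and $\mathbf{W}_{ijkl}$) to the independent tensors $E$ and $H$ via Weyl symmetries, first-Bianchi, and trace-freeness; and then regrouping the $k$- and $n^{-1}\nab n$-corrections into the compact operators $k \times \cdot$, $k \wedge \cdot$, and $n^{-1}\nab n \wedge \cdot$ defined in Section~\ref{SECspacelikefoliations}. Verifying that the $T$-derivative terms assemble into exactly $\hat\Lie_T$ rather than $\Lie_T$ is what fixes the numerical coefficients in \eqref{eq:BianchiEH}, and is precisely the reason Definition~\ref{DEFmodifiedLiederivative} is tailored to preserve the Weyl structure under $T$-transport.
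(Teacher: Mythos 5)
The paper does not prove this proposition; it cites it as Proposition 7.2.1 of \cite{ChrKl93}. Your sketch reproduces the standard Christodoulou--Klainerman derivation (split the Bianchi identity by taking one or two $T$-contractions, convert $\D_T$ to $\hat\Lie_T$ via Definition~\ref{DEFmodifiedLiederivative} and \eqref{eq:deformationTensorRelations}, and convert the remaining spatial components of $\mathbf{W}$ to $E,H$ through the volume form and first Bianchi), so the route matches. One caution on conventions: you use $\D_T T = n^{-1}\nab n$, whereas this paper works with $\D_T T = -n^{-1}\nab n$ (see the proof of Lemma~\ref{lemTransportNUalongL}), together with $k_{ij} = -\g(\D_i T, e_j)$; whichever you adopt, the signs in the bridging identities $\D_i e_j = \nab_i e_j + k_{ij}T$ and $\D_i T = -k_{ij}e^j$ must be kept consistent with the stated $\Div E = k\wedge H + J$, etc., or several terms in the evolution equations will come out with the wrong sign. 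Beyond that bookkeeping the plan is sound and matches the cited source.
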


\emph{Remarks.}
\begin{enumerate}
\item In Appendix \ref{sec:AppendixProofOfLowRegEllipticEstimateForK}, we interprete \eqref{eq:BianchiEH} with \eqref{EQTrelationEH} as $3$-dimensional Hodge system for $E(\mathbf{W})$ and $H(\mathbf{W})$ on $\Si_t$ and prove elliptic estimates.

\item In particular, it follows by Proposition \ref{prop:MaxwellsEq1} that in a vacuum spacetime $(\MM,\g)$ where $\Rbf$ satisfies by the Bianchi equations
\begin{align*}
\D^\a \Rbf_{\a \be \ga \de} = 0,
\end{align*}
it holds that
\begin{align} \begin{aligned}
\Div E =& k \wedge H, \\
\Div H =& -k \wedge E, \\
- \hat{\Lie}_T H + \Curl E =& - n^{-1} \nab n \wedge E - \half k \times H, \\
\hat{\Lie}_T E + \Curl H =& -n^{-1} \nab n \wedge H + \half k \times E.
\end{aligned} \label{eq:BianchiForEandH} \end{align}

\end{enumerate}

The following commutator identity allows us to derive elliptic estimates for $T(n)$, see (18.4) in Appendix E in \cite{KRS} for a proof.
\begin{lemma}[Commutator identity] \label{LEMcommutatorTriangleDT} It holds on $\Si_t$ that for scalar functions $f$ on $\Si_t$,
\begin{align*}
[\triangle, T]f = 2 k \nab^2 f - 2 n^{-1} \nab n  \nab T(f) - \vert k \vert^2 T(f) + 2 n^{-1} k \nab n \nab f.
\end{align*}
\end{lemma}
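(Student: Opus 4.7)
The plan is to derive the commutator identity by a direct calculation, exploiting two geometric features of the maximal foliation: the unit normal $T$ has controlled extrinsic geometry, and $\tr_g k=0$ allows the induced Laplacian to be rewritten in terms of ambient spacetime derivatives. Specifically, for a smooth function $f$ defined on a spacetime neighborhood of $\Si_t$, the Gauss formula gives $\D_i\D_j f = \nab_i\nab_j f + k_{ij}\,Tf$ for spatial indices $i,j$, so tracing with $g^{ij}$ and using maximality \eqref{eqtrkiszero} yields the clean identity $\triangle f = g^{ij}\D_i\D_j f$. This conversion to spacetime covariant derivatives is the key simplification and reduces the question to commuting $\D_T$ past the ambient Hessian traced in the spatial directions.

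Applying $T$ then produces $T(\triangle f) = T(g^{ij})\D_i\D_j f + g^{ij}\D_T\D_i\D_j f$, and I would split the analysis accordingly. For the first piece, the deformation tensor identity \eqref{eq:deformationTensorRelations} together with $T_i=0$ on $\Si_t$ yields $\Lie_T g_{ij}=-2k_{ij}$ for spatial indices, hence $T(g^{ij})=2k^{ij}$, producing the term $2k\,\nab^2 f$ in the statement. For the second piece, I would commute $\D_T$ past $\D_i\D_j$ using two basic identities: the Weingarten relation $\D_i T = -k_i{}^j e_j$ (immediate from $k_{ij} = -\g(\D_iT,e_j)$ and $\g(T,T)=-1$) and the standard lapse-foliation acceleration $\D_T T = n^{-1}\nab n$. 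Comparing this to $\triangle(Tf)=g^{ij}\D_i\D_j(Tf)$ expanded via the Leibniz rule and the same two identities, the pure Hessian-type terms pair off, and what remains are connection-type contributions pairing $\D T$ and $\D_T T$ with $\nab Tf$ and $\nab f$; these assemble into exactly $-2n^{-1}\nab n\cdot\nab T(f) + 2n^{-1}k\,\nab n\cdot\nab f - |k|^2 Tf$.

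The main obstacle is the bookkeeping inside the double commutator $g^{ij}[\D_T,\D_i\D_j]f$: sliding $\D_T$ past two spacetime covariant derivatives generically produces a Riemann curvature contribution via the Ricci identity. In our vacuum setting $\mathbf{Ric}=0$, the contractions with $g^{ij}$ force these curvature terms to cancel, but checking the cancellation explicitly is the delicate part. The cleanest route is probably to bypass curvature altogether by writing $\triangle f = \Div(\nab f) = \tfrac{1}{\sqrt{\det g}}\pr_a\bigl(\sqrt{\det g}\,g^{ab}\pr_b f\bigr)$ and noting that $T(\sqrt{\det g}) = (\tr_g k)\sqrt{\det g} = 0$ on a maximal slice. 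The computation then reduces to $T(g^{ab})=2k^{ab}$ together with the scalar commutator $[T,e_b]f$, both controlled directly by the Weingarten and lapse acceleration identities, and yields the four stated terms without ever invoking the spacetime Riemann tensor.
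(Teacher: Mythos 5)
The paper does not actually give a proof of this lemma: it simply cites equation (18.4) in Appendix E of \cite{KRS}. So you are supplying a proof that the paper leaves to a reference, and there is no in-paper argument to compare against directly. Your strategy is sound, and your preferred ``cleaner route'' --- writing $\triangle f = (\det g)^{-1/2}\,\pr_a\bigl((\det g)^{1/2} g^{ab}\pr_b f\bigr)$ in shift-free coordinates, using $T(\sqrt{\det g})\propto \tr_g k = 0$ and $T(g^{ab})=2k^{ab}$ from the first variation equation, and commuting $T=n^{-1}\pr_t$ past $\pr_a$ --- is essentially the standard computation and is the right way to carry this out. Your first route, via $\triangle f = g^{ij}\D_i\D_j f$ and the double commutator $g^{ij}[\D_T,\D_i\D_j]f$, also works, and your caution about the curvature terms is well placed: the contraction $g^{ij}\Rbf^{\mu}{}_{jTi}\D_\mu f$ does reduce to a Ricci trace, but verifying this cleanly is more work than the $\sqrt{\det g}$ calculation, which avoids the spacetime Riemann tensor entirely.

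Two points are worth flagging. First, your list of ingredients is incomplete: carrying out the $\sqrt{\det g}$ computation, the term $\nab_a(2nk^{ab}\nab_b f)$ produces a $2n(\Div k)\cdot\nab f$ contribution, and the term $\triangle(nTf)$ produces $n^{-1}(\triangle n)\,Tf$. To arrive at the clean four-term expression in the lemma you must also invoke the momentum constraint $\Div k = 0$ from \eqref{eq:divk} and the lapse equation $\triangle n = n\vert k\vert^2$ from \eqref{eq:Deltan}, not just $\tr_g k =0$, the Weingarten relation and the lapse acceleration as you state. These are cheap inputs on a vacuum maximal foliation, but they are genuinely used and should be named. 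Second, when you actually execute the computation, track the signs with care against the paper's own convention $k_{ij}=-\g(\D_i T, e_j)$ and $\Lie_T g_{ij} = -2k_{ij}$ (equivalently ${}^{(T)}\pi_{ij} = -2k_{ij}$ from \eqref{eq:deformationTensorRelations}; note \eqref{eq:firstvar} has a typographical factor). A direct pass through the $\sqrt{\det g}$ route with $T(g^{ab})=+2k^{ab}$ yields all four terms on the right-hand side of $T(\triangle f) - \triangle T(f)$ with the \emph{same} sign, whereas the stated lemma has mixed signs under $[\triangle,T]=\triangle T - T\triangle$ (the interpretation forced by its use in \eqref{EQDTNequation}). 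The paper does contain small sign inconsistencies (for instance $\D_T T = -n^{-1}\nab n$ in the proof of Lemma~\ref{lemTransportNUalongL} conflicts with $\D_T T=+n^{-1}\nab n$ implied by \eqref{eq:deformationTensorRelations}), so the discrepancy may well be inherited from \cite{KRS}'s conventions rather than a flaw in your reasoning, but you should verify which sign convention you are actually reproducing before asserting exact agreement with the displayed identity.
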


\subsection{Spherical coordinates on $\Si$} \label{SECfoliationSpheres} Let $(\Si,g)$ be a given maximal spacelike hypersurface in a vacuum spacetime $(\MM,\g)$ diffeomorphic to the closed unit ball in $\RRR^3$, that is, $\Si \simeq \overline{B(0,1)} \subset \RRR^3$. Using this diffeomorphism we can define standard spherical coordinates $(r,\th^1,\th^2)$ with $r \in [0,1]$ on $\Si$. We denote the level sets of $r$ by $\SS_r$, and for two reals $1\leq r_1,r_2 \leq 2$, let $A(r_1,r_2)$ denote the coordinate annulus
\begin{align*}
A(r_1,r_2) := \{ p \in \Si : r_1 \leq r(p) \leq r_2 \}.
\end{align*}
Then:
\begin{itemize}
\item The metric $g$ can be expressed in coordinates $(r,\th^1,\th^2)$ for $r>0$ as
\begin{align*}
g = a^2 dr^2 + \gd_{AB} (b^A dr + d\th^A)(b^Bdr + d \th^B),
\end{align*}
where 
\begin{itemize}
\item $a$ is called the \emph{foliation lapse}, 
\item $\gd$ is called the \emph{induced metric} on $\SS_r$,
\item $b$ is called the $\SS_r$-tangent \emph{shift vector}.
\end{itemize}

\item Let $N$ be the outward pointing unit normal to $\SS_r$ and let $(e_1,e_2)$ denote an orthonormal frame tangent to $\SS_r$. Define the second fundamental form of $\SS_r$ for $r>0$ by 
$$\Theta_{AB} := g(\nab_A N,e_B).$$
We split $\Th$ into its trace and tracefree part,
\begin{align*}
\trTh := \gd^{AB}\Th_{AB}, \,\,\, \Thh_{AB} := \Th_{AB} - \half \trTh\gd_{AB},
\end{align*}
Further, in coordinates $(r, \th^1, \th^2)$ we can express for $r>0$,
\begin{align*}
N= \frac{1}{a} \pr_r - \frac{1}{a} b, \,\,\, \Th_{AB} = -\frac{1}{2a} \pr_r (\gd_{AB}) + \frac{1}{2a} (\Lied_b \gd)_{AB},
\end{align*}
where $\Lied$ denotes the Lie derivative on $\SS_r$.

\item Let $\Nd$ and $\Ld$ denote the induced covariant derivative and Laplace-Beltrami operator on $\SS_r$, respectively. We note the relations (see Chapter 3 in \cite{ChrKl93})
\begin{align*}
\nab_N N = -a^{-1} \Nd a, \,\,\, \nab_A N = \Th_{AB} e_B, \,\,\, \Div N = \tr \Th.
\end{align*}

\item we decompose the second fundamental form $k$ on $\Si$ into $\SS_r$-tangential tensors as follows,
\begin{align*}
\de := k_{NN}, \,\,\, \ep_A :=& k_{NA}, \,\,\, \eta_{AB} := k_{AB}.
\end{align*}
We note that $\tr \eta = - \de$ because $\tr k =0$ on $\Si$ by maximality. 

\end{itemize}

Using the above, we can decompose $\nab k$ as follows (see Sections 3.1 and 4.4 in \cite{ChrKl93}),
\begin{align*}
\nab_A k_{BC} =& \Nd_A \eta_{BC} + \Th_{AB} \ep_C + \Th_{AC} \ep_{B}, & \nab_N k_{NN} =& N(\de) + 2 a^{-1} \Nd a \cdot \ep,  \\
\nab_N k_{AB} =& \Nd_N \eta_{AB} - a^{-1} \Nd_A a \ep_B - a^{-1} \Nd_B a \ep_A, & \nab_A k_{NN} =& \Nd_A \de - 2 \Th_{AC} \ep^C,\\
\nab_N k_{NA} =& \Nd_N \ep_A + a^{-1}\Nd_C a \, \eta^{C}_{\,\,\, A} - a^{-1} \Nd_A a \, \de, &\nab_B k_{NA} =& \Nd_B \ep_A + \eta_{A}^{\,\,\, C} \Th_{CB} - \de \Th_{AB}.
\end{align*}

Then the Gauss-Codazzi equations \eqref{eq:divk} and \eqref{eq:curlk} imply that (see Section 4.4 in \cite{ChrKl93})
\begin{subequations}
\begin{align}
N(\de) + \Divd \ep =& -2a^{-1} \Nd a \cdot \ep + \eta \cdot \Th - \de \tr \Th, \label{eq:FirstOfDivEq} \\
\Nd_N \ep_B + (\Divd \eta)_B =& -a^{-1} \Nd_C a \eta^{C}_{\,\,\, B} + a^{-1} \de \Nd_B a  - \tr \Th \ep_B - \Th_{BC} \ep^C, \label{eq:NepNabDeRelation} \\
\Nd_B \de - \Nd_N \ep_B=& \in^{A}_{\,\,\, NB} H_{NA} + 2 \Th_{BC} \ep^C + a^{-1} \Nd_{C}a \eta^C_{\,\,\,B} - a^{-1} \de \Nd_B a, \label{eq:FirstofCurleq} \\
\Nd_B \ep_A -\Nd_N \eta_{AB} =& \in^{C}_{\,\,\, NB} H_{AC} - \eta_{A}^{\,\,\, C} \Th_{CB} + \de \Th_{AB} + a^{-1} \Nd_C a \eta^{C}_{\,\,\,B} \\
&+ a^{-1}\Nd_C a \eta^C_{\,\,\, A}, \nonumber  \\
\Nd_B \eta_{CA} - \Nd_{A} \eta_{CB} =& \in^N_{\,\,\, AB} H_{CN} + \Th_{CB} \ep_A - \Th_{CA} \ep_B. \label{eq:divEtaFoundation}
\end{align}
\end{subequations}
Note that \eqref{eq:divEtaFoundation} implies in particular
\begin{align}\label{eq:DivEtaNabDeRelation}
(\Divd \eta)_A = - \Nd_A \de + \in^N_{\,\,\,AB} H^B_{\,\,\, N} + \Th_{AB} \ep^B + \tr \Th \ep_A.
\end{align}

\subsection{Relations between foliations on $\MM$ and $\HH$} Let $(\MM,\g)$ be a vacuum spacetime, let $\Si \simeq \ol{B(0,1)}$ be a spacelike maximal hypersurface, and let $\HH$ be the outgoing null hypersurface emanating from $S_1:= \pr \Si$. Let 
\begin{itemize}
\item $(\Si_t)_{t\geq1}$ be a foliation on $\MM$ by maximal spacelike hypersurfaces given as level sets of a time function $t$ such that $\Si_1=\Si$. Let $T$ denote the unit normal to $\Si_t$.
\item $(S_v)_{v\geq1}$ be a foliation on $\HH$ by spacelike $2$-spheres such that $S_1 = \{v=1\}$. Let $(L,\Lb,e_1,e_2)$ be an orthonormal null frame of $(S_v)$.
\end{itemize}
Assume furthermore that for $t\geq1$,
\begin{align*}
\pr \Si_t = S_t, \text{ i.e. } t=v \text{ on } \HH.
\end{align*}

\begin{definition} \label{definitionSLOPE} Let the \emph{slope} $\nut$ on $\HH$ be defined by
\begin{align} \label{def:nut}
\nut := & - \g(\Lbt, T).
\end{align}
\end{definition}

\begin{remark} Using Lemma \ref{lemma:TNrelations} below, the above definition of $\nu$ in \eqref{def:nut} is equivalent to $\nut^{-1} := - \g(\Lt,T)$. Thus, by Definition \ref{defGeodesicFoliation}, it follows that on $S_1$ we have the normalisation
\begin{align*}
\nut = 1.
\end{align*}
\end{remark}

The proof of the next lemma is left to the reader.
\begin{lemma} \label{lemma:TNrelations} On $\HH$ it holds that
  \begin{align}\label{eq:TNLLnut}
    \begin{aligned}
      T = & \half \nut \Lt + \half \nut^{-1}\Lbt, & N = & \half \nut \Lt - \half \nut^{-1}\Lbt, \\
      \Lt = & \nut^{-1}(T +  N), & \Lbt = & \nut (T - N),
    \end{aligned}
  \end{align}
and moreover on $\HH$,
  \begin{align*}
    \Th_{AB} & = \half \nut \chit_{AB} - \half \nut^{-1}\chibt_{AB}, \\
    \eta_{AB} & = -\half \nut \chit_{AB} - \half \nut^{-1}\chibt_{AB},\\
    \de  = - \tr\eta &= \half\nut\trchit + \half\nut^{-1}\trchibt. 
  \end{align*}
 \end{lemma}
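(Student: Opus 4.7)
The idea is that the relations are purely algebraic: both $T$ and $N$ are orthogonal to $S_v$ (indeed $T \perp \Si_t \supset S_t = S_v$ and $N \perp S_v$ inside $\Si_t$), so they lie in the timelike $2$-plane $\mathrm{span}(\Lt,\Lbt)$ orthogonal to $S_v$. I expand $T = a\Lt + b\Lbt$ and $N = c\Lt + d\Lbt$ and determine the coefficients from the normalisation conditions. Since $\g(\Lt,\Lt)=\g(\Lbt,\Lbt)=0$ and $\g(\Lt,\Lbt)=-2$, the identities $\g(T,T)=-1$, $\g(N,N)=1$ and $\g(T,N)=0$ yield
\begin{align*}
ab = \tfrac{1}{4}, \qquad cd = -\tfrac{1}{4}, \qquad ad + bc = 0.
\end{align*}
The definition $\nut = -\g(\Lbt,T)$ together with $\g(\Lbt,T) = a\g(\Lbt,\Lt)+b\g(\Lbt,\Lbt) = -2a$ gives $a = \nut/2$, and hence $b = (2\nut)^{-1}$. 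The remaining conditions force $(c,d) = \pm(\nut/2, -(2\nut)^{-1})$, and the outward orientation of $N$ (pointing toward $\HH$, i.e.\ in the direction of the $\HH$-tangent null vector $\Lt$) selects the plus sign. This produces the first block of identities.

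\textbf{Second fundamental forms.} For the geometric quantities I substitute the formulas just derived into the defining relations
\begin{align*}
\Th_{AB} = \g(\D_A N, \etB), \qquad \eta_{AB} = k_{AB} = -\g(\D_A T, \etB).
\end{align*}
Since $\nut$ is a scalar on $\HH$, differentiation produces terms of the form $\etA(\nut)\,\Lt$ and $\etA(\nut^{-1})\,\Lbt$; because $\Lt,\Lbt$ are orthogonal to $\etB$ these terms drop out when contracted with $\etB$. Using the definitions $\chit_{AB} = \g(\D_A \Lt,\etB)$ and $\chibt_{AB} = \g(\D_A \Lbt,\etB)$ one reads off
\begin{align*}
\Th_{AB} &= \tfrac{1}{2}\nut\,\chit_{AB} - \tfrac{1}{2}\nut^{-1}\chibt_{AB},\\
\eta_{AB} &= -\tfrac{1}{2}\nut\,\chit_{AB} - \tfrac{1}{2}\nut^{-1}\chibt_{AB}.
\end{align*}
Finally, $\de = -\tr_{\gd}\eta$ (the maximality of $\Si$ gives $\tr k = 0$, so $\de = k_{NN} = -g^{AB}k_{AB} = -\tr \eta$), and tracing the formula for $\eta_{AB}$ with $\gd^{AB}$ yields the stated expression for $\de$.

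\textbf{Main point of care.} No serious obstacle arises; the only subtlety is fixing the sign ambiguity when solving for $(c,d)$, which requires invoking the geometric convention that $N$ points outward from $\Si$ along $\HH$, i.e.\ has a positive $\Lt$-component. Everything else is a direct computation from the definitions in Sections \ref{SECfoliationOfNullHypersurfaces} and \ref{SECfoliationSpheres} together with $\nut = -\g(\Lbt,T)$.
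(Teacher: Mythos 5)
Your proof is correct. The paper leaves the proof to the reader (the statement is preceded by ``The proof of the next lemma is left to the reader''), and the argument you give is precisely the routine one: decompose $T$ and $N$ in the null pair $(L,\Lb)$ using the normalisations $\g(L,L)=\g(\Lb,\Lb)=0$, $\g(L,\Lb)=-2$, $\g(T,T)=-1$, $\g(N,N)=1$, $\g(T,N)=0$ together with $\nu=-\g(\Lb,T)$, fix the residual sign by requiring $T+N$ to point in the outgoing null direction $L$, and then read the formulas for $\Th$, $\eta$, $\de$ off the defining relations $\Th_{AB}=\g(\D_A N,e_B)$, $\eta_{AB}=k_{AB}=-\g(\D_A T,e_B)$, $\de=-\tr\eta$ using \eqref{eq:Null_Id}.
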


\begin{lemma}[Slope equation] \label{lemma:slopeEquation} On $\HH$ it holds that
  \begin{align}\label{eq:slope}
    \nut^{-1}\Nd_A\nut = -\ep_A + \zet_A.
  \end{align}
\end{lemma}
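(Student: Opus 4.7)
The plan is to differentiate the defining relation $\nut = -\g(\Lbt,T)$ in the direction $e_A$ (which is tangent to $S_v \subset \Si_t$, so $\g(e_A,T)=0$) and then identify each term with $-\ep_A$ or $\zet_A$ using the Ricci equations of \eqref{eq:Null_Id}, the frame relations of Lemma \ref{lemma:TNrelations}, and the definition $k_{ij} = -\g(\D_i T,e_j)$.

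Concretely, since $\nut$ is a scalar function, $\Nd_A \nut = e_A(\nut)$, and from $\nut = -\g(\Lbt,T)$ the Leibniz rule gives
\begin{align*}
\Nd_A \nut = -\g(\D_A \Lbt, T) - \g(\Lbt, \D_A T).
\end{align*}
For the first term, the Ricci equation $\D_A \Lbt = \chibt_{AB}\etB + \zet_A \Lbt$ together with $\g(\etB,T)=0$ (since $e_B$ is $S_v$-tangent, hence $\Si_t$-tangent) and $\g(\Lbt,T) = -\nut$ yields
\begin{align*}
\g(\D_A \Lbt, T) = \zet_A \g(\Lbt, T) = -\nut\, \zet_A.
\end{align*}
For the second term, use $\Lbt = \nut(T - N)$ from \eqref{eq:TNLLnut}; then $\g(T,\D_A T) = \half e_A(\g(T,T)) = 0$, while $\g(N, \D_A T) = -k_{AN} = -\ep_A$ by the definition of the second fundamental form and of $\ep_A$, so that
\begin{align*}
\g(\Lbt, \D_A T) = \nut \g(T, \D_A T) - \nut \g(N, \D_A T) = \nut\, \ep_A.
\end{align*}

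Combining these two computations gives $\Nd_A \nut = \nut\, \zet_A - \nut\, \ep_A$, and dividing by $\nut$ (which is strictly positive since, by continuity from its normalization $\nut = 1$ on $S_1$, it remains bounded away from $0$ in the low-regularity regime considered here) yields the claimed identity \eqref{eq:slope}. No step is a genuine obstacle: the computation is purely algebraic once the Ricci equations and the $(T,N)$--$(\Lt,\Lbt)$ dictionary of Lemma \ref{lemma:TNrelations} are in hand. The only subtlety worth flagging is the consistent use of the sign conventions $k_{ij} = -\g(\D_i T, e_j)$ and $\g(L,\Lb)=-2$, which must be matched carefully with the expression $\Lbt = \nut(T-N)$ in order for the $-\ep_A + \zet_A$ signs on the right-hand side to come out correctly.
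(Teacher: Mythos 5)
Your proof is correct and follows essentially the same route as the paper: differentiate $\nut = -\g(\Lbt, T)$ along $e_A$, apply the Ricci equation for $\D_A\Lbt$, and use the second fundamental form for $\D_A T$ together with the $(T,N)$-decomposition of $\Lbt$. The only cosmetic difference is that you expand $\Lbt = \nut(T-N)$ explicitly in the $\g(\Lbt,\D_A T)$ term while the paper writes this as $-\g(\Lbt,k_{Aj}e_j)$ and then contracts; the content is identical.
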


\begin{proof} Using \eqref{eq:Null_Id} and Lemma \ref{lemma:TNrelations}, we have
\begin{align*}
\nut^{-1} \Nd_A \nut =& -\nut^{-1} \Nd_A (\g(\Lbt,T)) \\
=& - \nut^{-1} \big(\g(\D_{A} \Lbt, T) + \g(\Lbt, \D_A T) \big) \\
=& -\nut^{-1} \big(\g(\chib_{AB} e_B + \zeta_A \Lb, T) - \g(\Lbt, k_{Aj}e_j) \big)\\
=& \zeta_A +\nut^{-1} \ep_A \g(\Lb,N)\\
=&\zeta_A - \ep_A.
\end{align*}
This finishes the proof of \eqref{eq:slope}.
\end{proof}

In addition to the slope equation of Lemma \ref{lemma:slopeEquation}, we have the next transport equation for $\nu$ on $\HH$. It is used in Sections \ref{SectionBAImprovementMAIN} and \ref{SectionHigherRegularity} to estimate $T(n)$ on spacelike hypersurfaces.
\begin{lemma}[Transport equation for $\nu$ on $\HH$] \label{lemTransportNUalongL} It holds on $\HH$ that
\begin{align*}
L(\nu) = - n^{-1} N(n) - \de.
\end{align*}
\end{lemma}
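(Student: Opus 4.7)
The plan is to differentiate the defining relation $\nu = -\g(\Lb, T)$ along $L$, yielding
\[
L(\nu) \;=\; -\g(\D_L \Lb,\, T) \;-\; \g(\Lb,\, \D_L T).
\]
The first term drops out immediately: by the Ricci identity $\D_L \Lb = 2\etab_A e_A$ from \eqref{eq:Null_Id}, together with the observation that the frame vectors $e_A$ are tangent to $S_v \subset \Si_t$ (using the matching $t = v$ on $\HH$), we have $\g(e_A, T) = 0$ and hence $\g(\D_L \Lb, T) = 0$.

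Everything therefore reduces to computing $\g(\Lb, \D_L T)$. For this I would decompose $L$ and $\Lb$ via Lemma \ref{lemma:TNrelations}, writing $L = \nu^{-1}(T + N)$ and $\Lb = \nu(T - N)$, which provides the inner products $\g(\Lb, T) = -\nu$, $\g(\Lb, N) = -\nu$, and $\g(\Lb, e_A) = 0$, and splits the problem into two independent pieces:
\[
\g(\Lb, \D_L T) \;=\; \nu^{-1}\bigl[\g(\Lb, \D_T T) \;+\; \g(\Lb, \D_N T)\bigr].
\]

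For $\D_T T$: starting from $T = -n\D t$ in \eqref{eqRelationTandDt} and performing a routine ADM-style computation, $\D_T T$ is $\Si_t$-tangent and is (up to sign) the intrinsic gradient of $\log n$; pairing against $\Lb = \nu(T - N)$ then extracts the term proportional to $n^{-1} N(n)$ via the component $\g(\Lb, N) = -\nu$. For $\D_N T$: since $N$ is $\Si_t$-tangent, $\g(\D_N T, T) = \tfrac{1}{2} N \g(T,T) = 0$, so $\D_N T$ is purely $\Si_t$-tangent, and the paper's convention $k_{ij} = -\g(\D_i T, e_j)$ gives the expansion $\D_N T = -\de N - \ep_A e_A$; only the $N$-term survives the pairing with $\Lb$ (as $\g(\Lb, e_A) = 0$), contributing the $\de$ piece.

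Combining these pieces with careful attention to the signs coming from $T = -n\D t$, the minus in the convention for $k$, and the negative inner products $\g(\Lb, T) = \g(\Lb, N) = -\nu$, yields the stated identity $L(\nu) = -n^{-1} N(n) - \de$. The only substantive input is the standard ADM identity for the acceleration $\D_T T$ of the normal to a maximal foliation; everything else is frame algebra, completely parallel to the proof of Lemma \ref{lemma:slopeEquation} but with the $S_v$-tangential derivative $\Nd_A$ replaced by the null derivative $L$, which is what forces the appearance of the lapse-derivative term $n^{-1}N(n)$ in place of the $\zeta_A$ term.
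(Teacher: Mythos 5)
Your proposal is correct and follows essentially the same route as the paper's proof: differentiate $\nu=-\g(\Lb,T)$ along $L$, observe that $\g(\D_L\Lb,T)=0$ since $\D_L\Lb=2\etab_A e_A$ is $S_v$-tangent and hence orthogonal to $T$, decompose $L=\nu^{-1}(T+N)$ and $\Lb=\nu(T-N)$, and then evaluate using $\D_TT=-n^{-1}\nab n$ and the definition $k_{ij}=-\g(\D_iT,e_j)$ to extract $-n^{-1}N(n)-\de$. The paper's proof is slightly more compact in that it substitutes $\Lb=\nu(T-N)$ directly rather than tracking the $\nu^{-1}$ prefactor, but the computation and all inputs are identical to yours.
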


\begin{proof} We have by \eqref{eq:Null_Id} and Lemma \ref{lemma:TNrelations} that
\begin{align*}
L(\nu) =&-L\g(\Lb,T) \\
=& -\g(\D_L \Lb, T) - \g(\Lb, \D_L T)\\ 
=& -\nu^{-1} \g(\Lb,\D_{T+N} T) \\
=& - \g(T-N,\D_{T+N} T) \\
=& -n^{-1} N(n) - \de,
\end{align*}
where we used that $\D_T T = - n^{-1} \nab n$. This finishes the proof of Lemma \ref{lemTransportNUalongL}.
\end{proof}

Further, we note that the lapse $n$ can be expressed on $\pr \Si_t$ as follows.
\begin{lemma} It holds on $\pr \Si_t$ that
\begin{align} \label{EQNboundaryIdentity}
n = \nu^{-1} \Om^{-1}.
\end{align}
\end{lemma}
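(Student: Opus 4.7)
The plan is to combine three identities: the definition of the null lapse $\Omtt = \Lt(v)$ from \eqref{eq:subt}, the relation $T = -n \D t$ from \eqref{eqRelationTandDt}, and the identity $\nut^{-1} = -\g(\Lt, T)$ stated in the remark after Definition \ref{definitionSLOPE}. Since by hypothesis $t = v$ on $\HH$, the null lapse is also the derivative of $t$ along $L$: $\Omtt = \Lt(v) = \Lt(t)$ on $\HH$, and in particular on $\pr \Si_t \subset \HH$.

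First I would rewrite $\Lt(t)$ in terms of the metric and $T$. From $T = -n\D t$, lowering the index gives $\pr_\mu t = -n^{-1} T_\mu$, so
\begin{align*}
\Lt(t) \;=\; \Lt^\mu \pr_\mu t \;=\; -n^{-1} \, \Lt^\mu T_\mu \;=\; -n^{-1}\,\g(\Lt, T).
\end{align*}
Substituting $\g(\Lt,T) = -\nut^{-1}$ yields $\Lt(t) = n^{-1}\nut^{-1}$.

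Combining these two computations gives $\Omtt = n^{-1}\nut^{-1}$ on $\pr \Si_t$, which is equivalent to the claimed identity $n = \nut^{-1}\Omtt^{-1}$. There is no real obstacle here; the only point requiring mild care is the sign convention in $T = -n \D t$ (Lorentzian signature $(-,+,+,+)$, with $T$ future-pointing), which ensures that $T(t) = n^{-1} > 0$ and hence that all signs work out consistently.
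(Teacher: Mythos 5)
Your proof is correct and essentially the same as the paper's: both start from $\Om = L(v) = L(t)$ (using $t=v$ on $\HH$), and both reduce $L(t)$ to $\nu^{-1}n^{-1}$ via the slope relation between $L$ and $T$. The paper reaches this by writing $L = \nu^{-1}(T+N)$ from Lemma \ref{lemma:TNrelations} and noting $N(t)=0$, $T(t)=n^{-1}$, whereas you contract $L$ directly against $\D t = -n^{-1}T$ and use $\g(L,T)=-\nu^{-1}$; these are two equivalent packagings of the same one-line computation.
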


\begin{proof} Indeed, by \eqref{eq:subt} and Lemma \ref{lemma:TNrelations},
\begin{align*}
\Om = L(v) = L(t) = \nu^{-1} T(t) = \nu^{-1} n^{-1},
\end{align*}
where we used that $t=v$ on $\HH$ and $T(t)= n^{-1}$.
\end{proof}

The next lemma follows by the definitions in Sections \ref{SECWeylTensors} and \ref{SECfoliationOfNullHypersurfaces}, and Lemma \ref{lemma:TNrelations}; the proof is left to the reader.
\begin{lemma} \label{LEMQLTTTrelations} The following identity holds on $\HH$,
\begin{align*} \begin{aligned}
Q(\Rbf)_{LTTT} =& \frac{1}{4}\nu^3 \vert \a \vert^2 + \frac{3}{2}\nu \vert \be \vert^2 + \frac{3 }{2}\nu^{-1}(\rho^2 + \si^2)+ \frac{1}{2}\nu^{-3} \vert \betab \vert^2.
\end{aligned}
\end{align*}
\end{lemma}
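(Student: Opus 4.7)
The plan is to expand $Q(\Rbf)_{LTTT}$ multilinearly in the last three slots by writing $T$ as a linear combination of $L$ and $\Lbt$ via Lemma \ref{lemma:TNrelations}, and then invoke the standard identities expressing $Q(\Rbf)$ on null frame vectors in terms of the null curvature components. No estimates are needed, only algebraic identities; the main (modest) ingredient is the total symmetry of the Bel-Robinson tensor in its four indices.

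First I would recall from Lemma \ref{lemma:TNrelations} that
\begin{align*}
T = \tfrac{1}{2}\nu\, L + \tfrac{1}{2}\nu^{-1}\, \Lbt.
\end{align*}
Since $Q(\Rbf)$ is trilinear in its last three arguments, this gives
\begin{align*}
Q(\Rbf)_{LTTT} = \sum_{a,b,c\in\{L,\Lbt\}} \lambda_a \lambda_b \lambda_c\, Q(\Rbf)_{L a b c},
\end{align*}
with $\lambda_L = \tfrac12\nu$ and $\lambda_{\Lbt} = \tfrac12\nu^{-1}$. Using the fact that $Q(\mathbf W)$ is fully symmetric in all four indices (Proposition 7.1.1 in \cite{ChrKl93}), the sum collapses according to the number of $\Lbt$ factors among $a,b,c$.

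Next I would use the standard null decomposition identities for $Q(\Rbf)$ on a null pair (see Proposition 7.3.1 in \cite{ChrKl93}):
\begin{align*}
Q(\Rbf)_{LLLL} &= 2|\a|^2, & Q(\Rbf)_{LLL\Lbt} &= 4|\be|^2, \\
Q(\Rbf)_{LL\Lbt\Lbt} &= 4(\rho^2 + \si^2), & Q(\Rbf)_{L\Lbt\Lbt\Lbt} &= 4|\betab|^2.
\end{align*}
Combining these with the binomial coefficients $1,3,3,1$ from the expansion above, the four contributions are
\begin{align*}
\bigl(\tfrac{\nu}{2}\bigr)^3 \cdot 2|\a|^2 &= \tfrac{1}{4}\nu^3|\a|^2, \\
3 \bigl(\tfrac{\nu}{2}\bigr)^2 \bigl(\tfrac{\nu^{-1}}{2}\bigr) \cdot 4|\be|^2 &= \tfrac{3}{2}\nu|\be|^2, \\
3 \bigl(\tfrac{\nu}{2}\bigr) \bigl(\tfrac{\nu^{-1}}{2}\bigr)^2 \cdot 4(\rho^2+\si^2) &= \tfrac{3}{2}\nu^{-1}(\rho^2+\si^2), \\
\bigl(\tfrac{\nu^{-1}}{2}\bigr)^3 \cdot 4|\betab|^2 &= \tfrac{1}{2}\nu^{-3}|\betab|^2,
\end{align*}
whose sum is exactly the claimed right-hand side.

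The only potential subtlety is bookkeeping with the symmetry of $Q$: one needs that $Q(\Rbf)_{L a b c}$ depends only on the unordered triple $\{a,b,c\}$, which is guaranteed because $Q(\mathbf W)$ is totally symmetric for any Weyl tensor $\mathbf W$. Note that the index $L$ is fixed in the first slot, so the term $Q(\Rbf)_{\Lbt\Lbt\Lbt\Lbt} = 2|\alphab|^2$ does not enter, consistent with the absence of $|\alphab|^2$ on the right-hand side. This completes the derivation.
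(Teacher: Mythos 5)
Your proof is correct and follows exactly the route the paper indicates (the statement is introduced with "The next lemma follows by the definitions... and Lemma \ref{lemma:TNrelations}; the proof is left to the reader"): decompose $T$ along the null pair, expand $Q$ multilinearly in its last three slots using total symmetry, and substitute the standard null-frame values of $Q(\Rbf)$ from \cite{ChrKl93}. The binomial bookkeeping and the arithmetic are all right, and the remark that $Q_{\Lbt\Lbt\Lbt\Lbt}=2|\alphab|^2$ does not enter because the first slot is pinned to $L$ is a good sanity check.
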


\subsection{Integration on $\HH$ and norms} In this section we define integration on null hypersurfaces $\HH$ and norms on $\Si$ and $\HH$.

\begin{definition}[Integration on $\HH$] \label{DEFintegrationH} Let $(S_v)_{1\leq v \leq 2}$ be a foliation on a null hypersurface $\HH$. For scalar functions $f$ on $\HH$, let
\begin{align*}
\int\limits_\HH f := \int\limits_{1}^{2} \left(\, \int\limits_{S_v} \Om^{-1} f d\mu_\gd \right)dv,
\end{align*}
where the integral over $S_v$ is with respect to the induced metric $\gd$ and $\Om := L(v)$ denotes the null lapse of $(S_v)$. 
\end{definition}

\begin{definition}[Norms on $\HH$] Let $(S_v)_{1\leq v \leq 2}$ be a foliation on a null hypersurface $\HH$. Let $1 \leq p < \infty$ be a real and let $F$ be an $S_v$-tangent tensor on $\HH$. Define for integers $m\geq0$,
  \begin{align*}
    \norm{F}_{L^2(\HH)} :=& \bigg(\int_\HH |F|^2\bigg)^{1/2}, \\
    \norm{F}_{L^\infty_v L^p(S_v)} :=& \sup\limits_{1\leq v \leq 2} \norm{F}_{L^p(S_v)},\\
        \norm{F}_{L^\infty_v L^\infty(S_v)} :=& \sup\limits_{1\leq v \leq 2} \norm{F}_{L^\infty(S_v)},\\
        \norm{F}_{L^\infty_v H^{1/2}(S_v)} :=& \sup\limits_{1\leq v \leq 2} \norm{F}_{H^{1/2}(S_v)},
   \end{align*}
   where the fractional Sobolev spaces $H^{1/2}(S_v)$ are defined in Section \ref{SECcalculusOnS}. Further, for spacetime tensors $\mathbf{W}$, define
  \begin{align*}
\norm{\mathbf{W}}_{L^\infty(\HH)} := \sup\limits_{\HH} \vert \mathbf{W} \vert_{\mathbf{h}^v},
\end{align*}
where $\mathbf{h}^v$ denotes the Riemannian metric on $\HH$ associated to the foliation $(S_v)$, see \eqref{DefTensorNormHH}. 
\end{definition}

\textbf{Notation.} More generally, for ease of presentation, we omit the domain of integration when it is clear over what interval the integration takes place.

\begin{definition}[Norms on $\MM$] Let $(\MM, \g)$ be a vacuum spacetime foliated by spacelike maximal hypersurfaces $\Si_t$ given as level sets of a time function $t$ on $\MM$. For $\Si_t$-tangential tensors $F$ define
  \begin{align*}
\norm{F}_{L^\infty_t L^2(\Si_t)} := \sup\limits_{t} \Vert F \Vert_{L^2(\Si_t)},
\end{align*}
and for spacetime tensors $\mathbf{W}$ let
\begin{align*}
\norm{\mathbf{W}}_{L^\infty_t L^2(\Si_t)} :=& \sup\limits_{t} \lrpar{ \,\int\limits_{\Si_t} \vert \mathbf{W} \vert^2_{\mathbf{h}^t}}^{1/2}, \\
\Vert \mathbf{W} \Vert_{L^\infty(\Si_t)} :=& \sup\limits_{\Si_t} \vert \mathbf{W} \vert_{\mathbf{h}^t},
\end{align*}
where $\mathbf{h}^t$ denotes the Riemannian metric on $\Si_t$ associated to the foliation $(\Si_t)$, see \eqref{EQdefinitionOFhNorm}.
\end{definition}


\subsection{Initial data norms} \label{SECinitialDATAnorms} In this section, we introduce the types of regularity and initial data norms used in our main result. We have the following definition of weak regularity of $2$-spheres, see \cite{CzimekGraf1} and \cite{ShaoBesov}.

\begin{definition}[Weakly regular $2$-spheres]\label{def:weakreg} Let $0\leq N < \infty$ be an integer and $c>0$ be a real number. A Riemaniann $2$-sphere $(S,\gd)$ is a \emph{weakly regular sphere} with constants $N,c$ if 
 \begin{itemize}
  \item it can be covered by $N$ coordinate patches,
 \item there is a partition of unity $\eta$ adapted to the above coordinate patches, 
 \item there are functions $0\leq \etatt \leq 1$ which are compactly supported in the coordinate patches and equal to $1$ on the support of $\eta$,
 \item on each coordinate patch there exists an orthonormal frame $(e_1,e_2)$,
 \end{itemize}
 such that on each coordinate patch,
  \begin{align*}
   & c^{-1} \leq \sqrt{\det{\gd}} \leq c, \\
  & c^{-1} \vert \xi \vert^2 \leq \gd_{AB} \xi^A \xi^B \leq c \vert \xi \vert^2 \text{ for all } \xi \in \RRR^2, \\
   & |\pr_{x^A}\eta| + |\pr_{x^A }\pr_{x^B}\eta| + |\pr_{x^A} \etatt |\leq c, \\
    & \norm{\Nd\pr_{x^A}}_{L^2} + \norm{\Nd\etA}_{L^4} \leq c.
 \end{align*}
\end{definition}

{\bf Low regularity initial data norms.} Let $(S_v)$ be the canonical foliation on $\HH$. Assume that each $S_v$ is a weakly regular $2$-sphere with constants $N,c$. Define
\begin{align*}
\OO^{\Si}_0 :=& \norm{k}_{L^2(\Si)}+\norm{\nabla k}_{L^2(\Si)}, \\
 \RR^{\Si}_0 :=& \norm{\RRRic}_{L^2(\Si)}, \\
 \OO_0^\HH :=& \left\Vert \tr \chi - \frac{2}{v}\right\Vert_{L^\infty(\HH)}+  \left\Vert \tr \chib + \frac{2}{v} \right\Vert_{L^\infty(\HH)} +\norm{\Nd \tr \chi}_{L^\infty_v L^2(S_v)}+ \norm{\Nd \tr \chib}_{L^\infty_v L^2(S_v)}\\
 &+ \left\Vert \chih \right\Vert_{L^\infty_vL^4(S_v)} + \left\Vert \chibh \right\Vert_{L^\infty_v L^4(S_v)} +\norm{\zeta}_{L^\infty_vL^4(S_v)} + \norm{\zeta}_{L^\infty_v H^{1/2}(S_v)} \\
 &+ \Vert \Nd \Om \Vert_{L^\infty_v L^4(S_v)} + \Vert \Nd \Om \Vert_{L^\infty_v H^{1/2}(S_v)}+ \Vert \Om-1 \Vert_{L^\infty(\HH)}, \\
\RRt_0^\HH  := &\norm{\alphat}_{L^2(\HH)}+\norm{\betat}_{L^2(\HH)}+\norm{\rhot}_{L^2(\HH)}+\norm{\sigmat}_{L^2(\HH)}+\norm{\betab}_{L^2(\HH)}.
\end{align*}
Here $H^{1/2}(S_v)$ is an $L^2$-based fractional Sobolev space on $S_v$ bounding $1/2$ derivatives, see Definition \ref{DEFHSspaces}.

\begin{remark} In \cite{CzimekGraf1}, it is shown that the weak regularity of the $2$-spheres $S_v$ and the norm $\OO_0^\HH$ can be bounded by the $L^2$ curvature flux $\RRt_0^\HH$ and low regularity bounds on the geometry of the initial sphere $S_1= \Si \cap \HH$. We refer to \cite{CzimekGraf1} for more details.
\end{remark}

{\bf Higher regularity initial data norms.} As higher regularity initial data norms, we consider the following. For integers $m\geq1$, let
\begin{align*}
 \OO_m^\Si :=& \sum\limits_{\vert \a \vert \leq m+1 } \Vert \nab^{\a} k \Vert_{L^\infty(\Si)}, \\
\RR_m^\Si :=& \sum\limits_{\vert \a \vert \leq m } \Vert \D^{\a} \Rbf \Vert_{L^\infty(\Si)} + \sum\limits_{\vert \a \vert \leq m } \Vert \nab^{\a} \RRRic \Vert_{L^\infty(\Si)},\\
\OO_m^\HH :=&\sum\limits_{\vert \a \vert \leq m+1}\norm{\Nd^{\a} \left(\tr \chi -\frac{2}{v} \right)}_{L^\infty(\HH)} + \norm{\Nd^{\a} \left( \trchib + \frac{2}{v} \right)}_{L^\infty(\HH)} \\
&+ \norm{\Nd^{\a}\Nd \trchi}_{L^\infty(\HH)}+ \norm{\Nd^{\a}\Nd \trchib}_{L^\infty(\HH)}  + \norm{\Nd^{\a} \chih}_{L^\infty(\HH)} + \norm{\Nd^{\a} \chibh}_{L^\infty(\HH)} \\
&+ \Vert \Nd^{\a} \zeta \Vert_{L^\infty(\HH)} + \norm{\Nd^{\a} \Nd \zeta}_{L^\infty(\HH)}+\sum\limits_{\vert \a \vert + \vert \be \vert \leq m+2 } \Vert \Nd^{\a} \Om \Vert_{L^\infty(\HH)} + \Vert \Nd^{\be} L^{\a}(\Om) \Vert_{L^\infty(\HH)}, \\
\RR_m^\HH :=& \sum\limits_{\vert \a \vert \leq m} \Vert \D^{\a} \Rbf \Vert_{L^\infty(\HH)}.
\end{align*}

\begin{remark} \label{RemarkHigherSharpNot}
For ease of presentation of the proof of the higher regularity estimates, we choose $L^\infty$-based norms instead of $L^2$-based norms for higher regularity initial data. As consequence, the \emph{higher} regularity estimates of this paper are not sharp. However, they are sufficient for the proof of the main theorem, see Section \ref{sec:ProofOfMainTheorem}.
\end{remark}

\begin{remark} In \cite{CzimekGraf1}, it is shown that for integers $m\geq1$ the norm $\OO_m^\HH$ can be bounded by higher regularity curvature fluxes and bounds on the geometry of the initial sphere $S_1= \Si \cap \HH$. We refer to \cite{CzimekGraf1} for more details.
\end{remark}

\subsection{Main result}  \label{sec:StatementMainResultandNORMS} To state our precise main result, we first introduce the next definition.

\begin{definition}[Weakly regular balls] \label{DEFweakRegdisks} A Riemannian $3$-manifold with boundary $\Si$ is a \emph{weakly regular ball with constant $0<\CMD<1/2$} if there is a coordinate chart $\phi: \ol{B(0,1)} \to \Si$ such that on $\ol{B(0,1)}$,
\begin{align*}
&(1-\CMD) \vert \xi \vert^2 \leq g_{ij} \xi^i \xi^j \leq (1+\CMD) \vert \xi \vert^2 \text{ for all } \xi \in \RRR^2, \\
&\Vert \pr g_{ij} \Vert_{L^2(\ol{B(0,1)})}+ \Vert \pr^2 g_{ij} \Vert_{L^2(\ol{B(0,1)})} \leq \CMD
\end{align*}
and for all integers $m\geq1$ the metric components $g_{ij}$ can be estimated in higher regularity by the Ricci tensor as follows,
\begin{align*}
\Vert g_{ij}-e_{ij} \Vert_{H^{m+2}(\ol{B(0,1)})} \les \sum\limits_{\vert \a \vert \leq m} \Vert \nab^{\a} \RRRic \Vert_{L^2(\Si)} + \CMD,
\end{align*}
where $e_{ij}$ denotes the standard Cartesian components of the Euclidean metric.
\end{definition} 
Here, for integers $m\geq0$, $H^m(\ol{B(0,1)})$ denotes the standard $L^2$-based Sobolev spaces on $\ol{B(0,1)}$ bounding $m$ derivatives.\\

The following is the main result of this paper.

\begin{theorem}[Main theorem, version 2] \label{MixedTheoremVersion2}
Let there be given smooth initial data for the spacelike-characteristic Cauchy problem on $\Si$ and $\HH$ and 
let $(S_v)_{1\leq v \leq 2}$ denote the canonical foliation on $\HH$. Assume that the $2$-spheres $S_v$ are uniformly weakly regular with constants $N,c$ and that for some real $\varep>0$,
\begin{align}
\OO^{\Si}_0 + \RR^{\Si}_0 + \OO^\HH_0+\RRt^\HH_0 \leq \varep, \,\,\, 1/4 \leq r_{vol}(\Si,1/2) \leq 8, \,\,\, 2\pi \leq \mathrm{vol}_g(\Si) \leq 8 \pi. \label{EQmaintheoremsmallnessAssumptions}
\end{align}
Then the following holds.
\begin{enumerate}
\item {\bf $L^2$-regularity.} Let $0<\CMD < 1/2$ be a real number. There is a universal constant $\varep_0>0$ such that if $0< \varep< \varep_0$, then the maximal smooth globally hyperbolic development $(\MM,\g)$ of the given initial data contains a foliation $(\Si_t)_{1\leq t\leq 2}$ of maximal spacelike hypersurfaces defined as level sets of a time function $t$ with $\Si_1 = \Si$ and
\begin{align*}
&\pr \Si_{t} = S_{t} \text{ for } 1 \leq t \leq 2,\\
&\Si_t \text{ is a weakly regular ball with constant $\CMD$},
\end{align*}
and such that for $1 \leq t \leq 2$, 
\begin{align} \begin{aligned}
\Vert \RRRic \Vert_{L^\infty_t L^2(\Si_t)}+\Vert k \Vert_{L^\infty_t L^2(\Si_t)}+ \Vert \nab k \Vert_{L^\infty_t L^2(\Si_t)}+\Vert \D_T k \Vert_{L^\infty_t L^2(\Si_t)}  \lesssim& \, \varep, \\
 \Vert n-1 \Vert_{L^\infty_t L^\infty(\Si_t)}+\Vert \nab n \Vert_{L^\infty_t L^2(\Si_t)} + \Vert \nab^2 n \Vert_{L^\infty_t L^2(\Si)} \les& \, \varep, \\
1/8 \leq \inf\limits_{1\leq t \leq 2} r_{vol}(\Si_{t},1/2) \leq& \, 16,\\
\pi/2 \leq  \mathrm{vol}_g(\Si_t) \leq& \,32 \pi.
\end{aligned} \label{eqL2estimatesprecise} \end{align}
\item {\bf Propagation of smoothness.} Smoothness of the initial data is propagated into the spacetime, and the spacetime is smooth up to $\Si_{2}= \{t=2\}$. More specifically, for integers $m\geq1$, for $1\leq t \leq 2$,
\begin{align} \begin{aligned}
 \sum\limits_{\vert \a \vert \leq m } \Vert \D^{\a} \Rbf \Vert_{L^\infty_t L^2(\Si_t)} \leq& C\left(\OO_m^{\Si},\RR_m^\Si,\OO_m^{\HH},\RRt_m^\HH, m \right), \\
\sum\limits_{\vert \a \vert \leq m+1 } \Vert \D^{\a} \left( {}^{(T)}\pi\right) \Vert_{L^\infty_t L^2(\Si_t)}  \leq& C\left(\OO_m^{\Si},\RR_m^\Si,\OO_m^{\HH},\RRt_m^\HH, m \right),
\end{aligned} \label{eqHigherestimatesprecise} \end{align}
where ${}^{(T)}\pi := \Lie_T \g$ denotes the deformation tensor of $T$.
\end{enumerate}
\end{theorem}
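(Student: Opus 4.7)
The plan is to combine the two black-box inputs (the bounded $L^2$ curvature theorem and the extension procedure for the constraint equations) with a bootstrap/continuity argument on $t \in [1,2]$ whose main new content is the construction of a maximal foliation whose leaves meet $\HH$ exactly in the canonical spheres $(S_t)$.

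\emph{Step 1: Extension and ambient bounded $L^2$ spacetime.} Using Theorem \ref{THMextensionConstraintsCZ1}, extend $(\Si,g,k)$ across $\pr\Si$ to an asymptotically flat maximal initial data set $(\widetilde\Si,\widetilde g,\widetilde k)\simeq\RRR^3$ whose $L^2$ curvature and $H^1$ second fundamental form remain of size $O(\varep)$, and whose volume radius is bounded below. Apply Theorem \ref{thm:smalldataL2details} to obtain a smooth ambient spacetime $(\widetilde\MM,\widetilde\g)$ foliated by maximal hypersurfaces $(\widetilde\Si_t)$ on $t\in[1,2]$ with the desired $L^2$ estimates. By the domain-of-dependence property for \eqref{EinsteinVacuumEquationsIntroJ}, the intrinsic development of our original data on $\Si\cup\HH$ is isometrically embedded in the subregion of $\widetilde\MM$ lying to the past of $\HH$; call this region $\MM_\HH$.

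\emph{Step 2: Constructing the boundary-adapted maximal foliation.} Set up a bootstrap on $[1,2]$ with unknowns $(\Si_t,n,\nu)$: the leaves $\Si_t$ are maximal with $\pr\Si_t=S_t$, the lapse $n$ solves the elliptic boundary value problem
\begin{align*}
\triangle n = n\,|k|_g^2 \text{ on } \Si_t, \qquad n\big|_{\pr\Si_t}=\nu^{-1}\Om^{-1},
\end{align*}
using \eqref{eq:Deltan} and the identity \eqref{EQNboundaryIdentity}, and $\nu$ propagates along $L$ on $\HH$ via Lemma \ref{lemTransportNUalongL}. Under the bootstrap assumption that the bounds \eqref{eqL2estimatesprecise} hold with a constant twice their target value, global elliptic estimates for Dirichlet problems on weakly regular balls give $\Vert n-1\Vert_{L^\infty}+\Vert\nab n\Vert_{H^1}\les\varep$ from $\OO_0^\Si,\OO_0^\HH$. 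To construct $\Si_t$ itself, deform the ambient leaves $\widetilde\Si_t\cap\MM_\HH$ by an implicit function / Jang-type argument: seek $\Si_t$ as the graph $\{t'=t+f\}$ for a small $f$ vanishing on $\HH$, with $f$ solving the linearization of $\tr_g k=0$. The ambient foliation provides an accurate initial guess, and the required invertibility of the linearized operator reduces to solvability of a Dirichlet problem of Laplacian type on weakly regular balls, which holds by Step 4.

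\emph{Step 3: Geometry of $\Si_t$ via Cheeger--Gromov and elliptic estimates.} From the traced Gauss equation \eqref{eq:RicE} and the bootstrap control on $k$, one gets $\Vert\RRRic\Vert_{L^2(\Si_t)}\les\varep$. Combined with the assumed lower bound on $r_{vol}(\Si,1/2)$ propagated to $\Si_t$, apply Theorem \ref{prop:CGestimation1} to conclude that each $\Si_t$ is a weakly regular ball with constant $\CMD$, giving the $H^2$-control of coordinate components of $g$. The Hodge system \eqref{eq:divk}--\eqref{eq:curlk} for $k$ with source $H$ and with boundary decomposition $(\de,\ep,\eta)$ from \S\ref{SECfoliationSpheres} (matched to $\HH$ via Lemma \ref{lemma:TNrelations}) yields $\Vert\nab k\Vert_{L^2(\Si_t)}\les\varep+\Vert H\Vert_{L^2(\Si_t)}$. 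For $\D_T k$, combine \eqref{eq:sndvar}, the elliptic estimate for $\nab^2 n$, and the commutator identity of Lemma \ref{LEMcommutatorTriangleDT} applied to the $T$-differentiated lapse equation, with boundary data $L(\nu)$ from Lemma \ref{lemTransportNUalongL}.

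\emph{Step 4: Closing the bootstrap via Bel--Robinson energy.} Use the divergence identity for $Q(\Rbf)_{\a\be\ga\de}T^\a T^\be T^\ga$ integrated over the region between $\Si_1$ and $\Si_t$, bounded laterally by $\HH$, to obtain
\begin{align*}
\Vert\Rbf\Vert_{L^\infty_tL^2(\Si_t)}^2 \les \Vert\Rbf\Vert_{L^2(\Si)}^2 + \int_\HH Q(\Rbf)_{LTTT} + \text{error},
\end{align*}
where the error is cubic in the deformation tensor ${}^{(T)}\pi$ controlled through \eqref{eq:deformationTensorRelations} by $k$ and $n^{-1}\nab n$, and the flux on $\HH$ is bounded by $\RRt_0^\HH$ using Lemma \ref{LEMQLTTTrelations} and the bootstrap bound on $\nu$. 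Combining with \eqref{EQEquivalenceNORMS1}--\eqref{EQEquivalenceNORMS2} improves the bootstrap constants strictly, closing \eqref{eqL2estimatesprecise}.

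\emph{Step 5: Higher regularity.} Commute the Bianchi system \eqref{eq:BianchiForEandH} with the modified Lie derivative $\widehat\Lie_T$ of Definition \ref{DEFmodifiedLiederivative}, iterate the energy estimate of Step 4 at each order, and use the elliptic Hodge estimates of Proposition \ref{prop:MaxwellsEq1}. Boundary contributions are controlled by the higher norms $\OO_m^\HH,\RR_m^\HH$ after transferring via Lemmas \ref{lemma:TNrelations} and \ref{LEMQLTTTrelations}. An induction on $m$ yields \eqref{eqHigherestimatesprecise}.

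\emph{Main obstacle.} The hardest step is Step 2: the ambient maximal foliation produced by Theorem \ref{thm:smalldataL2details} has no reason to intersect $\HH$ in the prescribed canonical spheres $S_t$, so the boundary-adapted foliation must be constructed as the solution of a free boundary problem for the maximal equation $\tr_g k=0$ at $L^2$ curvature regularity. This couples a Dirichlet problem for the lapse on an a priori only weakly regular ball to the transport equation for $\nu$ on $\HH$ and to the energy estimates of Step 4, so the low-regularity solvability of the linearized deformation operator and the matching of the boundary data $n=\nu^{-1}\Om^{-1}$ must be established inside the bootstrap, rather than as a stand-alone input.
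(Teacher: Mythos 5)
There is a genuine gap in Step 1. Extending $(\Si,g,k)$ across $\pr\Si$ and applying Theorem \ref{THMsmalldataL2details} \emph{forward} in time gives a spacetime $\widetilde\MM$ whose null data along the outgoing cone of $\pr\Si$ is whatever the extension happens to produce — it has no reason to agree with the prescribed data on $\HH$. By finite speed of propagation, $\widetilde\MM$ and the development of $(\Si,\HH)$ agree only on the domain of dependence $\DD(\Si)$, which lies to the \emph{past} of the ingoing cone from $\pr\Si$; the region between $\Si$ and $\HH$ is precisely what $\DD(\Si)$ does \emph{not} see. So the claimed isometric embedding is false, and there is no ambient foliation $\widetilde\Si_t\cap\MM_\HH$ to perturb in Step 2. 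The paper's device is different: at each fixed time $t^\ast$ in the bootstrap, extend $(\Si_{t^\ast},g,k)$ via Theorem \ref{THMextensionConstraintsCZ1} and apply Theorem \ref{THMsmalldataL2details} \emph{backwards}, producing an auxiliary maximal foliation $(\wt{\Si}_{\ttt})$ of the past region $\MM_{t^\ast}$ — exactly where the estimates are needed — together with $\wt{\nab}\wt{n}\in L^\infty$ and, crucially, the trilinear estimate in that foliation. The boundary-adapted foliation $(\Si_t)$ is not obtained by perturbing anything ambient; it comes from classical smooth local existence via Bruhat's perturbation argument (Proposition \ref{thm:LocalExistenceMixedMaximal}, Section \ref{Section:LocalExistence}) plus the continuity argument, and the propagation of smoothness (Proposition \ref{thm:HigherRegularitySpacetimeEstimates}) is what lets you keep continuing.

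Step 4 has a related problem: the error term $\int_{\MM}Q(\Rbf)_{\a\be TT}\,{}^{(T)}\pi^{\a\be}$ contains $\int Q(\Rbf)_{ij TT}k^{ij}$, which at $L^2$ curvature regularity is \emph{not} controllable by writing ``cubic in $\pi$, controlled by $k$ and $n^{-1}\nab n$''; a naive H\"older estimate needs $k\in L^1_tL^\infty(\Si_t)$, which is unavailable. This is the term that forces the trilinear estimate \eqref{EQtrilinearStatement1} (and hence the backward bounded-$L^2$ foliation with its wave parametrix). Moreover, the global elliptic estimate for $k$ on $\Si_{t^\ast}$ requires a careful decomposition of the boundary integral, which splits into a sign-definite piece controlling $\Nd\nu$ and $k|_{\pr\Si_{t^\ast}}$, and a piece bounded by the assumed smallness of the canonical foliation connection coefficients; skipping this makes it impossible to close the bootstrap for $k$ and $\nu$ simultaneously, since the boundary data of the lapse equation depends on $\nu$.
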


\emph{Remarks.}
\begin{enumerate}

\item In the proof of Theorem \ref{MixedTheoremVersion2}, we derive $L^2$-bounds for $\Rbf$ using the Bel-Robinson tensor $Q(\Rbf)$, see Proposition \ref{PropEllipticEstimatesFORtriangle}, which in turns requires a trilinear estimate for the corresponding error term. It is due to this trilinear estimate that we need to invoke the bounded $L^2$ curvature theorem, see Theorem \ref{THMsmalldataL2details}. We note that for the proof of the higher regularity estimates $m\geq2$ of Theorem \ref{MixedTheoremVersion2}, the corresponding error term can be bounded by a classical Gr\"onwall argument. 

\item In Theorem \ref{MixedTheoremVersion2}, at the level of $L^2$-regularity, each hypersurface $\Si_t$ is by construction a weakly regular ball with constant $\CMD$ and hence admits global coordinates such that $g_{ij} \in H^2(\Si_t)$. However, because the existence of each of these global coordinates follows by an application of Cheeger-Gromov theory to $\Si_t$ (see Theorem \ref{prop:CGestimation1}), we have no control of regularity of the components $g_{ij}$ in the $t$-direction.

\item In \cite{CzimekGraf1}, it is shown that the weak regularity of the $2$-spheres $S_v$ and the norm $\OO_0^\HH$ can be bounded by the $L^2$ curvature flux $\RRt_0^\HH$ and low regularity bounds on the geometry of the initial sphere $S_1= \Si \cap \HH$. Analogously, the norms $\OO_m^\HH$ can be bounded by higher regularity curvature fluxes and bounds on the geometry of the initial sphere $S_1$. We refer to \cite{CzimekGraf1} for more details.

\item The regularity assumptions \eqref{EQmaintheoremsmallnessAssumptions} on the canonical foliation (succesfully established in \cite{CzimekGraf1} at the level of bounded $L^2$ curvature) are crucial for the proof of Theorem \ref{MixedTheoremVersion2}. However, the exact definition of the canonical foliation is not used in this paper. Thus, any other foliation with similar regularity properties could be used to prove Theorem \ref{MixedTheoremVersion2}. 

\item The higher regularity estimates proved in Theorem \ref{MixedTheoremVersion2} are not sharp, see also the previous Remark \ref{RemarkHigherSharpNot}. Nevertheless, they are sufficient for proof of Theorem \ref{MixedTheoremVersion2}, see below.

\end{enumerate}

\subsection{Proof of the main theorem} \label{sec:ProofOfMainTheorem} The proof of Theorem \ref{MixedTheoremVersion2} goes by a bootstrapping argument which we set up and prove in this section. Let $T \in [1,2]$ be defined as
\begin{align*}
T := \sup_{t^\ast \in [ 1,2] } \left\{ 
\begin{tabular}{l}
  \text{There is a time function } $1 \leq t\leq t^\ast$ \text{ as in  Theorem } \ref{MixedTheoremVersion2} \\
  \text{such that \eqref{eqL2estimatesprecise} and \eqref{eqHigherestimatesprecise} hold}
  \end{tabular}
 \right\}.
\end{align*}
In the following, we show that $T=2$ for $\varep>0$ sufficiently small. \\


{\bf Step 1.} \emph{It holds that $T>1$.} Indeed, this follows from the next local existence and continuation result. Its proof is given in Section \ref{Section:LocalExistence}.

\begin{proposition}[Classical local existence and continuation] \label{thm:LocalExistenceMixedMaximal} Consider smooth initial data for the spacelike-characteristic Cauchy problem on $\Si$ and $\HH$ together with a smooth canonical foliation $(S_v)_{v\geq1}$ on $\HH$. Then for a small real number $\tau>0$, the maximal globally hyperbolic development $(\MM,\g)$ contains a foliation by smooth spacelike maximal hypersurfaces $(\Si_t)_{1\leq t \leq 1+\tau}$ given as level sets of a smooth time function $t$ such that $\Si_1 = \Si$ and for each $1\leq t \leq 1+\tau$, 
$$\pr \Si_t = S_t.$$
Moreover, the foliation $(\Si_t)$ can be locally continued in a smooth fashion as long as the foliation $(\Si_t)$ and the spacetime remains smooth.
\end{proposition}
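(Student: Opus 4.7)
The plan is to split the proof into two pieces: first extract a smooth ambient development from the initial data via known local existence theorems, and second construct the maximal time function with prescribed boundary as a parameter-dependent elliptic Dirichlet problem on $\Sigma$.

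For the first piece I would directly invoke the classical local existence results \cite{BruhatExistence,Rendall,LukChar} for the spacelike-characteristic Cauchy problem. Provided the smooth initial data $(\Si,g,k)$ and the smooth null data on $\HH$ satisfy the standard higher-order corner compatibility conditions at $\pr\Si=S_1=\Si\cap\HH$ (which are automatic here since the data are smooth and we only need smoothness up to the boundary, cf.\ Section~7.6 of \cite{ChruscielPaetz2}), one obtains a smooth Lorentzian metric $\g$ on a collar neighborhood of $\Si\cup\HH$ inside the maximal globally hyperbolic development, realizing the initial data in the usual sense.

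The second and main piece is the construction of the maximal foliation with $\pr\Si_t=S_t$. I would parametrize candidate hypersurfaces as normal graphs over $\Si$: for a small scalar function $u$ on $\Si$, let $\Si_u := \{\exp_p(u(p)\,T_0(p))\}$, where $T_0$ is the future timelike unit normal to $\Si$ in $\MM$. The maximality condition $\mathrm{tr}_g k=0$ for $\Si_u$ becomes a quasilinear elliptic equation
\begin{equation*}
\mathcal{M}(u) \;=\; \triangle_{\Si} u + |k|^2 u + \mathbf{Ric}(T_0,T_0)\,u + \mathcal{N}(u,\nab u,\nab^2 u) \;=\; 0,
\end{equation*}
whose principal part is $\triangle_{\Si}$ and whose nonlinearity $\mathcal{N}$ vanishes quadratically at $u=0$. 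The requirement $\pr\Si_u=S_t$ translates, via the identification of $\HH$ near $S_1$ as a graph in the geodesic direction $T_0$, into a Dirichlet condition $u|_{\pr\Si}=\varphi(t,\cdot)$ for a smooth one-parameter family of boundary data with $\varphi(1,\cdot)=0$. At $u=0$ (i.e.\ the slice $\Si_1=\Si$) the linearization is the Jacobi operator $\mathcal{J}=\triangle_{\Si}+|k|^2+\mathbf{Ric}(T_0,T_0)$ with Dirichlet boundary conditions; since $\Si$ is compact with nonempty boundary, a standard maximum principle / Friedrichs extension argument on the bounded domain $\Si$ shows $\mathcal{J}$ has trivial kernel and is therefore an isomorphism between the appropriate smooth Banach scales. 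The implicit function theorem then yields a smooth family $u_t$, $|t-1|<\tau$, solving $\mathcal{M}(u_t)=0$ with boundary datum $\varphi(t,\cdot)$, and elliptic regularity gives smoothness of $u_t$ on $\Si$; smoothness in $t$ is inherited from the smoothness of $\varphi(t,\cdot)$ and of the coefficients. Setting $\Si_t:=\Si_{u_t}$ and declaring $t$ to be constant on each $\Si_t$ produces a smooth time function with the required boundary compatibility $t|_\HH=v$.

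For continuation, once a smooth maximal foliation $(\Si_t)$ has been constructed up to some time $t^\ast$ and the ambient spacetime remains smooth in an open neighborhood of $\overline{\bigcup_{1\le t\le t^\ast}\Si_t}$, I would rerun the implicit function theorem argument based at $\Si_{t^\ast}$ in place of $\Si$: this slice is again smooth, maximal, and spacelike, its Jacobi operator with Dirichlet conditions is again an isomorphism by continuity, and the boundary data $S_t$ on $\HH$ depend smoothly on $t$. This extends the foliation past $t^\ast$ by a uniform (depending only on local smooth norms) amount. The main obstacle I expect is the boundary matching: one must verify that the hypersurface $\Si_{u_t}$ built as a normal graph really meets $\HH$ precisely along $S_t$, rather than transversally crossing $\HH$ or meeting some other cross-section. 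This is handled by noting that $\HH$ is characteristic and that the map $(t,p)\mapsto \exp_p(u_t(p)T_0(p))$ restricted to $\pr\Si$ is, by construction of $\varphi$, a diffeomorphism of $[1,1+\tau]\times\pr\Si$ onto a neighborhood of $S_1$ in $\HH$ parameterized by $v=t$; a short implicit-function argument at $\pr\Si$ identifies $\{\Si_{u_t}\cap\HH\}$ with $\{S_t\}$. Everything else (spacelikeness of $\Si_t$, invertibility of $\mathcal{J}$, positivity of the lapse $n$) is stable under small perturbations from the $t=1$ slice and poses no essential difficulty.
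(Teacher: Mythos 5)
Your general strategy---parametrize candidate hypersurfaces as graphs, derive the maximal-surface quasilinear elliptic equation, and close by linearizing and invoking the implicit function theorem---is the same blueprint the paper follows. However, there is a genuine geometric gap in your boundary matching that invalidates the construction as written.

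The hypersurfaces $\Si_u := \{\exp_p(u(p)T_0(p)) : p \in \Si\}$ cannot have boundary on $\HH$ for any non-zero boundary datum $u\vert_{\pr\Si}$. At a point $p \in \pr\Si = S_1$, the vector $T_0(p)$ is the future timelike unit normal to $\Si$ and hence is transverse to the null hypersurface $\HH$, which at $p$ contains only null and spacelike tangent directions (spanned by $L$ and $e_A$). The $T_0$-geodesic from $p$ therefore leaves $\HH$ immediately and enters the interior of the future domain of dependence of $\Si$. Consequently $\exp_p(u(p)T_0(p))$ for $u(p)>0$ lies on the timelike cylinder swept out by $T_0$-geodesics through $\pr\Si$, not on $\HH$; these two $3$-manifolds meet $S_1$ with different transverse directions ($T_0$ versus $L$) and intersect only in $S_1$. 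Your assertion that the map $(t,p)\mapsto\exp_p(u_t(p)T_0(p))$ restricted to $\pr\Si$ is a diffeomorphism onto a neighborhood of $S_1$ in $\HH$ is therefore false: $\HH$ is not a $T_0$-normal graph over $\pr\Si$. As a result the graphs $\Si_{u_t}$ cannot satisfy $\pr\Si_{u_t}=S_t$ unless $u_t\vert_{\pr\Si}\equiv 0$, which keeps the boundary frozen at $S_1$. The inhomogeneous Dirichlet condition $\varphi(t,\cdot)$ simply has no geometric realization along $\HH$, so the mechanism you propose for letting the boundary move along $\HH$ does not exist.

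The paper circumvents precisely this point by first constructing, via the Whitney extension theorem, a background time coordinate $x^0$ satisfying $x^0 = 1$ on $\Si$ and $x^0=v$ on $\HH$, whose level sets $\Si_{x^0}$ therefore already have $\pr\Si_{x^0}=S_{x^0}$ by construction; the ambient shift-free coordinates $(x^0,x^i)$ then define the graph direction. Bruhat's perturbation theorem (Theorem \ref{theorem:BruhatPerturbation}) is applied with \emph{homogeneous} Dirichlet data $\varphi\vert_{\pr\Si}=0$, so the boundary is preserved exactly, and the movement of the boundary along $\HH$ is encoded in the moving base $\Si_{x^0}$ rather than in boundary data. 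This is the missing ingredient your approach needs: a graph direction adapted to $\HH$ at $\pr\Si$ rather than the intrinsic normal $T_0$. A secondary remark is that the linearization of the mean-curvature functional for variations in the timelike normal direction is $\triangle - |k|^2 - \mathbf{Ric}(T_0,T_0)$, not $\triangle + |k|^2 + \mathbf{Ric}(T_0,T_0)$ (this is consistent with the lapse equation $\triangle n = n|k|^2$ in vacuum); with the correct sign the Dirichlet operator is negative definite and manifestly invertible, which is what underlies Bruhat's isomorphism statement.
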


\emph{Remarks.}
\begin{itemize}
\item The existence of a local maximal foliation in Proposition \ref{thm:LocalExistenceMixedMaximal} follows by a classical perturbation argument of Bruhat \cite{PerturbationBruhat}, see Theorem \ref{theorem:BruhatPerturbation}.
\item We could explicitly formulate Proposition \ref{thm:LocalExistenceMixedMaximal} in terms of function spaces of finite regularity, but for ease of presentation we choose the smooth class, that is, $C^k$ for every integer $k\geq0$.
\end{itemize}


{\bf Step 2.} \emph{Assuming that a set of bootstrap assumptions holds up to $1<t^\ast_0 < 2$, we show that we can improve them for $\varep>0$ sufficiently small.}  Indeed, the next proposition is proved in Section \ref{SectionBAImprovementMAIN}.
\begin{proposition}[Improvement of bootstrap assumptions] \label{prop:BAimprovement} Let $(\MM,\g)$ be a vacuum spacetime with past boundary consisting of a compact spacelike maximal hypersurface with boundary $\Si \simeq \overline{B(0,1)} \subset \RRR^3$ and the outgoing null hypersurface $\HH$ emanating from $\pr \Si$. Assume that $\HH$ is foliated by the canonical foliation $(S_v)_{1\leq v \leq 2}$. Let $1< t^\ast_0 \leq 2$ be a real number, and let $t$ be a time function on $\MM$ such that its level sets $(\Si_t)_{1\leq t \leq t^\ast_0}$ are spacelike maximal hypersurfaces with $\Si_1 = \Si$, satisfying for each $1 \leq t \leq t^\ast_0$,
\begin{align*}
\pr \Si_t = S_{t}.
\end{align*}
Assume that for some small $\varep>0$,
\begin{align*}
\RR_0^\Si +\OO_0^\Si + \OO_0^\HH+ \RR_0^\HH  \leq \varep,
\end{align*}
and for some fixed, large $D>0$, for $1\leq t \leq t^\ast_0$, 
\begin{align*} \begin{aligned}
\Vert \RRRic \Vert_{L^\infty_t L^2(\Si_t)} \leq& \,D \varep, \\
\Vert k \Vert_{L^\infty_t L^2(\Si_t)} + \Vert \nab k \Vert_{L^\infty_t L^2(\Si_t)} + \Vert k \Vert_{L^\infty_t L^2(S_t)} \leq& \,D \varep, \\
\Vert \nu-1 \Vert_{L^\infty_t L^\infty(S_t)} + \Vert \Nd \nu \Vert_{L^\infty_t L^4(S_t)} + \Vert \Nd \nu \Vert_{L^\infty_t H^{1/2}(S_t)} \leq& \, D \varep, \\
1/8 \leq r_{vol}(\Si_t, 1/2) \leq& \,16, \\
\pi/2 \leq \mathrm{vol}_g(\Si_{t}) \leq& \, 32 \pi.
\end{aligned} 
\end{align*}
There exists a universal constant $\varep_0>0$ such that if $0<\varep<\varep_0$, then for $1\leq t \leq t^\ast_0$,
\begin{align*}
\Vert \RRRic \Vert_{L^\infty_t L^2(\Si_t)} \les& \, D'\varep, \\
\Vert k \Vert_{L^\infty_t L^2(\Si_t)} + \Vert \nab k \Vert_{L^\infty_t L^2(\Si_t)} +\Vert \D_T k \Vert_{L^\infty_t L^2(\Si_t)}+ \Vert k \Vert_{L^\infty_t L^2(\pr \Si_t)} \les& \,D'\varep, \\
\Vert \nu-1 \Vert_{L^\infty_t L^\infty(S_t)} + \Vert \Nd \nu \Vert_{L^\infty_t L^4(S_t)} + \Vert \Nd \nu \Vert_{L^\infty_t H^{1/2}(S_t)} \les& \,D'\varep, \\
\Vert n-1 \Vert_{L^\infty_tL^\infty(\Si_t)} + \Vert \nab n \Vert_{L^\infty_tL^2(\Si_t)}  + \Vert \nab^2 n \Vert_{L^\infty_tL^2(\Si_t)} \les& \,D'\varep, \\
1/8 <  r_{vol}(\Si_t, 1/2) <& \,16, \\
\pi/2 < \mathrm{vol}_g(\Si_{t}) <& \, 32 \pi,
\end{align*}
for a constant $0<D'<D$.
\end{proposition}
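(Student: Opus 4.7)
The strategy is to reduce the interior estimates to the bounded $L^2$ curvature theorem and close the boundary contributions through the canonical foliation on $\HH$, tied together by Bel--Robinson energy estimates on the spacetime slab $\MM_{t^\ast_0}:=\bigcup_{1\le t \le t^\ast_0}\Si_t$. The trilinear error term in the energy identity is the structural obstacle that forces the use of Theorem~\ref{thm:smallboundedL2thm} as a black box.

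\emph{Step 1 (Extension and black-box input).} First I would apply the extension procedure for the constraint equations \cite{Czimek1} to extend $(\Si,g,k)$ to an asymptotically flat maximal data set $(\wt{\Si},\wt{g},\wt{k})\simeq \RRR^3$ of comparable $L^2$-curvature size. Theorem~\ref{thm:smallboundedL2thm} applied to this extended data yields a smooth development into which the interior of $\MM_{t^\ast_0}$ embeds, and delivers the spacetime $L^2$-bounds for $\Rbf$, $k$, $\nab k$ and $\nab^2 n$ of the extended foliation in the interior. These are precisely the bounds needed to control the trilinear error in Step~2.

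\emph{Step 2 (Bel--Robinson energy on the slab).} Contracting $Q(\Rbf)$ with four copies of $T$ and integrating by parts on $\MM_{t^\ast_0}$ gives, schematically,
\begin{align*}
\int_{\Si_t}Q(\Rbf)_{TTTT}=\int_{\Si_1}Q(\Rbf)_{TTTT}+\int_{\HH\cap\{1\le v\le t\}}Q(\Rbf)_{LTTT}+\mathrm{Err},
\end{align*}
where the bulk error $\mathrm{Err}$ is trilinear in ${}^{(T)}\pi$ and $\Rbf$. The $\Si_1$-data term is bounded by $(\RR_0^\Si)^2$ through \eqref{EQEquivalenceNORMS2} together with the Gauss equation~\eqref{eq:RicE}. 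The flux through $\HH$ is bounded by $(\RRt_0^\HH)^2$ via Lemma~\ref{LEMQLTTTrelations}, using the improved closeness of $\nu$ to $1$. In view of~\eqref{eq:deformationTensorRelations} the error $\mathrm{Err}$ reduces to triple products in $k$, $n^{-1}\nab n$, and $\Rbf$; the spacetime bounds from Step~1 close it.

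\emph{Step 3 (Elliptic estimates for $k$, $n$, and $\D_T k$).} I would then bound $k$ through the Hodge system $\Div k=0$, $\Curl k=H$, $\tr k=0$ on $\Si_t$ with boundary value $k|_{S_t}$ read off the null data via Lemma~\ref{lemma:TNrelations}. Global elliptic estimates on weakly regular balls (this is where Cheeger--Gromov from Section~\ref{SectionGlobalExistence} enters) yield $\|k\|_{L^2}+\|\nab k\|_{L^2}\lesssim \varep$. The lapse $n$ solves $\triangle n=n|k|^2$ with boundary value $n|_{S_t}=\nu^{-1}\Om^{-1}$ from \eqref{EQNboundaryIdentity}; standard elliptic theory on a weakly regular ball, combined with $\|\Om-1\|_{L^\infty(\HH)}\le\varep$ and the improved bound on $\nu-1$, produces $\|n-1\|_{L^\infty}+\|\nab n\|_{L^2}+\|\nab^2 n\|_{L^2}\lesssim\varep$. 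Finally, $\D_T k$ is controlled by the second variation \eqref{eq:sndvar} once $E=\Rbf(\cdot, T,\cdot,T)$ and $\nab^2 n$ are under control.

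\emph{Step 4 (Slope $\nu$ and weakly regular ball).} Integrating the transport equation of Lemma~\ref{lemTransportNUalongL} from $S_1$ (where $\nu=1$) along $L$, with $n^{-1}N(n)$ and $\delta$ controlled from Step~3 and from $\OO_0^\HH$, yields $\|\nu-1\|_{L^\infty(S_t)}\lesssim\varep$. The tangential estimates on $\Nd\nu$ in $L^4(S_t)$ and $H^{1/2}(S_t)$ follow from the slope equation~\eqref{eq:slope} with the bounds on $\zeta$ and $\ep$ in $\OO_0^\HH$. The volume radius and volume bounds improve strictly from $t=1$ by continuity, upon applying Theorem~\ref{prop:CGestimation1} on each $\Si_t$, which also provides the harmonic chart making $\Si_t$ a weakly regular ball with constant $\CMD$.

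\textbf{Main obstacle.} The crux is closing the trilinear error $\mathrm{Err}$ in Step~2: at the $L^2$-curvature level, the regularity of ${}^{(T)}\pi$ and $\Rbf$ is insufficient for a direct Grönwall argument, which is precisely why Theorem~\ref{thm:smallboundedL2thm} must be invoked as a black box in Step~1. At higher regularity $m\ge 2$, the corresponding error term can be handled by a classical Grönwall argument, as noted in the remarks after Theorem~\ref{MixedTheoremVersion2}.
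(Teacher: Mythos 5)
Your proposal shares the overall architecture with the paper (extension procedure, bounded $L^2$ curvature theorem as a black box, Bel--Robinson energy estimate, elliptic estimates for $k$ and $n$, slope equation for $\nu$), but Steps~1 and~2 deviate from the paper in a way that opens a genuine gap.

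\textbf{Direction of the black-box application.} You apply the extension procedure to $(\Si_1,g,k)$ and run the bounded $L^2$ curvature theorem \emph{forward}, claiming the interior of $\MM_{t^\ast_0}$ embeds into the resulting development. It does not. The future domain of dependence of $\Si_1$ is bounded by the outgoing null cone $\HH$, so $D^+(\Si_1)$ omits precisely the region of $\MM_{t^\ast_0}$ abutting $\HH$ (which is where $\partial\Si_t=S_t$ lives for $t>1$). Once $(\Si_1,g,k)$ is extended to $\wt{\Si}_1\simeq\RRR^3$ the extended metric is not the physical one outside $\Si_1$, so $\wt\MM$ and $\MM$ only coincide inside $D^+(\Si_1)$; the background foliation and its good bounds therefore do not reach the boundary of the slab. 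The paper instead extends the \emph{top slice} $\Si_{t^\ast}$ and applies the bounded $L^2$ curvature theorem \emph{backwards}, which works because the entire past slab $\MM_{t^\ast}$ is contained in the past domain of dependence of $\wt{\Si}_{t^\ast}$; that is the geometric fact that makes a background foliation covering the full slab available.

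\textbf{Multiplier for the Bel--Robinson estimate.} In Step~2 you contract $Q(\Rbf)$ with four copies of $T$ (the normal to $\Si_t$) and propose to close the trilinear error in ${}^{(T)}\pi$ using ``the spacetime bounds from Step~1.'' This does not close: the trilinear estimate \eqref{EQtrilinearStatement1} of Theorem~\ref{THMsmalldataL2details} is only available for the background foliation, i.e.\ for the multiplier $\Ttt$ and deformation tensor $\wt\pi$; there is no comparable trilinear estimate for $T$, and the spacetime bounds from Step~1 control $\wt{k},\wt{\nab}\wt{n}$, not $k,\nab n$. The paper is careful to run the energy identity with multiplier $\Ttt$, and then separately controls the change-of-frame factors (via the angle $\nutt$ of Lemma~\ref{lem:comparisonfoliationMM}) when reading off the fluxes on $\Si_1$ and $\HH$.

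\textbf{Boundary data for $k$ and the slope $\nu$.} Step~3 treats $k|_{S_t}$ as known data read off the null structure. But by Lemma~\ref{lemma:TNrelations}, $\eta,\delta,\ep$ on $S_t$ depend on $\nu$, which is not yet controlled at that stage. The paper's resolution is to exploit the sign structure of the boundary integral in the Hodge-system elliptic estimate (see the manipulations around~\eqref{eq:BdryTermEstimate1}), which produces a coercive term bounding $\|\Nd\nu\|_{L^2(\partial\Si_{t^\ast})}$ and $\|k\|_{L^2(\partial\Si_{t^\ast})}$ simultaneously, with the remaining pieces controlled by the $\varepsilon$-small canonical-foliation quantities. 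Likewise, Step~4 integrates the transport equation of Lemma~\ref{lemTransportNUalongL}, but this is circular: that equation involves $n^{-1}N(n)$ and $\delta$, both of which require control of $\nu$ first (through the boundary value $n=\nu^{-1}\Om^{-1}$ and $\delta=\tfrac12\nu\trchi+\tfrac12\nu^{-1}\trchib$). The paper instead improves $\nu-1$ \emph{algebraically} from the boundary trace of $\delta$ (Lemma~\ref{lem:NuImprov1}) once $\|k\|_{L^2(\partial\Si_{t^\ast})}$ is in hand, and only uses the $L$-transport equation for $\nu$ later in the higher-regularity lapse estimates.
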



{\bf Step 3.} The following \emph{higher regularity estimates} are proved in Section \ref{SectionHigherRegularity}.

\begin{proposition}[Higher regularity estimates] \label{thm:HigherRegularitySpacetimeEstimates} Let $(\MM,\g)$ be a vacuum spacetime whose past boundary consists of a compact spacelike maximal hypersurface with boundary $\Si\simeq \ol{B(0,1)}$ and the outgoing null hypersurface $\HH$ of $\pr \Si$. Let $(S_v)_{1\leq v \leq 2}$ be the canonical foliation on $\HH$. Let $1 < t^\ast_0 \leq 2$  and assume that there is a time function $1\leq t \leq t^\ast_0$ in $\MM$ such that its level sets $\Si_t$ are maximal spacelike hypersurfaces with $\Si_1=\Si$ and such that for $1\leq t \leq t^\ast_0$ and a real number $0<\CMD<1/2$,
\begin{align*}
&\pr \Si_t = S_t,\\
&\Si_t \text{ is a weakly regular ball with constant $\CMD$},
\end{align*}
and assume moreover that for some $\varep>0$, for $1\leq t \leq t^\ast_0$,
\begin{align*} \begin{aligned}
\Vert \RRRic \Vert_{L^\infty_tL^2(\Si_t)} \les& \,\varep, \\
\Vert k \Vert_{L^\infty_tL^2(\Si_t)} + \Vert \nab k \Vert_{L^\infty_tL^2(\Si_t)}+ \Vert \D_T k \Vert_{L^\infty_tL^2(\Si_t)} + \Vert k \Vert_{L^\infty_tL^2(S_t)} \les& \,\varep, \\
 \Vert \nu -1 \Vert_{L^\infty_t L^\infty(S_t)}+\Vert \Nd \nu \Vert_{L^\infty_t L^4(S_t)}+\Vert \Nd \nu \Vert_{L^\infty_t H^{1/2}(S_t)} \les&\,\varep,\\
\Vert n-1 \Vert_{L^\infty_t L^\infty(\Si_t)} + \Vert \nab n \Vert_{L^\infty_t L^2(\Si_t)} + \Vert \nab^2 n \Vert_{L^\infty_tL^2(\Si_t)} \les& \,\varep.
\end{aligned}
\end{align*}

For $\varep>0$ and $\CMD>0$ sufficiently small, it holds that for all integers $m\geq1$, on $1\leq t \leq t^\ast_0$,
\begin{align*}
 \sum\limits_{\vert \a \vert \leq m } \Vert \D^{\a} \Rbf \Vert_{L^\infty_t L^2(\Si_t)} \leq& C\left(\OO_m^{\Si},\RR_m^\Si,\OO_m^{\HH},\RRt_m^\HH,m,\CMD \right), \\
\sum\limits_{\vert \a \vert \leq m+1 } \Vert \D^{\a}\left( {}^{(T)} \pi \right) \Vert_{L^\infty_t L^2(\Si_t)}  \leq& C\left(\OO_m^{\Si},\RR_m^\Si,\OO_m^{\HH},\RRt_m^\HH, m, \CMD \right),
\end{align*}
where ${}^{(T)} \pi := \Lie_T \g$ denotes the deformation tensor of $T$.
\end{proposition}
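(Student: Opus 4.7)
The plan is to prove Proposition \ref{thm:HigherRegularitySpacetimeEstimates} by induction on $m \geq 1$. At each level, the estimate reduces to three ingredients: (i) a Bel-Robinson energy inequality for the Weyl field $\Lieh_T^k \Rbf$ with $0 \leq k \leq m$; (ii) three-dimensional Hodge/elliptic estimates on each $\Si_t$ to convert $T$-derivative control into spatial derivative control of $\Rbf$, $n$, $k$; (iii) a Gr\"onwall argument on $[1,t^\ast_0]$ to absorb the nonlinear errors. The crucial simplification compared to the base case is that for $m \geq 1$ the worst trilinear error in the Bel-Robinson identity, which in the $L^2$ theory forced the use of Theorem \ref{THMsmalldataL2details} inside Proposition \ref{prop:BAimprovement}, now enjoys one extra derivative and becomes amenable to a classical Gr\"onwall absorption, as flagged in the remark following Theorem \ref{MixedTheoremVersion2}.

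For the energy step I would apply the divergence theorem to $Q(\Lieh_T^k \Rbf)_{\mu TTT}$ on the slab $\bigcup_{1 \leq t' \leq t}\Si_{t'}$, bounded below by $\Si_1 = \Si$, above by $\Si_t$, and laterally by the portion of $\HH$ between $S_1$ and $S_t$. Positivity of $Q$ on the timelike $T$ together with \eqref{EQEquivalenceNORMS1}--\eqref{EQEquivalenceNORMS2} gives $\|\Lieh_T^k \Rbf\|_{L^2(\Si_t)}^2$ on the top face; the initial face is controlled by $\RR_m^\Si$; and the null flux through $\HH$ is, by Lemma \ref{LEMQLTTTrelations} and $\|\nu-1\|_{L^\infty(\HH)} \lesssim \varepsilon$, reduced to $\RRt_m^\HH$ and lower-order quantities in $\OO_m^\HH$ that are handled by the inductive hypothesis. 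The bulk error consists of a term $Q(\Lieh_T^k\Rbf)\cdot \D{}^{(T)}\pi$ from the non-Killing character of $T$ and a source term from the Bianchi-commuted equations $\D^\a (\Lieh_T^k \Rbf)_{\a\be\ga\de} = J^{(k)}_{\be\ga\de}$, where $J^{(k)}$ is schematically $\D^{j}{}^{(T)}\pi \cdot \D^{k-1-j}\Rbf$. Via \eqref{eq:deformationTensorRelations} both reduce to $\nabla^{\leq m+1} k$ and $\nabla^{\leq m+2} n$, supplied by step (ii).

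For the elliptic step I would combine the Hodge system of Proposition \ref{prop:MaxwellsEq1} applied to $(E(\Lieh_T^{k-1}\Rbf), H(\Lieh_T^{k-1}\Rbf))$ on $\Si_t$, the lapse equation \eqref{eq:Deltan} for $n$, and the Gauss-Codazzi div-curl system \eqref{eq:divk}--\eqref{eq:curlk} for $k$. Since each $\Si_t$ is a weakly regular ball (Definition \ref{DEFweakRegdisks}), classical elliptic theory in the global coordinate chart yields $H^{m+2}$ control of $g_{ij}$ and then of $\nabla^{\leq m+2}n$, $\nabla^{\leq m+1}k$, and of spatial derivatives of $\Rbf$ up to order $m$. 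Boundary data on $\pr \Si_t = S_t$ are pulled back from $\HH$: Lemma \ref{lemma:TNrelations} expresses $(\Th,\eta,\de)$ through $(\chi,\chib,\nu)$, \eqref{EQNboundaryIdentity} gives the Dirichlet trace $n = \nu^{-1}\Om^{-1}$, and Lemma \ref{lemTransportNUalongL} supplies the Neumann trace $N(n) = -n(L(\nu) + \de)$; tangential differentiation along $S_t$ then brings in $\OO_m^\HH$, while Lemmas \ref{lemma:slopeEquation} and \ref{lemTransportNUalongL} propagate higher derivatives of $\nu$. The main obstacle is the careful bookkeeping of these boundary contributions together with the commutators $[\De, T]$ (Lemma \ref{LEMcommutatorTriangleDT}), verifying that at each inductive level the top-order norms $\|\D^m \Rbf\|_{L^\infty_t L^2(\Si_t)}$ and $\|\D^{m+1}{}^{(T)}\pi\|_{L^\infty_t L^2(\Si_t)}$ close in terms of one another and of already-controlled lower-order quantities, so that a final Gr\"onwall inequality with coefficient depending only on $\CMD$ and on the inductive data produces the desired bound $C(\OO_m^\Si,\RR_m^\Si,\OO_m^\HH,\RRt_m^\HH,m,\CMD)$.
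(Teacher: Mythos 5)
The broad framework you describe — induction in $m$, Bel–Robinson energy estimates for $\Lieh_T^k\Rbf$, Hodge elliptic estimates on $\Si_t$, boundary data from the null foliation via \eqref{eq:slope}, \eqref{EQNboundaryIdentity}, Lemma \ref{lemTransportNUalongL}, and a Gr\"onwall absorption — matches what the paper does for $m\geq 2$ (Section \ref{SECm2estimatesOUTLINE}). But there is a genuine gap in your treatment of the base case $m=1$. You write that for $m\geq 1$ the trilinear error ``enjoys one extra derivative and becomes amenable to a classical Gr\"onwall absorption, as flagged in the remark following Theorem \ref{MixedTheoremVersion2}.'' This misreads that remark: it says the Gr\"onwall argument suffices only for $m\geq 2$. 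For $m=1$ the argument does not close. Concretely, the Gr\"onwall coefficient in \eqref{EQintegralm2identity} contains $\|{}^{(T)}\pi\|_{L^\infty(\MM_{t^\ast})}$. For $m\geq 2$ this is supplied by the inductive hypothesis at level $m\geq 1$, which gives $\sum_{|\a|\leq m+1}\|\D^\a\pi\|_{L^2}\geq \sum_{|\a|\leq 2}\|\D^\a\pi\|_{L^2}$ and hence $\pi\in L^\infty$ by Sobolev. At $m=1$, however, the inductive (here: hypothesis-of-the-proposition) data only control $\|k\|_{L^2}+\|\nab k\|_{L^2}$ and $\|\nab n\|_{L^2}+\|\nab^2 n\|_{L^2}$, which give $\pi\in L^6$ but not $L^\infty$, so the bulk error $\int_{\MM}Q(\Lieh_T\Rbf)_{ij TT}k^{ij}$ is borderline and cannot be absorbed by Gr\"onwall.

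The paper closes this at $m=1$ by an entirely different mechanism (Sections \ref{SECm1estimatesOUTLINE}--\ref{SECcurv111} and Appendix \ref{SECtrilinearEstimateM1}): it applies the extension procedure (Theorem \ref{THMextensionConstraintsCZ1}) to $(\Si_{t^\ast},g,k)$, invokes the bounded $L^2$ curvature theorem (Theorem \ref{THMsmalldataL2details}) backwards to obtain a background maximal foliation $(\wt{\Si}_\ttt)$ with $\wt\nab\wt n\in L^\infty$ and associated null hyperplanes $\HH_{{}^\om u}$, and then establishes the trilinear estimate of Proposition \ref{PropTrilinearM1} by relating the worst terms of $\EE_1,\EE_2$ to the wave parametrix of \cite{KRS} and to the null structure of the Einstein equations (see the cancellation in the discussion of $\TT_2$). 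Your proof sketch omits this entire parametrix-based step, and without it the base case of the induction is unproved. A second, smaller omission: in the elliptic step for $k$ the paper exploits a sign in the boundary integral (see Section \ref{SECellipticEstimateM1} and \eqref{EQcoercivityBoundary}) that simultaneously yields the missing estimate for $\Nd^2\nu$ on $\pr\Si_t$; your description (``careful bookkeeping'') does not identify this coercive structure, which is what makes the boundary terms controllable rather than merely bounded.
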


\emph{Remarks.}
\begin{enumerate}
\item The smallness of $\CMD>0$ and $\varep>0$ is only used in the proof of the case $m=1$.
\item The estimates for $m=1$ require a trilinear estimate based on the null structure of the Einstein vacuum equations, see Section~\ref{SEChigherEstimatesM1}. In Appendix \ref{SECtrilinearEstimateM1} we reduce this trilinear estimate to the $(m=1)$-estimates of \cite{KRS}.
\item The estimates for $m\geq2$ are based on a classical Gr\"onwall argument together with the estimates for $m=1$; see Section \ref{SECm2estimatesOUTLINE}. 
\end{enumerate}


{\bf Step 4.} By the higher regularity estimates of Proposition \ref{thm:HigherRegularitySpacetimeEstimates}, it follows that on each hypersurface $\Si_t$, the induced initial data is smooth. Thus by Proposition \ref{thm:LocalExistenceMixedMaximal} and Proposition \ref{prop:BAimprovement}, we can \emph{continue the spacetime such that \eqref{eqL2estimatesprecise} and \eqref{eqHigherestimatesprecise} hold beyond $T$ for each $T<2$}, which yields a contradiction to the maximality of $T$. This concludes the proof of Theorem \ref{MixedTheoremVersion2}.

\subsection{Organisation of the paper}

The paper is organised as follows.
\begin{itemize}
\item In Section \ref{Calculus}, we recall calculus prerequisites, elliptic estimates and energy estimates.
\item In Section \ref{SectionBAImprovementMAIN}, we improve the bootstrap assumptions, see Proposition \ref{prop:BAimprovement}.
\item In Section \ref{SectionHigherRegularity}, we show higher regularity estimates, see Proposition \ref{thm:HigherRegularitySpacetimeEstimates}.
\item In Section \ref{Section:LocalExistence}, we prove classical local existence and smooth continuation of spacetimes with maximal foliations, see Proposition \ref{thm:LocalExistenceMixedMaximal}.
\item In Section \ref{Section:ProofOfCG}, we prove existence of global coordinates on $\Si$, see Theorem \ref{prop:CGestimation1}.
\item In Appendix \ref{sec:AppendixProofOfLowRegEllipticEstimateForK}, we prove global elliptic estimates for Hodge systems on $\Si$.
\item In Appendix \ref{SECproofTRACEEST}, we prove the trace estimate of Lemma \ref{PROPtraceEstimate}.
\item In Appendix \ref{secComparisonAppendix}, we prove comparison estimates between two maximal foliations on a spacetime $\MM$.
\item In Appendix \ref{SECtrilinearEstimateM1}, we discuss the trilinear estimate for the higher regularity estimates $m=1$, see Proposition \ref{PropTrilinearM1}.
\end{itemize}


\section{Calculus inequalities and prerequisite results} \label{Calculus}

\subsection{Calculus on $S$} \label{SECcalculusOnS} In this section, we recapitulate calculus prerequisites on Riemannian $2$-spheres $(S,\gd)$ that satisfy the weak regularity condition of Definition \ref{def:weakreg}. The next lemma is proved for example in \cite{ShaoBesov}.
\begin{lemma}[Sobolev inequalities on $S$] \label{lemma:CalculusOnGat1} Let $(S,\gd)$ be a weakly regular Riemannian $2$-sphere with constants $N,c$. Then it holds that for each tensor $F$ on $S$,
\begin{align*}
\Vert F \Vert_{L^\infty(S)} \lesssim& \Vert \Nd F \Vert_{L^4(S)} + \Vert F \Vert_{L^2(S)},
\end{align*}
where the constant depends only on $N,c$.
\end{lemma}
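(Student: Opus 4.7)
The plan is to reduce the tensorial Sobolev inequality to the scalar case via Kato's inequality, and then to prove the scalar case by combining the partition of unity from the weak regularity definition with the classical Euclidean Sobolev embedding on each coordinate patch.

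First I would set $f := |F|_{\gd}$. Kato's inequality gives $|\Nd f| \leq |\Nd F|_{\gd}$ almost everywhere (on the set where $F \neq 0$, both sides vanishing elsewhere), so $\Vert \Nd f\Vert_{L^p(S)} \leq \Vert \Nd F\Vert_{L^p(S)}$ and $\Vert f\Vert_{L^p(S)} = \Vert F\Vert_{L^p(S)}$ for every $p$. This reduces the lemma to the scalar statement
\[
\Vert f\Vert_{L^\infty(S)} \lesssim \Vert \Nd f\Vert_{L^4(S)} + \Vert f\Vert_{L^2(S)}.
\]

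Next I would exploit the weak regularity. Writing $f = \sum_i \eta_i f$ with $(\eta_i)_{i=1,\ldots,N}$ the partition of unity adapted to the coordinate patches, each $\eta_i f$ is compactly supported in a ball $B \subset \RRR^2$. Under the chart the volume form satisfies $c^{-1} \leq \sqrt{\det \gd} \leq c$, so coordinate $L^p$ norms and intrinsic $L^p$ norms are comparable, and since $\gd_{AB}$ is uniformly comparable to the Euclidean metric, the change of basis matrix between $(\partial_{x^A})$ and the orthonormal frame $(e_A)$ has uniformly bounded entries. In particular, for a scalar function one has $|\partial_{x^A} f| \leq C\,|\Nd f|_{\gd}$ pointwise. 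Applying the Euclidean Sobolev embedding $W^{1,4}(B) \hookrightarrow L^\infty(B)$ (valid since $4 > 2$) to $\eta_i f$ and using the pointwise bound $|\partial \eta_i| \leq c$ provided by Definition \ref{def:weakreg}, I obtain
\[
\Vert \eta_i f\Vert_{L^\infty} \lesssim \Vert \partial(\eta_i f)\Vert_{L^4(B)} + \Vert \eta_i f\Vert_{L^4(B)} \lesssim \Vert \Nd f\Vert_{L^4(S)} + \Vert f\Vert_{L^4(S)}.
\]
Taking the maximum over $i = 1, \ldots, N$ gives $\Vert f\Vert_{L^\infty(S)} \lesssim \Vert \Nd f\Vert_{L^4(S)} + \Vert f\Vert_{L^4(S)}$.

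Finally, I would eliminate the $L^4$ norm of $f$. The same localization argument, invoking instead the two-dimensional embedding $W^{1,2}(B) \hookrightarrow L^4(B)$, yields $\Vert f\Vert_{L^4(S)} \lesssim \Vert \Nd f\Vert_{L^2(S)} + \Vert f\Vert_{L^2(S)}$. Since the total volume of $S$ is bounded in terms of $N$ and $c$, H\"older's inequality gives $\Vert \Nd f\Vert_{L^2(S)} \lesssim \Vert \Nd f\Vert_{L^4(S)}$. Combining these yields the desired scalar inequality, and Kato completes the proof.

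I do not anticipate any serious obstacle: the entire argument is a routine localization of the classical Euclidean Sobolev embedding, with the weak regularity conditions precisely designed to make every step (equivalence of norms in the chart, bounded cutoffs, comparison of coordinate and frame derivatives) uniform in the constants $N,c$. The only subtle point is that, in the tensorial case, one should not try to work component-by-component in the orthonormal frame, since the error term involving $\Nd e_A$ would require an $L^\infty$ bound on $F$ that is exactly what one is trying to prove; going through $f = |F|_{\gd}$ via Kato's inequality sidesteps this difficulty cleanly.
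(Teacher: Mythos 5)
Your proof is correct. Note that the paper does not actually supply its own proof of Lemma \ref{lemma:CalculusOnGat1}; it cites \cite{ShaoBesov}, where the estimate is obtained inside a heavier framework of tensorial Besov/Littlewood--Paley theory on weakly regular surfaces. Your route via Kato's inequality followed by localization and the two-dimensional Euclidean embeddings $W^{1,4}\hookrightarrow L^\infty$ and $W^{1,2}\hookrightarrow L^4$ is more elementary and is exactly the right way to sidestep the tensorial difficulties at this regularity: as you correctly point out, the weak-regularity hypothesis only gives $\Nd e_A \in L^4$, so expanding $F$ componentwise in the orthonormal frame would produce the error $F\cdot \Nd e_A$, which cannot be placed in $L^4$ without already knowing $F\in L^\infty$ (or at least $F \in L^p$ for large $p$); passing to the scalar $|F|_\gd$ avoids this circularity entirely. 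The only implicit point worth flagging is that in the second step you use $\Vert\Nd f\Vert_{L^2(S)}\lesssim\Vert\Nd f\Vert_{L^4(S)}$ via H\"older, which requires $\mathrm{vol}(S)$ to be controlled by $N,c$; this is a standard (and here tacit) consequence of the chart bounds in Definition \ref{def:weakreg}, so it does not constitute a gap, but it deserves a sentence in a written-out proof.
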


We introduce the following fractional Sobolev spaces on $S$.
\begin{definition}[Fractional Sobolev spaces on $S$] \label{DEFHSspaces} Let $(S,\gd)$ be a Riemannian $2$-sphere and let $s \in \RRR$. For tensors $F$ on $S$, define the norm
\begin{align*}
\norm{F}_{H^s(S)} := \Vert (1-\Ld)^{s/2} F \Vert_{L^2(S)},
\end{align*}
where the fractional Laplace-Beltrami operator is defined by standard spectral decomposition, see \cite{ShaoBesov}.  \end{definition}

The next are straight-forward properties of fractional Sobolev spaces (see for example Section 2 and Theorem 3.6 in \cite{ShaoBesov}, and Lemma 3.5 and Appendix B in \cite{CzimekGraf1}).
 \begin{lemma}[Properties of fractional Sobolev spaces] \label{LEMproductEstimatesBesov} Let $(S,\gd)$ be a weakly regular Riemannian $2$-sphere with constants $N,c$. Let $F_1$ and $F_2$ be two tensors on $S$. Then it holds that
\begin{align*}
\Vert F_1 F_2 \Vert_{H^{1/2}(S)} \les \lrpar{\Vert F_1 \Vert_{L^\infty(S)}+ \Vert \Nd F_1 \Vert_{L^2(S)}} \Vert F_2 \Vert_{H^{1/2}(S)}.
\end{align*}
and further for tensors $F$ on $S$,
\begin{align*}
\Vert \Nd F \Vert_{H^{-1/2}(S)} \les \Vert F \Vert_{H^{1/2}(S)},
\end{align*}
where the constants depend only on $N,c$.
\end{lemma}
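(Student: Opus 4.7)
The plan is to reduce both inequalities to Littlewood--Paley analysis on $(S,\gd)$, defined spectrally via eigenfunctions of $-\Ld$ (which, by the weak regularity assumption, has a discrete spectrum $0=\lambda_0<\lambda_1\leq \lambda_2\leq\cdots$ behaving essentially like that of the round sphere at large frequencies, with constants depending only on $N,c$). Accordingly I introduce Littlewood--Paley projectors $\Delta_k$, $k\geq -1$, together with low-frequency cutoffs $S_k=\sum_{j<k}\Delta_j$, and record the two estimates that do all the work: Bernstein in dimension $2$, $\|\Delta_k F\|_{L^q(S)}\lesssim 2^{k(\tfrac{2}{p}-\tfrac{2}{q})}\|\Delta_k F\|_{L^p(S)}$, and Sobolev embedding $H^{1/2}(S)\hookrightarrow L^4(S)$, both of which hold on weakly regular spheres with constants depending only on $N,c$.

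For the product estimate, I would apply Bony's paraproduct decomposition
\begin{align*}
F_1F_2 = T_{F_1}F_2 + T_{F_2}F_1 + R(F_1,F_2),
\end{align*}
and bound each piece separately. The first piece is classical and gives $\|T_{F_1}F_2\|_{H^{1/2}}\lesssim \|F_1\|_{L^\infty}\|F_2\|_{H^{1/2}}$ directly from frequency localization. For the second piece, I write $T_{F_2}F_1=\sum_k (S_{k-1}F_2)(\Delta_k F_1)$; the spectral near-orthogonality yields $\|T_{F_2}F_1\|_{H^{1/2}}^2\lesssim \sum_k 2^{k}\|(S_{k-1}F_2)(\Delta_k F_1)\|_{L^2}^2$. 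I then bound each summand with H\"older using the $L^4\cdot L^4$ pairing, trade an $L^4$ norm on $\Delta_k F_1$ for an $L^2$ norm at cost $2^{k/2}$ via Bernstein, and apply $H^{1/2}\hookrightarrow L^4$ to $S_{k-1}F_2$, which yields $\|T_{F_2}F_1\|_{H^{1/2}}^2 \lesssim \|F_2\|_{H^{1/2}}^2 \sum_k 2^{2k}\|\Delta_k F_1\|_{L^2}^2 \lesssim \|F_2\|_{H^{1/2}}^2 \bigl(\|F_1\|_{L^\infty}+\|\Nd F_1\|_{L^2}\bigr)^2$. The remainder $R(F_1,F_2)$ is treated identically.

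For the derivative estimate, I use the spectral definition directly. Writing $u=(1-\Ld)^{-1/4}F$,
\begin{align*}
\|\Nd (1-\Ld)^{-1/4} F\|_{L^2(S)}^2 = \int_S (-\Ld) u\cdot u\, d\mu_\gd = \sum_k \frac{\lambda_k}{\sqrt{1+\lambda_k}}|\hat F(k)|^2\leq \|F\|_{H^{1/2}(S)}^2,
\end{align*}
since $\lambda_k/\sqrt{1+\lambda_k}\leq \sqrt{1+\lambda_k}$. Combining with a commutator estimate between $\Nd$ and $(1-\Ld)^{-1/4}$ (which produces only lower-order curvature terms bounded using the weak regularity constants $N,c$ and duality against $H^{1/2}$) yields $\|\Nd F\|_{H^{-1/2}(S)}\lesssim \|F\|_{H^{1/2}(S)}$.

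The main technical obstacle is showing that the Littlewood--Paley calculus on $(S,\gd)$ --- in particular the almost-orthogonality needed for the Bony estimate and the $\Nd / (1-\Ld)^{s/2}$ commutator bounds --- goes through under only the weak regularity of Definition~\ref{def:weakreg}, with constants depending solely on $N$ and $c$. This is essentially the content of the heat-kernel-based construction of Besov spaces in \cite{ShaoBesov}, which I would invoke to justify the spectral machinery; once the machinery is in place, the two inequalities above follow from the frequency-localized computations sketched.
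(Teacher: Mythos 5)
The paper does not supply its own proof of this lemma; it cites Section~2 and Theorem~3.6 of Shao's work on Besov spaces for weakly regular spheres, together with Lemma~3.5 and Appendix~B of the companion paper. Your proposal essentially reconstructs, from scratch, the Littlewood--Paley calculus that those references provide, so while the \emph{approach} (reduce to paraproducts and Bernstein on weakly regular spheres) is the same one the citations are meant to encapsulate, you are filling in a proof the paper left implicit.

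Your treatment of the product estimate is correct: the $T_{F_1}F_2$ piece is the standard paraproduct bound, and for $T_{F_2}F_1$ and $R(F_1,F_2)$ the $L^4\cdot L^4$ pairing plus Bernstein and $H^{1/2}\hookrightarrow L^4$ gives a bound by $\|F_2\|_{H^{1/2}}\bigl(\sum_k 2^{2k}\|\Delta_k F_1\|_{L^2}^2\bigr)^{1/2} \sim \|F_2\|_{H^{1/2}}\|F_1\|_{H^1}$, which on a compact $S$ is controlled by $\|F_1\|_{L^\infty}+\|\Nd F_1\|_{L^2}$. That is exactly what is wanted, once one accepts (as you note) that the Besov machinery of Shao gives the almost-orthogonality and Bernstein inequalities with constants depending only on $N,c$.

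For the derivative estimate, however, I would not rely on a commutator bound for $[\Nd,(1-\Ld)^{-1/4}]$. Under Definition~\ref{def:weakreg} the connection coefficients are controlled only in $L^2$ and $L^4$ and the Gauss curvature of $(S,\gd)$ lives at best in a negative Sobolev space; a symbolic-calculus argument for that commutator (which formally has order $-1/2$) is exactly the kind of step that can silently break at this regularity level, and your proposal doesn't say how to run it. A cleaner and unconditionally valid route uses only the spectral definition of $H^s$ and duality: one has $\|\Nd F\|_{L^2}\les\|F\|_{H^1}$ trivially, and $\|\Nd F\|_{H^{-1}}\les\|F\|_{L^2}$ by pairing against test tensors $\phi$ and integrating by parts, $\langle\Nd F,\phi\rangle=-\langle F,\Div\phi\rangle$, with $\|\Div\phi\|_{L^2}\les\|\phi\|_{H^1}$. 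Since $H^s=D\bigl((1-\Ld)^{s/2}\bigr)$ for a single self-adjoint operator, complex interpolation of domains gives $(H^1,L^2)_{1/2}=H^{1/2}$ and $(L^2,H^{-1})_{1/2}=H^{-1/2}$, hence $\|\Nd F\|_{H^{-1/2}}\les\|F\|_{H^{1/2}}$ with no curvature commutator at all. I would replace your spectral-commutator paragraph with this duality-plus-interpolation argument.
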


\subsection{Calculus on $\Si$} \label{sec:CalculusOnSi} In this section, we recall calculus prerequisites on compact Riemannian $3$-manifolds with boundary $\Si \simeq \ol{B(0,1)} \subset \RRR^3$.\\

The following standard Sobolev inequalities follow for example from Section 3 in \cite{J3}.
\begin{lemma}[Sobolev inequalities on $\Si$] \label{LEMsobolevEmbeddingSigmaT} Let $F$ be a tensor on $\Si \simeq \overline{B(0,1)} \subset \RRR^3$ and let $g$ be a Riemannian metric on $\Si$ such that
\begin{align} \label{EQC0reg1def}
\frac{1}{4} \vert \xi \vert \leq g_{ij}\xi^i \xi^j \leq 2 \vert \xi \vert^2 \text{ for all } \xi \in \RRR^2.
\end{align}
Then
\begin{align*}
\Vert F \Vert_{L^\infty(\Si)} \les& \Vert F \Vert_{L^2(\Si)} + \Vert \nab F \Vert_{L^2(\Si)} + \Vert \nab^2 F \Vert_{L^2(\Si)}, \\
\Vert F \Vert_{L^6(\Si)} \les& \Vert F \Vert_{L^2(\Si)} + \Vert \nab F \Vert_{L^2(\Si)}.
\end{align*}
\end{lemma}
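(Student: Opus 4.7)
The plan is to reduce the tensor Sobolev estimates to the standard Euclidean Sobolev inequalities on $\ol{B(0,1)} \subset \RRR^3$ applied to the pointwise norm $u := |F|_g$, which is a scalar. Two observations do all the work: Kato's inequality $|\pr |F|_g|_e \leq |\nab F|_g$, which is purely pointwise and needs no regularity of $g$; and the metric comparability \eqref{EQC0reg1def}, which gives $\sqrt{\det g} \sim 1$ and hence that the volume form $dv_g$ is comparable to $dx$, so that $\|u\|_{L^p(\Si,g)} \sim \|u\|_{L^p(\ol{B(0,1)},e)}$ for every scalar $u$, where $e$ denotes the Euclidean metric.

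To verify Kato, I would start from $|F|_g^2 = g^{I_1J_1}\cdots g^{I_kJ_k} F_I F_J$ and use $\nab g = 0$ to obtain
\[
\pr_m\!\left(|F|_g^2\right) = \nab_m\!\left(|F|_g^2\right) = 2\, g^{IJ} F_I \nab_m F_J.
\]
Cauchy--Schwarz yields $|\pr_m(|F|_g^2)| \leq 2 |F|_g |\nab F|_g$, and dividing by $2|F|_g$ on $\{|F|_g > 0\}$ (using the regularisation $\sqrt{|F|_g^2 + \eps^2}$ and passing $\eps \to 0$) gives $|\pr |F|_g|_e \leq |\nab F|_g$.

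Next I would establish the $L^6$ estimate first. Applying the Gagliardo--Nirenberg--Sobolev embedding $H^1(\ol{B(0,1)}) \hookrightarrow L^6(\ol{B(0,1)})$ on the Euclidean ball to $u = |F|_g$, and combining with Kato and the scalar comparison, yields
\[
\|F\|_{L^6(\Si,g)} = \|u\|_{L^6(\Si,g)} \sim \|u\|_{L^6(e)} \les \|u\|_{H^1(e)} \les \|F\|_{L^2(\Si,g)} + \|\nab F\|_{L^2(\Si,g)}.
\]
For the $L^\infty$ estimate, I would invoke Morrey's embedding $W^{1,6}(\ol{B(0,1)}) \hookrightarrow L^\infty$ on $u$, which reduces matters to controlling $\|\pr u\|_{L^6(e)}$. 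By Kato applied once more, this is bounded by $\|\nab F\|_{L^6(\Si,g)}$, and I would then apply the $L^6$ estimate just proved with $F$ replaced by the tensor $\nab F$ (Kato for $\nab F$ giving $|\pr |\nab F|_g|_e \leq |\nab^2 F|_g$). Chaining these yields $\|F\|_{L^\infty} \les \|F\|_{L^2} + \|\nab F\|_{L^2} + \|\nab^2 F\|_{L^2}$.

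The only conceptual subtlety, and not really an obstacle, is that no regularity is assumed on $g$ beyond the pointwise two-sided bound \eqref{EQC0reg1def}. This is precisely the reason to route everything through the scalar $|F|_g$ and Kato's inequality: a componentwise comparison between $\pr F$ and $\nab F$ would involve Christoffel symbols which cannot be controlled from $C^0$ bounds on $g$ alone, whereas Kato's inequality is pointwise and circumvents this issue.
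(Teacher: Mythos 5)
Your argument is correct and is the standard route for this kind of low-regularity Sobolev inequality, which is also what the cited reference [J3, Section 3] is based on: Kato's inequality to pass from the tensor $F$ to the scalar $|F|_g$, metric comparability from \eqref{EQC0reg1def} to pass between $g$- and $e$-volume forms, and then the Euclidean Sobolev and Morrey embeddings on the fixed domain $\overline{B(0,1)}$. You correctly identify the key point — that Christoffel symbols cannot be controlled from $C^0$ bounds on $g$, so one must avoid componentwise comparison of $\partial F$ and $\nabla F$ — and you correctly avoid the pitfall of trying to use a second-order Kato inequality (which does not hold pointwise) by going through $W^{1,6}\hookrightarrow L^\infty$ rather than $H^2 \hookrightarrow L^\infty$, applying the already-proved $L^6$ estimate to the tensor $\nabla F$.
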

\begin{remark} In Lemma \ref{LEMsobolevEmbeddingSigmaT} we distinguish between the $C^0$-regularity \eqref{EQC0reg1def} and the stronger $H^2$-regularity of weak balls because the spacelike maximal hypersurfaces of the bounded $L^2$ curvature theorem admit the former but not the latter, see Theorem \ref{THMsmalldataL2details}. The hypersurfaces constructed in the main Theorem \ref{MixedTheoremVersion2} are weak balls.
\end{remark}

The next well-known trace estimates follow from the theory of function spaces in \cite{Adams} and \cite{ShaoBesov}. A proof is provided in Appendix \ref{SECproofTRACEEST}.
\begin{lemma}[Trace estimate] \label{Lemma:TraceEstimateH1toL4bdry} \label{PROPtraceEstimate} Let $F$ be a tensor on $\Si \simeq \overline{B(0,1)} \subset \RRR^3$ and let $g$ be a Riemannian metric on $\Si$ such that
\begin{align*}
\frac{1}{4} \vert \xi \vert \leq g_{ij}\xi^i \xi^j \leq 2 \vert \xi \vert^2 \text{ for all } \xi \in \RRR^2.
\end{align*}
Then it holds that
\begin{align} \begin{aligned}
\Vert F \Vert_{L^2(\pr \Si)} \lesssim& \Vert F \Vert_{L^2(\Si)} + \Vert \nab F \Vert_{L^2(\Si)}, \\
\Vert F \Vert_{L^4(\pr \Si)} \lesssim& \Vert F \Vert_{L^2(\Si)} + \Vert \nab F \Vert_{L^2(\Si)}.
\end{aligned} \label{EQtraceEstimate} \end{align}
Moreover, if $(\Si,g)$ is a weakly regular ball with constant $0<\CMD<1/2$, then
\begin{align} \label{EQtraceEstimate12prereq}
\Vert F \Vert_{H^{1/2}(\pr \Si)} \lesssim& \Vert F \Vert_{L^2(\Si)} + \Vert \nab F \Vert_{L^2(\Si)},
\end{align}
and for integers $m\geq1$,
\begin{align} \label{EQhigherregtraceEstimate}
\sum\limits_{\vert \a \vert \leq m} \Vert \Nd^{\a}F \Vert_{H^{1/2}(\pr \Si)} \les& \sum\limits_{\vert \a \vert \leq m+1} \Vert \nab^{\a} F \Vert_{L^2(\Si)}+ \sum\limits_{\vert \a \vert \leq m} \Vert \nab^{\a} \RRRic \Vert_{L^2(\Si)} + C_m\CMD.
\end{align}
\end{lemma}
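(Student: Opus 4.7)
The plan is to reduce each of the claimed trace estimates to the classical Euclidean trace theory applied to scalar functions on $\ol{B(0,1)} \subset \RRR^3$, exploiting the chart given by the identification $\Si \simeq \ol{B(0,1)}$ together with the pointwise bound on $g$. For the two inequalities in \eqref{EQtraceEstimate}, which only use the $C^0$-control of $g$, I would invoke Kato's inequality: since $\nab g = 0$ one has $d(|F|_g^2) = 2\,g(\nab F, F)$, hence pointwise $|d|F|_g|_g \leq |\nab F|_g$. Setting $f := |F|_g$, the two-sided bound $\tfrac14|\xi|^2 \leq g_{ij}\xi^i\xi^j \leq 2|\xi|^2$ gives $\|f\|_{L^2(B,e)} + \|\pr f\|_{L^2(B,e)} \lesssim \|F\|_{L^2(\Si)} + \|\nab F\|_{L^2(\Si)}$, and the classical Euclidean trace embeddings $H^1(\ol{B(0,1)}) \hookrightarrow L^2(\pr B)$ and (via $H^{1/2}(S^2) \hookrightarrow L^4(S^2)$) $H^1(\ol{B(0,1)}) \hookrightarrow L^4(\pr B)$ applied to the scalar $f$, combined once more with the $C^0$-comparison to transfer the boundary norm back to $g$, yield \eqref{EQtraceEstimate}.

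For \eqref{EQtraceEstimate12prereq} the scalar Kato trick is insufficient because $\|F\|_{H^{1/2}(\pr\Si)}$ is a tensor norm and \emph{not} monotone in $|F|_g$. The plan is to use the weakly regular ball chart $\phi:\ol{B(0,1)}\to\Si$ to write $F$ in Cartesian coordinate components $F^I_J$, which are scalar functions, and to prove $\|F^I_J\|_{H^1(B,e)} \lesssim \|F\|_{L^2(\Si)} + \|\nab F\|_{L^2(\Si)}$ for every $I,J$. From the identity $\pr F = \nab F - \Gamma \ast F$ and the $H^2$-control of $g$ from Definition \ref{DEFweakRegdisks}, one has $\Gamma \in H^1(B) \hookrightarrow L^6(B)$, so
\begin{align*}
\|\pr F^I_J\|_{L^2(B)} \les \|\nab F\|_{L^2(\Si)} + \|\Gamma\|_{L^6(B)} \|F\|_{L^3(\Si)},
\end{align*}
and the last factor is absorbed by Sobolev embedding $\|F\|_{L^3(\Si)} \lesssim \|F\|_{L^2(\Si)} + \|\nab F\|_{L^2(\Si)}$. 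The classical scalar embedding $H^1(\ol{B(0,1)}) \hookrightarrow H^{1/2}(\pr B)$ applied componentwise then produces $\|F^I_J\|_{H^{1/2}(\pr B,e)} \lesssim \|F\|_{L^2(\Si)} + \|\nab F\|_{L^2(\Si)}$. Finally, since $g|_{\pr\Si}$ is $\CMD$-close to the round metric in $H^{3/2}(\pr\Si)$ (again from Definition \ref{DEFweakRegdisks}), the spectral $H^{1/2}$-norm on $\pr\Si$ coming from Definition \ref{DEFHSspaces} is comparable to the Euclidean one on $S^2$, and \eqref{EQtraceEstimate12prereq} follows.

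For the higher-regularity estimate \eqref{EQhigherregtraceEstimate} the plan is to iterate the componentwise argument. In the chart, $\Nd^{\a}F$ has coordinate components that differ from $\pr^{\a} F^I_J$ by a sum of terms of the form $\pr^{\b} g \ast \cdots \ast \pr^{\ga} g \ast \pr^{\de} F^I_J$ with $|\b| + \cdots + |\ga| + |\de| \leq |\a|$. Using the $H^{m+2}$-control
\begin{align*}
\|g - e\|_{H^{m+2}(\ol{B(0,1)})} \les \sum_{|\a|\leq m} \|\nab^{\a}\RRRic\|_{L^2(\Si)} + \CMD
\end{align*}
from Definition \ref{DEFweakRegdisks}, Sobolev multiplication estimates bound each such commutator in $L^2(B)$ by $\bigl(\sum_{|\a|\leq m}\|\nab^{\a}\RRRic\|_{L^2(\Si)} + C_m\CMD\bigr) \cdot \sum_{|\b|\leq m+1}\|\nab^{\b}F\|_{L^2(\Si)}$, so $\|\pr^{\a}F^I_J\|_{H^1(B)} $ and therefore (by the scalar $H^1\hookrightarrow H^{1/2}(\pr B)$ trace) $\|\Nd^{\a}F\|_{H^{1/2}(\pr\Si)}$ is controlled by the right-hand side of \eqref{EQhigherregtraceEstimate}.

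The main technical obstacle is the passage from tensor norms to scalar component norms in the fractional setting, which forces the appearance of Christoffel terms. The crucial input that makes this passage close is precisely the $H^{m+2}$-regularity of $g$ in terms of the Ricci curvature built into the definition of a weakly regular ball; without it, the commutator between $\Nd$ and coordinate partial derivatives could not be absorbed. The $L^2/L^4$ case \eqref{EQtraceEstimate} sidesteps this issue entirely via Kato's inequality, which is why it only needs $C^0$-control of $g$.
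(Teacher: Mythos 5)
Your proposal is correct and tracks the paper's proof quite closely. The one place you diverge is in the low-regularity inequalities \eqref{EQtraceEstimate}: the paper simply invokes Corollary 3.26 of \cite{J3}, whereas you supply a self-contained argument via Kato's inequality, reducing the tensor $F$ to the scalar $f = |F|_g$, transferring to the flat chart using the $C^0$-comparability of $g$ to $e$, and applying the Euclidean embeddings $H^1(\ol{B(0,1)}) \hookrightarrow L^2(\pr B)$ and $H^1(\ol{B(0,1)}) \hookrightarrow H^{1/2}(\pr B) \hookrightarrow L^4(\pr B)$. This is a legitimate and arguably cleaner route, and it makes transparent why the estimate needs only $C^0$-control of $g$. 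For \eqref{EQtraceEstimate12prereq} and \eqref{EQhigherregtraceEstimate} you follow the same two-step scheme as the paper — componentwise Euclidean trace $H^1(V) \hookrightarrow H^{1/2}(U)$ à la Adams, then comparison of the coordinate $H^{1/2}$ with the geometric $H^{1/2}$ of Definition \ref{DEFHSspaces} via the $W^{1,4}$-regularity of $\gd$ on the boundary (ShaoBesov) — but you usefully spell out the commutator $\pr F = \nab F - \Gamma \ast F$ and its absorption via the Sobolev product $\|\Gamma\|_{L^6}\|F\|_{L^3}$, which the paper leaves implicit. The $H^{m+2}$-control of $g$ in terms of $\nab^{\leq m}\RRRic$ from Definition \ref{DEFweakRegdisks} is invoked in exactly the same role. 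No gap.
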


The next lemma shows that in spherical coordinates the metric components are estimated away from $r=0$ by the constant $\CMD$ of the weakly regular ball, see Lemma 2.22 in \cite{Czimek1} for a proof.
\begin{lemma}[Estimates for metric components in spherical coordinates] \label{LemmaComparisonEstimates} Let $(\Si,g)$ be a weakly regular ball with constant $0<\CMD<1/2$. Then
\begin{align} \begin{aligned}
\Vert a-1 \Vert_{H^2(A(1/2,1))} + \Vert b \Vert_{H^2(A(1/2,1))} + \Vert \gd-\ga \Vert_{H^2(A(1/2,1))} \les& \CMD, \\
 \norm{ \tr \Th - \frac{2}{r}}_{H^1(A(1/2,1))} + \norm{\widehat{\Theta}}_{H^1(A(1/2,1))}+\norm{\Nd a}_{H^1(A(1/2,1))} \les& \CMD,
\end{aligned} \label{EQCoeffSmallSpherical} \end{align}
where $\ga$ denotes the standard round metric on $\SS_r$, and the notation for $a, b, \gd$ and $A(r_1, r_2)$ was introduced in Section \ref{SECfoliationSpheres}.
\end{lemma}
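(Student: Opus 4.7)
The plan is to reduce this to the $H^2$ control of the Cartesian metric components supplied by the weakly regular ball condition, then transfer that bound through a fixed smooth change of coordinates and an algebraic extraction of $(a,b,\gd)$ from $g$.

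First I would set up the transfer to spherical coordinates. The definition of a weakly regular ball with constant $\CMD$ gives, in the Cartesian chart $\phi: \overline{B(0,1)} \to \Si$,
\begin{align*}
\Vert g_{ij} - e_{ij} \Vert_{H^2(\ol{B(0,1)})} \les \CMD,
\end{align*}
together with uniform two-sided $C^0$ bounds on $g$ arising from the closeness of $g_{ij}$ to $e_{ij}$. On the Cartesian annulus corresponding to $A(1/2,1)$, the map $(r,\th^1,\th^2) \mapsto (x^1,x^2,x^3)$ is a smooth diffeomorphism with uniformly bounded derivatives of all orders, and the Jacobian is bounded away from zero. Composition with such a fixed diffeomorphism preserves $H^2$-norms up to a universal constant, so the components of $g - e$ in spherical coordinates (i.e.\ $g_{rr}-1$, $g_{rA}$, $g_{AB} - r^2 \gac_{AB}$ with my convention for $\ga$) are bounded in $H^2(A(1/2,1))$ by a constant times $\CMD$.

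Next I would extract $(a,b,\gd)$ algebraically. From the decomposition
\begin{align*}
g = a^2 dr^2 + \gd_{AB}(b^A dr + d\th^A)(b^B dr + d\th^B),
\end{align*}
one reads off $\gd_{AB} = g_{AB}$, $b^A = \gd^{AB} g_{rB}$, and $a^2 = g_{rr} - \gd_{AB} b^A b^B$; the Euclidean values are $(1,0,\ga)$. The first identity gives $\Vert \gd - \ga \Vert_{H^2(A(1/2,1))} \les \CMD$ directly. Since $H^2(A(1/2,1))$ is a Banach algebra (dimension $3$, Sobolev exponent $2 > 3/2$), and since $\gd$ is uniformly invertible on $A(1/2,1)$ for $\CMD$ small (the Euclidean $\ga = r^2 \gac$ is bounded away from degeneracy on $r \in [1/2,1]$), the map $g \mapsto (\gd,b,a)$ is $H^2$-Lipschitz in a neighborhood of the Euclidean value. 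Passing through $\sqrt{a^2}$ is legal because $a$ is forced close to $1$ by the $C^0$ bound on $g_{rr}$. This yields
\begin{align*}
\Vert a-1 \Vert_{H^2(A(1/2,1))} + \Vert b \Vert_{H^2(A(1/2,1))} + \Vert \gd - \ga \Vert_{H^2(A(1/2,1))} \les \CMD.
\end{align*}

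For the second inequality I would use the formulas
\begin{align*}
\Th_{AB} = -\frac{1}{2a} \pr_r \gd_{AB} + \frac{1}{2a} (\Lied_b \gd)_{AB}, \qquad \Nd a = \text{tangential derivative of } a,
\end{align*}
from Section~\ref{SECfoliationSpheres}. Each expression is a polynomial in $a^{-1}$, $b$, $\gd^{-1}$, and one derivative of $(a,b,\gd)$. The Euclidean values yield $\tr \Th = 2/r$, $\widehat{\Th} = 0$, $\Nd a = 0$; subtracting and applying the $H^2$-Banach-algebra structure (losing one derivative, so landing in $H^1$) combined with the first inequality gives the required $H^1(A(1/2,1))$ bound by $\CMD$.

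The main obstacle is simply the bookkeeping that all algebraic operations remain uniformly Lipschitz: this requires $\CMD<1/2$ so that $g$ stays $C^0$-close to Euclidean (hence $a$ stays positive and $\gd$ stays invertible), and it is why the estimates must be stated on the annulus $A(1/2,1)$ rather than on all of $\Si$, since the spherical chart degenerates at $r=0$ and the Cartesian bound cannot rule out such coordinate singularities.
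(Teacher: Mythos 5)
Your proof is correct. The paper itself does not include an argument for this lemma, delegating to Lemma~2.22 of~\cite{Czimek1}; your route — transferring the Cartesian $H^2(\ol{B(0,1)})$ bound on $g_{ij}-e_{ij}$ (which follows from the two conditions in Definition~\ref{DEFweakRegdisks}, since the uniform ellipticity bound also gives $\|g-e\|_{L^\infty}\les\CMD$ and hence $\|g-e\|_{L^2}\les\CMD$) to the annular region via the fixed smooth, non-degenerate polar change of variables, then extracting $(a,b,\gd)$ by the algebraic identities $\gd_{AB}=g_{AB}$, $b^A=\gd^{AB}g_{rB}$, $a^2=g_{rr}-\gd_{AB}b^Ab^B$ together with the Banach-algebra and smooth-functional-calculus properties of $H^2$ in dimension three, and finally expressing $(\trTh,\widehat{\Th},\Nd a)$ as first-order differential polynomials and losing one derivative — is precisely the natural argument and is complete. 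Two small remarks worth noting: spherical coordinates are not a single global chart on $A(1/2,1)$, so one must work with a finite cover of $\SSS^2$ (this is a routine partition-of-unity matter and does not affect the estimate); and the displayed formula for $\Theta_{AB}$ in Section~\ref{SECfoliationSpheres} appears to carry a sign typo, since it yields $\trTh=-2/r$ in the flat case whereas the definition $\Theta_{AB}=g(\nab_A N,e_B)$ gives $\trTh=2/r$ — your argument correctly uses the latter.
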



\subsubsection{Global elliptic estimates for the Laplace-Beltrami operator on $\Si$} \label{SECellipticEstimatesLaplacian} In this section we cite global elliptic estimates for the Laplace-Beltrami operator. They are a straight-forward generalisation of the global elliptic estimates of Appendix A in \cite{Czimek21}.

\begin{proposition} \label{PropEllipticEstimatesFORtriangle} Let $(\Si,g)$ be a weakly regular ball with constant $0<\CMD<1/2$. Then for any scalar function $f$ on $\Si$,
\begin{align*}
\sum\limits_{\vert \a \vert \leq 2} \Vert \nab^{\a} f \Vert_{L^2(\Si)} \les& \Vert \triangle f \Vert_{L^2(\Si)} + \Vert \Nd f \Vert_{H^{1/2}(\pr\Si)} + \Vert f \Vert_{L^2(\pr \Si)}, \\
\sum\limits_{\vert \a \vert \leq 3} \Vert \nab^{\a} f \Vert_{L^2(\Si)} \les& \Vert \nab\triangle f \Vert_{L^2(\Si)} +\Vert \triangle f \Vert_{L^2(\Si)} \\
&+\Vert \Nd^2 f \Vert_{H^{1/2}(\pr\Si)}+ \Vert \Nd f \Vert_{H^{1/2}(\pr\Si)} + \Vert f \Vert_{L^2(\pr \Si)}.
\end{align*}
Furthermore, for integers $m\geq1$, 
\begin{align*}
&\sum\limits_{\vert \a \vert \leq m+2} \Vert \nab^{\a} f \Vert_{L^2(\Si)} \\
\les& \sum\limits_{\vert \a \vert \leq m}\Vert \nab^{\a}\triangle f \Vert_{L^2(\Si)} + \sum\limits_{\vert \a \vert \leq m+1} \Vert \Nd^{\a} f \Vert_{H^{1/2}(\pr\Si)} \\
&+ C_m\lrpar{\sum\limits_{\vert \a \vert\leq m} \Vert \nab^{\a} \RRRic \Vert_{L^2(\Si)}+\CMD} \lrpar{\Vert \triangle f \Vert_{L^2(\Si)} + \Vert \Nd f \Vert_{H^{1/2}(\pr\Si)} + \Vert f \Vert_{L^2(\pr \Si)}}.
\end{align*}
\end{proposition}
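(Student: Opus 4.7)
\textbf{Proof plan for Proposition \ref{PropEllipticEstimatesFORtriangle}.}

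The plan is to exploit the fact that a weakly regular ball comes with global coordinates in which the metric $g$ is an $H^2$-small perturbation of the Euclidean metric on $\ol{B(0,1)}$, reducing each estimate to a standard flat-space elliptic boundary estimate plus a perturbative remainder. First I would address the base case $\sum_{|\a|\leq 2}\|\nab^\a f\|_{L^2(\Si)}\lesssim \|\triangle f\|_{L^2(\Si)}+\|\Nd f\|_{H^{1/2}(\pr\Si)}+\|f\|_{L^2(\pr\Si)}$. Write $\triangle_g f = \triangle_e f + (g^{ij}-e^{ij})\pr_i\pr_j f - g^{ij}\Gamma^k_{ij}\pr_k f$, and apply the classical $H^2$ elliptic estimate for the flat Laplacian on the unit ball with Dirichlet-type data: $\|f\|_{H^2(\ol{B(0,1)})}\lesssim \|\triangle_e f\|_{L^2}+\|f\|_{H^{3/2}(\pr\ol{B(0,1)})}$. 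The trace data $\|\Nd f\|_{H^{1/2}(\pr\Si)}+\|f\|_{L^2(\pr\Si)}$ is equivalent (on a weakly regular ball) to $\|f\|_{H^{3/2}(\pr\Si)}$, and the Euclidean/$g$-Sobolev norms are equivalent by \eqref{EQCoeffSmallSpherical}. The perturbative remainder is handled by Sobolev embeddings in three dimensions ($H^2\hookrightarrow L^\infty$) together with the smallness $\|g-e\|_{H^2}\leq \CMD<1/2$, which allows one to absorb $\CMD\cdot\|\nab^2 f\|_{L^2}$ into the left hand side.

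Next I would bootstrap to the $H^3$ estimate by differentiating the equation. Writing $\nab_i\triangle f = \triangle(\nab_i f) + [\nab_i,\triangle]f$, where the commutator equals a contraction of $\RRRic$ with $\nab f$, one applies the $H^2$ estimate to each component of $\nab f$. The boundary term now involves $\Nd^2 f$ in $H^{1/2}(\pr\Si)$, and the Ricci commutator term is controlled via Lemma \ref{LEMsobolevEmbeddingSigmaT}: $\|\RRRic\cdot\nab f\|_{L^2(\Si)}\lesssim \|\RRRic\|_{L^2}\|\nab f\|_{L^\infty}\lesssim \CMD\,\|f\|_{H^3}$, which is absorbed by smallness of $\CMD$. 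A subtle point is that the gradient bound on the boundary for $\nab f$ requires both tangential derivatives $\Nd f$ (given) and the normal derivative $N(f)$; the latter is recovered on $\pr\Si$ via the trace of $\nab f$ inside $\Si$ and the base-case $H^2$ estimate.

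For the induction step $m\geq1$, I would proceed by induction on $m$. Differentiating the equation $m$ times produces
\[
\nab^m\triangle f = \triangle(\nab^m f) + \sum_{j=0}^{m-1} \nab^{m-1-j}\RRRic\cdot \nab^{j+1}f + \text{l.o.t.},
\]
and applying the base $H^2$ estimate to each component of $\nab^m f$ yields the bound in terms of $\|\nab^m\triangle f\|_{L^2}$, boundary data $\|\Nd^{\a}f\|_{H^{1/2}(\pr\Si)}$ for $|\a|\leq m+1$ (obtained by combining the induction hypothesis with \eqref{EQhigherregtraceEstimate} on the boundary), and a commutator error. This error splits into a top-order piece $\|\RRRic\|_{L^2}\,\|\nab^{m+1}f\|_{L^\infty}\lesssim\CMD\,\|f\|_{H^{m+3}}$-type terms that are again absorbed, and lower-order pieces bounded via the induction hypothesis by the right hand side times $C_m(\sum_{|\a|\leq m}\|\nab^\a\RRRic\|_{L^2}+\CMD)$. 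The $H^{m+2}$-smallness of $g-e$ in Definition \ref{DEFweakRegdisks} is exactly what enables the passage between coordinate and covariant derivatives at this order.

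The main obstacle I expect is the careful book-keeping of the commutator terms in the inductive step and the need to ensure that the Ricci factors of $\nab^{\leq m}\RRRic$ multiply only lower-order norms of $f$ that have already been controlled by the right hand side. A second, more technical, issue is the higher-order boundary trace: to convert ambient bounds on $\nab^\a f$ into $H^{1/2}(\pr\Si)$ bounds on $\Nd^\a f$ requires \eqref{EQhigherregtraceEstimate}, and one must verify that the resulting $\CMD$-factor and Ricci-factor are compatible with the claimed form of the estimate. Once this is carried out, the proof is essentially a routine generalisation of Appendix A in \cite{Czimek21}.
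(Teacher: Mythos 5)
Your overall strategy — reducing to the flat Dirichlet estimate on $\ol{B(0,1)}$ via the $H^2$-closeness of $g$ to $e$, absorbing the $\CMD$-small perturbative remainder, and inducting in $m$ with commutators producing Ricci and $\pr g$ terms — is the right one and is in the spirit of the cited Appendix A of \cite{Czimek21}. The base $H^2$ case is handled correctly: the boundary data $\Vert \Nd f \Vert_{H^{1/2}(\pr\Si)}+\Vert f\Vert_{L^2(\pr\Si)}$ is indeed equivalent to $\Vert f\Vert_{H^{3/2}(\pr\Si)}$, which is exactly the Dirichlet datum of the classical flat estimate $\Vert f\Vert_{H^2}\les \Vert \triangle_e f\Vert_{L^2}+\Vert f\Vert_{H^{3/2}(\pr\ol{B(0,1)})}$.

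There is, however, a genuine gap in the bootstrap for $m\geq1$. You propose to obtain the $H^3$ bound by applying the $H^2$ estimate componentwise to $\nab f$. For the tangential components $\nab_A f=\Nd_A f$ this is fine: the required boundary data is $\Nd(\Nd_A f)\in H^{1/2}(\pr\Si)$, which is $\Nd^2 f$ up to curvature terms and appears on the right-hand side. For the normal component $N(f)$, however, the $H^2$ estimate applied to $N(f)$ requires $\Vert \Nd (N(f))\Vert_{H^{1/2}(\pr\Si)}$ as boundary data, i.e. the tangential derivative of the normal derivative on $\pr\Si$, measured in $H^{1/2}$. Your proposed fix — recovering $N(f)$ on $\pr\Si$ by the trace estimate together with the base $H^2$ estimate — only gives $\Vert N(f)\Vert_{H^{1/2}(\pr\Si)}\les \Vert \nab f\Vert_{L^2(\Si)}+\Vert \nab^2 f\Vert_{L^2(\Si)}$, one half-derivative short. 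Controlling $\Nd N(f)$ in $H^{1/2}(\pr\Si)$ via \eqref{EQtraceEstimate12prereq} would require $\nab^2 f\in H^1(\Si)$, i.e. $f\in H^3(\Si)$ — precisely the quantity being estimated — so the argument is circular. The same issue propagates through the induction for $m\geq1$.

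The cure is to avoid commuting in the normal direction altogether. Two standard resolutions: (a) invoke the higher-order flat Dirichlet regularity $\Vert u\Vert_{H^{k+2}(\ol{B(0,1)})}\les\Vert\triangle_e u\Vert_{H^k}+\Vert u\Vert_{H^{k+3/2}(\pr\ol{B(0,1)})}$ directly, for which the boundary norm is purely tangential (on a $2$-sphere, $\Vert u\Vert_{H^{k+3/2}(\pr\Si)}$ is equivalent, by Definition \ref{DEFHSspaces} and Lemma \ref{LEMproductEstimatesBesov}, to $\sum_{|\a|\leq k+1}\Vert\Nd^\a u\Vert_{H^{1/2}(\pr\Si)}$), so no normal derivative on $\pr\Si$ is ever needed; or (b) commute only with tangential derivatives near the boundary and recover the normal derivatives from the equation $\pr_N^2 f = \triangle f - \Ld f - \tr\Th\,N(f)+\dots$ and its tangential derivatives. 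Either route yields the stated bound. With the Ricci-terms and $\CMD$-absorption you describe, and with the higher-regularity trace estimate \eqref{EQhigherregtraceEstimate} applied only in the form just indicated, the rest of your book-keeping goes through.
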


\subsection{Energy estimates for the curvature tensor on $\MM$} \label{sec:BelRobinsonCalculus} The following classical energy estimate for Weyl tensors is proved in \cite{ChrKl93}, see the introduction and Lemma 8.1.1 therein. 

\begin{proposition}[Energy estimate for Weyl tensors] \label{PROPenergyEstimateWeyl} Let $(\MM,\g)$ be a vacuum spacetime bounded by two disjoint maximal spacelike hypersurfaces $\Si_1$ and $\Si_2$ and an outgoing null hypersurface $\HH$, and assume that $\MM$ is foliated by the spacelike level sets $(\Si_t)_{1\leq t \leq 2}$ of a time function $t$ such that $\{t=1\} = \Si_1$ and $\{t=2\} = \Si_2$. Let $T$ denote the timelike unit normal to $\Si_t$. Let further $\mathbf{W}$ be a Weyl tensor on $\MM$. Then it holds that
\begin{align} \begin{aligned}
\int\limits_{\Si_2} Q(\mathbf{W})_{TTTT}=& \int\limits_{\Si_1} Q(\mathbf{W})_{TTTT} + \int\limits_{\HH} Q(\mathbf{W})_{LTTT} \\
&- \int\limits_{\MM} \D^\mu Q(\mathbf{W})_{\mu TTT} - \int\limits_{\MM} \frac{3}{2} Q(\mathbf{W})_{\a \be TT} {}^{(T)}\pi^{\a\be},
\end{aligned} \label{lemma:IntegralidentityWeyl1} \end{align}
where ${}^{(T)}\pi := \Lie_T \g$ denotes the deformation tensor of $T$ and the integral over $\HH$ is defined in Definition \ref{DEFintegrationH}.
\end{proposition}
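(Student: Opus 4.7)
The plan is to derive the identity by applying the divergence theorem to the Bel-Robinson current
\[
P_\alpha := Q(\mathbf{W})_{\alpha \beta \gamma \delta}\, T^\beta T^\gamma T^\delta.
\]
First, I would recall the total symmetry of $Q(\mathbf{W})$ in all four indices, which is a direct consequence of the Weyl symmetries and tracelessness of $\mathbf{W}$ (see Chapter 7 of \cite{ChrKl93}). Using this, the Leibniz rule gives
\[
\D^\alpha P_\alpha = \bigl(\D^\alpha Q(\mathbf{W})_{\alpha \beta\gamma\delta}\bigr) T^\beta T^\gamma T^\delta + 3\, Q(\mathbf{W})_{\alpha\beta\gamma\delta}\, T^\gamma T^\delta\, \D^\alpha T^\beta,
\]
where the factor of $3$ comes from combining the three identical terms produced by hitting each $T$ in $P_\alpha$. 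Exploiting the symmetry of $Q(\mathbf{W})$ in its first two indices I would then symmetrise $\D^\alpha T^\beta \rightsquigarrow \tfrac{1}{2}(\D^\alpha T^\beta + \D^\beta T^\alpha) = \tfrac{1}{2}\,{}^{(T)}\pi^{\alpha\beta}$, arriving at
\[
\D^\alpha P_\alpha = \D^\alpha Q(\mathbf{W})_{\alpha TTT} + \tfrac{3}{2}\, Q(\mathbf{W})_{\alpha\beta TT}\, {}^{(T)}\pi^{\alpha\beta}.
\]

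Next I would apply Stokes' theorem on $\MM$, whose boundary decomposes as $\partial\MM = \Si_1 \cup \Si_2 \cup \HH$. On the spacelike pieces the contribution is $-\int_{\Si_i} P_\alpha T^\alpha = -\int_{\Si_i} Q(\mathbf{W})_{TTTT}$, with a relative minus sign between $\Si_1$ (past) and $\Si_2$ (future) coming from outward orientation, and with the Lorentzian sign $g(T,T)=-1$ absorbed so that $Q(\mathbf{W})_{TTTT}$ appears positively. On the null piece $\HH$, rewriting the induced volume form using a null pair $(L,\Lb)$ adapted to the foliation $(S_v)$ together with the null lapse $\Omega$ produces the factor $\Omega^{-1}\,dv\,d\mu_{\gd}$ that is exactly absorbed by Definition \ref{DEFintegrationH}, and the contraction of $P$ with the generator $L$ gives $P_\alpha L^\alpha = Q(\mathbf{W})_{LTTT}$. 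Rearranging the identity so that $\int_{\Si_2} Q(\mathbf{W})_{TTTT}$ is on the left yields the formula \eqref{lemma:IntegralidentityWeyl1}.

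There is no serious obstacle in this proof, as it is a direct application of the divergence theorem; the main care required is bookkeeping of signs and orientations. The subtlest point is the null boundary: one must verify that the combination of Lorentzian orientation on $\MM$, the choice of generator $L$ (rather than $\Lb$), and the factor $\Omega^{-1}$ in Definition \ref{DEFintegrationH} are mutually consistent so that the null contribution appears as $+\int_{\HH} Q(\mathbf{W})_{LTTT}$ with the correct sign. Once these conventions are fixed, the identity follows immediately from the divergence computation above.
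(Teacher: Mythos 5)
Your proposal is correct and follows the standard Bel--Robinson energy-current argument, which is exactly what the paper invokes by citing the introduction and Lemma 8.1.1 of \cite{ChrKl93} rather than giving a proof. The Leibniz computation of $\D^\alpha P_\alpha$, the symmetrisation producing $\tfrac{3}{2}Q_{\alpha\beta TT}\,{}^{(T)}\pi^{\alpha\beta}$, and the reading of the null boundary term through the factor $\Om^{-1}$ built into Definition \ref{DEFintegrationH} are all the right steps, and your explicit flagging of the Lorentzian sign/orientation bookkeeping on the spacelike and null boundary pieces is precisely where the care is needed.
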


\subsection{An extension procedure for the constraint equations} \label{SECliteratureResults1} In this section, we cite in detail the exact statement of the extension procedure for the constraint equations \cite{Czimek1} which is used as \emph{black box} in this paper.

\begin{theorem}[An extension procedure for the constraint equations, \cite{Czimek1}] \label{THMextensionConstraintsCZ1} Let $(\bar g, \bar k)$ be initial data for the Einstein equations on a maximal hypersurface $\Si = B(0,1) \subset \RRR^3$. There exists a universal constant $\varep >0$ such that if
\begin{align*} 
\Vert \bar g- e \Vert_{H^2(B(0,1))} + \Vert  \bar k \Vert_{H^{1}(B(0,1))} < \varep,
\end{align*}
where $e$ denotes the Euclidean metric, then $(\bar g, \bar k)$ can be smoothly extended to asymptotically flat, maximal initial data $(g,k)$ on $\RRR^3$ with
 $$(g, k) \vert_{B(0,1)} = ( \bar g, \bar k),$$ 
which is bounded by
\begin{align*}
\Vert g- e \Vert_{H^2_{-1/2}(\RRR^3)} + \Vert  k \Vert_{H^{1}_{-3/2}(\RRR^3)} \lesssim \Vert \bar g-e \Vert_{H^2(B(0,1))} + \Vert \bar k \Vert_{H^{1}(B(0,1))}.
\end{align*}
Moreover, for integers $m\geq1$,
\begin{align*} 
\Vert g- e \Vert_{H^{m+2}_{-1/2}(\RRR^3)} + \Vert  k \Vert_{H^{m+1}_{-3/2}(\RRR^3)} \leq C \left( \Vert \bar g-e \Vert_{H^{m+2}(B(0,1))} + \Vert \bar k \Vert_{H^{m+1}(B(0,1))} \right),
\end{align*} 
where the constant $C>0$ depends only on $m$.
\end{theorem}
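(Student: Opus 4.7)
The natural strategy is a Corvino--Schoen style gluing: first produce a preliminary (non-constraint-solving) extension of $(\bar g, \bar k)$ to $\RRR^3$ whose values agree with the given data on $B(0,1)$ and which equals a fixed asymptotically flat reference outside a large ball, then correct this extension inside a transition annulus $A = B(0,R_2) \setminus \ol{B(0,R_1)}$ so that the constraints and maximality hold globally without altering the interior data.

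First I would construct a preliminary extension $(\hat g, \hat k)$ by interpolating $(\bar g, \bar k)$ with the Euclidean data $(e, 0)$ through a smooth cutoff supported in $A$. The resulting pair agrees exactly with $(\bar g, \bar k)$ on $B(0,1)$ and with $(e, 0)$ outside $B(0,R_2)$, but it violates the Hamiltonian and momentum constraints as well as $\tr_g k = 0$ throughout $A$. The size of the violation is controlled pointwise by derivatives of the cutoff together with $\Vert \bar g - e \Vert_{H^2} + \Vert \bar k \Vert_{H^1}$ and is therefore small in the assumed regime.

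Next I would correct $(\hat g, \hat k)$ by a pair $(h,q)$ of symmetric $2$-tensors supported in $A$ and vanishing to high order at $\pr B(0,1)$ and at $\pr B(0,R_2)$, so that neither the interior nor the exterior data is altered. Setting
\begin{align*}
\Phi(g,k) := \lrpar{R_g - |k|_g^2,\ \Div_g k,\ \tr_g k},
\end{align*}
the correction must solve $\Phi(\hat g + h, \hat k + q) = 0$. Following Corvino--Schoen I would invert the linearization $D\Phi$ at $(\hat g, \hat k)$ by the adjoint method: compose with $D\Phi^\ast$ and solve the fourth-order self-adjoint system $D\Phi \circ D\Phi^\ast X = F$ on $A$ with Dirichlet-type boundary conditions. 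Coercivity, and hence solvability, holds because the space of Killing Initial Data on the annulus vanishes once these boundary conditions are imposed. A fixed-point iteration converts linear invertibility into the nonlinear correction, and the resulting $(h,q)$ is controlled in $H^2(A)\times H^1(A)$ by $\Vert \Phi(\hat g,\hat k)\Vert$, closing the base-regularity estimate.

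The main obstacles I anticipate are: (i) arranging that $(h,q)$ vanishes to sufficient order at $\pr B(0,1)$ to preserve the interior data exactly, which forces the use of weighted function spaces adapted to that boundary and a careful choice of cokernel-annihilating boundary conditions for $D\Phi^\ast$; (ii) propagating the minimal regularity $H^2 \times H^1$ demanded by the bounded $L^2$ curvature theorem through the coupled fourth-order elliptic system, which requires a functional-analytic setup sharper than the usual smooth Corvino--Schoen statement; and (iii) producing the tail in the weighted spaces $H^{m+2}_{-1/2}(\RRR^3) \times H^{m+1}_{-3/2}(\RRR^3)$ rather than merely outside a compact set. For (iii) I would replace the Euclidean reference outside $B(0,R_2)$ by a solution of the Lichnerowicz--York conformal constraint system producing asymptotically flat maximal data with the required decay, and use the annular gluing only to bridge the interior data and this exterior solution; the higher-regularity estimates for integers $m\geq 1$ would then follow by elliptic bootstrap inside $A$ and weighted elliptic theory outside.
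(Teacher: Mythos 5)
Your proposal follows the Corvino--Schoen annular gluing scheme: cutoff interpolation, the adjoint operator $D\Phi\circ D\Phi^\ast$ on a transition annulus with degenerating weights, and an exterior conformal reference. This is not the argument of \cite{Czimek1}, and the first two obstacles you flag are fatal rather than technical at the stated $H^2\times H^1$ regularity. The fourth-order operator $D\Phi\circ D\Phi^\ast$ has coefficients involving $\RRRic_g$ and $\nab k$, which are only in $L^2$ when $g\in H^2$, $k\in H^1$; no coercivity estimate is available at this regularity, and the known annular-gluing schemes (Corvino, Corvino--Schoen, Chru\'sciel--Delay, Corvino--Huang) require at least $C^{2,\alpha}$ data. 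Likewise, the exponentially degenerate weights that enforce vanishing of the correction at $\pr B(0,1)$ presuppose vanishing to infinite order, whereas at $H^2$ only the trace and one normal derivative are available, so the correction can at most lie in $H^2_0$ of the exterior---which is incompatible with the Corvino weight machinery. The cokernel-annihilation (no-KIDs) step is a unique-continuation statement with similar regularity demands. Your remark under (ii) about ``a functional-analytic setup sharper than the usual smooth Corvino--Schoen statement'' is exactly the missing content; it is not supplied and, for the fourth-order adjoint, I do not believe one exists at this level.

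The proof in \cite{Czimek1} takes an essentially different route. Rather than inverting $D\Phi$ by duality via $D\Phi^\ast$, it exploits the underdeterminedness of the constraint operator by restricting candidate corrections to a subspace adapted to the structure of the equations (a scalar/conformal piece for the Hamiltonian constraint together with a York-type decomposition for the momentum and maximality constraints), so that the operator to be inverted is a \emph{second-order} elliptic system whose coefficients remain controlled when $\RRRic_g\in L^2$ and $\nab k\in L^2$. The extension is then produced by an implicit function theorem in the weighted spaces $H^2_{-1/2}\times H^1_{-3/2}$ on $\RRR^3$, with the matching on $B(0,1)$ built into the ansatz rather than enforced by degenerate annular weights. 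Note also that the gluing is one-sided: only the interior trace at $\pr B(0,1)$ is prescribed, while the exterior asymptotics are free up to decay, so the two-sided kernel/cokernel bookkeeping of the annular Corvino--Schoen scheme is avoided entirely. This is what makes the low-regularity claim tractable, and it renders your separate step (iii) for the higher-regularity tail unnecessary: the same second-order system bootstraps to $H^{m+2}_{-1/2}\times H^{m+1}_{-3/2}$ by standard weighted elliptic estimates.
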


\begin{remark} In Theorem \ref{THMextensionConstraintsCZ1}, for integers $m\geq0$, $H^{m+2}_{-1/2}(\RRR^3)$ and $H^{m+1}_{-3/2}(\RRR^3)$ denote weighted $L^2$-based Sobolev spaces bounding $m$ coordinate derivatives and measuring asymptotic flatness, see \cite{Czimek1}. In particular, for $\varep>0$ sufficiently small, the global maximal initial data $(g,k)$ on $\RRR^3$ satisfies the assumptions of the bounded $L^2$ curvature theorem, see Theorem \ref{THMsmalldataL2details} below. \end{remark}

\subsection{The bounded $L^2$ curvature theorem} \label{SECliteratureResults2}  In this section, we cite in detail the bounded $L^2$ curvature theorem \cite{KRS} which is used as \emph{black box} in this paper. The theorem as stated below is a paraphrase of Theorems 2.4 and 2.5 in \cite{J1} and Theorem 2.18 in \cite{J3}.

\begin{theorem}[The bounded $L^2$ curvature theorem, version 2] \label{THMsmalldataL2details} Let $(\Si,g,k)$ be asymptotically flat maximal initial data for the Einstein vacuum equations such that $\Si \simeq \RRR^3$. Assume moreover there are global coordinates in which
\begin{align*}
\half e_{ij} \leq g_{ij} \leq \frac{3}{2} e_{ij},
\end{align*}
and that for some $\varep>0$,
\begin{align*}
\Vert \RRRic \Vert_{L^2(\Si)} \leq \varep, \,\,\, \Vert k \Vert_{L^2(\Si)} + \Vert \nab k \Vert_{L^2(\Si)} \leq \varep, \,\,\, r_{vol}(\Si,1/2) >1/4.
\end{align*}
Then.
\begin{enumerate}
\item \textbf{$L^2$-regularity.} There is a universal constant $\varep_0>0$ such that if $0< \varep< \varep_0$, then the maximal globally hyperbolic development $(\MM,\g)$ of the initial data $(\Si,g,k)$ contains a foliation of maximal spacelike hypersurfaces $(\Si_t)_{0\leq t \leq 1}$ with $\Si_0=\Si$ such that on each $\Si_t$,
\begin{align*}
\frac{1}{4} e_{ij} \leq g_{ij} \leq& \,2 e_{ij}, \\
\Vert \RRRic \Vert_{L^\infty_t L^2(\Si_t)} \les&\, \varep, \\
\Vert k \Vert_{L^\infty_tL^2(\Si_t)} + \Vert \nab k \Vert_{L^\infty_tL^2(\Si_t)}+ \Vert \D_T k \Vert_{L^\infty_tL^2(\Si_t)} +\Vert {\mathbf{A}} \Vert_{L^\infty_\ttt L^4(\Sitt_\ttt)} \les& \, \varep,\\
\Vert n-1 \Vert_{L^\infty(\MM)} + \Vert \nab n \Vert_{L^\infty(\MM)} + \Vert \nab^2 n \Vert_{L^\infty_tL^2(\Si_t)}+ \Vert \nab T( n) \Vert_{L^\infty_tL^2(\Si_t)} \les& \,\varep, \\
 r_{vol}(\Si_t,1/2) \geq& \, 1/8.
\end{align*}
Moreover, for each $\om \in \mathbb{S}^2$, there is a foliation $(\HH_{{}^\om u})_{{}^\om u \in \RRR}$ of $\MM$ by weakly regular (see remarks below) null hyperplanes $\HH_{{}^\om u}$ given as level sets of an optical function ${}^\om u$ such that
\begin{align*}
\sup\limits_{\om \in \SSS^2} \Vert \R \cdot L \Vert_{L^\infty_u L^2(\HH_{{}^\om u})} \lesssim \varep,
\end{align*}
where $L$ denotes the $\HH_{{}^\om u}$-tangential null vectorfield with $\g(T,L)=-1$. In addition, the following trilinear estimate holds,
\begin{align} \label{EQtrilinearStatement1}
\left\vert \, \int\limits_{\MM} Q(\R)_{ij TT} k^{ij} \right\vert \les& \, \varep\Vert \R \Vert_{L^\infty_{t} L^2(\Si_t)}^2 + \varep \Vert \R \Vert_{L^2(\MM)} \sup\limits_{\om \in \SSS^2} \Vert \R \cdot L \Vert_{L^\infty_{{}^\om u} L^2(\HH_{{}^\om u} )}.
\end{align}
\item \textbf{Higher regularity.} For integers $m\geq1$, it holds that
\begin{align*}
\sum\limits_{\vert \a \vert \leq m} \Vert \D^{\a} \R \Vert_{L^\infty_t L^2(\Si_t)} \les& \sum\limits_{\vert \a \vert \leq m} \Vert \nab^{\a} \RRRic \Vert_{L^2(\Si)} + \Vert \nab^{\a}\nab k \Vert_{L^2(\Si)},\\
\sum\limits_{\vert \a \vert \leq m+1} \Vert \D^{\a} \left( {}^{(T)}\pi  \right) \Vert_{L^\infty_t L^2(\Si_t)} \les& \sum\limits_{\vert \a \vert \leq m} \Vert \nab^{\a} \RRRic \Vert_{L^2(\Si)} + \Vert \nab^{\a}\nab k \Vert_{L^2(\Si)},
\end{align*}
where ${}^{(T)}\pi := \Lie_T \g$ denotes the deformation tensor of $T$.
\end{enumerate}
\end{theorem}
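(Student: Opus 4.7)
The plan is a bootstrap argument in $t$: assume the stated bounds hold on some maximal subinterval $[0,t^\ast]$ with constants doubled, and show that the sharp bounds are recovered, so that $t^\ast=1$. The maximal foliation $(\Si_t)$ is built slice-by-slice by solving the lapse equation $\triangle n = n\vert k\vert_g^2$ with $n\to 1$ at infinity, coupled to the evolution equations \eqref{eq:firstvar}--\eqref{eq:sndvar}; the preservation of $\tr_g k=0$ follows from \eqref{eq:Deltan} together with the constraints. Simultaneously, for each $\om \in \SSS^2$, one constructs an optical function ${}^\om u$ by solving the eikonal equation $\g^{\a\be}\pr_\a u\,\pr_\be u = 0$ with the asymptotic condition $u \sim t - \om \cdot x$ at spatial infinity; its level sets $\HH_{{}^\om u}$ are the ``plane-wave'' null hyperplanes along which the curvature flux will be controlled.

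The interior curvature bound $\Vert \Rbf \Vert_{L^\infty_t L^2(\Si_t)} \lesssim \varep$ follows from Proposition \ref{PROPenergyEstimateWeyl} applied to $\mathbf{W}=\Rbf$. Since $\Rbf$ is Bianchi-free in vacuum the $\D^\mu Q$-term in \eqref{lemma:IntegralidentityWeyl1} vanishes, and \eqref{eq:deformationTensorRelations} together with $\tr k=0$ reduces the remaining bulk error essentially to the trilinear integral $\int_\MM Q(\Rbf)_{ijTT} k^{ij}$, up to benign terms involving $n^{-1}\nab n$ and $T(n)$ absorbed using Lemma \ref{LEMcommutatorTriangleDT} and Proposition \ref{PropEllipticEstimatesFORtriangle}. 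A naive application of H\"older at $L^2 \cdot L^4 \cdot L^4$ to this trilinear integral costs a half-derivative that is not available at $L^2$-curvature regularity; the strategy is instead to exploit $\Div k=0$ and the null-Bianchi structure of $\Rbf$ by testing against the hyperplanes $\HH_{{}^\om u}$. Decomposing $\Rbf$ in the null frame adapted to $\HH_{{}^\om u}$ isolates the ``half-curvature'' components $\R \cdot L$, whose flux $\Vert \R \cdot L \Vert_{L^\infty_{{}^\om u} L^2(\HH_{{}^\om u})}$ is bounded from the boundary term $Q(\Rbf)_{LTTT}$ in the energy identity restricted to each null hyperplane; a Fourier-type synthesis in $\om \in \SSS^2$ then recasts the trilinear integral as a bilinear pairing between this flux and $\Vert \Rbf \Vert_{L^2(\MM)}$, yielding \eqref{EQtrilinearStatement1}.

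To close the flux estimate one must in turn control in $L^\infty$ the Ricci coefficients $(\trchit, \chiht, \zeta, \ldots)$ of each foliation $(\HH_{{}^\om u})$, which satisfy transport-Hodge systems forced by $\Rbf$ that cannot be closed by classical Sobolev embedding at $L^2$-curvature. This is the central analytic obstacle: the plan is to prove sharp Strichartz-type estimates $\Vert \nab \phi \Vert_{L^2_t L^\infty_x} \lesssim \Vert \Box_\g \phi \Vert_{L^2} + \text{data}$ for scalar solutions $\phi$ of the wave equation on a background whose curvature lies only in $L^2$. The approach is to construct, for each half-wave, a plane-wave parametrix $\phi(x) \approx \int_{\SSS^2} \int_\RRR e^{i\la {}^\om u(x)} f(\la,\om)\,d\la\,d\om$ using the already-built optical functions, and to establish its $L^2$-boundedness and dispersive decay via a $TT^\ast$ argument combined with a careful stationary-phase analysis of the phase $\la({}^\om u(x) - {}^\om u(y))$ on rough metrics; this is precisely the step where the assumption $\Si \simeq \RRR^3$ enters, since the parametrix is defined globally as a Fourier integral operator. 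Feeding the resulting Strichartz bounds into the transport-Hodge systems for the Ricci coefficients of each $\HH_{{}^\om u}$ closes the bootstrap for the null geometry, hence for the flux, hence for the trilinear estimate and the energy of $\Rbf$, all simultaneously. Higher regularity for $m \geq 1$ is then obtained by commuting the Bianchi equations with $\Lieh_T$ via \eqref{EQTrelationEH}, applying Proposition \ref{PROPenergyEstimateWeyl} to the Weyl tensors $\Lieh_T^m \Rbf$, and closing by a standard Gr\"onwall argument using the $L^2$-level bounds and Sobolev embedding (Lemma \ref{LEMsobolevEmbeddingSigmaT}) to estimate the now-multilinear error terms.
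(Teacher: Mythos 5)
The paper does not prove Theorem \ref{THMsmalldataL2details}; it imports it verbatim as a black box, explicitly stating that it is ``a paraphrase of Theorems 2.4 and 2.5 in \cite{J1} and Theorem 2.18 in \cite{J3}'' — i.e.\ of the bounded $L^2$ curvature theorem \cite{KRS} together with the parametrix companion papers \cite{J1}--\cite{J5}. There is therefore no proof in this paper to compare your proposal against; what you have written is a compressed outline of the proof of the bounded $L^2$ curvature theorem itself, which is the content of roughly five hundred pages of \cite{KRS} and \cite{J1}--\cite{J5} and is entirely outside the scope of the present article.

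As an outline of the KRS argument, your sketch captures the correct skeleton (bootstrap in $t$, lapse elliptic estimates, Bel–Robinson energy identity, the trilinear error term as the core difficulty, plane-wave parametrix built from a family of optical functions ${}^\om u$ indexed by $\om\in\SSS^2$, Strichartz estimates on a rough background closing the Ricci-coefficient transport-Hodge systems on the $\HH_{{}^\om u}$, and higher regularity by commuting with $\Lieh_T$). However, you omit the organizing structural device that makes the trilinear estimate tractable in \cite{KRS}: the Yang–Mills-type reformulation in which the fundamental unknown is the frame connection $1$-form $\mathbf{A}$ (not $k$ or $g$ directly), together with the Hodge/Coulomb-type decomposition $A = \Curl B + E$ in which $B$ solves a nonlinear wave equation with a favorable null structure. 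The parametrix is applied to $B$, and the representation $k \sim \Curl B + \text{error}$ is what converts $\int_\MM Q(\Rbf)_{ijTT}k^{ij}$ into the bilinear pairing between $\Rbf$ and the null flux $\Rbf\cdot L$; your description of ``a Fourier-type synthesis in $\om$'' is in the right spirit but does not identify the specific gauge-fixed object being represented. The paper's own use of this formalism is visible in Appendix \ref{SECtrilinearEstimateM1} (the decomposition $A = \Curl B + E$ and the estimates \eqref{EQparametrixEstimate0}, \eqref{EQparametrixEstimate2}), and your sketch would need to incorporate it to be a faithful account. In short: no gap in the sense of a wrong step, but what you have written is an outline of a different (cited) paper's proof rather than a proof living inside this one, and it glosses over the gauge-fixing mechanism that is central to that argument.
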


\emph{Remarks.}
\begin{enumerate}
\item Theorem \ref{thm:smallboundedL2thm} is the \emph{small data version} of the bounded $L^2$ curvature theorem. A corresponding large data version is obtained in \cite{KRS} by a rescaling procedure.
\item We refer to Definition 5.3 in \cite{KRS} for a definition of weakly regular null hypersurfaces. For the purposes of this paper, it suffices to note that weak regularity is sufficient for an application of Stokes' theorem as in Proposition \ref{PROPenergyEstimateWeyl}.
\item In Appendix \ref{SECtrilinearEstimateM1}, we give more details about the Yang-Mills formalism and wave parametrix construction of \cite{KRS} and use the estimates of that paper to prove a trilinear estimate, see in particular \eqref{EQparametrixEstimate0} and \eqref{EQparametrixEstimate2}.
\end{enumerate}

\section{Low regularity estimates} \label{SectionBAImprovementMAIN}

In this section we prove Proposition \ref{prop:BAimprovement}. Let $\varep>0$ be a real. Assume that
\begin{align} \label{eq:SmallData}
\OO_0^\Si +\RR_0^\Si +  \OO_0^\HH+ \RR_0^\HH  \leq \varep,
\end{align}
and further, for a large, fixed constant $D >0$, assume that for $1\leq t \leq t_0^\ast$ (where $t_0^\ast <2$),
\begin{align} \begin{aligned}
\Vert \RRRic \Vert_{L^\infty_t L^2(\Si_t)} \leq& \,D \varep, \\
\Vert k \Vert_{L^\infty_t L^2(\Si_t)} + \Vert \nab k \Vert_{L^\infty_t L^2(\Si_t)} + \Vert k \Vert_{L^\infty_t L^2(S_t)} \leq& \, D \varep, \\
\Vert \nu-1 \Vert_{L^\infty_t L^\infty(S_t)} + \Vert \Nd \nu \Vert_{L^\infty_t L^4(S_t)} + \Vert \nu^{-1} \Nd \nu \Vert_{L^\infty_t H^{1/2}(S_t)} \leq& \, D \varep, \\
1/4 \leq r_{vol}(\Si_t, 1/2) \leq& \, 8, \\
\pi/2 \leq \mathrm{vol}_g(\Si_{t}) \leq& \, 32 \pi.
\end{aligned}\label{eq:BAspacetimeEstimates01}
 \end{align}

In the following, we prove that for $\varep>0$ sufficiently small, for $1 \leq t \leq t_0^\ast$,
\begin{align*} \begin{aligned}
\Vert \RRRic \Vert_{L^\infty_t L^2(\Si_t)} \les& \,D' \varep, \\
\Vert k \Vert_{L^\infty_t L^2(\Si_t)} + \Vert \nab k \Vert_{L^\infty_t L^2(\Si_t)} + \Vert k \Vert_{L^\infty_t L^2(S_t)} \les& \,D' \varep, \\
\Vert \nu-1 \Vert_{L^\infty_tL^\infty(S_t)} + \Vert \Nd \nu \Vert_{L^\infty_t L^4(S_t)} + \Vert \nu^{-1} \Nd \nu \Vert_{L^\infty_t H^{1/2}(S_t)} \les& \,D' \varep, \\
1/4 <  r_{vol}(\Si_t, 1/2) <& \, 8, \\
\pi/2 < \mathrm{vol}_g(\Si_{t}) <& \, 32 \pi.
\end{aligned}
 \end{align*}
for a constant $0<D'<D$, and furthermore,
\begin{align*}
\Vert n-1 \Vert_{L^\infty_tL^\infty(\Si_t)} + \Vert \nab n \Vert_{L^\infty_tL^2(\Si_t)}  + \Vert \nab^2 n \Vert_{L^\infty_tL^2(\Si_t)} \les& \, \varep.
\end{align*}

\subsection{Overview of the proof of Proposition \ref{prop:BAimprovement}} In the following, we outline the main steps of the proof of Proposition \ref{prop:BAimprovement}. An important tool applied in the proof is the next theorem about the existence of global coordinates. Its proof using the Cheeger-Gromov theory of manifold convergence is given in Section \ref{Section:ProofOfCG}.

\begin{theorem}[Existence of global regular coordinates] \label{prop:CGestimation1} Let $(M,g)$ be a compact Riemannian $3$-manifold with boundary such that $M \simeq \overline{B(0,1)} \subset \RRR^3$, and assume that for two reals $\varep>0$ and $0< V < \infty$,
\begin{align*} \begin{aligned}
\Vert \RRRic \Vert_{L^2(M)} \leq \varep, \,\, \Vert \tr \Theta - 2 \Vert_{L^4(\pr M)}+ \Vert \widehat{\Theta}\Vert_{L^4(\pr M)} \leq \varep, \,\,  r_{vol}(M,1/2) \geq 1/4, \,\, \mathrm{vol}_g (M) \leq V,
\end{aligned} \end{align*}
where $\Th$ denotes the second fundamental form of $\pr M \subset M$. Then for every real number $0<\CMD<1/2$, there is an $\varep_0>0$ such that if $0< \varep < \varep_0$, then 
\begin{align*}
(M,g) \text{ is a weakly regular ball with constant }\CMD,
\end{align*}
that is,
\begin{enumerate}
\item {\bf $H^2$-regularity.} There is a coordinate chart $\phi: \overline{B(0,1)} \to M$ such that
\begin{align*}
&\Vert g_{ij} -e_{ij} \Vert_{H^2({B(0,1)})} \lesssim \CMD, \\
&(1-\CMD) \vert \xi \vert^2 \leq g_{ij} \xi^i \xi^j \leq (1+\CMD) \vert \xi \vert^2 \text{ for all } \xi \in \RRR^2. 
\end{align*}

\item {\bf Higher regularity.} For integers $m\geq1$, the following estimate for the coordinate components $g_{ij}$ holds,
\begin{align*}
\Vert g_{ij} -e_{ij} \Vert_{H^{m+2}({B(0,1)})} \les C_{V}\left( \sum\limits_{\vert \a \vert \leq m}\Vert \nab^{\a}\RRRic \Vert_{L^2(M)} +C_{m,V} \right).
\end{align*}
\end{enumerate}
\end{theorem}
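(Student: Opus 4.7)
The plan is to prove Theorem \ref{prop:CGestimation1} by a contradiction-and-compactness argument built on the low-regularity Cheeger--Gromov theory of \cite{Czimek21}, and then, for the higher-regularity statement, to bootstrap in harmonic coordinates using the elliptic estimates of Section~\ref{SECellipticEstimatesLaplacian}.

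For the $H^2$-regularity assertion, I would argue by contradiction. If the claim fails, then there is a fixed $\delta_0 \in (0, 1/2)$ and a sequence of Riemannian $3$-manifolds-with-boundary $(M_i, g_i)$, each homeomorphic to $\overline{B(0,1)}$, satisfying $\|\RRRic\|_{L^2(M_i)} \to 0$, $\|\tr\Theta_i - 2\|_{L^4(\pr M_i)} + \|\widehat{\Theta}_i\|_{L^4(\pr M_i)} \to 0$, $r_{vol}(M_i,1/2) \geq 1/4$, and $\mathrm{vol}_{g_i}(M_i) \leq V$, but for which no chart $\phi : \overline{B(0,1)} \to M_i$ realizes the $\delta_0$-weakly-regular-ball property. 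I would then invoke the Cheeger--Gromov compactness theorem of \cite{Czimek21}, which, under precisely these volume-radius, volume and curvature controls together with boundary control of $\Theta$, extracts a subsequence converging in the $W^{2,p}$-topology (for any $p<\infty$) in harmonic-coordinate charts to a limit $(M_\infty, g_\infty)$ homeomorphic to $\overline{B(0,1)}$.

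Next I would identify the limit. The $L^2$-smallness of Ricci and the $W^{2,p}$-convergence force $\RRRic_\infty \equiv 0$ and hence, in dimension three, $g_\infty$ is flat on the interior; meanwhile $\tr\Theta_\infty \equiv 2$ and $\widehat{\Theta}_\infty \equiv 0$, so $\pr M_\infty$ is totally umbilic of mean curvature $2$ inside a flat, simply-connected $3$-manifold. Standard rigidity for umbilic spheres in flat space then implies that $(M_\infty, g_\infty)$ is isometric to $(\overline{B(0,1)}, e)$. Pulling back the Euclidean coordinates of the limit through the approximating Cheeger--Gromov diffeomorphisms yields, for all sufficiently large $i$, charts $\phi_i : \overline{B(0,1)} \to M_i$ with $\|(\phi_i^* g_i)_{ab} - e_{ab}\|_{H^2(B(0,1))} \to 0$, contradicting the failure of $\delta_0$-weak regularity.

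For the higher-regularity statement, starting from the $H^2$-chart just produced I would pass to global harmonic coordinates (perturbatively available because $g$ is $H^2$-close to the Euclidean metric), in which the metric satisfies the quasilinear elliptic system
\begin{equation*}
\Delta_g g_{ab} = -2\,\RRRic_{ab} + Q_{ab}(g,\pr g),
\end{equation*}
with $Q$ quadratic in $\pr g$ and smooth in $(g, g^{-1})$. An induction on $m$ using Proposition \ref{PropEllipticEstimatesFORtriangle} componentwise, together with the trace estimate \eqref{EQhigherregtraceEstimate} of Lemma \ref{PROPtraceEstimate} to control boundary data from the round reference sphere and the already-established lower regularity, gives $\|g_{ab} - e_{ab}\|_{H^{m+2}(B(0,1))} \les C_V(\sum_{|\alpha|\leq m}\|\nab^\a \RRRic\|_{L^2(M)} + C_{m,V})$, which is the stated bound.

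The main obstacle I expect is the Cheeger--Gromov step for manifolds with boundary at this low regularity: one must propagate the merely $L^4$ boundary control of $\Theta$ through the convergence so that the limit inherits the totally-umbilic round-sphere boundary, and one must ensure that the harmonic charts used in \cite{Czimek21} cover $M$ uniformly up to and including $\pr M$. Once that compactness-and-rigidity step is secured, the identification of the limit as the Euclidean ball and the elliptic bootstrap are essentially standard.
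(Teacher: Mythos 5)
Your contradiction–compactness argument for the $H^2$-part is essentially the paper's: both extract a subsequence via the Cheeger--Gromov theory of \cite{Czimek21} (Theorem~\ref{THMconvergence123} in the paper), identify the limit as $(\overline{B(0,1)},e)$ by a rigidity lemma (the paper's Lemma~\ref{lemma:rigidityResult}, which proves flat interior $+$ umbilic unit-mean-curvature boundary $\Rightarrow$ Euclidean ball by extending to $\RRR^3$ and applying Liebmann's theorem), and pull back coordinates to contradict the assumed failure.

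Where you genuinely diverge is in the higher-regularity statement. The paper does not perform an elliptic bootstrap at all: the higher-regularity bound is part of the cited Cheeger--Gromov compactness theorem (\eqref{EQhigherREGCGestimates1} in Theorem~\ref{THMconvergence123}), which asserts directly that the Cheeger--Gromov charts satisfy
\begin{align*}
\Vert (\Psi_n^\ast g_n)_{ij} -g_{ij} \Vert_{H^{m+2}} \leq C_V \sum_{\vert \a \vert\leq m}\lrpar{\Vert \nab^{\a}\RRRic\Vert_{L^2(M)} + \Vert \nab^{\a}\RRRic_n\Vert_{L^2(M_n)}} + C_{m,V}\Vert (\Psi_n^\ast g_n)_{ij} -g_{ij} \Vert_{H^2},
\end{align*}
and the claimed estimate then follows at once from $\RRRic \equiv 0$ on the Euclidean limit. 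Your alternative route --- passing to global harmonic coordinates and bootstrapping via $\Delta_g g_{ab} = -2\RRRic_{ab} + Q(g,\pr g)$ with Proposition~\ref{PropEllipticEstimatesFORtriangle} --- is a legitimate strategy in principle, but it is incomplete as stated. The elliptic estimates you invoke require you to prescribe and control boundary data for $g_{ab}$ (or $\Nd g_{ab}$) on $\pr M$ in $H^{m+1/2}$, and the hypotheses of the theorem only supply $L^4$-level control of $\tr\Theta-2$ and $\widehat\Theta$; nothing in your sketch shows how to upgrade this to the boundary trace control needed at order $m$. In other words, the same difficulty you flag as the "main obstacle" for the compactness step reappears in the bootstrap and is not resolved. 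The paper sidesteps this by leaving all boundary/regularity propagation inside the black-boxed Theorem~\ref{THMconvergence123}; if you want to give a self-contained elliptic argument, you would need to couple the interior system for $g_{ij}$ with elliptic estimates for the induced boundary data coming from $(\Theta, \gd)$, which is considerably more than one line.

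Two minor inaccuracies: the convergence cited in the paper is in the $H^2$-topology, not $W^{2,p}$ for all $p<\infty$ (with only $L^2$ curvature control one should not expect $W^{2,p}$, $p>2$); and $\RRRic \equiv 0$ does not by itself give flatness --- in dimension three it does, because the Weyl tensor vanishes identically, and this should be said.
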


We are now in position to give an overview of the proof of Proposition \ref{prop:BAimprovement}. It suffices to improve the bootstrap assumptions on $\Si_{t^\ast}$ for a fixed real $1\leq t^\ast \leq t^\ast_0$.

\begin{enumerate}

\item Let $0<\CMD<1/2$ be a small constant to be determined below. By the bootstrap assumptions \eqref{eq:BAspacetimeEstimates01} 
together with Theorem \ref{prop:CGestimation1}, we deduce that for $\varep>0$ sufficiently small, $\Si_{t^\ast}$ is a weakly regular ball with constant $0<\CMD<1/2$. For $\CMD >0$ sufficiently small, this directly improves the bootstrap assumptions on $\mathrm{vol}_g(\Si_{t^\ast})$ and $r_{vol}(\Si_{t^\ast}, 1/2)$.

\item For $0<\CMD<1/2$ and $\varep>0$ sufficiently small, the \emph{extension procedure for the constraint equations} (see Theorem \ref{THMextensionConstraintsCZ1}) can be applied to $\Si_{t^\ast}$. This yields an extension of the maximal initial data $(\Si_{t^\ast},g,k)$ to an asymptotically flat initial data set of size bounded by $\CMD$.

\item For $\CMD>0$ sufficiently small, we can subsequently apply \emph{backwards} the bounded $L^2$ curvature theorem (see Theorem \ref{thm:smallboundedL2thm}) to the above extended initial data set. This yields a foliation of the past of $\Si_{t^\ast}$ in $\MM$ by maximal hypersurfaces $(\wt{\Si}_{\ttt})_{0 \leq \ttt \leq t^\ast}$ with controlled foliation geometry. In particular, this foliation admits $\wt{\nab} \wt{n} \in L^\infty(\MM_{t^\ast})$ and a trilinear estimate, see Theorem \ref{THMsmalldataL2details}.

\item Using the $\CMD$-control of the foliation $\wt{\Si}_{\ttt}$ we can estimate $\Rbf$ using the Bel-Robinson tensor (see Proposition \ref{PropEllipticEstimatesFORtriangle}), relating the curvature flux through $\Si_{t^\ast}$ with the curvature fluxes through $\HH$ and $\Si$ which are in turn bounded by the $\varep$-small initial data norms. This improves the bootstrap assumption on the curvature flux on $\Si_{t^\ast}$. It is in this step that the control of $\wt{\nab}\wt{n} \in L^\infty(\MM_{t^\ast})$ and the trilinear estimate for the $(\wt{\Si}_{\ttt})_{0 \leq \ttt \leq t^\ast}$-foliation are needed.

\item The second fundamental form $k$ on $\Si_{t^\ast}$ satisfies a Hodge system and thus global elliptic estimates (see Corollary \ref{integralIDKEST}) improve the bounds on $\nab k \in L^2(\Si_{t^\ast})$. Here we use that the source terms for the Hodge system of $k$ depend only on already improved curvature terms. Moreover, in this step it is crucial to analyse the boundary integrals appearing in the global elliptic estimates for $k$. Indeed, they admit a special structure which allows to split them up into one part which \emph{bounds} the slope $\nu$ between $\HH$ and $\Si_{t^\ast}$ and the $L^2(\pr \Si_{t^\ast})$-norm of $k$, and another part which can be estimated by the assumed $\varep$-smallness of the null connection coefficients of the canonical foliation on $\HH$. 

\item The $L^2(\Si_{t^\ast})$-norm of $k$ is estimated using the previously improved $L^2(\pr \Si_{t^\ast})$-norm of $k$ and $L^2(\Si_{t^\ast})$-norm of $\nab k$. The estimate for $\D_T k$ in $L^2(\Si_{t^\ast})$ follows by the second variation equation \eqref{eq:sndvar}.

\item The bootstrap assumptions for $\nu$ on $\pr \Si_{t^\ast}$ are improved by using the slope equation \eqref{eq:slope} and the bounds mentioned in (5) together with the assumed $\varep$-smallness of the null connection coefficients of the canonical foliation.

\item The foliation lapse $n$ of the foliation $\Si_t$ is improved by global elliptic estimates applied to the maximal lapse equation, using that $k$ on $\Si_{t^\ast}$ and the boundary value $n= \Om^{-1} \nu^{-1}$ on $\pr \Si_{t^\ast}$ are improved in the previous steps.

\end{enumerate}

We remark that to compare the curvature fluxes through $\HH$ and $\Si$ with the initial data norms on $\HH$ and $\Si$, a comparison argument between the two maximal foliations $(\Si_t)_{1 \leq t \leq t^\ast}$ and $(\wt{\Si})_{0\leq \ttt \leq t^\ast}$ is needed, see Lemma \ref{lem:comparisonfoliationMM} and its proof in Appendix \ref{secComparisonAppendix}. This comparison argument requires the control $n -1 \in L^\infty(\MM_{t^\ast})$, and hence in the proof of Proposition \ref{prop:BAimprovement} below, we use the bootstrap assumptions to bound $n$ of size $D\varep$ before improving them.

\subsection{First consequences of the bootstrap assumptions} \label{sec:FirstConseqSPACETIME} We first remark that by the smallness assumption \eqref{eq:SmallData},
\begin{align*}
\left\Vert \tr \chib + \frac{2}{t} \right\Vert_{L^\infty_t L^\infty(S_t)}+ \left\Vert \tr \chi - \frac{2}{t} \right\Vert_{L^\infty_t L^\infty(S_t)} \lesssim& \, \varep, \\
\norm{\chih}_{L^\infty_tL^4(S_t)} + \norm{\chibh}_{L^\infty_t L^4(S_t)} \les& \, \varep.
\end{align*}

Therefore on the one hand, using that by Lemma \ref{lemma:TNrelations}
\begin{align*}
\tr \Th - \frac{2}{t} =& \half \nu \trchi - \half \nu^{-1} \trchib -\frac{2}{t} \\
=& \half (\nu-1) \trchi + \half \lrpar{\trchi -\frac{2}{t}} - \half (\nu^{-1}-1) \trchib - \half \lrpar{\trchib+\frac{2}{t}},
\end{align*}
we have that by \eqref{eq:SmallData} and \eqref{eq:BAspacetimeEstimates01},
 \begin{align}
\left\Vert \tr \Th - \frac{2}{t} \right\Vert_{L^\infty_t L^\infty(S_t)} \les&\, D \varep.\label{lem:FirstConsequencesSpacetime1}
\end{align}
On the other hand, using that by Lemma \ref{lemma:TNrelations}
\begin{align*}
\widehat{\Theta} = \half \nu \chih - \half \nu^{-1} \chibh,
\end{align*}
we get that by \eqref{eq:SmallData} and \eqref{eq:BAspacetimeEstimates01},
 \begin{align}
\left\Vert \widehat{\Theta} \right\Vert_{L^\infty_t L^4(S_t)} \les& \,\varep.\label{lem:FirstConsequencesSpacetime2}
\end{align}

\subsection{Weak regularity of $\Si_{t^\ast}$} \label{SECweakreguM0} From \eqref{eq:BAspacetimeEstimates01}, \eqref{lem:FirstConsequencesSpacetime1} and \eqref{lem:FirstConsequencesSpacetime2}, we have for $\varep>0$ sufficiently small that
\begin{align*} \begin{aligned}
\Vert \RRRic \Vert_{L^2(\Si_{t^\ast})} +  \Vert k \Vert_{L^2(\Si_{t^\ast})} + \Vert \nab k \Vert_{L^2(\Si_{t^\ast})} \les& D \varep, \\
\left\Vert \tr \Theta - \frac{2}{t^\ast} \right\Vert_{L^4(S_{t^\ast})}+ \norm{\widehat{\Theta}}_{L^4(S_{t^\ast})} \lesssim& D \varep, \\
r_{vol}(\Si_{t^\ast},1/2) \geq 1/4, \,\,\, \mathrm{vol}_g(\Si_{t^\ast}) \leq& 32\pi.
\end{aligned} 
\end{align*}
Therefore by Theorem \ref{prop:CGestimation1}, for any real $\CMD>0$, there is $\varep_0>0$ such that if $0<\varep<\varep_0$, then $\Si_{t^\ast}$ is a weakly regular ball with constant $\CMD$. Moreover, we can pick $\CMD>0$ and $\varep>0$ sufficiently small such that
\begin{align}\label{EqBounds2}
\max\limits_{i,j=1,2,3} \Big( \Vert g_{ij}- e_{ij} \Vert_{H^2(\Si_{t^\ast})} + \Vert k_{ij} \Vert_{H^1(\Si_{t^\ast})} \Big) \lesssim \CMD,
\end{align}
and further,
\begin{align*}
1/4< r_{vol}(\Si_{t^\ast},1/2) < 8, \,\, \pi/2 < \mathrm{vol}_g(\Si_{t^\ast}) < 32 \pi,
\end{align*}
which improves the bootstrap assumptions on $r_{vol}(\Si_{t^\ast},1/2)$ and $\mathrm{vol}_g(\Si_{t^\ast})$.

\subsection{Estimates for $n$ on $\Si_{t^\ast}$} \label{SECestimatesForLapseN} The lapse function $n$ is by  \eqref{eq:subt}, \eqref{eqRelationTandDt}, \eqref{EQNboundaryIdentity} and \eqref{eq:Deltan} a solution to the following elliptic boundary value problem,
\begin{align} \begin{aligned} 
\Delta n =& \,n \vert k\vert_g^2 &\text{ on } \Si_{t^\ast}, \\
n=& \,\nu^{-1} \Om^{-1} &\text{ on } \pr \Si_{t^\ast}.
\end{aligned} \label{eq:ellEQforN}
\end{align}

In this section, we prove that for $\varep>0$ sufficiently small,
\begin{align} \label{EQnEstimatesSummary}
\Vert n-1 \Vert_{L^\infty(\Si_{t^\ast})} + \Vert \nab n \Vert_{L^2(\Si_{t^\ast})} + \Vert \nab^2 n \Vert_{L^2(\Si_{t^\ast})} \les D\varep.
\end{align}

\begin{remark} In accordance with the continuity argument, we do not have any bootstrap assumptions on $n$ in \eqref{eq:BAspacetimeEstimates01}. \end{remark}

On the one hand, by \eqref{eq:SmallData} and \eqref{eq:BAspacetimeEstimates01}, the boundary value $n=\nu^{-1} \Om^{-1}$ satisfies for $\varep>0$ small
\begin{align*} 
\Vert n-1 \Vert_{L^\infty(\pr \Si_{t^\ast})} \les& \, \Vert \nu-1 \Vert_{L^\infty(\pr \Si_{t^\ast})} + \Vert \Om-1 \Vert_{L^\infty(\pr \Si_{t^\ast})} \\
\les& \, D\varep,\\
\Vert \Nd n \Vert_{L^2(\pr \Si_{t^\ast})} \les& \, \Vert \Nd \nu \Vert_{L^2(\pr \Si_{t^\ast})} + \Vert \Nd \Om \Vert_{L^2(\pr \Si_{t^\ast})} \\
\les& \, D\varep.
\end{align*}
Using further \eqref{eq:slope}, \eqref{eq:SmallData}, \eqref{eq:BAspacetimeEstimates01} and Lemmas \ref{LEMproductEstimatesBesov} and \ref{PROPtraceEstimate}, we get
\begin{align*} 
&\Vert \Nd n \Vert_{H^{1/2}(\pr \Si_{t^\ast})} \\
=& \left\Vert \frac{1}{\nu \Om} \left( \nu^{-1} \Nd \nu \right) + \frac{1}{\nu \Om^2} \Nd \Om \right\Vert_{H^{1/2}(\pr \Si_{t^\ast})} \\
\les&\lrpar{\norm{\Nd \left( \frac{1}{\Om \nu} \right)}_{L^2(\pr \Si_{t^\ast})} + \norm{\frac{1}{\Om \nu}}_{L^\infty(\pr\Si_{t^\ast})}+\norm{\Nd \left( \frac{1}{\Om^2 \nu} \right)}_{L^2(\pr \Si_{t^\ast})} + \norm{\frac{1}{\Om^2 \nu}}_{L^\infty(\pr\Si_{t^\ast})}} \\
&\qquad \cdot \lrpar{\norm{\nu^{-1} \Nd \nu}_{H^{1/2}(\pr \Si_{t^\ast})} + \norm{\Nd \Om}_{H^{1/2}(\pr \Si_{t^\ast})}} \\
\les& \norm{\nu^{-1} \Nd \nu}_{H^{1/2}(\pr \Si_{t^\ast})} + \norm{\Nd \Om}_{H^{1/2}(\pr \Si_{t^\ast})} \\
\les& D\varep.
\end{align*}

On the other hand, by \eqref{eq:BAspacetimeEstimates01}, \eqref{eq:ellEQforN}, Lemma \ref{LEMsobolevEmbeddingSigmaT} and H\"older's inequality, we have that
\begin{align*}
\Vert \triangle n \Vert_{L^2(\Si_{t^\ast})} =& \,\Vert n \vert k\vert^2 \Vert_{L^2(\Si_{t^\ast})} \\
\les&\, (1+ \Vert n-1 \Vert_{L^6(\Si_{t^\ast})}) \Vert k \Vert_{L^6(\Si_{t^\ast})}^2 \\
\les&\, (1+ \Vert n-1 \Vert_{L^2(\Si_{t^\ast})} + \Vert \nab n \Vert_{L^2(\Si_{t^\ast})} ) \left(\Vert k \Vert_{L^2(\Si_{t^\ast})} + \Vert \nab k \Vert_{L^2(\Si_{t^\ast})} \right)^2 \\
\les&\, (1+ \Vert n-1 \Vert_{L^2(\Si_{t^\ast})} + \Vert \nab n \Vert_{L^2(\Si_{t^\ast})} ) (D\varep)^2.
\end{align*}

Therefore by the elliptic estimates of Proposition \ref{PropEllipticEstimatesFORtriangle} together with the above, we get, for $\CMD>0$ and $\varep>0$ sufficiently small,
\begin{align*}
&\Vert n-1 \Vert_{L^2(\Si_{t^\ast})} + \Vert \nab n \Vert_{L^2(\Si_{t^\ast})} + \Vert \nab^2 n \Vert_{L^2(\Si_{t^\ast})}\\
 \les&\,\Vert \triangle n \Vert_{L^2(\Si_{t^\ast})} + \Vert \Nd n \Vert_{H^{1/2}(\pr \Si_{t^\ast})} \\
\les&\,  (1+ \Vert n-1 \Vert_{L^2(\Si_{t^\ast})} + \Vert \nab n \Vert_{L^2(\Si_{t^\ast})} ) (D\varep)^2 + D\varep \\
\les&\, D\varep,
\end{align*}
where we used the smallness of $\varep>0$ to absorb the term in the left-hand side. The estimate \eqref{EQnEstimatesSummary} follows then by the Sobolev inequality of Lemma \ref{LEMsobolevEmbeddingSigmaT}.

\subsection{Construction of a background foliation of $\MM_{t^\ast}$} \label{sec:BAimprovBackgroundFoliation} In this section, we apply \emph{backwards} the bounded $L^2$ curvature theorem to $\Si_{t^\ast}$ backwards to construct a background foliation of the past of $\Si_{t^\ast}$ in $\MM$, denoted by $\MM_{t^\ast}$. \\

By Theorem \ref{THMextensionConstraintsCZ1} and \eqref{EqBounds2}, for $\CMD>0$ sufficiently small, $(\Si_{t^\ast},g,k)$ can be extended to an asymptotically flat maximal initial data set on $\RRR^3$ which satisfies the assumptions of Theorem \ref{THMsmalldataL2details}. Subsequently, Theorem \ref{THMsmalldataL2details} yields that the following holds for $\CMD>0$ sufficiently small.

\begin{enumerate}
\item The spacetime region $\MM_{t^\ast}$ is foliated by spacelike maximal hypersurfaces $(\wt{\Si}_{\ttt})_{0 \leq \ttt \leq t^\ast}$ given as level sets of a time function $\ttt$ with $\Si_{t^\ast} = \wt{\Si}_{t^\ast} \cap \MM_{t^\ast}$ and satisfying for $0 \leq \ttt \leq t^\ast$,
\begin{align} \begin{aligned}
\Vert \wt{\RRRic} \Vert_{L^\infty_{\ttt} L^2(\wt{\Si}_{\ttt})} \les& \,\CMD, & \Vert \ktt \Vert_{L^\infty_\ttt L^2(\wt{\Si}_{\ttt})} + \Vert \wt{\nab} \ktt \Vert_{L^\infty_\ttt L^2(\wt{\Si}_{\ttt})}+ \Vert \wt{\mathbf{A}} \Vert_{L^\infty_\ttt L^4(\Sitt_\ttt)}\les&\, \CMD, \\
\Vert \R \Vert_{L^\infty_{\tilde t} L^2(\wt{\Si}_{\tilde t})}\les& \,\CMD, & \Vert \wt{n} -1 \Vert_{L^\infty_\ttt L^\infty(\wt{\Si}_{\ttt})} +\Vert \wt{\nab} \ntt \Vert_{L^\infty_{\ttt} L^\infty(\wt{\Si}_{\ttt})} \les& \,\CMD. 
\end{aligned} \label{EqBounds11}
\end{align}
Let $\Ttt$ denote the future-pointing time-like unit normal to $\wt{\Si}_\ttt$.
\item For each $\om \in \mathbb{S}^2$, the spacetime $\MM_{t^\ast}$ is foliated by a family of null hyperplanes $(\HH_{{}^\om u})_{{}^\om u \in \RRR}$ given as level sets of an optical function ${}^\om u$ satisfying
\begin{align*}
\sup\limits_{\om \in \mathbb{S}^2} \Vert \R \cdot \Ltt \Vert_{L^\infty_{{}^\om u} L^2(\HH_{{}^\om u} \cap \MM_{t^\ast})} \les \CMD,
\end{align*}
where $\Ltt$ is the unique $\HH_{{}^\om u}$-tangent null vectorfield with $\g(\Ltt,\Ttt)=-1$.
\item Define the \emph{angle} $\nutt$ between $T$ and $\Ttt$ by
\begin{align} \label{EQangleDefinition}
\nutt := -\g(T, \Ttt).
\end{align}
The proof of the next lemma is provided in Appendix \ref{secComparisonAppendix}.
\begin{lemma}[Comparison of maximal foliations on $\MM_{t^\ast}$]\label{lem:comparisonfoliationMM} For $\varep>0$ and $\CMD>0$ sufficiently small, it holds that with respect to the foliation $(\Si)_{1\leq t\leq t^\ast}$,
\begin{align} \label{EQlemmaComparisonTwoEstimates}
\norm{\nutt-1}_{L^\infty(\MM_{t^\ast})} \les \CMD, \,\, \Vert \wt{k} \Vert_{L^\infty_t L^4(\Si_t)} \les \CMD,
\end{align}
where $\wt{k}$ denotes the second fundamental form of $\wt{\Si}_\ttt$.
\end{lemma}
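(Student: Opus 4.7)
The plan is to establish the two estimates sequentially: first propagate pointwise control of the angle $\nutt$ backward from the common slice $\Si_{t^\ast} = \wt{\Si}_{t^\ast}$, and then convert the intrinsic $L^2$-Sobolev control of $\wt{k}$ along the $\wt\Si_\ttt$-foliation into an $L^4$ bound on the $\Si_t$-slices by using closeness of the two foliations.

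For the angle estimate, note that since $T$ and $\Ttt$ are future-pointing unit timelike vectors, the reverse Cauchy--Schwarz inequality yields $\nutt = -\g(T,\Ttt) \geq 1$, with equality iff $T = \Ttt$; in particular $\nutt = 1$ on $\Si_{t^\ast}$. I would derive a transport equation for $\nutt$ along the integral curves of $T$, starting from
\[
T(\nutt) = -\g(\D_T T,\Ttt) - \g(T, \D_T \Ttt).
\]
Using $\D_T T = -n^{-1}\nab n$ together with $\Ttt = -\ntt\,\D\ttt$, and decomposing $\Ttt = \nutt T + X$ with $X$ tangent to $\Si_t$ (so that $|X|_g = \sqrt{\nutt^2-1}$), both terms on the right reduce to schematic contractions of $n^{-1}\nab n$, $\ntt^{-1}\wt\nab\ntt$, $k$, and $\wt{k}$ against $X$ and $T$. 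Combining the bootstrap bound $\|\nab n\|_{L^\infty_t L^2(\Si_t)} \lesssim D\varep$ derived in Section~\ref{SECestimatesForLapseN} with the pointwise bounds $\|\ntt-1\|_{L^\infty} + \|\wt\nab \ntt\|_{L^\infty(\MM_{t^\ast})} \lesssim \CMD$ from~\eqref{EqBounds11} and Sobolev, the right-hand side is controlled by a bounded forcing of size $\lesssim \CMD$ plus a term proportional to $(\nutt-1)^{1/2}$. A backward Gronwall argument starting from $\nutt|_{\Si_{t^\ast}} = 1$ on the bounded time interval $[1,t^\ast]$ then yields $\|\nutt-1\|_{L^\infty(\MM_{t^\ast})} \lesssim \CMD$.

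For the $L^4$ bound on $\wt{k}$, I would first combine the Sobolev embedding on the maximal hypersurface $\wt{\Si}_\ttt$ (whose metric is close to Euclidean by Theorem~\ref{THMsmalldataL2details}) with \eqref{EqBounds11} to obtain
\[
\|\wt{k}\|_{L^\infty_\ttt L^6(\wt\Si_\ttt)} \lesssim \|\wt{k}\|_{L^\infty_\ttt L^2(\wt\Si_\ttt)} + \|\wt\nab \wt{k}\|_{L^\infty_\ttt L^2(\wt\Si_\ttt)} \lesssim \CMD,
\]
and in particular $\|\wt{k}\|_{L^\infty_\ttt L^4(\wt\Si_\ttt)} \lesssim \CMD$. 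To transfer this bound onto the $\Si_t$-slices, I would realise $\Si_t$ as the image of a section of the $\wt\Si_\ttt$-foliation: the restriction $\ttt|_{\Si_t}$ has spatial gradient controlled by the spatial part of $\D\ttt = -\ntt^{-1}\Ttt$, which by Part~1 and \eqref{EqBounds11} has $g$-norm of order $\CMD$, so $\Si_t$ meets only an $O(\CMD)$-neighbourhood of some $\wt{\Si}_{\ttt_0}$ in $\ttt$-coordinate. Flowing along $T$ produces a diffeomorphism between $\Si_t$ and $\wt\Si_{\ttt_0}$ whose Jacobian is $\nutt/\ntt = 1 + O(\CMD)$; taking supremum of the slice-wise $L^4$ norms over the relevant $\ttt$-interval gives $\|\wt{k}\|_{L^\infty_t L^4(\Si_t)} \lesssim \CMD$.

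The main obstacle will be setting up the transport inequality for $\nutt$ cleanly: the identity $T(\nutt) = -\g(\D_T T,\Ttt) - \g(T,\D_T\Ttt)$ involves derivatives of the second lapse $\ntt$ through $\D_T\D\ttt$, and one must carefully separate the inhomogeneous forcing of size $O(\CMD)$ (arising from $\nab n$, $\wt\nab\ntt$, and $k,\wt k$ contracted with the $T$-component of $\Ttt$) from the terms that vanish to leading order as $\nutt \to 1$ (those involving $X$). Once this linearisation is in hand, the Gronwall step and the coarea/graph-comparison argument in Part~2 are routine.
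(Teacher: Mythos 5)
Your Part~1 starts from the right identity but then runs a pointwise backward Gronwall argument, and this is where the proof breaks down. The forcing in $T(\nutt)$ involves $n^{-1}\nab n$ contracted against the $\Si_t$-tangent part $X$ of $\Ttt$, and by \eqref{eqCOMPARISONspacetimeControl} the lapse gradient $\nab n$ is controlled only in $L^\infty_t L^2(\Si_t)$ (respectively $L^\infty_t L^4$ after Sobolev on the slice), \emph{not} in $L^\infty(\MM_{t^\ast})$. You cannot therefore obtain a pointwise differential inequality for $\nutt-1$: integrating $T(\nutt)$ only yields $\|\nutt-1\|_{L^\infty_t L^4(\Si_t)}\lesssim\sqrt{M\CMD}(D\varep+\CMD)$. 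To upgrade to $L^\infty(\MM_{t^\ast})$ (which is what Lemma~\ref{lem:comparisonfoliationMM} asserts and what the bootstrap requires) the paper separately computes $\nab_{e_i}\nutt$, bounds $\|\nab\nutt\|_{L^\infty_t L^4(\Si_t)}$ — crucially encountering the product $\g(T,\ett_j)\g(e_i,\ett_l)\wt{k}_{lj}$, which forces the $L^4(\Si_t)$ control of $\wt{k}$ into the estimate — and then invokes the Sobolev inequality of Lemma~\ref{LEMsobolevEmbeddingSigmaT} on $\Si_t$. This coupling between the two assertions of the lemma is genuine and is not something a Gronwall argument on $\nutt$ alone can avoid.

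Your Part~2 proposes to restrict $\wt{k}$ to $\Si_t$ by flowing along $T$ to a nearby $\wt\Si_{\ttt_0}$ and compare via the Jacobian $\nutt/\ntt$. This does not control $\|\wt{k}\|_{L^4(\Si_t)}$, because $\wt{k}$ is not a fixed function being pushed forward by the flow: its value changes from slice to slice, and the difference $\wt{k}|_{\Si_t}-\wt{k}|_{\wt\Si_{\ttt_0}}$ is an integral of the transversal derivative $\D\wt{k}$, which your argument never uses. What makes the transfer work in the paper is the coarea/interpolation inequality of Lemma~\ref{lem:compL4MM}, which is proved by the fundamental theorem of calculus in $t$, uses maximality ($\tr k=0$) so the volume form does not contribute at leading order, and converts the spacetime bulk term into $\|\D f\|_{L^\infty_\ttt L^2(\Sitt_\ttt)}^{1/4}\|f\|_{L^\infty_\ttt L^6(\Sitt_\ttt)}^{3/4}$ plus the anchor $\|f\|_{L^4(\Si_{t^\ast})}$. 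Applying this with $f=\wt{k}_{ij}$ brings in $\D(\wt{k}_{ij})$, which one must relate to $\D\wt\pi$ and the connection $1$-form $\wt{\mathbf A}$ — both of which are controlled by \eqref{eqCOMPARISONspacetimeControl2222} — and your proposal does not account for these terms at all. So while your overall decomposition $\Ttt=\nutt T+X$ and the recognition that the $\nutt$-transport equation couples to $\nab n,\wt{k},\wt\nab\wt n$ are the correct starting point, both the upgrade to $L^\infty$ for $\nutt$ and the $L^4$ transfer for $\wt{k}$ contain genuine gaps that the paper closes via $L^4$ elliptic-type estimates plus Sobolev and via Lemma~\ref{lem:compL4MM}, respectively.
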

\end{enumerate}

\subsection{Energy estimates for curvature tensor on $\Si_{t^\ast}$} \label{sec:CurvatureEstimatesForBackground} \label{SectionImprovement1}

In this section, we prove that
\begin{align} \label{eq:trilinearBootstrapAssumption}
\Vert \R \Vert_{L^\infty_{\tilde t} L^2(\wt{\Si}_{\tilde t} \cap \MM_{t^\ast})} \les \varep.
\end{align}
Using that $\Si_{t^\ast} = \wt{\Si}_{t^\ast} \cap \MM_{t^\ast}$, \eqref{eq:trilinearBootstrapAssumption} implies in particular that
\begin{align} \label{eq:improvedL2curvature}
\Vert \R \Vert_{L^2(\Si_{t^\ast})} \les \varep.
\end{align}

We turn to the proof of \eqref{eq:trilinearBootstrapAssumption}. By a standard application of \eqref{lemma:IntegralidentityWeyl1} with $W= \R$ and multiplier field $\Ttt$, we have
\begin{align} \begin{aligned} 
&\Vert \R \Vert^2_{L^\infty_{\tilde t} L^2(\wt{\Si}_{\tilde t} \cap \MM_{t^\ast})} + \sup\limits_{\om \in \SSS^2} \Vert \R \cdot \Ltt \Vert^2_{L^\infty_{{}^\om u} L^2(\HH_{{}^\om u} \cap \MM_{t^\ast})} \\
\lesssim&  \int\limits_{\Si_1} Q(\R)_{\tilde T \tilde T \tilde T T} + \int\limits_{\HH} Q(\R)_{\Ttt \Ttt \Ttt L} + \underbrace{\left\vert  \, \int\limits_{\MM_{t^\ast}} Q(\Rbf)_{\a \be \Ttt \Ttt} \wt{\pi}^{\a \be} \right\vert}_{=:\mathcal{E}},
\end{aligned} \label{eq:CurvatureEst1} \end{align}
where the integral over $\HH$ is defined in Definition \ref{DEFintegrationH}.\\

We bound the error term $\mathcal{E}$ on the right-hand side of \eqref{eq:CurvatureEst1} as follows. By \eqref{eq:deformationTensorRelations} the components of $\wt{\pi}:= \Lieh_\Ttt \g$ are
$$\wt{\pi}_{\Ttt \Ttt} =0, \,\, \wt{\pi}_{\Ttt j} = \tilde{n}^{-1} \nab_j \tilde n, \,\, \wt{\pi}_{ab} = -2 \tilde{k}_{ab}.$$
Hence, by \eqref{eq:BAspacetimeEstimates01} and \eqref{EqBounds11},
\begin{align} \begin{aligned}
\EE =& \left\vert  \, \int\limits_{\MM_{t^\ast}} Q(\Rbf)_{\a \be \Ttt \Ttt} \wt{\pi}^{\a \be}  \right\vert \\
\les& \left\vert  \, \int\limits_{\MM_{t^\ast}} Q(\Rbf)_{ab \Ttt \Ttt} \wt{k}^{ab} \right\vert + \left\vert  \, \int\limits_{\MM_{t^\ast}} \wt{n}^{-1} Q(\Rbf)_{\Ttt j \Ttt \Ttt} \wt{\nab}^j \wt{n} \right\vert \\
\les& \left\vert  \, \int\limits_{\MM_{t^\ast}} Q(\Rbf)_{ab \Ttt \Ttt} \wt{k}^{ab} \right\vert +\Big(1+ \Vert \wt{n} -1 \Vert_{L^\infty(\MM_{t^\ast})} \Big) \Vert \wt\nab \wt{n} \Vert_{L^\infty(\MM_{t^\ast})} \Vert \R \Vert^2_{L^\infty_t L^2(\Si_t)} \\
\les&  \left\vert  \, \int\limits_{\MM_{t^\ast}} Q(\Rbf)_{ab \Ttt \Ttt} \wt{k}^{ab} \right\vert + \CMD \Vert \R \Vert^2_{L^\infty_t L^2(\Si_t)}.
\end{aligned} \label{eq:Qestimate1BAimprovbackground} \end{align}

The first term on the right-hand side of \eqref{eq:Qestimate1BAimprovbackground} is estimated by a localisation of the trilinear estimate \eqref{EQtrilinearStatement1} of Theorem \ref{THMsmalldataL2details}. Indeed, a direct inspection of its proof on page 112 in \cite{KRS} yields that the following estimate holds on $\MM_{t^\ast}$,
\begin{align*}
\left\vert  \, \int\limits_{\MM_{t^\ast}} Q(\Rbf)_{ab \Ttt \Ttt} \ktt^{ab} \right\vert \lesssim& \CMD \Vert \R \Vert_{L^\infty_{\tilde t} L^2(\wt{\Si}_{\tilde t} \cap \MM_{t^\ast})}^2 + \CMD \Vert \R \Vert_{L^2(\MM_{t^\ast})} \sup\limits_{\om \in \SSS^2} \Vert \R \cdot \Ltt \Vert_{L^\infty_{{}^\om u} L^2(\HH_{{}^\om u} \cap \MM_{t^\ast})} \\
\les& \CMD \Vert \R \Vert_{L^\infty_{\tilde t} L^2(\wt{\Si}_{\tilde t} \cap \MM_{t^\ast})}^2 + \CMD \Vert \R \Vert_{L^\infty_\ttt L^2(\wt{\Si}_\ttt)} \sup\limits_{\om \in \SSS^2} \Vert \R \cdot \Ltt \Vert_{L^\infty_{{}^\om u} L^2(\HH_{{}^\om u} \cap \MM_{t^\ast})} \\
\les& \CMD \Vert \R \Vert_{L^\infty_{\tilde t} L^2(\wt{\Si}_{\tilde t} \cap \MM_{t^\ast})}^2 + \CMD \lrpar{\, \sup\limits_{\om \in \SSS^2} \Vert \R \cdot \Ltt \Vert_{L^\infty_u L^2(\HH_{{}^\om u} \cap \MM_{t^\ast})}}^2.
\end{align*}

Plugging the above and \eqref{eq:Qestimate1BAimprovbackground} into \eqref{eq:CurvatureEst1}, we get
\begin{align} \begin{aligned} 
&\Vert \R \Vert^2_{L^\infty_{\tilde t} L^2(\wt{\Si}_{\tilde t} \cap \MM_{t^\ast})} + \sup\limits_{\om \in \SSS^2} \Vert \R \cdot \Ltt \Vert^2_{L^\infty_{{}^\om u} L^2(\HH_{{}^\om u} \cap \MM_{t^\ast})} \\
\lesssim& \int\limits_{\Si_1} Q(\R)_{\tilde T \tilde T \tilde T T} +\int\limits_{\HH} Q(\R)_{\Ttt \Ttt \Ttt L} \\
&+\CMD \Vert \R \Vert_{L^\infty_{\tilde t} L^2(\wt{\Si}_{\tilde t} \cap \MM_{t^\ast})}^2 + \CMD \lrpar{\, \sup\limits_{\om \in \SSS^2} \Vert \R \cdot \Ltt \Vert_{L^\infty_{{}^\om u} L^2(\HH_{{}^\om u} \cap \MM_{t^\ast})}}^2 \\
\les& \underbrace{\int\limits_{\Si_1} Q(\R)_{\tilde T \tilde T \tilde T T}}_{:= \II_1} + \underbrace{\int\limits_{\HH} Q(\R)_{\Ttt \Ttt \Ttt L}}_{:=\II_2},
\end{aligned} \label{EQcurvatureEststep2}\end{align}
where we used the smallness of $\CMD>0$ to absorb the term on the left-hand side. It remains to bound $\II_1$ and $\II_2$ on the right-hand side of \eqref{EQcurvatureEststep2}. \\

{\bf Estimation of $\mathcal{I}_1$.} Let $(e_i)_{i=1,2,3}$ be an orthonormal frame of $\Si_1$. Decompose $\Ttt$ with respect to this frame into
\begin{align} \label{eqDECOMPcomp1}
\Ttt = \nutt T + C^1e_1 +C^2e_2 + C^3e_3,
\end{align}
and denote $C^0:= \nutt-1$. By \eqref{EQlemmaComparisonTwoEstimates} and the fact that $(T,e_1,e_2,e_3)$ is an orthonormal frame, we deduce that for $i=1,2,3$,
  \begin{align}\label{est:compariC}
    \norm{C^i}_{L^\infty(\Si_1)} \les \sqrt{\CMD}.
  \end{align}

Using \eqref{eqDECOMPcomp1}, we get that
\begin{align*}
\int_{\Si_1} Q_{\Ttt \Ttt \Ttt T} = &  \int_{\Si_1} Q_{TTTT} + \int_{\Si_1} C^\mu Q_{\mu TTT}  + \int_{\Si_1} C^\mu C^\nu Q_{\mu \nu TT} + \int_{\Si_1} C^\mu C^\nu C^\la Q_{\mu \nu \la T}.
  \end{align*}
From Lemmas 7.3.1 and 7.3.2 in~\cite{ChrKl93}, we get that for $\mu, \nu, \la = 0,1,2,3$,
\begin{align} \label{EQrelationQothervectors1}
|Q_{\mu \nu \la T}| \les Q_{TTTT} \les \vert \Rbf \vert_{\mathbf{h}^t}^2.
\end{align}
Hence by~\eqref{est:compariC}, we deduce that for $\CMD>0$ and $\varep>0$ sufficiently small,
\begin{align} \begin{aligned} 
\int_{\Si_1} Q_{\Ttt \Ttt \Ttt T} \les &  \lrpar{1+\sqrt{\CMD}+\sqrt{\CMD}^2+\sqrt{\CMD}^3}\int_{\Si_1}Q_{TTTT}\\
 \les& \norm{\R}^2_{L^2(\Si_0)}\\
  \les& \varep^2,
\end{aligned}\label{EQI1estimateM0} \end{align}
where we used the smallness of the initial data \eqref{eq:SmallData}. \\

{\bf Estimation of $\mathcal{I}_2$.} Let $(N,e_1,e_2)$ be a local frame on $\HH$ such that $(e_1,e_2)$ is an orthonormal frame tangent to $\pr \Si_t$ and $N$ is tangent to $\Si_t$ and normal to $\pr\Si_t$. Decompose $\Ttt$ into 
\begin{align} \label{eqdecompcomp2}
\Ttt = \nutt T + C_1e_1 +C_2e_2 + C_3 N,
\end{align}
and denote $C^0 := \nutt-1$. For $i=1,2,3$, we have by \eqref{EQlemmaComparisonTwoEstimates} and since $(N,e_1,e_2)$ is orthonormal,
\begin{align}\label{est:compariC2}
    \norm{C^i}_{L^\infty(\HH)} \les \sqrt{\CMD}.
  \end{align}

Using \eqref{eqdecompcomp2}, we get
\begin{align*}
\int_\HH Q_{\Ttt \Ttt \Ttt L} = &  \int_\HH Q_{TTTL} + \int_{\HH} C^\mu Q_{\mu TTL} \\ & + \int_{\HH}C^\mu C^\nu Q_{\mu \nu TL} + \int_{\HH} C^\mu C^\nu C^\la Q_{\mu \nu \la L}.
\end{align*}
By Lemmas 7.3.1 and 7.3.2 of \cite{ChrKl93}, we have 
\begin{align*}
|Q_{\mu \nu \la L}| \les Q_{TTTL}.
\end{align*}

By the above, Lemma \ref{LEMQLTTTrelations} and \eqref{eq:SmallData}, we get for $\varep>0$ and $\CMD>0$ sufficiently small,
\begin{align*} \begin{aligned}
\int_\HH Q_{TTTL} \les & \norm{\nutt}^3_{L^\infty(\HH)} \norm{\alpha}^2_{L^2(\HH)} + \norm{\nutt}_{L^\infty(\HH)} \norm{\beta}^2_{L^2(\HH)} \\ & + \norm{\nutt^{-1}}_{L^\infty(\HH)}(\norm{\rho}^2_{L^2(\HH)}+\norm{\sigma}_{L^2(\HH)}^2)
+\norm{\nutt^{-3}}_{L^\infty(\HH)} \norm{\betab}^2_{L^2(\HH)} \\
\les & \norm{\alpha}^2_{L^2(\HH)} + \norm{\beta}^2_{L^2(\HH)} + \norm{\rho}^2_{L^2(\HH)} + \norm{\sigma}^2_{L^2(\HH)} + \norm{\betab}^2_{L^2(\HH)} \\
\les& \varep^2,
\end{aligned} \end{align*}
and therefore with \eqref{est:compariC2},
\begin{align} \label{EQII2estimateM0}
\int_\HH Q_{\Ttt \Ttt \Ttt L}\les& \varep^2.
\end{align}

Plugging \eqref{EQI1estimateM0} and \eqref{EQII2estimateM0} into \eqref{EQcurvatureEststep2}, we get that for $\CMD>0$ and $\varep>0$ sufficiently small,
\begin{align*}
\Vert \R \Vert^2_{L^\infty_{\tilde t} L^2(\wt{\Si}_{\tilde t} \cap \MM)} + \sup\limits_{\om \in \SSS^2} \Vert \R \cdot \Ltt \Vert^2_{L^\infty_{{}^\om u} L^2(\HH_{{}^\om u} \cap \MM)} \les \varep^2.
\end{align*}
This proves in particular \eqref{eq:trilinearBootstrapAssumption} and consequently \eqref{eq:improvedL2curvature}.

\subsection{Elliptic estimates for $k$ on $\Si_{t^\ast}$} \label{SectionKEllipticEstimateWithBoundaryTerm} In this section we prove the next global elliptic estimate for $k$ on $\Si_{t^\ast}$ to improve the bootstrap assumption \eqref{eq:BAspacetimeEstimates01} for $k$.

\begin{proposition}[Global elliptic estimate for $k$] \label{prop:Kestim1} It holds that
\begin{align*}
\Vert \nab k \Vert^2_{L^2(\Si_{t^\ast})} + \Vert k \Vert_{L^4(\Si_{t^\ast})}^4 + \Vert k \Vert^2_{L^2(\pr \Si_{t^\ast})}+ \Vert \Nd \nu \Vert^2_{L^2(\pr \Si_{t^\ast})} \lesssim (\sqrt{D}\varep)^2.
\end{align*}
\end{proposition}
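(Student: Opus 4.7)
The plan is to exploit the fact that on a maximal hypersurface the second fundamental form $k$ is a symmetric, $g$-tracefree $2$-tensor satisfying the Hodge system
\[
\Div k = 0, \qquad \Curl k = H, \qquad \tr_g k = 0,
\]
by \eqref{eq:divk}, \eqref{eq:curlk} and \eqref{eqtrkiszero}. Applying the global elliptic estimates for such Hodge systems on $\Si_{t^\ast}$ (the subject of Appendix \ref{sec:AppendixProofOfLowRegEllipticEstimateForK}, i.e.\ Corollary \ref{integralIDKEST}), I get a Bochner-type integral identity of schematic shape
\[
\int_{\Si_{t^\ast}} |\nab k|^2 \lesssim \int_{\Si_{t^\ast}} \bigl( |H|^2 + |\RRRic||k|^2 + |k|^4 \bigr) \; + \; \int_{\pr \Si_{t^\ast}} \mathcal{B}(k, N),
\]
where $\mathcal{B}(k,N)$ is a quadratic boundary contribution that I will analyze in detail.

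First I would control the bulk integral. The term $\int |H|^2$ is bounded by $\Vert \Rbf \Vert_{L^2(\Si_{t^\ast})}^2 \lesssim \varep^2$ thanks to the already improved curvature flux \eqref{eq:improvedL2curvature}. For $\int |\RRRic||k|^2$, the traced Gauss equation \eqref{eq:RicE} lets me replace $\RRRic$ by $E + k \cdot k$, and then Sobolev embedding (Lemma \ref{LEMsobolevEmbeddingSigmaT}, valid because $\Si_{t^\ast}$ is a weakly regular ball by Section \ref{SECweakreguM0}) bounds $\Vert k \Vert_{L^6}$ by $\Vert k \Vert_{L^2} + \Vert \nab k \Vert_{L^2}$, yielding $\varep^2 (D\varep)^2$ after using \eqref{eq:BAspacetimeEstimates01}, which is absorbable for $\varep$ small. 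The term $\int |k|^4$ is genuinely part of the left-hand side and is kept.

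The main obstacle is the boundary integral $\int_{\pr \Si_{t^\ast}} \mathcal{B}(k,N)$. I would decompose $k$ on $S_{t^\ast}$ into its normal/tangential components as in Section \ref{SECfoliationSpheres}, i.e.\ $(\delta, \epsilon, \eta)$, and then re-express these via Lemma \ref{lemma:TNrelations}:
\[
\delta = \tfrac{1}{2}\nu\, \trchit + \tfrac{1}{2}\nu^{-1}\trchibt, \qquad \eta_{AB} = -\tfrac{1}{2}\nu\, \chit_{AB} - \tfrac{1}{2}\nu^{-1}\chibt_{AB},
\]
and via the slope equation \eqref{eq:slope}, $\epsilon_A = \zeta_A - \nu^{-1}\Nd_A \nu$. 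The structure of $\mathcal B$ coming from the Hodge integration-by-parts will be symmetric-quadratic in these components, and I expect (as advertised in step (5) of the overview) that it splits into:
\begin{itemize}
\item a part that is a positive-definite quadratic form in $\nu^{-1}\Nd\nu$ and in $(\delta, \eta)$ modulo terms of size $\OO_0^\HH \lesssim \varep$, which therefore produces exactly $\Vert \Nd\nu \Vert_{L^2(\pr\Si_{t^\ast})}^2 + \Vert k \Vert_{L^2(\pr \Si_{t^\ast})}^2$ on the left;
\item a remainder that is a product of $k$-components with the $\varep$-small null connection coefficients of the canonical foliation ($\trchi - 2/v$, $\trchib + 2/v$, $\chih$, $\chibh$, $\zeta$) controlled by $\OO_0^\HH \leq \varep$ through \eqref{eq:SmallData}, and that can be estimated using H\"older on $S_{t^\ast}$, the trace inequality Lemma \ref{PROPtraceEstimate}, and the bootstrap bound $\Vert \nu - 1 \Vert_{L^\infty(S_t)} \leq D\varep$.
\end{itemize}
Cancellations between the leading $\nu \cdot (2/t^\ast)$ and $\nu^{-1}\cdot(2/t^\ast)$ contributions in $\delta$ and $\eta$ are crucial here; what survives is of the form $(\nu-1)/t^\ast$ plus $\varep$-small terms, both of which produce $(D\varep)^2$ contributions after Cauchy–Schwarz.

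Putting these estimates together and absorbing the quadratic left-hand-side contributions using smallness of $\varep$, I arrive at
\[
\Vert \nab k \Vert_{L^2(\Si_{t^\ast})}^2 + \Vert k \Vert_{L^4(\Si_{t^\ast})}^4 + \Vert k \Vert_{L^2(\pr \Si_{t^\ast})}^2 + \Vert \Nd \nu \Vert_{L^2(\pr \Si_{t^\ast})}^2 \lesssim D\varep^2 = (\sqrt{D}\varep)^2,
\]
which is the claimed bound. The hardest step is identifying the algebraic structure of the boundary integrand $\mathcal B$ with enough precision to see the good terms $\Vert \Nd\nu \Vert_{L^2(\pr \Si_{t^\ast})}^2$ and $\Vert k \Vert_{L^2(\pr \Si_{t^\ast})}^2$ appear with a favorable sign so that they can be placed on the left, rather than being absorbed by a Gr\"onwall-type argument which would be insufficient for the final $\sqrt{D}$ improvement.
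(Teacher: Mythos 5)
Your overall strategy matches the paper's proof of Proposition \ref{prop:Kestim1}: apply the fundamental global Hodge elliptic estimate for $k$ (Corollary \ref{integralIDKEST}), decompose the boundary integrand using the $(\delta,\epsilon,\eta)$-components and the slope equation $\epsilon_A = \zeta_A - \nu^{-1}\Nd_A\nu$, and extract a coercive part in $\Nd\nu$ and $k\vert_{\pr\Si_{t^\ast}}$ to be moved to the left, with a remainder estimated by the initial-data smallness. The bulk treatment is consistent with the paper's as well.

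There is, however, a concrete gap in your treatment of the boundary remainder. You assert it consists entirely of products of $k$-components with the $\varep$-small null connection coefficients ($\trchi-2/v$, $\trchib+2/v$, $\chih$, $\chibh$, $\zeta$), estimable by H\"older, the trace inequality of Lemma \ref{PROPtraceEstimate}, and the bootstrap bound on $\nu-1$. But the Gauss--Codazzi identity \eqref{eq:DivEtaNabDeRelation}, once substituted into the boundary integrand arising from $\int_{\pr\Si_{t^\ast}}2(\Divd\eta)_A\epsilon^A$, produces a term $\int_{\pr\Si_{t^\ast}}\in^{AB}H_{BN}\epsilon_A$ (the term $I_2$ in \eqref{eq:EllEstKFullGeneral2}), where $H_{BN}$ is a \emph{curvature} component, not a connection coefficient, and is not $\varep$-small in any pointwise sense on $\pr\Si_{t^\ast}$. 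This term cannot be handled by the linear trace inequality: controlling $\Vert H\Vert_{L^2(\pr\Si_{t^\ast})}$ by $\Vert H\Vert_{L^2(\Si_{t^\ast})} + \Vert\nab H\Vert_{L^2(\Si_{t^\ast})}$ would require a $\D\Rbf$ bound in $L^2$, which is unavailable at the level of bounded $L^2$ curvature; and the flux $\RRt_0^\HH$ gives only $L^2(\HH)$-averaged control of the null curvature components, not an $L^2(S_{t^\ast})$-slice bound. The paper handles this via a \emph{bilinear} trace argument, see \eqref{eq:SecondTermBAMainK}: write the boundary integral as $\int_{t^\ast/2}^{t^\ast}\pr_r\lrpar{\phi\int_{\SS_r}\in^{AB}H_{BN}\epsilon_A}\,dr$ with a radial cutoff $\phi$, use the Bianchi equation \eqref{eq:BianchiForEandH} to exchange $\nab_N H$ for tangential derivatives of $H$, and integrate by parts on $\SS_r$ to transfer those derivatives onto $\epsilon$ (which \emph{is} controlled in $H^1(\Si_{t^\ast})$ by the bootstrap). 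Without this step your proposed boundary analysis would not close.
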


\begin{proof} From Corollary \ref{integralIDKEST}, we have the following well-known, classical global elliptic estimate for $k$ (a proof is provided in Appendix \ref{SECellipticEstimatesKnotation}),
\begin{align}\label{eq:energyID1222}
\int\limits_{\Si_{t^\ast}} \vert \nab k \vert^2 + \frac{1}{4} \vert k \vert^4- \int\limits_{\pr \Si_{t^\ast}}  \nab_a k_{bN} k^{ba} \lesssim \int\limits_{\Si_{t^\ast}} \vert \mathbf{R} \vert_{\mathbf{h}^t}^2,
\end{align}
where $N$ denotes the outward-pointing unit normal to $\pr \Si_{t^\ast} \subset \Si_{t^\ast}$.\\

By \eqref{eq:divk}, the boundary term on the left-hand side of \eqref{eq:energyID1222} can be rewritten as
\begin{align} \begin{aligned}
- \int\limits_{\pr \Si_{t^\ast}}  \nab_a k_{bN} k^{ba} =& - \int\limits_{\pr \Si_{t^\ast}} \Big( \nab_N k_{bN} k^{bN} + \nab_C k_{bN} k^{bC} \Big) \\
=&-  \int\limits_{\pr \Si_{t^\ast}} \Big( - \nab^C k_{bC} k^{bN}+ \nab_C k_{NN} k^{NC} + \nab_C k_{AN} k^{AC} \Big)\\
=& \int\limits_{\pr \Si_{t^\ast}} \nab^C k_{AC} k^{AN} + \nab_C k_{NC} k^{NN} - \nab_C k_{NN} k^{NC} - \nab_C k_{AN} k^{AC}.
\end{aligned} \label{eq:firstdecompConstraints} \end{align}

Using the boundary decomposition of $k$ on $\pr \Si_{t^\ast}$, see Section \ref{SECspacelikefoliations},
\begin{align*}
\de := k_{NN}, \, \ep_A := k_{NA}, \, \eta_{AB} :=k_{AB},
\end{align*}
together with \eqref{eq:DivEtaNabDeRelation} and integration by parts, the right-hand side of \eqref{eq:firstdecompConstraints} equals 
\begin{align} \begin{aligned}
- \int\limits_{\pr \Si_{t^\ast}}  \nab_a k_{bN} k^{ba} =& \int\limits_{\pr \Si_{t^\ast}} 2 (\Divd \eta)_{A} \ep^A  - 2 \ep \cdot \Nd \de + 3 \Th_{AB} \ep^A \ep^B + \tr \Th \vert \ep \vert^2 \\
&+  \int\limits_{\pr \Si_{t^\ast}} \de^2 \tr \Th -2 \de \eta_{AB} \Th^{AB}  + \eta_{AB} \Th_{BC} \eta^{CA} \\
=& \int\limits_{\pr \Si_{t^\ast}} - 4 \ep \cdot \Nd \de - 2 \in_{AB} H^B_{\,\,\, N} \ep^A + 5 \Th_{AB} \ep^A \ep^B +3 \tr \Th \vert \ep \vert^2 \\
&+  \int\limits_{\pr \Si_{t^\ast}} \de^2 \tr \Th -2 \de \eta_{AB} \Th^{AB}  + \eta_{AB} \Th_{BC} \eta^{CA}.
\end{aligned} \label{eq53434} 
\end{align}

The right-hand side of \eqref{eq53434} is then rewritten as
\begin{align} \begin{aligned}
- \int\limits_{\pr \Si_{t^\ast}}  \nab_a k_{bN} k^{ba} =& - \int\limits_{\pr \Si_{t^\ast}} 4 \ep \cdot \Nd \de + 2 \in^{AB} H_{B N} \ep_A \\
&+ \int\limits_{\pr \Si_{t^\ast}}  \left( \tr \Theta - \frac{2}{{t^\ast}} \right)\left( \frac{11}{2}\vert \ep \vert^2 + 2  \vert \de \vert^2 + \frac{1}{2}  \vert \eta \vert^2 \right)\\
&+ \int\limits_{\pr \Si_{t^\ast}} 5 \widehat{\Theta}_{AB} \ep^A \ep^B - 2 \de \widehat{\Theta}_{AB} \eta^{AB} - \eta_{AC} \widehat{\Theta}^{CB} \eta_{B}^{\,\,\,\, A} \\
&+ \int\limits_{\pr \Si_{t^\ast}} \frac{11}{{t^\ast}} \vert \ep \vert^2 + \frac{4}{{t^\ast}} \vert \de \vert^2 + \frac{1}{{t^\ast}} \vert \eta \vert^2.
\end{aligned} \label{eq:boundaryestimate111}
\end{align}

On the right-hand side of \eqref{eq:boundaryestimate111}, we can use Lemmas \ref{lemma:TNrelations} and \ref{lemma:slopeEquation}, that is, the relations
\begin{align} \begin{aligned}
\ep_A=& - \nut^{-1} \Nd_A \nut + \zet_A, \\
\de=& \half \nu \tr \chi + \half \nu^{-1} \tr \chib, \\
 \Nd \de =& \Nd \left( \half \nu \tr \chi + \half \nu^{-1} \tr \chib \right) \\
 =&  \underbrace{\left( \frac{1}{t^\ast} \left( 1+ \frac{1}{\nu^2}\right) + \half \left(\tr \chi - \frac{2}{t^\ast} \right) - \half \frac{1}{\nu^2} \left( \tr \chib +\frac{2}{t^\ast} \right) \right)}_{=: F(\nu, \tr \chi, \tr \chib)} \Nd \nu \\
 &+ \half \nu \Nd \tr \chi + \half \nu^{-1} \Nd \tr \chib,
\end{aligned} \label{eq:deltanuRelation} \end{align}
to rewrite
\begin{align} \begin{aligned} 
- \int\limits_{\pr \Si_{t^\ast}} 4\ep \cdot \Nd \de =& - \int\limits_{\pr \Si_{t^\ast}}4\zet \cdot \Nd \de +\int\limits_{\pr \Si_{t^\ast}} 4\nut^{-1} \Nd_A \nut \Nd^A \de \\
=& - \int\limits_{\pr \Si_{t^\ast}}4\zet \cdot \Nd \de + \int\limits_{\pr \Si_{t^\ast}}4\nu^{-1} F(\nu,\tr \chi,\tr \chib) \vert \Nd \nu \vert^2 \\
&+  \int\limits_{\pr \Si_{t^\ast}}2 \left( \Nd \nu \cdot \Nd \tr \chi + \nu^{-2} \Nd \nu \cdot \Nd \tr \chib \right).
\end{aligned} \label{eq:bdryterm1analysis1} \end{align}

By \eqref{eq:BAspacetimeEstimates01} and \eqref{lem:FirstConsequencesSpacetime1}, it holds for $\varep>0$ sufficiently small that on $\pr \Si_{t^\ast}$,
\begin{align*}
\nu^{-1} F(\nu, \tr \chi,\tr \chib) \geq \frac{1}{8}.
\end{align*}
Hence, for $\varep>0$ sufficiently small, \eqref{eq:bdryterm1analysis1} yields
\begin{align} \label{eq:BdryTermEstimate1}
- \int\limits_{\pr \Si_{t^\ast}} 4\ep \cdot \Nd \de \geq + \int\limits_{\pr \Si_{t^\ast}} \half \vert \Nd \nu \vert^2 + \int\limits_{\pr \Si_{t^\ast}} 2 \left( \Nd \nu \cdot \Nd \tr \chi + \nu^{-2} \Nd \nu \cdot \Nd \tr \chib \right)-4 \zeta \cdot \Nd \de.
\end{align}

Plugging \eqref{eq:boundaryestimate111} and \eqref{eq:BdryTermEstimate1} into \eqref{eq:energyID1222}, we get for $\varep>0$ sufficiently small that
\begin{align} \begin{aligned}
&\int\limits_{\Si_{t^\ast}} \vert \nab k \vert^2 + \frac{1}{4} \vert k \vert^4 + \int\limits_{\pr \Si_{t^\ast}} \vert k \vert^2 + \int\limits_{\pr \Si_{t^\ast}} \vert \Nd \nu \vert^2 \\
\lesssim& \underbrace{\int\limits_{\Si_{t^\ast}} \vert \mathbf{R} \vert_{\mathbf{h}^t}^2}_{:= I_1}+ \underbrace{\left\vert \, \int\limits_{\pr \Si_{t^\ast}} \in^{AB} H_{B N} \ep_A \right\vert}_{:=I_2}  + \underbrace{\left\vert \, \int\limits_{\pr \Si_{t^\ast}}  \left( \tr \Theta - \frac{2}{{t^\ast}} \right) \vert k \vert^2 \right\vert}_{:=I_3} \\
&+ \underbrace{\left\vert \, \int\limits_{\pr \Si_{t^\ast}} \widehat{\Theta}_{AB} \ep^A \ep^B \right\vert}_{:=I_4} + \underbrace{\left\vert \, \int\limits_{\pr \Si_{t^\ast}} \de \widehat{\Theta}_{AB} \eta^{AB} \right\vert}_{:=I_5} + \underbrace{\left\vert \, \int\limits_{\pr \Si_{t^\ast}} \eta_{AC} \widehat{\Theta}^{CB} \eta_{B}^{\,\,\,\, A} \right\vert}_{:=I_6} \\
&+ \underbrace{\left\vert \, \int\limits_{\pr \Si_{t^\ast}} \zeta \cdot \Nd \de \right\vert}_{:=I_7} + \underbrace{\left\vert \, \int\limits_{\pr \Si_{t^\ast}} \Nd \nu \cdot \Nd \tr \chi \right\vert}_{:=I_8} + \underbrace{\left\vert \, \int\limits_{\pr \Si_{t^\ast}} \nut^{-2} \Nd \nu \Nd \tr \chib \right\vert}_{:=I_9}.
\end{aligned} \label{eq:EllEstKFullGeneral2} \end{align}
In the following, we bound each term $I_1$-$I_9$ from \eqref{eq:EllEstKFullGeneral2}. \\


{\bf Estimation of $I_1$.} From \eqref{eq:improvedL2curvature}, we directly have
\begin{subequations}
\begin{align} \label{eq:i1estimateIMPROVED}
I_1 := \int\limits_{\Si_{t^\ast}} \vert \mathbf{R} \vert_{\mathbf{h}^t}^2 \lesssim \varep^2.
\end{align}

{\bf Estimation of $I_2$.} The integral $I_2$ is estimated by
\begin{align} \begin{aligned}
I_2 := \left\vert \, \int\limits_{\pr \Si_{t^\ast}} \in^{AB} H_{B N} \ep_A \right\vert \lesssim& \Big(\Vert \ep \Vert_{L^2(\Si_{t^\ast})} + \Vert \nab \ep \Vert_{L^2(\Si_{t^\ast})} \Big) \Big( \Vert H \Vert_{L^2(\Si_{t^\ast})} + \Vert E \Vert_{L^2(\Si_{t^\ast})} \Big)  \\
\lesssim& D\varep \cdot \varep \\
\les& (\sqrt{D}\varep)^2.
\end{aligned} \label{eq:SecondTermBAMainK} \end{align}
Indeed, \eqref{eq:SecondTermBAMainK} follows from a standard bilinear trace theorem (see for example Lemma 4.14 in \cite{J3}). For completeness, we outline the proof of \eqref{eq:SecondTermBAMainK} here. In the following we use \eqref{EqBounds2}, spherical coordinates $(r,\th^1, \th^2)$ on $\Si_{t^\ast}$ as defined in Section \ref{SECfoliationSpheres}, and \eqref{EQCoeffSmallSpherical}. Let $\phi: \Si_{t^\ast} \to [0,1]$ be a smooth radial cut-off function such that $\phi(p)=1$ for $r(p) \geq 3t^\ast/4$ and $\phi(p) =0$ for $r(p) \leq t^\ast/2$. Then we have by the fundamental theorem of calculus, the Bianchi identity \eqref{eq:BianchiForEandH} and integration by parts on $\SS_r$,
\begin{align*}
\int\limits_{\pr \Si_{t^\ast}} \in^{AB} H_{BN} \ep_A =& \int\limits_{t^\ast/2}^{t^\ast} \pr_r \lrpar{\phi \int\limits_{\SS_r} \in^{AB} H_{BN} \ep_A} dr \\
\les& \int\limits_{t^\ast/2}^{t^\ast} \phi \lrpar{\, \int\limits_{\SS_r} \in^{AB} (\nab_N H_{BN}) \ep_A}dr + \varep (D\varep) \\
\les& \int\limits_{t^\ast/2}^{t^\ast} \phi \lrpar{\, \int\limits_{\SS_r} \in^{AB} (-\Nd_C H_{BC}) \ep_{A}} dr + \varep(D \varep) \\
\les& \int\limits_{t^\ast/2}^{t^\ast} \phi \lrpar{\, \int\limits_{\SS_r} \in^{AB} H_{BC} \Nd_C \ep_{A}} dr + \varep(D \varep) \\
\les& \Vert H \Vert_{L^2(\Si_{t^\ast})} \Vert \Nd \ep \Vert_{L^2(\Si_{t^\ast})} + \varep(D\varep) \\
\les& \varep(D \varep)\\
\les& (\sqrt{D}\varep)^2,
\end{align*} 
where we estimated the error terms by $\varep D\varep$ and used that $0<\CMD<1/2$; details are left to the reader. \\

{\bf Estimation of $I_3$.} Using \eqref{eq:BAspacetimeEstimates01} and \eqref{lem:FirstConsequencesSpacetime1}, we have
\begin{align} \begin{aligned}
I_3 := \left\vert \, \int\limits_{\pr \Si_{t^\ast}}  \left( \tr \Theta - \frac{2}{{t^\ast}} \right) \vert k \vert^2 \right\vert \leq& \left\Vert \tr \Th - \frac{2}{t^\ast} \right\Vert_{L^\infty(\pr \Si_{t^\ast})}\lrpar{ \,\, \int\limits_{\pr \Si_{t^\ast}} \vert k \vert^2} \\
\les&  (D \varep)^3.
\end{aligned} \label{eq:i3estimate} \end{align}

{\bf Estimation of $I_4,I_5$ and $I_6$.} By Lemma \ref{Lemma:TraceEstimateH1toL4bdry}, \eqref{lem:FirstConsequencesSpacetime1} and \eqref{lem:FirstConsequencesSpacetime2}, we have
\begin{align} \begin{aligned} 
I_4:= \left\vert \, \int\limits_{\pr \Si_{t^\ast}} \widehat{\Theta}_{AB} \ep^A \ep^B \right\vert \lesssim& \Vert \widehat{\Th} \Vert_{L^4(\pr \Si_{t^\ast})} \Vert \ep \Vert_{L^2(\pr \Si_{t^\ast})} \Vert \ep \Vert_{L^4(\pr \Si_{t^\ast})} \\
\lesssim& \Vert \widehat{\Th} \Vert_{L^4(\pr \Si_{t^\ast})} \Vert \ep \Vert_{L^2(\pr \Si_{t^\ast})}\Big( \Vert \ep \Vert_{L^2(\Si_{t^\ast})} + \Vert \nab \ep \Vert_{L^2(\Si_{t^\ast})} \Big)\\
\lesssim&  (D\varep)^3 .\end{aligned} \label{eq:i4TOi7estimate} \end{align}
The terms $I_5$ and $I_6$ are bounded similarly as
\begin{align}
I_5 +I_6 \lesssim (D\varep)^3 ;
\end{align}
details are left to the reader. \\

{\bf Estimation of $I_7,I_8$ and $I_9$.} By \eqref{eq:SmallData} and \eqref{eq:deltanuRelation}, we have
\begin{align} \begin{aligned}
I_7:=&\left\vert \, \int\limits_{\pr \Si_{t^\ast}} \zeta \cdot \Nd \de \right\vert \\
\leq& \left\vert \, \int\limits_{\pr \Si_{t^\ast}} F(\nu, \tr \chi, \tr \chib) \zeta \cdot \Nd \nu + \half \zeta\cdot \left( \nu \Nd \tr \chi + \nu^{-1} \Nd \tr \chib  \right) \right\vert \\
\lesssim&\left(1+ \Vert \nu-1 \Vert_{L^\infty(\pr \Si_{t^\ast})} + \left\Vert \tr \chi - \frac{2}{t^\ast} \right\Vert_{L^\infty(\pr \Si_{t^\ast})} +\left\Vert \tr \chib + \frac{2}{t^\ast} \right\Vert_{L^\infty(\pr \Si_{t^\ast})} \right) \\
& \cdot \Vert \zeta \Vert_{L^2(\pr \Si_{t^\ast})} \Big( \Vert \Nd \nu \Vert_{L^2(\pr \Si_{t^\ast})} + \Vert \Nd \tr \chi \Vert_{L^2(\pr \Si_{t^\ast})} + \Vert \Nd \tr \chib \Vert_{L^2(\pr \Si_{t^\ast})} \Big) \\
\lesssim& \varep(D\varep) + \varep^2.
\end{aligned} \end{align}

The terms $I_8$ and $I_9$ are bounded similarly by \eqref{eq:SmallData},
\begin{align} \label{eq:EightAndNinthTermMainBAk}
I_8+I_9 \lesssim (\sqrt{D}\varep)^2;
\end{align}
details are left to the reader. \end{subequations} \\

Plugging \eqref{eq:i1estimateIMPROVED}-\eqref{eq:EightAndNinthTermMainBAk} into \eqref{eq:EllEstKFullGeneral2}, we get that for $\CMD>0$ and $\varep>0$ sufficiently small, 
\begin{align*}
\int\limits_{\Si_{t^\ast}} \vert \nab k \vert^2 + \frac{1}{4} \int\limits_{\Si_{t^\ast}}\vert k \vert^4 + \int\limits_{\pr \Si_{t^\ast}} \vert k \vert^2 + \int\limits_{\pr \Si_{t^\ast}} \vert \Nd \nu \vert^2 \les(\sqrt{D}\varep)^2.
\end{align*}
This finishes the proof of Proposition \ref{prop:Kestim1}. \end{proof}

It remains to control the $L^2$-norm of $k$ on $\Si_{t^\ast}$.
\begin{lemma} \label{lemma:kBAmainImprov2} It holds that
$$\Vert k \Vert_{L^2(\Si_{t^\ast})} \lesssim \sqrt{D}\varep.$$
\end{lemma}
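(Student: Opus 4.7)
The plan is to combine the bounds on $\nab k$ in $L^2(\Si_{t^\ast})$ and on $k$ in $L^2(\pr \Si_{t^\ast})$ already provided by Proposition~\ref{prop:Kestim1} via a Poincar\'e-trace inequality on the weakly regular ball $\Si_{t^\ast}$. Note that the $L^4(\Si_{t^\ast})$ estimate for $k$ in Proposition~\ref{prop:Kestim1} is too weak to conclude by H\"older, since it yields only $\Vert k \Vert_{L^2(\Si_{t^\ast})} \lesssim (\sqrt{D}\varep)^{1/2}$, so the interior $L^2$ bound must genuinely be reconstructed from the gradient and the boundary trace.

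Specifically, I claim that for any $S_v$-independent tensor field $u$ on $\Si_{t^\ast}$,
\begin{align*}
\Vert u \Vert_{L^2(\Si_{t^\ast})}^2 \lesssim \Vert \nab u \Vert_{L^2(\Si_{t^\ast})}^2 + \Vert u \Vert_{L^2(\pr \Si_{t^\ast})}^2,
\end{align*}
with implicit constant independent of weakly regular balls of constant $\CMD$ sufficiently small. To prove this I would work in the global coordinate chart $\phi : \overline{B(0,1)} \to \Si_{t^\ast}$ supplied by Definition~\ref{DEFweakRegdisks}, pick the radial vector field $X^i := x^i/3$, and compute $\Div X = 1 + f$, where the correction $f$ satisfies $\Vert f \Vert_{L^6(\Si_{t^\ast})} \lesssim \CMD$ because it is built from $\pr g \in H^1 \hookrightarrow L^6$ with $H^2$-small $g-e$. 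Both $X$ and $\g(X,N)$ remain uniformly bounded by the $C^0$-closeness of $g$ to the Euclidean metric (Sobolev $H^2 \hookrightarrow C^0$ in three dimensions). Applying the divergence theorem to $|u|^2 X$ yields
\begin{align*}
\int\limits_{\Si_{t^\ast}} |u|^2 \, \Div X = \int\limits_{\pr \Si_{t^\ast}} |u|^2 \, \g(X,N) - 2 \int\limits_{\Si_{t^\ast}} \g(u, \nab_X u),
\end{align*}
and Cauchy--Schwarz combined with Young's inequality, together with the Sobolev inequality $\Vert u \Vert_{L^6(\Si_{t^\ast})} \lesssim \Vert u \Vert_{L^2(\Si_{t^\ast})} + \Vert \nab u \Vert_{L^2(\Si_{t^\ast})}$ from Lemma~\ref{LEMsobolevEmbeddingSigmaT} to absorb the $L^6$-small error from $f$, allow us to move $\Vert u \Vert_{L^2(\Si_{t^\ast})}^2$ to the left-hand side and obtain the claimed inequality.

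Applying this Poincar\'e-trace inequality to $u = k$ and using Proposition~\ref{prop:Kestim1} then immediately gives
\begin{align*}
\Vert k \Vert_{L^2(\Si_{t^\ast})}^2 \lesssim \Vert \nab k \Vert_{L^2(\Si_{t^\ast})}^2 + \Vert k \Vert_{L^2(\pr \Si_{t^\ast})}^2 \lesssim (\sqrt{D}\varep)^2,
\end{align*}
which is the desired conclusion. The only mildly delicate step is that $\Div X$ is not pointwise close to $1$ (because $\nab g$ is controlled only in $L^6$, not $L^\infty$); this is handled by the $L^6$-smallness of the correction together with Sobolev embedding, and is where the $H^2$-regularity built into the notion of a weakly regular ball---established for $\Si_{t^\ast}$ in Section~\ref{SECweakreguM0}---plays its role.
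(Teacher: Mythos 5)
Your proof is correct, and the underlying mechanism -- exploiting the radial structure of the ball to convert the interior $L^2$ norm into a combination of the gradient norm and the boundary trace norm -- is the same as the paper's. The technical execution differs: the paper works directly in spherical coordinates, applies the fundamental theorem of calculus in the $r$-direction to $\frac{1}{r^2}\int_{\SS_r}|k|^2 \,d\mu_\ga$, and bounds the resulting $\nab_{\pr_r}k\cdot k$ term via Cauchy--Schwarz, using the pointwise bound $|\pr_r|_g^2 \leq 1+\CMD$ to compare with the Euclidean picture. You instead state a general Poincar\'e-trace inequality on weakly regular balls and prove it via a multiplier argument, testing the divergence theorem against $|u|^2 X$ with $X^i = x^i/3$, then absorbing the $\Div X \neq 1$ correction (controlled in $L^6$ by the $H^2$-regularity of $g$) via H\"older, Sobolev embedding, and Young's inequality. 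Both approaches require the same inputs from Proposition~\ref{prop:Kestim1} and the same global-chart control from Section~\ref{SECweakreguM0}. What your formulation buys is modularity: the Poincar\'e-trace inequality is a clean, reusable statement, and it makes explicit the absorption step that the paper's in-line computation leaves somewhat implicit. It costs slightly more machinery ($L^6$ embedding, interpolation/Young) where the paper's direct spherical-coordinate computation uses only Cauchy--Schwarz. Your remark that the $L^4(\Si_{t^\ast})$ bound on $k$ from Proposition~\ref{prop:Kestim1} alone gives the insufficient estimate $\Vert k\Vert_{L^2} \lesssim (\sqrt{D}\varep)^{1/2}$ is accurate and correctly motivates the need for the gradient-plus-trace route.
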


\begin{proof} The estimate follows in a standard way from the above bounds for $\Vert \nab k \Vert_{L^2(\Si_{t^\ast})}$ and $\Vert k \Vert_{L^2(\pr \Si_{t^\ast})}$, and the $L^\infty$-control of $g_{ij}$ established in Section \ref{SECweakreguM0}. For completeness, we write out full details. Using \eqref{EqBounds2}, define spherical coordinates $(r,\th^1, \th^2)$ on $\Si_{t^\ast}$ as in Section \ref{SECfoliationSpheres}. Let ${\ga}$ and $d \mu_{\ga}$ denote the standard round metric on $\SS_r$ (of radius $r>0$) and its volume element, respectively. Using the fundamental theorem of calculus, it holds that
\begin{align*}
\frac{1}{r^2} \int\limits_{\SS_r} \vert k \vert_g^2 d\mu_\ga =& \int\limits_{t^\ast}^r \pr_{r'} \left( \frac{1}{r'^2} \int\limits_{\SS_{r'}} \vert k \vert^2_g d\mu_\ga \right) dr' + \frac{1}{{t^\ast}^2}\int\limits_{\SS_{t^\ast}} \vert k \vert_g^2 d\mu_\ga \\
=& \int\limits_{t^\ast}^r \frac{1}{r'^2} \lrpar{\, \int\limits_{\SS_{r'}} \pr_{r'} \left( \vert k \vert^2_g \right) d\mu_\ga } dr' + \frac{1}{{t^\ast}^2}\int\limits_{\SS_{t^\ast}} \vert k \vert_g^2 d\mu_\ga \\
=&  \int\limits_{t^\ast}^r \frac{1}{r'^2} \lrpar{\, \int\limits_{\SS_{r'}} \nab_{\pr_{r'}} k \cdot k \, d\mu_\ga } dr' + \frac{1}{{t^\ast}^2}\int\limits_{\SS_{t^\ast}} \vert k \vert_g^2 d\mu_\ga \\
\leq& \frac{1}{r^2} \left\Vert  \vert \pr_r \vert_g^2 \right\Vert_{L^\infty(\ol{B(0,t^\ast)})} \left(  \int\limits_{t^\ast}^r \int\limits_{\SS_{r'}} \vert \nab k \vert^2_g d\mu_\ga dr' \right)^{1/2} \left(  \int\limits_{t^\ast}^r \int\limits_{\SS_{r'}} \vert k \vert^2_g d\mu_\ga dr' \right)^{1/2} \\
&+ \frac{1}{{t^\ast}^2}\int\limits_{\SS_{t^\ast}} \vert k \vert_g^2 d\mu_\ga \\
\les&  \frac{1}{r^2} \left\Vert  \vert \pr_r \vert_g^2 \right\Vert_{L^\infty(\ol{B(0,t^\ast)})} (1+\CMD) \Vert \nab k \Vert_{L^2(\Si_{t^\ast})} (1+\CMD) \Vert k \Vert_{L^2(\Si_{t^\ast})} \\
&+ \frac{1}{{t^\ast}^2}\int\limits_{\SS_{t^\ast}} \vert k \vert_g^2 d\mu_\ga.
\end{align*}
Multiplying the above with $r^2$, using Proposition \ref{prop:Kestim1} and that by definition of the spherical coordinates on $\overline{B(0,t^\ast)}$ and the weak regularity of $\Si_{t^\ast}$ with constant $\CMD$, for $r>0$,
\begin{align*}
 \vert \pr_r \vert_g^2 =  g_{rr} =  \frac{x^i}{r} \frac{x^j}{r} g_{ij} \leq (1+ \CMD)\frac{x^i}{r} \frac{x^j}{r} e_{ij} =1+\CMD,
\end{align*}
it follows that for $0< r \leq t^\ast$,
\begin{align*}
\int\limits_{\SS_r} \vert k \vert_g^2 d\mu_\ga \lesssim \Vert \nab k \Vert_{L^2(\Si_{t^\ast})} \Vert k \Vert_{L^2(\Si_{t^\ast})} + \int\limits_{\pr \Si_{t^\ast}} \vert k \vert_g^2 \lesssim (\sqrt{D} \varep)^2.
\end{align*}
In particular, using that $\Si_{t^\ast}$ is a weakly regular ball with constant $0<\CMD<1/2$, we get that
\begin{align*}
\Vert k \Vert_{L^2(\Si_{t^\ast})}^2 \lesssim \int\limits_{\Si_{t^\ast}} \vert k \vert^2_g d\mu_e \lesssim \int\limits_0^{t^\ast} \lrpar{\, \int\limits_{\SS_r} \vert k \vert_g^2 d\mu_\ga} dr \les& (\sqrt{D} \varep)^2,
\end{align*}
where $e$ denotes the Euclidean metric on $\Si_{t^\ast}= \ol{B(0,1)}$. This finishes the proof of Lemma \ref{lemma:kBAmainImprov2}. \end{proof}

Lemmas \ref{prop:Kestim1} and \ref{lemma:kBAmainImprov2} together with the trace estimate of Lemma \ref{Lemma:TraceEstimateH1toL4bdry} yield the following.
\begin{corollary} \label{corollary:KL4onbdry} It holds that
\begin{align*}
\Vert k \Vert_{H^{1/2}(\pr \Si_{t^\ast})}+\Vert k \Vert_{L^4(\pr \Si_{t^\ast})} \lesssim \sqrt{D}\varep.
\end{align*}
\end{corollary}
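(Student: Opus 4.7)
The plan is to apply the trace estimates of Lemma \ref{Lemma:TraceEstimateH1toL4bdry} directly to the second fundamental form $k$, regarded as a symmetric $2$-tensor on $\Si_{t^\ast}$. Since we have already established in Section \ref{SECweakreguM0} that, for $\CMD>0$ and $\varep>0$ sufficiently small, $\Si_{t^\ast}$ is a weakly regular ball with constant $\CMD$, in particular the metric components $g_{ij}$ satisfy the pointwise bound
\begin{align*}
\tfrac{1}{4} e_{ij} \le g_{ij} \le 2 e_{ij},
\end{align*}
so the hypotheses of Lemma \ref{Lemma:TraceEstimateH1toL4bdry} are met and we get the trace inequalities
\begin{align*}
\Vert k \Vert_{L^4(\pr \Si_{t^\ast})} &\lesssim \Vert k \Vert_{L^2(\Si_{t^\ast})} + \Vert \nab k \Vert_{L^2(\Si_{t^\ast})},\\
\Vert k \Vert_{H^{1/2}(\pr \Si_{t^\ast})} &\lesssim \Vert k \Vert_{L^2(\Si_{t^\ast})} + \Vert \nab k \Vert_{L^2(\Si_{t^\ast})}.
\end{align*}

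To conclude, I would plug in the two bounds already available. Proposition \ref{prop:Kestim1} provides $\Vert \nab k \Vert_{L^2(\Si_{t^\ast})} \lesssim \sqrt{D}\varep$, and Lemma \ref{lemma:kBAmainImprov2} provides $\Vert k \Vert_{L^2(\Si_{t^\ast})} \lesssim \sqrt{D}\varep$. Summing these gives
\begin{align*}
\Vert k \Vert_{L^4(\pr \Si_{t^\ast})} + \Vert k \Vert_{H^{1/2}(\pr \Si_{t^\ast})} \lesssim \sqrt{D}\varep,
\end{align*}
which is the desired statement. There is no genuine obstacle here: the corollary is really a routine combination of the already proven interior bounds with the trace estimate, and the only thing to check (trivially satisfied by Section \ref{SECweakreguM0}) is that the regularity hypotheses needed for the $H^{1/2}$ trace inequality \eqref{EQtraceEstimate12prereq}, namely that $\Si_{t^\ast}$ is a weakly regular ball with constant $\CMD$, are in force.
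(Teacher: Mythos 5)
Your proposal is correct and is precisely the argument the paper intends: the paper states the corollary follows from Proposition \ref{prop:Kestim1}, Lemma \ref{lemma:kBAmainImprov2}, and the trace estimate of Lemma \ref{Lemma:TraceEstimateH1toL4bdry}, which is exactly the combination you spell out, and you correctly note that the weak-regular-ball hypothesis needed for the $H^{1/2}$ trace bound \eqref{EQtraceEstimate12prereq} was already secured in Section \ref{SECweakreguM0}.
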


\subsection{Improvement of $\nu-1$} We first prove the next lemma.
\begin{lemma} \label{lem:NuImprov1} It holds that
\begin{align*}
\Vert \nu -1 \Vert_{L^2(\pr \Si_{t^\ast})} \les \sqrt{D}\varep.
\end{align*}
\end{lemma}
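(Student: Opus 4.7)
The plan is to integrate the transport equation of Lemma~\ref{lemTransportNUalongL},
\begin{align*}
L(\nu) = -n^{-1}N(n) - \de,
\end{align*}
along the null generators of $\HH$, starting from $S_1$ where $\nu \equiv 1$. Parameterizing $\HH$ by $(v,\om)$ with $\om \in S_1$ labelling the generator and using that $L = \Om\,\pr_v$ along each generator, this yields
\begin{align*}
\nu(t^\ast, \om) - 1 \,=\, -\int_1^{t^\ast}\Om^{-1}\bigl(n^{-1}N(n) + \de\bigr)(v', \om)\,dv'.
\end{align*}

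Squaring, integrating over $S_{t^\ast}$, applying Cauchy--Schwarz in $v'$, and changing variables along null generators, one obtains
\begin{align*}
\Vert \nu - 1\Vert_{L^2(\pr\Si_{t^\ast})}^2 \,\les\, \int_1^{t^\ast}\bigl(\Vert N(n)\Vert_{L^2(S_v)}^2 + \Vert \de\Vert_{L^2(S_v)}^2\bigr)\,dv.
\end{align*}
The Jacobian between $S_1$ and $S_v$ along null generators is uniformly bounded above and below since $\OO_0^\HH \les \varep$ forces $\trchi$, $\trchib$ close to $\pm 2/v$ and $\Om$ close to $1$, while the bootstrap controls $\nu$.

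For the integrand: by Lemma~\ref{lemma:TNrelations}, $\de = k_{NN}$, so $\Vert \de\Vert_{L^2(S_v)} \les \Vert k\Vert_{L^2(S_v)}$, and Proposition~\ref{prop:Kestim1} applied at each intermediate time $v \in [1, t^\ast]$ (the bootstrap assumptions of Proposition~\ref{prop:BAimprovement} hold on the whole interval) gives $\Vert k\Vert_{L^2(S_v)} \les \sqrt{D}\varep$. For $N(n)$, the trace estimate of Lemma~\ref{PROPtraceEstimate} reduces matters to $\Vert \nab n\Vert_{L^2(\Si_v)} + \Vert \nab^2 n\Vert_{L^2(\Si_v)}$, controlled by the elliptic analysis of the maximal lapse equation as in Section~\ref{SECestimatesForLapseN} applied at each $\Si_v$.

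The main obstacle is extracting the correct $\sqrt{D}\varep$ scaling for the $N(n)$ contribution (rather than the coarser $D\varep$ coming directly from the bootstrap on $\Nd \nu \in H^{1/2}$). The idea is to split $n = 1 + n_0 + n_1$ into the constant background, the harmonic extension $n_0$ of the boundary data $\nu^{-1}\Om^{-1} - 1$, and the perturbation $n_1$ with zero boundary trace driven by the source $n|k|^2$; one can then leverage the improved $L^4(\Si_{t^\ast})$ bound on $k$ and the improved $L^2(\pr\Si_{t^\ast})$ bound on $\Nd \nu$ from Proposition~\ref{prop:Kestim1} to obtain the sharper $\sqrt{D}\varep$ scaling on $N(n)$, thereby closing the argument.
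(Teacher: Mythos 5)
Your approach is fundamentally different from the paper's, and it has a circularity problem that is not resolved.

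The paper's proof is purely algebraic and does not touch the transport equation. It recalls from Lemma~\ref{lemma:TNrelations} that $\de = \half\nu\trchi + \half\nu^{-1}\trchib$ on $\pr\Si_{t^\ast}$, subtracts off the background values $\pm 2/t^\ast$ to write
\begin{align*}
\de \,=\, \half\nu\Bigl(\trchi - \tfrac{2}{t^\ast}\Bigr) + \half\nu^{-1}\Bigl(\trchib + \tfrac{2}{t^\ast}\Bigr) + \tfrac{1}{t^\ast}\,\tfrac{(\nu-1)(\nu+1)}{\nu},
\end{align*}
solves this pointwise for $\nu-1$, and then bounds $\Vert\de\Vert_{L^2(\pr\Si_{t^\ast})} \les \sqrt{D}\varep$ by Proposition~\ref{prop:Kestim1} and the $\trchi, \trchib$ terms by $\varep$ using \eqref{eq:SmallData}. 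No lapse or transport is involved; the $\sqrt{D}\varep$ scaling comes entirely from the already-improved $\Vert k\Vert_{L^2(\pr\Si_{t^\ast})}$.

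Your transport route has a genuine gap in the $N(n)$ contribution. At the point in the argument where this lemma is invoked, the lapse estimates of Section~\ref{SECestimatesForLapseN} give only $\Vert\nab n\Vert_{L^2(\Si_{t^\ast})} + \Vert\nab^2 n\Vert_{L^2(\Si_{t^\ast})} \les D\varep$, so the trace of $N(n)$ is controlled only at scale $D\varep$, not $\sqrt{D}\varep$. Your proposed split $n = 1 + n_0 + n_1$ does not escape this: the harmonic extension $n_0$ of the boundary data $\nu^{-1}\Om^{-1}-1$ has $\Vert N(n_0)\Vert_{L^2(\pr\Si_{t^\ast})}$ controlled (via the Dirichlet-to-Neumann map) by the full $H^1(\pr\Si_{t^\ast})$ norm of $\nu^{-1}\Om^{-1}-1$, which requires an $L^2(\pr\Si_{t^\ast})$ bound on $\nu-1$ at scale $\sqrt{D}\varep$ --- precisely the quantity you are trying to prove. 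Moreover, the transport integral runs over all $v\in[1,t^\ast]$, while the improvements of Proposition~\ref{prop:Kestim1} and the elliptic bounds for $n$ are established (at this stage of the bootstrap) only at the single time $t^\ast$; applying them at intermediate times $v$ would require re-running the whole improvement argument there. The paper's observation sidesteps all of this by extracting $\nu-1$ directly from the algebraic identity for $\de$, which is the key step you should use instead.
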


\begin{proof} Recall from Lemma \ref{lemma:TNrelations} that on $\pr \Si_{t^\ast}$,
$$\de= \half \nu \tr \chi + \half \nu^{-1} \tr \chib.$$ 
This can be rewritten as
\begin{align*}
\de = \half \nu \left( \tr \chi - \frac{2}{t^\ast} \right) + \half \nu^{-1} \left( \tr \chib + \frac{2}{t^\ast} \right) + \frac{1}{t^\ast} \nu (\nu+1)(\nu-1),
\end{align*}
which leads to
\begin{align*}
\nu-1 = \frac{ t^\ast}{\nu(\nu+1)} \left( \de - \half \nu \left( \tr \chi - \frac{2}{t^\ast} \right) -\half \nu^{-1} \left( \tr \chib + \frac{2}{t^\ast} \right)\right).
\end{align*}
Consequently, using Proposition \ref{prop:Kestim1} and \eqref{eq:SmallData}, we can estimate
\begin{align*} \begin{aligned}
\Vert \nu -1 \Vert_{L^2(\pr \Si_{t^\ast})} \lesssim& \Vert \de \Vert_{L^2(\pr \Si_{t^\ast})} + \left\Vert \tr \chi - \frac{2}{t^\ast} \right\Vert_{L^2(\pr \Si_{t^\ast})} + \left\Vert \tr \chib + \frac{2}{t^\ast} \right\Vert_{L^2(\pr \Si_{t^\ast})} \\
\lesssim& \sqrt{D}\varep.
\end{aligned}
\end{align*}
This finishes the proof of Lemma \ref{lem:NuImprov1}. \end{proof}

Moreover, we have the following.
\begin{lemma} \label{lem:numinus1infty} It holds that
$$ \Vert \Nd \nu \Vert_{L^4(\pr \Si_{t^\ast})}+\Vert \nu -1 \Vert_{L^\infty(\pr \Si_{t^\ast})} +\Vert \nu^{-1} \Nd \nu \Vert_{H^{1/2}(\pr \Si_{t^\ast})}\lesssim \sqrt{D} \varep.$$
\end{lemma}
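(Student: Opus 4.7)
\textbf{Proof sketch for Lemma \ref{lem:numinus1infty}.} The plan is to reduce everything to Lemma \ref{lemma:slopeEquation}, which states $\nu^{-1}\Nd_A\nu = -\ep_A + \zet_A$. First I will bound the right-hand side in $L^4(\pr\Si_{t^\ast})$ and $H^{1/2}(\pr\Si_{t^\ast})$. For $\ep = k_{NA}$, Corollary \ref{corollary:KL4onbdry} gives $\Vert\ep\Vert_{L^4(\pr\Si_{t^\ast})} + \Vert\ep\Vert_{H^{1/2}(\pr\Si_{t^\ast})} \lesssim \sqrt{D}\varep$. For $\zet$, the smallness assumption on the initial data norm $\OO_0^\HH$ gives directly $\Vert\zet\Vert_{L^4(\pr\Si_{t^\ast})} + \Vert\zet\Vert_{H^{1/2}(\pr\Si_{t^\ast})} \lesssim \varep$. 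Combining through the slope equation yields
$$ \Vert \nu^{-1}\Nd\nu\Vert_{L^4(\pr\Si_{t^\ast})} + \Vert \nu^{-1}\Nd\nu\Vert_{H^{1/2}(\pr\Si_{t^\ast})} \lesssim \sqrt{D}\,\varep, $$
which already handles the $H^{1/2}$ term of the statement.

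Next, using the bootstrap assumption $\Vert \nu-1\Vert_{L^\infty(\pr\Si_{t^\ast})} \leq D\varep$ (so in particular $\nu$ stays uniformly bounded between, say, $1/2$ and $2$), we convert the bound on $\nu^{-1}\Nd\nu$ into one on $\Nd\nu$ by writing $\Nd\nu = \nu \cdot (\nu^{-1}\Nd\nu)$ and applying H\"older:
$$ \Vert \Nd\nu\Vert_{L^4(\pr\Si_{t^\ast})} \lesssim (1+D\varep)\Vert \nu^{-1}\Nd\nu\Vert_{L^4(\pr\Si_{t^\ast})} \lesssim \sqrt{D}\,\varep. $$

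Finally, for the $L^\infty$ bound on $\nu-1$ I will invoke the Sobolev inequality on the weakly regular $2$-sphere $\pr\Si_{t^\ast} = S_{t^\ast}$ from Lemma \ref{lemma:CalculusOnGat1} applied to the scalar $\nu-1$:
$$ \Vert \nu-1\Vert_{L^\infty(\pr\Si_{t^\ast})} \lesssim \Vert \Nd\nu\Vert_{L^4(\pr\Si_{t^\ast})} + \Vert \nu-1\Vert_{L^2(\pr\Si_{t^\ast})}. $$
The first term on the right is controlled by the previous step, and the second by Lemma \ref{lem:NuImprov1}, yielding $\Vert \nu-1\Vert_{L^\infty(\pr\Si_{t^\ast})} \lesssim \sqrt{D}\,\varep$, which completes the proof.

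There is no serious obstacle here; the only mild subtlety is that passing between $\nu^{-1}\Nd\nu$ and $\Nd\nu$ requires the pointwise $L^\infty$ control of $\nu$, which is available from the bootstrap assumption and is what ultimately gets improved. The weak regularity of $S_{t^\ast}$ (with constants $N,c$ uniform in $t$, guaranteed by the assumptions of Proposition \ref{prop:BAimprovement}) is what licenses both the Sobolev embedding and the product estimate of Lemma \ref{LEMproductEstimatesBesov} used implicitly for the $H^{1/2}$ estimate on the slope equation.
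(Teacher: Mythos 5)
Your proof is correct and follows essentially the same route as the paper: bound the right-hand side of the slope equation $\nu^{-1}\Nd_A\nu = -\ep_A + \zet_A$ using Corollary \ref{corollary:KL4onbdry} (for $\ep$) and the smallness of $\OO_0^\HH$ (for $\zet$), then deduce the $L^4$ bound on $\Nd\nu$ via the $L^\infty$ control of $\nu$ from the bootstrap assumption, and finally apply Lemma \ref{lemma:CalculusOnGat1} together with Lemma \ref{lem:NuImprov1} for the $L^\infty$ estimate on $\nu-1$. The only cosmetic difference is that you make explicit the intermediate step of bounding $\Vert\nu^{-1}\Nd\nu\Vert_{L^4}$ before converting to $\Vert\Nd\nu\Vert_{L^4}$, a conversion the paper performs implicitly.
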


\begin{proof} Indeed, by Lemma \ref{lemma:slopeEquation}, \eqref{eq:SmallData} and Corollary \ref{corollary:KL4onbdry}, we have
\begin{align*}
\Vert \Nd \nu \Vert_{L^4(\pr \Si_{t^\ast})}  \lesssim& \Vert \ep \Vert_{L^4(\pr \Si_{t^\ast})} + \Vert \zeta \Vert_{L^4(\pr \Si_{t^\ast})} \\
\les& \sqrt{D} \varep+ \varep.
\end{align*}
Consequently, by Lemmas \ref{lemma:CalculusOnGat1} and \ref{lem:NuImprov1}, we have
\begin{align*}
\Vert \nu-1 \Vert_{L^\infty(\pr \Si_{t^\ast})} \les& \Vert \Nd (\nu-1) \Vert_{L^4(\pr \Si_{t^\ast})} + \Vert \nu-1 \Vert_{L^2(\pr \Si_{t^\ast})}\\
 \les&  \sqrt{D} \varep.
\end{align*}

Further, by Lemmas \ref{lemma:slopeEquation} and \ref{PROPtraceEstimate}, with \eqref{eq:SmallData} and Proposition \ref{prop:Kestim1} and Lemma \ref{lemma:kBAmainImprov2},
\begin{align*}
\Vert \nu^{-1} \Nd \nu \Vert_{H^{1/2}(\pr \Si_{t^\ast})} \les& \Vert \zeta \Vert_{H^{1/2}(\pr \Si_{t^\ast})} + \Vert \ep \Vert_{H^{1/2}(\pr \Si_{t^\ast})} \\
\les& \Vert \zeta \Vert_{H^{1/2}(\pr \Si_{t^\ast})} + \lrpar{ \Vert \ep \Vert_{L^2(\Si_{t^\ast})} + \Vert \nab \ep \Vert_{L^2(\Si_{t^\ast})} }\\
\les&  \sqrt{D} \varep.
\end{align*}
This finishes the proof of Lemma \ref{lem:numinus1infty}. \end{proof}

We note that at this point we can reapply the estimates of Section \ref{SECestimatesForLapseN} for $n$ to get
\begin{align*}
\Vert n-1 \Vert_{L^\infty(\Si_{t^\ast})} + \Vert \nab n \Vert_{L^2(\Si_{t^\ast})} + \Vert \nab^2 n \Vert_{L^2(\Si_{t^\ast})} \les \sqrt{D}\varep.
\end{align*}

As a consequence of the above, we can improve the bound for $\D_T k$. 
\begin{lemma} \label{LemmaDTcontrolk} It holds that
\begin{align*}
\Vert \D_T k \Vert_{L^2(\Si_{t^\ast})} \les \sqrt{D}\varep.
\end{align*}
\end{lemma}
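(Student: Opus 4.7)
The plan is to apply the second variation equation \eqref{eq:sndvar},
\begin{align*}
\D_T k_{ij} = E_{ij} - n^{-1} \nab_i \nab_j n + k_{il} k^{l}_{\,\,j},
\end{align*}
take the $L^2(\Si_{t^\ast})$ norm of both sides, and bound each of the three terms on the right-hand side by $\sqrt{D}\varep$ using the estimates already established in this section.

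First I would control the curvature term. Since $E = E(\Rbf)$ is (a component of) the electric-magnetic decomposition of $\Rbf$ with respect to $T$, we have $|E| \les |\Rbf|_{\mathbf{h}^t}$ pointwise, so
\begin{align*}
\norm{E}_{L^2(\Si_{t^\ast})} \les \norm{\Rbf}_{L^2(\Si_{t^\ast})} \les \varep,
\end{align*}
by the improved curvature bound \eqref{eq:improvedL2curvature} from Section \ref{SectionImprovement1}. Next, for the Hessian-of-lapse term, I would use the improved lapse estimate obtained at the end of the previous subsection (after re-running the elliptic argument of Section \ref{SECestimatesForLapseN} with the improved boundary data from Lemma \ref{lem:numinus1infty}), namely
\begin{align*}
\norm{n-1}_{L^\infty(\Si_{t^\ast})} + \norm{\nab n}_{L^2(\Si_{t^\ast})} + \norm{\nab^2 n}_{L^2(\Si_{t^\ast})} \les \sqrt{D}\varep,
\end{align*}
which in particular yields $\norm{n^{-1}}_{L^\infty(\Si_{t^\ast})} \les 1$ for $\varep$ small, and thus $\norm{n^{-1}\nab^2 n}_{L^2(\Si_{t^\ast})} \les \sqrt{D}\varep$.

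Finally, for the quadratic term $k \cdot k$ I would use H\"older's inequality and Proposition \ref{prop:Kestim1}, which gives $\norm{k}_{L^4(\Si_{t^\ast})}^4 \les (\sqrt{D}\varep)^2$, hence
\begin{align*}
\norm{k_{il}k^{l}_{\,\,j}}_{L^2(\Si_{t^\ast})} \les \norm{k}_{L^4(\Si_{t^\ast})}^2 \les \sqrt{D}\varep.
\end{align*}
Summing the three contributions yields $\norm{\D_T k}_{L^2(\Si_{t^\ast})} \les \sqrt{D}\varep$, as desired. No step presents any real obstacle here: this lemma is simply a bookkeeping consequence of the second variation equation together with the curvature, lapse, and $k$-estimates previously improved in Sections \ref{SectionImprovement1}--\ref{SectionKEllipticEstimateWithBoundaryTerm}.
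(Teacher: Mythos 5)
Your proposal is correct and follows essentially the same route as the paper: apply the second variation equation \eqref{eq:sndvar} and bound the three terms $E$, $n^{-1}\nab^2 n$, and $k\cdot k$ in $L^2(\Si_{t^\ast})$ by the curvature, lapse, and $k$-estimates already improved in Sections \ref{SectionImprovement1}--\ref{SectionKEllipticEstimateWithBoundaryTerm}. The only cosmetic difference is that for the quadratic term you invoke the $L^4$ bound from Proposition \ref{prop:Kestim1} directly, whereas the paper bounds it by $(\|\nab k\|_{L^2}+\|k\|_{L^2})^2$ via Sobolev embedding; both are valid.
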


\begin{proof} Indeed, by the second variation equation \eqref{eq:sndvar}, that is,
 \begin{align*}
    \D_Tk_{ij} = E_{ij} -n^{-1} \nab_i\nab_j n +k_{il}k_{\,\,\,j}^l,
  \end{align*}
  we get that
  \begin{align*}
\Vert \D_T k \Vert_{L^2(\Si_{t^\ast})} \les& \Vert E \Vert_{L^2(\Si_{t^\ast})} + \Vert \nab^2 n \Vert_{L^2(\Si_{t^\ast})} + \left( \Vert \nab k \Vert_{L^2(\Si_{t^\ast})} + \Vert k \Vert_{L^2(\Si_{t^\ast})}\right)^2\\
\les& \sqrt{D}\varep + (\sqrt{D}\varep)^2.
\end{align*}
This finishes the proof of Lemma \ref{LemmaDTcontrolk} and the improvement of \eqref{eq:BAspacetimeEstimates01}.
\end{proof}

\section{Higher regularity estimates} \label{SectionHigherRegularity}  \label{SEChigherEstimatesM1} In this section we prove Proposition \ref{thm:HigherRegularitySpacetimeEstimates}. In Sections \ref{SECm1estimatesOUTLINE}-\ref{SECm1estimatesLAPSE} we prove the higher regularity estimates for $m=1$, and in Section \ref{SECm2estimatesOUTLINE} we outline the estimates for $m\geq2$. As remarked in Section \ref{sec:ProofOfMainTheorem}, the case $m=1$ requires a trilinear estimate which necessitates an inspection of the Yang-Mills formalism and wave parametrix construction of \cite{KRS}, see Proposition \ref{PropTrilinearM1} and its proof in Appendix \ref{SECtrilinearEstimateM1}. On the contrary, the cases $m\geq2$ are proved by a classical Gr\"onwall argument together with straight-forward generalisations of the methods for $m=1$.

\subsection{Proof of the case $m=1$} \label{SECm1estimatesOUTLINE}

In this section, we prove the case $m=1$ of Proposition \ref{thm:HigherRegularitySpacetimeEstimates}. Assume that 
\begin{align*}
 \OO_1^{\Si} + \RR_1^{\Si}+ \OO_1^{\HH} + \RRt_1^\HH < \infty,
\end{align*}
and that for two reals $1<t_0^\ast \leq 2$ and $\varep>0$, it holds for $1\leq t \leq t_0^\ast$ that
\begin{align*}
\text{$\Si_t$ is a weakly regular ball with constant $\CMD>0$ }
\end{align*}
and 
\begin{align} \begin{aligned}
\Vert \RRRic \Vert_{L^\infty_tL^2(\Si_t)} \les&\, \varep, \\
\Vert k \Vert_{L^\infty_tL^2(\Si_t)} + \Vert \nab k \Vert_{L^\infty_tL^2(\Si_t)} + \Vert k \Vert_{L^\infty_tL^2(\pr \Si_t)} \les&\, \varep, \\
 \Vert \nu -1 \Vert_{L^\infty_t L^\infty(S_t)} +\Vert \Nd \nu \Vert_{L^\infty_t L^4(S_t)} +\Vert \Nd \nu \Vert_{L^\infty_t H^{1/2}(S_t)} \les&\,\varep,\\
\Vert n-1 \Vert_{L^\infty_t L^\infty(\Si_t)} + \Vert \nab n \Vert_{L^\infty_t L^2(\Si_t)} + \Vert \nab^2 n \Vert_{L^\infty_tL^2(\Si_t)} \les& \,\varep,
\end{aligned} \label{EQm1regEstimatesAssumption}
\end{align}

In the following we show that for $\CMD>0$ and $\varep>0$ sufficiently small, it holds that for $1 \leq t \leq t_0^\ast$,
\begin{align}
\Vert \nab E \Vert_{L^\infty_t L^2(\Si_t)} +\Vert \nab H \Vert_{L^\infty_t L^2(\Si_t)} \les&\, \OO_1^{\HH} + \RRt_1^\HH +\OO_1^{\Si} + \RRt_1^\Si+\CMD, \label{EQStatementHigherReg11000}  \\
\Vert \nab^2 k \Vert_{L^\infty_t L^2(\Si_t)}+ \Vert \Nd^2 \nu \Vert_{L^\infty_t L^2(S_t)}&+ \Vert \Nd^2 \nu \Vert_{L^\infty_t H^{1/2}(S_t)} \label{EQStatementHigherReg11} \\
\les& \,\OO_1^{\HH} + \RRt_1^\HH +\OO_1^{\Si} + \RRt_1^\Si +\CMD\nonumber  \\
\Vert \D\Rbf \Vert_{L^\infty_t L^2(\Si_t)}  \les&\, \OO_1^{\HH} + \RRt_1^\HH +\OO_1^{\Si} + \RRt_1^\Si+\CMD, \label{EQStatementHigherReg112} \\
\Vert \nab^3 n \Vert_{L^\infty_t L^2(\Si_t)}+ \Vert \nab^2 T(n) \Vert_{L^\infty_t L^2(\Si_t)}&+\Vert \nab T^2(n) \Vert_{L^\infty_t L^2(\Si_t)}\label{EQStatementHigherReg13} \\
\les& \,\OO_1^{\Si} + \RR_1^\Si + \OO_1^{\HH}+ \RRt_1^\HH +\CMD. \nonumber
\end{align}

\textbf{Notation.} Pick $1\leq t^\ast \leq t_0^\ast$. In the following, we prove \eqref{EQStatementHigherReg11000}, \eqref{EQStatementHigherReg11}, \eqref{EQStatementHigherReg112} and \eqref{EQStatementHigherReg13} on $\Si_{t^\ast}$. As $t^\ast$ was chosen arbitrarily, this implies  \eqref{EQStatementHigherReg11000}, \eqref{EQStatementHigherReg11}, \eqref{EQStatementHigherReg112} and \eqref{EQStatementHigherReg13} for $1\leq t \leq t_0^\ast$.

\begin{remark} The smallness of $\CMD>0$ and $\varep>0$ is only used in the proof of the estimates for $m=1$. For the cases $m\geq2$, no further smallness assumption is made.
\end{remark}

We start by setting up the geometric framework. By the assumption that $\Si_{t^\ast}$ is a weakly regular ball with constant $\CMD>0$ and \eqref{EQm1regEstimatesAssumption}, we can pick $\CMD>0$ and $\varep>0$ sufficiently small such that
\begin{align} \begin{aligned} 
\Vert g_{ij}-e_{ij} \Vert_{H^2(\Si_{t^\ast})} + \Vert k_{ij} \Vert_{H^1(\Si_{t^\ast})} \les&\, \CMD.
\end{aligned} \label{EQM1higherRegCoordinates1} \end{align}

By \eqref{EQM1higherRegCoordinates1} and for $\CMD>0$ sufficiently small, we can use Theorem \ref{THMextensionConstraintsCZ1} to extend $(\Si_{t^\ast}, g,k)$ to an asymptotically flat, regular maximal initial data set on $\RRR^3$ which satisfies the assumptions of the bounded $L^2$ curvature theorem, see Theorem \ref{THMsmalldataL2details}. Consequently, applying Theorem \ref{THMsmalldataL2details} \emph{backwards} from $\Si_{t^\ast}$, we get:
\begin{enumerate}
\item The past of $\Si_{t^\ast}$ in $\MM$, denoted by $\MM_{t^\ast}$, is foliated by maximal spacelike hypersurfaces $(\wt{\Si}_\ttt)_{0 \leq \ttt \leq t^\ast}$ given as level sets of a time function $\ttt$ with $\wt{\Si}_{t^\ast} \cap \MM_{t^\ast}= \Si_{t^\ast}$ and satisfying
\begin{align} \begin{aligned}
\Vert \R \Vert_{L^\infty_\ttt L^2(\wt{\Si}_\ttt)} \les&\, \CMD, \\
\Vert \wt{k} \Vert_{L^\infty_\ttt L^2(\wt{\Si}_\ttt)} + \Vert \wt\nab \wt{k} \Vert_{L^\infty_\ttt L^2(\wt{\Si}_\ttt)}+\Vert \wt{\mathbf{A}} \Vert_{L^\infty_\ttt L^4(\Sitt_\ttt)} \les& \,\CMD, \\
\Vert \wt{n}-1 \Vert_{L^\infty_\ttt L^\infty(\wt{\Si}_{\ttt})} + \Vert \wt\nab \wt{n} \Vert_{L^\infty_\ttt L^\infty(\wt{\Si}_\ttt)}+ \Vert \wt{\nab}^2 \wt{n} \Vert_{L^\infty_\ttt L^2(\wt{\Si}_\ttt)}+\Vert \wt{\nab} \Ttt( \wt{n}) \Vert_{L^\infty_\ttt L^2(\wt{\Si}_\ttt)} \les&\, \CMD,
\end{aligned} \label{EQM1KRSfoliationEPest1} \end{align}
where $\wt\nab$ denotes the covariant derivative on $\wt{\Si}_\ttt$, and moreover, by the combined higher regularity estimates of Theorems \ref{THMextensionConstraintsCZ1} and \ref{THMsmalldataL2details}, it holds that on $0 \leq \ttt \leq t^\ast$, 
\begin{align*}
\norm{\D \R}_{L^\infty_{\ttt} L^2(\wt{\Si}_\ttt)}+ \Vert \D^2 \wt{\pi} \Vert_{L^\infty_{\ttt} L^2(\wt{\Si}_\ttt)} \les& \Vert \nab \RRRic \Vert_{L^2(\Si_{t^\ast})}+ \Vert \nab^2 k \Vert_{L^2(\Si_{t^\ast})}+\CMD.
\end{align*}
Let $\wt{e}_0:=\Ttt$ denote the timelike unit normal to $\wt{\Si}_\Ttt$, and let $\wt E$ and $\wt H$ be the electric-magnetic decomposition with respect to $\Ttt$.

\item For each $\om \in\mathbb{S}^2$, the spacetime $\MM_{t^\ast}$ is foliated by a family of null hyperplanes $(\HH_{{}^\om u})_{{}^\om u \in \RRR}$ given as level sets of an optical function ${}^\om u$ satisfying
\begin{align*}
\sup\limits_{\om \in \mathbb{S}^2} \Vert \R \cdot \tilde{L} \Vert_{L^\infty_{{}^\om u}L^2(\HH_{{}^\om u})} \les \CMD,
\end{align*}
where $\Ltt$ is the $\HH_{{}^\om u}$-tangent null vectorfield with $\g(\Ltt,\Ttt)=-1$. 
\item Recall from \eqref{EQangleDefinition} that the angle $\nutt$ between $T$ and $\Ttt$ is defined by
\begin{align*}
\nutt := -\g(T, \Ttt).
\end{align*}
By Lemma \ref{lem:comparisonfoliationMM} proved in Appendix \ref{secComparisonAppendix}, it holds that for $\varep>0$ and $\CMD>0$ sufficiently small, along the foliation $(\Si_t)_{1 \leq t \leq t^\ast}$,
\begin{align} \label{EQlemmaComparisonTwoEstimates222}
\norm{\nutt-1}_{L^\infty(\MM_{t^\ast})} \les \CMD, \,\, \Vert \wt{k} \Vert_{L^\infty_t L^4(\Si_t)} \les \CMD,
\end{align}
where $\wt{k}$ denotes the second fundamental form of $\wt{\Si}_\ttt$.
\end{enumerate}

In the rest of this section we proceed as follows.
\begin{itemize}

\item In Section \ref{SECcurv1112} we prove by \emph{elliptic estimates} that on $\Si_{t^\ast}$,
\begin{align}\begin{aligned}
\Vert \nab {E} \Vert_{L^2(\Si_{t^\ast})}+ \Vert \nab {H} \Vert_{L^2({\Si}_{t^\ast})} \les \Vert \Lieh_\Ttt \R \Vert_{L^2({\Si}_{t^\ast})}+ \RR_1^\HH+\CMD.
\end{aligned}  \label{EQCurvatureEstimate1M1} \end{align}

\item In Section \ref{SECcurv111} we prove by using the Bel-Robinson tensor that for $0 \leq \ttt \leq t^\ast$,
\begin{align} \begin{aligned}
\Vert \Lieh_{\Ttt} \R \Vert_{L^\infty_{\ttt} L^2(\wt{\Si}_{\ttt}\cap \MM_{t^\ast})} 
\les & \RR_1^\Si+\RR_1^\HH \\
&+ \sqrt{\CMD} \lrpar{\norm{\nab \RRRic}_{L^2(\Si_{t^\ast})}+ \norm{\nab^2 k}_{L^2(\Si_{t^\ast})}} + \CMD,
\end{aligned} \label{EQgeneralResult} \end{align}
which necessitates a trilinear estimate.

\item In Section \ref{SECellipticEstimateM1}, we prove by \emph{elliptic estimates for $k$} that on $\Si_{t^\ast}$,
\begin{align} \begin{aligned} 
&\Vert \nab^2 k \Vert_{L^2(\Si_{t^\ast})}+ \Vert \Nd^2 \nu \Vert_{L^2(S_{t^\ast})}\\
 \les& \RR_1^\HH + \sqrt{\CMD} \lrpar{\norm{\nab \RRRic}_{L^2(\Si_{t^\ast})} +\norm{\nab^2 k}_{L^2(\Si_{t^\ast})} } + \CMD.
\end{aligned}\label{EQStatementHigherReg12} \end{align}

\item In Section \ref{SECm1estimatesConclusion}, we combine \eqref{EQCurvatureEstimate1M1}, \eqref{EQgeneralResult} and \eqref{EQStatementHigherReg12} to conclude the proof of \eqref{EQStatementHigherReg11000} and \eqref{EQStatementHigherReg11}, that is,
\begin{align*}
\Vert \nab E \Vert_{L^2(\Si_{t^\ast})} +\Vert \nab H \Vert_{L^2(\Si_{t^\ast})} \les& \, \OO_1^{\HH} + \RRt_1^\HH +\OO_1^{\Si} + \RRt_1^\Si +\CMD, \\
\Vert \nab^2 k \Vert_{L^2(\Si_{t^\ast})}+ \Vert \Nd^2 \nu \Vert_{L^2(S_{t^\ast})}+ \Vert \Nd^2 \nu \Vert_{ H^{1/2}(S_{t^\ast})}  \les& \, \OO_1^{\HH} + \RRt_1^\HH +\OO_1^{\Si} + \RRt_1^\Si +\CMD.
\end{align*}

\item In Section \ref{SECTestimateEHm1}, we prove \eqref{EQStatementHigherReg112}, that is,
\begin{align*}
\Vert \D\Rbf \Vert_{L^2(\Si_{t^\ast})}  \les& \, \OO_1^{\HH} + \RRt_1^\HH +\OO_1^{\Si} + \RRt_1^\Si +\CMD.
\end{align*}

\item In Section \ref{SECm1estimatesLAPSE}, we prove \eqref{EQStatementHigherReg13}, that is,
\begin{align*}
\Vert \nab^3 n \Vert_{L^2(\Si_{t^\ast})} +\Vert \nab^2 T( n) \Vert_{L^2(\Si_{t^\ast})} +\Vert \nab T^2(n) \Vert_{L^2(\Si_{t^\ast})}\les& \, \OO_1^{\Si} + \RR_1^\Si + \OO_1^{\HH}+ \RRt_1^\HH+\CMD.
\end{align*}

\end{itemize}

\subsubsection{Elliptic estimates for curvature: The proof of \eqref{EQCurvatureEstimate1M1}} \label{SECcurv1112} In this section, we prove \eqref{EQCurvatureEstimate1M1}. First we note that on $\Si_{t^\ast}$, by construction, $E=\wt{E}, H=\wt{H}, \wt\nab \wt{E} = \nab E$ and $\wt\nab \wt{H} = \nab H$. Therefore it suffices to prove that
\begin{align*}
\Vert \wt\nab \wt{E} \Vert_{L^2({\Si}_{t^\ast})}+ \Vert \wt\nab \wt{H} \Vert_{L^2({\Si}_{t^\ast})}\les \Vert \Lieh_\Ttt \R \Vert_{L^2(\wt{\Si}_{\ttt} \cap \MM_{t^\ast})}+ \RR_1^\HH+\CMD.
\end{align*}
By Proposition \ref{prop:MaxwellsEq1} with \eqref{EQTrelationEH}, and using that $\Rbf$ satisfies the homogeneous Bianchi equations, $\wt{E}$ and $\wt{H}$ satisfy the following Hodge system on ${\Si}_{t^\ast}$,
\begin{align*} \begin{aligned}
\wt\Div \wt{E} =& + \wt{k} \wedge \wt{H}, \\
\wt\Curl \wt{E} =& + \wt{H}(\Lieh_{\Ttt} \R ) -\frac{3}{2} \wt{k} \times \wt{H} - 3 \wt{n}^{-1} \wt\nab \wt{n} \wedge \wt{E}, \\
\wt\Div \wt{H}=& - \wt{k} \wedge \wt{E},\\
\wt\Curl \wt{H} =& - \wt{E}(\Lieh_\Ttt \R) + \frac{3}{2} \wt{k} \times \wt{E} - 3 \wt{n}^{-1} \wt\nab \wt{n} \wedge \wt{H}.
\end{aligned} 
\end{align*}
where $\wt\Div$ and $\wt\Curl$ denote the divergence and symmetrised curl operators on ${\Si}_\ast$, respectively. By application of the elliptic estimates of Corollary \ref{CORellipticEstimatesEH1} and using \eqref{EQM1KRSfoliationEPest1}, we thus get
\begin{align*}
&\int\limits_{{\Si}_{t^\ast}} \vert \wt\nab \wt{E} \vert^2 + \vert \wt\nab \wt{H} \vert^2 \\
\les& \int\limits_{{\Si}_{t^\ast}} \vert \Lieh_\Ttt\mathbf{R} \vert^2 + \int\limits_{\pr{\Si}_{t^\ast}} \wt{\nab}_b \wt{E}_{aN} \, \wt{E}^{ab} +  \int\limits_{\pr{\Si}_{t^\ast}} \wt{\nab}_b \wt{H}_{aN} \, \wt{H}^{ab} \\
&+ \CMD^2 \lrpar{\Vert \wt{\nab} \wt{H} \Vert^2_{L^2({\Si}_{t^\ast})} +\Vert \wt{\nab} \wt{E} \Vert^2_{L^2({\Si}_{t^\ast})} } +\CMD^4.
\end{align*}
Thus for $\CMD>0$ and $\varep>0$ sufficiently small, we get
\begin{align*}
\int\limits_{{\Si}_{t^\ast}} \vert \wt\nab \wt{E} \vert^2 + \vert \wt\nab \wt{E} \vert^2 \les&  \int\limits_{{\Si}_{t^\ast}} \vert \Lieh_\Ttt\mathbf{R} \vert^2 + \int\limits_{\pr{\Si}_{t^\ast}} \nab_b {E}_{aN} \, {E}^{ab} +  \int\limits_{\pr{\Si}_{t^\ast}} \nab_b {H}_{aN} \, {H}^{ab} +\CMD^2,
\end{align*}
where we used that $\nab= \wt{\nab}$ and $E=\wt{E}, H=\wt{H}$ on $\Si_{t^\ast}$.\\

Using the spacetime relations
\begin{align*}
{\nab}_a {E}_{bN} =& \D_a \Rbf_{T bT N} - {k}_{ac} \Rbf_{cb{T}N} - {k}_{ac} \Rbf_{T b c N},\\
{\nab}_a {H}_{bN} =& \D_a {}^\ast\Rbf_{T bT N} - {k}_{ac} {}^\ast \Rbf_{cb{T}N} - {k}_{ac} {}^\ast\Rbf_{T b c N},
\end{align*}
we can estimate the boundary integrals on the right-hand side above for $\CMD>0$ and $\varep>0$ small by
\begin{align*}
& \int\limits_{\pr{\Si}_{t^\ast}} {\nab}_b {E}_{aN} \, {E}^{ab} +  \int\limits_{\pr{\Si}_{t^\ast}} {\nab}_b {H}_{aN} \, {H}^{ab} \\
 \les& \int\limits_{\pr{\Si}_{t^\ast}} \vert \D \Rbf \vert_{{\mathbf{h}^t}}^2 + \vert \Rbf \vert_{{\mathbf{h}^t}}^2 + \vert {k}\vert \vert \Rbf \vert_{{\mathbf{h}^t}}^2 \\
 \les& \Vert \D \Rbf \Vert_{L^\infty(\HH)}^2+ \Vert \Rbf \Vert_{L^\infty(\HH)}^2 + \Vert {k} \Vert_{L^1\lrpar{\pr{\Si}_{t^\ast}}}  \Vert \Rbf \Vert_{L^\infty(\HH)}^2 \\
  \les& \Vert \D \Rbf \Vert_{L^\infty(\HH)}^2 + \Vert \Rbf \Vert_{L^\infty(\HH)}^2+ \lrpar{\Vert \nab {k} \Vert_{L^2({\Si}_{t^\ast})}+\Vert {k} \Vert_{L^2({\Si}_{t^\ast})} }  \Vert \Rbf \Vert_{L^\infty(\HH)}^2 \\
 \les& \Vert \D \Rbf \Vert_{L^\infty(\HH)}^2 + \Vert \Rbf \Vert_{L^\infty(\HH)}^2+ \varep  \Vert \Rbf \Vert_{L^\infty(\HH)}^2 \\
  \les& \RR_1^\HH.
\end{align*}

To summarise the above, we get that for $\CMD>0$ and $\varep>0$ sufficiently small, for $0\leq \ttt\leq t^\ast$,
\begin{align*}
\Vert \wt\nab \wt{E} \Vert_{L^2({\Si}_{t^\ast})}+ \Vert \wt\nab \wt{H} \Vert_{L^2({\Si}_{t^\ast})}\les \Vert \Lieh_\Ttt \R \Vert_{L^2(\wt{\Si}_{\ttt} \cap \MM_{t^\ast})}+ \RR_1^\HH+\CMD.
\end{align*}
This finishes the proof of \eqref{EQCurvatureEstimate1M1}.

\subsubsection{Energy estimate for the curvature tensor: The proof of \eqref{EQgeneralResult}} \label{SECcurv111} In this section, we prove that for $0 \leq \ttt \leq t^\ast$,
\begin{align*}
\Vert \Lieh_{\Ttt} \R \Vert_{L^\infty_{\ttt} L^2(\wt{\Si}_{\ttt}\cap \MM_{t^\ast})} 
\les \RR_1^\Si+\RR_1^\HH + \sqrt{\CMD} \lrpar{\norm{\nab \RRRic}_{L^2(\Si_{t^\ast})}+ \norm{\nab^2 k}_{L^2(\Si_{t^\ast})}} + \CMD.
\end{align*}

Indeed, applying \eqref{lemma:IntegralidentityWeyl1} to the Weyl tensor $W := \Lieh_{\Ttt} \R$ yields
\begin{align} \begin{aligned}
&\Vert \Lieh_{\Ttt} \R \Vert^2_{L^\infty_{\ttt} L^2(\wt{\Si}_{\ttt} \cap \MM_{t^\ast})} +  \sup\limits_{\om \in \mathbb{S}^2} \Vert \Lieh_{\Ttt}\Rbf \cdot \Ltt \Vert^2_{L^\infty_{{}^\om u}L^2(\HH_{{}^\om u} \cap \MM_{t^\ast})} \\
\les& \int\limits_{\Si_1} Q(\Lieh_{\Ttt} \R)_{\Ttt \Ttt \Ttt  T}+ \int\limits_{\HH} Q(\Lieh_{\Ttt} \R)_{\Ttt \Ttt \Ttt L} \\
&- \underbrace{\int\limits_{\MM_{t^\ast}} \frac{3}{2} Q(\Lieh_{\Ttt} \R)_{\a \be \Ttt \Ttt}\wt{\pi}^{\a\be}}_{:=\EE_1} - \underbrace{\int\limits_{\MM_{t^\ast}} \D^\a Q(\Lieh_{\Ttt} \R)_{\a \Ttt \Ttt \Ttt}}_{:=\EE_2},
\end{aligned} \label{eq:initialBRestimate} \end{align}
where the integral over $\HH$ is defined in Definition \ref{DEFintegrationH}.\\

The terms $\EE_1$ and $\EE_2$ are estimated by the following trilinear estimate.
\begin{proposition}[Trilinear estimate for $m=1$] \label{PropTrilinearM1} For $\varep>0$ and $\CMD>0$ sufficiently small, it holds that
\begin{align} \begin{aligned} 
\vert \EE_1\vert  + \vert \EE_2 \vert \les& \CMD \Vert \Lieh_{\Ttt} \R \Vert^2_{L^\infty_{\ttt} L^2(\wt{\Si}_{\ttt}\cap \MM_{t^\ast})} + \CMD \sup\limits_{\om \in \mathbb{S}^2} \Vert \Lieh_{\Ttt} \Rbf \cdot \Ltt \Vert^2_{L^\infty_{{}^\om u}L^2(\HH_{{}^\om u} \cap \MM_{t^\ast})} \\
&+ \CMD \lrpar{\norm{\nab \RRRic}_{L^2(\Si_{t^\ast})}+ \norm{\nab^2 k}_{L^2(\Si_{t^\ast})} + \CMD}^2 + \CMD^2.
\end{aligned} \label{EQpreliminaryTrilinearM1}\end{align}
\end{proposition}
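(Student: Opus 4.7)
The strategy is to expand both $\EE_1$ and $\EE_2$ into sums of integrals that are manifestly trilinear in $\Lieh_\Ttt\Rbf$, $\wt\pi$ (or $\D\wt\pi$), and $\Rbf$ (or $\D\Rbf$), and then to reduce the only genuinely non-absorbable piece to a higher-order trilinear estimate in the style of \eqref{EQtrilinearStatement1}. For $\EE_1$, insert the decomposition $\wt\pi_{\Ttt\Ttt}=0$, $\wt\pi_{\Ttt i}=\wt n^{-1}\wt\nab_i\wt n$, $\wt\pi_{ab}=-2\wt k_{ab}$ from \eqref{eq:deformationTensorRelations}. The contribution involving $\wt n^{-1}\wt\nab\wt n$ is lower order: since $\|\wt\nab\wt n\|_{L^\infty(\MM_{t^\ast})}\les\CMD$ by \eqref{EQM1KRSfoliationEPest1}, it is bounded by $\CMD\|\Lieh_\Ttt\Rbf\|^2_{L^\infty_\ttt L^2(\wt\Si_\ttt)}$ via the equivalence \eqref{EQEquivalenceNORMS2}. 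The critical piece is
\[
\int_{\MM_{t^\ast}} Q(\Lieh_\Ttt\Rbf)_{ab\Ttt\Ttt}\,\wt k^{ab},
\]
which has exactly the structure of the $m=0$ trilinear estimate \eqref{EQtrilinearStatement1}, now applied to the Weyl field $\Lieh_\Ttt\Rbf$ instead of $\Rbf$; this is the estimate I defer to Appendix \ref{SECtrilinearEstimateM1}.

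For $\EE_2$, I apply the standard divergence identity for the Bel-Robinson tensor of an arbitrary Weyl field $W$ (Chapter~7 of \cite{ChrKl93}), which expresses $\D^\a Q(W)_{\a\be\ga\de}$ as a sum of contractions of $W$ with its Bianchi source $J_{\be\ga\de}:=\D^\mu W_{\mu\be\ga\de}$. Applied to $W=\Lieh_\Ttt\Rbf$, and using that $\Rbf$ itself satisfies the homogeneous Bianchi equations, the source $J$ is computed from the commutator $[\Lieh_\Ttt,\D^\mu]\Rbf$ and has the schematic form $\wt\pi\cdot\D\Rbf+\D\wt\pi\cdot\Rbf$. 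Substituting turns $\EE_2$ into trilinear integrals in $(\Lieh_\Ttt\Rbf,\wt\pi,\D\Rbf)$ and $(\Lieh_\Ttt\Rbf,\D\wt\pi,\Rbf)$. The $\wt k\cdot\D\Rbf\cdot\Lieh_\Ttt\Rbf$ portion once again reduces to the trilinear estimate of Appendix \ref{SECtrilinearEstimateM1}; the $\D\wt\pi\cdot\Rbf\cdot\Lieh_\Ttt\Rbf$ portion is bounded by H\"older's inequality using the higher-regularity control
\[
\|\D\Rbf\|_{L^\infty_\ttt L^2(\wt\Si_\ttt)}+\|\D^2\wt\pi\|_{L^\infty_\ttt L^2(\wt\Si_\ttt)}\les \|\nab\RRRic\|_{L^2(\Si_{t^\ast})}+\|\nab^2 k\|_{L^2(\Si_{t^\ast})}+\CMD
\]
from part~(2) of Theorem \ref{THMsmalldataL2details}, and the $\wt n^{-1}\wt\nab\wt n$ components of $\wt\pi$ contribute strictly smaller terms by \eqref{EQM1KRSfoliationEPest1}. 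Factoring out an extra $\sqrt\CMD$ from the $L^\infty$-smallness of $\wt k$ or $\wt\nab\wt n$, followed by a Cauchy-Schwarz splitting, produces exactly the term $\CMD(\|\nab\RRRic\|_{L^2(\Si_{t^\ast})}+\|\nab^2 k\|_{L^2(\Si_{t^\ast})}+\CMD)^2$ on the right-hand side of \eqref{EQpreliminaryTrilinearM1}.

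The main obstacle is the trilinear estimate reserved for Appendix \ref{SECtrilinearEstimateM1}, namely a bound of the form
\[
\left|\int_{\MM_{t^\ast}} Q(\Lieh_\Ttt\Rbf)_{ab\Ttt\Ttt}\,\wt k^{ab}\right|\les \CMD\|\Lieh_\Ttt\Rbf\|^2_{L^\infty_\ttt L^2(\wt\Si_\ttt)}+\CMD\sup_{\om\in\mathbb{S}^2}\|\Lieh_\Ttt\Rbf\cdot\Ltt\|^2_{L^\infty_{{}^\om u}L^2(\HH_{{}^\om u}\cap\MM_{t^\ast})}+\text{l.o.t.}
\]
The fundamental difficulty is that, although $\Lieh_\Ttt\Rbf$ is a Weyl field (by Lemma~7.1.2 of \cite{ChrKl93}), it does \emph{not} satisfy the homogeneous Bianchi equations, so the plane-wave parametrix and Yang-Mills machinery of \cite{KRS} does not apply off the shelf. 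The work of Appendix \ref{SECtrilinearEstimateM1} is precisely to adapt that construction to the present inhomogeneous setting: the Bianchi source $\wt\pi\cdot\D\Rbf+\D\wt\pi\cdot\Rbf$ for $\Lieh_\Ttt\Rbf$ is absorbed using the higher-regularity estimates of Theorem \ref{THMsmalldataL2details} applied backward from $\Si_{t^\ast}$, producing the mixed term $\sqrt\CMD(\|\nab\RRRic\|_{L^2(\Si_{t^\ast})}+\|\nab^2 k\|_{L^2(\Si_{t^\ast})}+\CMD)$ that, after being squared and paired with a factor $\CMD$ through Cauchy-Schwarz, yields the exact form of the right-hand side of \eqref{EQpreliminaryTrilinearM1}.
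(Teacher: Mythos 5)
Your treatment of $\EE_1$ is essentially the paper's: reduce to $\int Q(\Lieh_\Ttt\Rbf)_{ab\Ttt\Ttt}\wt k^{ab}$ and run the $m=0$ trilinear machinery of Section 11 of \cite{KRS} on the Weyl field $\Lieh_\Ttt\Rbf$, which the paper also defers to and justifies by noting that $\Lieh_\Ttt\Rbf$ has the same symmetries as $\Rbf$.

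Your treatment of $\EE_2$ has a genuine gap. After expanding $\D^\a Q(\Lieh_\Ttt\Rbf)_{\a\Ttt\Ttt\Ttt}$, you claim that the portion of shape $\D\wt\pi\cdot\Rbf\cdot\Lieh_\Ttt\Rbf$ can be closed by H\"older using $\Vert \D^2\wt\pi\Vert_{L^\infty_\ttt L^2}+\Vert \D\Rbf\Vert_{L^\infty_\ttt L^2}\les \Vert\nab\RRRic\Vert+\Vert\nab^2 k\Vert+\CMD$. This does not close. By the Codazzi identity $\nab_a k_{bc}-\nab_b k_{ac}=\R_{cTab}$, the spatial part of $\D\wt\pi$ is at the regularity level of $\Rbf$ itself, so this piece is a genuine cubic in curvature of the schematic form $\Lieh_\Ttt\Rbf\cdot\Rbf\cdot\Rbf$, with every factor only in $L^\infty_\ttt L^2$ a priori. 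Even if you spend the higher regularity to place $\D\wt\pi$ and $\Rbf$ in $L^\infty_\ttt L^6$, H\"older produces $(\Vert\nab\RRRic\Vert+\Vert\nab^2 k\Vert+\CMD)^2\cdot\Vert\Lieh_\Ttt\Rbf\Vert_{L^\infty_\ttt L^2}$, which after Young's inequality leaves $\Vert\Lieh_\Ttt\Rbf\Vert^2_{L^\infty_\ttt L^2}$ without the small prefactor $\CMD$ needed to absorb it on the left of \eqref{eq:initialBRestimate}, and pushes the higher-regularity term to a fourth power with a bad $\CMD^{-1}$ weight. The required form \eqref{EQpreliminaryTrilinearM1} is therefore unreachable by H\"older alone.

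The mechanism the paper actually uses for this term (its $\TT_2$) is an explicit null cancellation: rewriting $\D\pi$ via Codazzi as a curvature, isolating the worst null component $(\Lieh_\Ttt\Rbf)_{T\ T}^{\ A\ B}\R_{CTNA}\R^{CN}_{\ \ T B}$, and showing it is \emph{identically zero} by the tracelessness and symmetries of the Weyl tensors involved, leaving only lower-order pieces with a good factor $\Rbf\cdot L$ that can be placed on the null flux. Your proposal does not mention this cancellation and would not discover it following your route. A related omission: the second spatial derivatives of $\wt n$ hiding in $\D\wt\pi_{\a T}$ (the paper's $\TT_3$) are not "strictly smaller terms" as you assert from the $L^\infty$ control of $\wt\nabla\wt n$ alone; the paper has to use the lapse equation $\wt\Delta\wt n=\wt n|\wt k|^2$ to trade the normal second derivative $\nab_N\nab_N\wt n$ for tangential ones $\Nd\Nd\wt n$ that can be matched with the parametrix representation $\Nd\pr B$ and fed into the flux estimate \eqref{EQparametrixEstimate2}. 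Without both of these ingredients the argument does not close.
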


\begin{remark} In Appendix \ref{SECtrilinearEstimateM1}, we argue that Proposition \ref{eq:initialBRestimate} follows readily from the $(m=1)$-estimates proved in Section 13 of \cite{KRS} in the Yang-Mills formalism by estimating the most difficult integrand terms of $\EE_1$ and $\EE_2$ through their corresponding estimates in \cite{KRS}.\end{remark}


Plugging \eqref{EQpreliminaryTrilinearM1} into \eqref{eq:initialBRestimate}, we get that for $\CMD>0$ and $\varep>0$ sufficiently small,
\begin{align} \begin{aligned}
&\Vert \Lieh_{\Ttt} \R \Vert^2_{L^\infty_{\ttt} L^2(\wt{\Si}_{\ttt} \cap \MM_{t^\ast})} +  \sup\limits_{\om \in \mathbb{S}^2} \Vert \Lieh_{\Ttt}\Rbf \cdot \Ltt \Vert^2_{L^\infty_{{}^\om u}L^2(\HH_{{}^\om u} \cap \MM_{t^\ast})} \\
\les&\int\limits_{\HH} Q(\Lieh_{\Ttt} \R)_{\Ttt \Ttt \Ttt L} + \int\limits_{\Si_1} Q(\Lieh_{\Ttt} \R)_{\Ttt \Ttt \Ttt  T} \\
&+ \CMD \Vert \Lieh_{\Ttt} \R \Vert^2_{L^\infty_{\ttt} L^2(\wt{\Si}_{\ttt}\cap \MM_{t^\ast})} + \CMD \sup\limits_{\om \in \mathbb{S}^2} \Vert \Lieh_{\Ttt} \Rbf \cdot \Ltt \Vert^2_{L^\infty_{{}^\om u}L^2(\HH_{{}^\om u} \cap \MM_{t^\ast})} \\
&+ \CMD \lrpar{\norm{\nab \RRRic}_{L^2(\Si_{t^\ast})}+ \norm{\nab^2 k}_{L^2(\Si_{t^\ast})} + \CMD}^2 + \CMD^2\\
\les&\underbrace{ \int\limits_{\HH} Q(\Lieh_{\Ttt} \R)_{\Ttt \Ttt \Ttt L}}_{:=\II_1} + \underbrace{\int\limits_{\Si_1} Q(\Lieh_{\Ttt} \R)_{\Ttt \Ttt \Ttt  T}}_{:=\II_2} \\
&+ \CMD \lrpar{\norm{\nab \RRRic}_{L^2(\Si_{t^\ast})}+ \norm{\nab^2 k}_{L^2(\Si_{t^\ast})}}^2 + \CMD^2,
\end{aligned} \label{EQM1curvatureestimateWterms} \end{align}
where we used the smallness of $\CMD>0$ to absorb the second and third term on the right-hand side of the first inequality into the left-hand side. It remains to estimate $\II_1$ and $\II_2$ on the right-hand side of \eqref{EQM1curvatureestimateWterms}.\\

\textbf{Estimation of $\II_1$.} By definition of $Q$ in Definition \ref{def:BelRobTensor},
\begin{align*}
Q(\Lieh_\Ttt \R)_{\Ttt \Ttt \Ttt L} = (\Lieh_\Ttt \R)_{\Ttt \mu \Ttt \nu} (\Lieh_\Ttt \R)_{\Ttt \,\,\,\, L}^{\,\,\,\, \nu \,\,\,\, \mu} + \text{dual term}, 
\end{align*}
where by Definition \ref{DEFmodifiedLiederivative} and using that $(\wt{\Si}_\ttt)$ is maximal,
\begin{align*}
(\Lieh_\Ttt \R)_{\a \be \ga \de} :=& (\Lie_\Ttt \R)_{\a \be \ga \de} - \half \left(\wt{\pi}^\mu_{\,\,\, \a} \R_{\mu \be \ga \de} + \wt{\pi}^\mu_{\,\,\, \be} \R_{\a \mu \ga \de}+ \wt{\pi}^\mu_{\,\,\, \ga} \R_{\a \be \mu \de} + \wt{\pi}^\mu_{\,\,\, \de} \R_{\a \be \ga \mu} \right),
\end{align*}
which can be written schematically as
\begin{align*}
\Lieh_\Ttt \R = \D \R + \wt{\pi} \cdot \R,
\end{align*}
and thus
\begin{align*}
Q(\Lieh_\Ttt \R)_{\Ttt \Ttt \Ttt L} = \D \R \cdot \D \R + \wt{\pi} \cdot \R  \cdot \D \R + \wt{\pi} \cdot \wt{\pi} \cdot \R \cdot \R.
\end{align*}
Therefore, for $\varep>0$ and $\CMD>0$ sufficiently small,
\begin{align} \begin{aligned} 
\II_1 :=\int\limits_{\HH} Q(\Lieh_{\Ttt} \R)_{\Ttt \Ttt \Ttt L} \les& \int\limits_{\HH} \vert \D \R \vert_{\mathbf{{h}}^\ttt}^2 + \vert \wt{\pi} \vert_{\mathbf{{h}}^\ttt} \vert \R \vert_{\mathbf{{h}}^\ttt} \vert \D \R \vert_{\mathbf{{h}}^\ttt} + \vert\wt{\pi} \vert_{\mathbf{{h}}^\ttt}^2 \vert \R \vert_{\mathbf{{h}}^\ttt}^2 \\
\les& \int\limits_{\HH} \vert \D \R \vert_{\mathbf{{h}}^v}^2 + \vert \wt{\pi} \vert_{\mathbf{{h}}^\ttt} \vert \R \vert_{\mathbf{{h}}^v} \vert \D \R \vert_{\mathbf{{h}}^v} + \vert \wt{\pi} \vert_{\mathbf{{h}}^\ttt}^2 \vert \R \vert_{\mathbf{{h}}^v}^2
\end{aligned} \label{ESTQQ1estimM1new} \end{align} 
where we used \eqref{EQm1regEstimatesAssumption} and \eqref{EQlemmaComparisonTwoEstimates222} to compare $\mathbf{{h}}^\ttt$ with $\mathbf{{h}}^v$ on $\HH$.\\

By Lemma \ref{Lemma:TraceEstimateH1toL4bdry} and \eqref{EQM1KRSfoliationEPest1},
\begin{align*}
\int\limits_\HH \vert \wt{\pi} \vert_{\mathbf{{h}}^\ttt} + \int\limits_\HH \vert \wt{\pi} \vert^2_{\mathbf{{h}}^\ttt} \les& \Vert \wt{\pi} \Vert_{L^\infty_\ttt L^2(\pr \wt{\Si}_\ttt)} +  \Vert \wt{\pi} \Vert_{L^\infty_\ttt L^2(\pr \wt{\Si}_\ttt)}^2 \\
\les& \lrpar{\Vert \wt{\pi} \Vert_{L^\infty_\ttt L^2(\wt{\Si}_\ttt)} + \Vert \nab\wt{\pi} \Vert_{L^\infty_\ttt L^2(\wt{\Si}_\ttt)} }+\lrpar{\Vert \wt{\pi} \Vert_{L^\infty_\ttt L^2(\wt{\Si}_\ttt)} + \Vert \nab \wt{\pi} \Vert_{L^\infty_\ttt L^2(\wt{\Si}_\ttt)}}^2  \\
 \les&\, \CMD + \CMD^2.
\end{align*}

Plugging this into \eqref{ESTQQ1estimM1new} yields that for $\CMD>0$ sufficiently small,
\begin{align} \label{ESTQQ1M1}
\II_1 \les& \Vert \R \Vert^2_{L^\infty(\HH)} + \Vert \D \R \Vert^2_{L^\infty(\HH)} \les \lrpar{\RR_1^\HH}^2.
\end{align}

\textbf{Estimation of $\II_2$.} First, by \eqref{EQrelationQothervectors1} and \eqref{EQlemmaComparisonTwoEstimates222}, 
\begin{align*}
\int\limits_{\Si_1} Q(\Lieh_\Ttt \R)_{\Ttt \Ttt \Ttt T} \les \int\limits_{\Si_1} Q(\Lieh_\Ttt \R)_{\Ttt \Ttt \Ttt \Ttt} \les \int\limits_{\Si_1} \vert \wt{E}(\Lieh_\Ttt \R) \vert^2 + \vert \wt{H}(\Lieh_\Ttt \R) \vert^2.
\end{align*}
Second, by definition of $\Lieh_\Ttt$, see Definition \ref{DEFmodifiedLiederivative}, and using that $(\wt{\Si}_\ttt)$ is maximal, we have for an $\wt{\Si}_\ttt$-tangential frame $(\ett_a)_{a=1,2,3}$, 
\begin{align*}
\wt{E}(\Lieh_\Ttt \R)_{ab} =& (\Lie_\Ttt \R)_{\Ttt a\Ttt b} -\half \left( \wt{\pi}^c_{\,\,\, \Ttt} \R_{ca\Ttt b} + \wt{\pi}^c_{\,\,\, a} \R_{\Ttt c\Ttt b} + \wt{\pi}^c_{\,\,\, \Ttt} \R_{\Ttt acb} + \wt{\pi}^c_b \R_{\Ttt a\Ttt c} \right),
\end{align*}
The Lie derivative on the right-hand side can be rewritten as
\begin{align*}
(\Lie_\Ttt \R)_{\Ttt a\Ttt b} =& \D_\Ttt \R_{\Ttt  a\Ttt b} -\ntt^{-1} \nabtt_c \ntt \left( \R_{c a \Ttt b} + \R_{\Ttt a c b} \right) - \ktt_{ac} \R_{\Ttt c \Ttt b} - \ktt_{bc} \R_{\Ttt a \Ttt c}.
\end{align*}
From the above two, we get that
\begin{align*}
\int\limits_{\Si_1} \vert \wt{E}( \Lieh_\Ttt \R) \vert^2 \les& \Vert \vert \D \R \vert_{\mathbf{\tilde{h}}} \Vert^2_{L^2(\Si_1)} + \Vert \vert \R \vert_{\mathbf{\tilde{h}}} \Vert^2_{L^2(\Si_1)} \Vert \nabtt \ntt \Vert^2_{L^\infty(\MM)} + \Vert \vert \R \vert_{\mathbf{\tilde{h}}} \Vert_{L^\infty(\Si_1)}^2 \Vert \ktt \Vert^2_{L^2(\Si_1)} \\
\les& \Vert \D \R \Vert^2_{L^\infty(\Si_1)} + \Vert \R \Vert^2_{L^\infty(\Si_1)} \\
\les& \lrpar{\RR_1^\Si}^2,
\end{align*}
where we used \eqref{EQM1KRSfoliationEPest1}, and $\mathbf{{h}}^\ttt$ and $\mathbf{{h}}^t$ denote the Riemannian metrics corresponding to $(\wt{\Si}_\ttt)$ and $(\Si_t)$, respectively, and we used \eqref{EQlemmaComparisonTwoEstimates222} to compare $\mathbf{{h}}^\ttt$ and $\mathbf{{h}}^t$.\\

It follows similarly that
\begin{align*}
\int\limits_{\Si_1} \vert \wt{H}( \Lieh_\Ttt \R) \vert^2 \les \lrpar{\RR_1^\Si}^2;
\end{align*}
details are left to the reader. To summarise the above, we proved that
\begin{align} \label{EQFinalTERM2estimateM1Curvature}
\II_2 := \int\limits_{\Si_1} Q(\Lieh_\Ttt \R)_{\Ttt \Ttt \Ttt T} \les \lrpar{\RR_1^\Si}^2. 
\end{align}

Plugging \eqref{ESTQQ1M1} and \eqref{EQFinalTERM2estimateM1Curvature} into \eqref{EQM1curvatureestimateWterms} shows that 
\begin{align*}
\Vert \Lieh_{\Ttt} \R \Vert^2_{L^\infty_{\ttt} L^2(\wt{\Si}_{\ttt})} \les& \lrpar{\RR_1^\Si}^2+\lrpar{\RR_1^\HH}^2 \\
&+ \CMD \lrpar{\norm{\nab \RRRic}_{L^2(\Si_{t^\ast})}+ \norm{\nab^2 k}_{L^2(\Si_{t^\ast})}}^2 + \CMD^2.
\end{align*}
This finishes the proof of \eqref{EQgeneralResult}.

\subsubsection{Elliptic estimates for $k$ on $\Si_{t^\ast}$ for $m=1$} \label{SECellipticEstimateM1} In this section we prove \eqref{EQStatementHigherReg12}, that is,
\begin{align*}
&\Vert \nab^2 k \Vert_{L^2(\Si_{t^\ast})}+ \Vert \Nd^2 \nu \Vert_{L^2(\pr \Si_{t^\ast})}\\
\les& \RR_1^\HH + \CMD + \CMD \lrpar{\Vert \nab \RRRic \Vert_{L^2(\Si_{t^\ast})}+\Vert \nab^2 k \Vert_{L^2(\Si_{t^\ast})} }.
\end{align*}

Analogously to Section \ref{SectionKEllipticEstimateWithBoundaryTerm}, the idea is to use elliptic estimates for $k$ and exploit the special structure of the appearing boundary integral. For completeness, we provide more details below.\\

We recall that $k$ satisfies on $\Si_{t^\ast}$ the Hodge system
\begin{align*}
\Div_g k=&0, \\
{\Curl}_g k=&H, \\
\tr_g k =&0.
\end{align*}
In the following higher regularity estimates for $k$, we use the notation of Appendix \ref{sec:AppendixProofOfLowRegEllipticEstimateForK}. In the notation of Appendix \ref{sec:AppendixProofOfLowRegEllipticEstimateForK}, the above Hodge system of $k$ implies that
\begin{align} \label{EQkHodgem1Cite}
A(k)_{iab} = \in^m_{\,\,\,\, ab}H_{im}, \,\, D(k)=0.
\end{align}
We note that by \eqref{EQkHodgem1Cite} together with Lemma \ref{lem:controlSYM}, we can express the symmetrised derivative $\ol{\nab}k$ of $k$ as
\begin{align} \begin{aligned}
\lrpar{\ol{\nab}k}_{a_1 a_2 b} =&\nab_b k_{a_1a_2} + \frac{1}{3} \in^m_{\,\,\, ba_1} H_{a_2m}  +\frac{1}{3}  \in^m_{\,\,\, ba_2} H_{a_1m}.
\end{aligned} \label{EQm1RELATIONSk} \end{align}

Applying the fundamental elliptic estimate for Hodge systems (see Lemma \ref{lem:IntegrationByPartsIdentity} and note that it applies only to symmetric tensors) to the symmetrised derivative $\ol{\nab}k$ of $k$ and using Lemmas \ref{lemma:commutatorDA} and \ref{lem:controlSYM}, we get the next elliptic estimate (see also Lemma \ref{LEMkHigherRegEllEstBdryTerms})
\begin{align} \begin{aligned}
&\int\limits_{\Si_{t^\ast}} \vert \nab^2 k \vert^2 + \int\limits_{\pr \Si_{t^\ast}} \lrpar{\ol{\nab}k}^{a_1 a_2 N} D(\ol{\nab} k)_{a_1 a_2} - \int\limits_{\pr \Si_{t^\ast}} \nab_b \lrpar{\ol{\nab}k}_{a_1a_2N} \lrpar{\ol{\nab}k}^{a_1 a_2 b} \\
& \les \int\limits_{\Si_{t^\ast}} \vert \nab H \vert^2  + \CMD^2,
\end{aligned} \label{EQellipticEstimatekM1overview} \end{align}
By the definition of the divergence $D$, see Definition \ref{def:AandDdef}, we can rewrite the boundary integrals as
\begin{align} \begin{aligned}
&\int\limits_{\pr \Si_{t^\ast}} \lrpar{\ol{\nab}k}^{a_1 a_2 N} D(\ol{\nab} k)_{a_1 a_2} - \int\limits_{\pr \Si_{t^\ast}} \nab_b \lrpar{\ol{\nab}k}_{a_1a_2N} \lrpar{\ol{\nab}k}^{a_1 a_2 b} \\
=&  \int\limits_{\pr \Si_{t^\ast}} \lrpar{\ol{\nab}k}^{a_1 a_2 N} \nab^D (\ol{\nab} k)_{a_1 a_2 D} - \int\limits_{\pr \Si_{t^\ast}} \nab_D \lrpar{\ol{\nab}k}_{a_1a_2N} \lrpar{\ol{\nab}k}^{a_1 a_2 D}.
\end{aligned} \label{EQboundaryTermsKm1est1} \end{align}

In the following, it suffices to analyse the first term on the right-hand side of \eqref{EQboundaryTermsKm1est1}. Indeed, by an integration by parts on $\pr \Si_{t^\ast}$, the second term equals the first term up to error terms $Q$ which can be estimated as
\begin{align*}
\vert Q \vert 
\les& \, \Vert \nab E \Vert^2_{L^2(\Si_{t^\ast})} +\Vert \nab H \Vert^2_{L^2(\Si_{t^\ast})} +\CMD \\
&\, + \CMD (\Vert \nab \RRRic \Vert^2_{L^2(\Si_{t^\ast})}+\Vert \nab^2 k \Vert^2_{L^2(\Si_{t^\ast})} + \CMD^2).
\end{align*}
where we used the property that $\Si_{t^\ast}$ is a weakly regular ball of constant $\CMD$. In the following, we write $Q$ as general notation for such error terms.\\

We turn to the analysis of the first term on the right-hand side of \eqref{EQboundaryTermsKm1est1}. We consider three cases.\\

\textbf{Case 1: $a_1, a_2 \in \{1,2\}$.} In this case denote $A_1 := a_1$ and $A_2 := a_2$. By \eqref{EQm1RELATIONSk}, see also \eqref{eq:FirstOfDivEq}-\eqref{eq:divEtaFoundation}, and integration by parts on $\pr \Si_{t^\ast}$,
\begin{align*}
&\int\limits_{\pr \Si_{t^\ast}} \lrpar{\ol{\nab}k}^{A_1 A_2 N} \nab^D \lrpar{\ol{\nab}k}_{A_1 A_2 D} \\
=&\int\limits_{\pr \Si_{t^\ast}} \lrpar{\nab^{A_1} k^{A_2 N}+ \frac{1}{3} \in^{mNA_1} H^{A_2}_{\,\,\,\,\,\,m}+ \frac{1}{3} \in^{mNA_2} H^{A_1}_{\,\,\,\,\,\,m}} \\
&\qquad \cdot \nab^D \lrpar{\nab_{A_2} k_{A_1D} + \frac{1}{3} \in^{mDA_2} H^{A_1}_{\,\,\,\,\,\,m}+ \frac{1}{3} \in^{mdA_1} H^{A_2}_{\,\,\,\,\,\,m}} \\
=&\int\limits_{\pr \Si_{t^\ast}} \Nd_{A_2} k^{A_2N} \Nd^{A_1} \Nd^D k_{A_1 D} + Q\\
=& \int\limits_{\pr \Si_{t^\ast}} \Divd \ep \Divd \Divd \eta + Q\\
=&- \int\limits_{\pr \Si_{t^\ast}} \Divd \ep \Divd \Nd \de + Q.
\end{align*}
Using the slope equation \eqref{eq:slope} and \eqref{eq:deltanuRelation},
\begin{align*}
\ep_A =& -\nut^{-1}\Nd_A\nut + \zet_A,\\
 \Nd \de =& \Nd \left( \half \nu \tr \chi + \half \nu^{-1} \tr \chib \right) =  \underbrace{F(\nu, \tr \chi, \tr \chib)}_{\geq 1/8.} \Nd \nu + \half \nu \Nd \tr \chi + \half \nu^{-1} \Nd \tr \chib,
\end{align*}
we get from the above and standard elliptic estimates on $\pr \Si_{t^\ast}$ that
\begin{align*}
\int\limits_{\pr \Si_{t^\ast}} \lrpar{\ol{\nab}k}^{A_1 A_2 N} \nab^D \lrpar{\ol{\nab}k}_{A_1 A_2 D} =& \int\limits_{\pr \Si_{t^\ast}} F(\nu, \tr \chi, \tr \chib) \vert \Ld \nu \vert^2 + Q\\
\gtrsim& \int\limits_{\pr \Si_{t^\ast}} \vert \Ld \nu \vert^2 + Q\\
\gtrsim& \int\limits_{\pr \Si_{t^\ast}} \vert \Nd^2 \nu \vert^2 + Q.
\end{align*}
This finishes our discussion of Case 1.\\

\textbf{Case 2: $a_1 \in \{1,2\}, a_2 =N$.} In this case let $A_1 := a_1$. We have
\begin{align*}
\int\limits_{\pr \Si_{t^\ast}} \lrpar{\ol{\nab}k}^{A_1NN} \nab^D (\ol{\nab} k)_{A_1ND} =& \int\limits_{\pr \Si_{t^\ast}} \nab^{A_1}k^{NN} \nab^D \nab_{A_1} k_{ND} + Q \\
=&\int\limits_{\pr \Si_{t^\ast}} \Nd^{A_1}\de \, \Nd_{A_1} \Divd \ep + Q \\
=&-\int\limits_{\pr \Si_{t^\ast}} \Divd \Nd\de \Divd \ep + Q.
\end{align*}
Hence by the same reasoning as in Case 1, we get that
\begin{align*}
\int\limits_{\pr \Si_{t^\ast}} \lrpar{\ol{\nab}k}^{A_1NN} \nab^D (\ol{\nab} k)_{A_1ND} \gtrsim \int\limits_{\pr \Si_{t^\ast}} \vert \Nd^2 \nu \vert^2 + Q.
\end{align*}
This finishes our discussion of Case 2.\\

\textbf{Case 3: $a_1=a_2 =N$.} The idea is to use \eqref{EQm1RELATIONSk}, see also \eqref{eq:FirstOfDivEq}-\eqref{eq:divEtaFoundation}, to reduce the number of $N$'s. Indeed, we have
\begin{align*}
\int\limits_{\pr \Si_{t^\ast}} \lrpar{\ol{\nab}k}^{NNN} \nab^D (\ol{\nab} k)_{NND} =& \int\limits_{\pr \Si_{t^\ast}} \nab^N k^{NN} \nab^D \nab_D k_{NN} +Q \\
=& -\int\limits_{\pr \Si_{t^\ast}} \nab^A k^{NA} \nab^D \nab_D k_{NN} +Q \\
=& -\int\limits_{\pr \Si_{t^\ast}} \Divd \ep \Divd \Nd \de +Q.
\end{align*}
Hence by the same reasoning as in Case 1, we get that
\begin{align*}
\int\limits_{\pr \Si_{t^\ast}} \lrpar{\ol{\nab}k}^{NNN} \nab^D (\ol{\nab} k)_{NND} \gtrsim \int\limits_{\pr \Si_{t^\ast}} \vert \Nd^2 \nu \vert^2 + Q.
\end{align*}
This finishes our discussion of Case 3.\\

By plugging the above estimates for Cases 1, 2 and 3 with \eqref{EQboundaryTermsKm1est1} into \eqref{EQellipticEstimatekM1overview}, we get that
\begin{align*}
&\, \Vert \nab^2 k \Vert^2_{L^2(\Si_{t^\ast})} + \Vert \Nd^2 \nu \Vert^2_{L^2(\pr \Si_{t^\ast})} \\
\les& \,\Vert \nab H \Vert^2_{L^2(\Si_{t^\ast})}  + \CMD^2 + Q \\
\les& \,\Vert \nab E \Vert^2_{L^2(\Si_{t^\ast})} +\Vert \nab H \Vert^2_{L^2(\Si_{t^\ast})} +\CMD^2 + \CMD (\Vert \nab \RRRic \Vert^2_{L^2(\Si_{t^\ast})}+\Vert \nab^2 k \Vert^2_{L^2(\Si_{t^\ast})} + \CMD^2) \\
\les&\, \RR_1^\HH +\CMD^2 + \CMD (\Vert \nab \RRRic \Vert^2_{L^2(\Si_{t^\ast})}+\Vert \nab^2 k \Vert^2_{L^2(\Si_{t^\ast})} + \CMD^2).
\end{align*}
This finishes the proof of \eqref{EQStatementHigherReg12}.

\subsubsection{Conclusion of the proof of \eqref{EQStatementHigherReg11000} and \eqref{EQStatementHigherReg11}} \label{SECm1estimatesConclusion} In this section, we conclude the proof of \eqref{EQStatementHigherReg11000} and \eqref{EQStatementHigherReg11}, that is,
\begin{align*} 
\Vert \nab E \Vert_{L^2(\Si_{t^\ast})} +\Vert \nab H \Vert_{L^2(\Si_{t^\ast})} \les& \OO_1^{\HH} + \RRt_1^\HH +\OO_1^{\Si} + \RRt_1^\Si+\CMD, \\
\Vert \nab^2 k \Vert_{L^2(\Si_{t^\ast})}+ \Vert \Nd^2 \nu \Vert_{L^2(\pr \Si_{t^\ast})}+\Vert \Nd^2 \nu \Vert_{H^{1/2}(\pr \Si_{t^\ast})} \les& \OO_1^{\HH} + \RRt_1^\HH +\OO_1^{\Si} + \RRt_1^\Si+\CMD.
\end{align*}

Combining \eqref{EQCurvatureEstimate1M1}, \eqref{EQgeneralResult} and \eqref{EQStatementHigherReg12}, and noting that on $\Si_{t^\ast}$, $\wt\nab \wt{E} =\nab E$ and $\wt\nab \wt{H} =\nab H$, we have
\begin{align} \begin{aligned}
&\Vert \nab E \Vert_{L^2(\Si_{t^\ast})} +\Vert \nab H \Vert_{L^2(\Si_{t^\ast})} +\Vert \nab^2 k \Vert_{L^2(\Si_{t^\ast})}+ \Vert \Nd^2 \nu \Vert_{L^2(\pr \Si_{t^\ast})} \\
 \les& \OO_1^{\HH} + \RRt_1^\HH +\OO_1^{\Si} + \RRt_1^\Si +\sqrt{ \CMD} \lrpar{\Vert \nab^2 k \Vert^2_{L^2(\Si_{t^\ast})} + \Vert \nab \RRRic \Vert_{L^2(\Si_{t^\ast})}^2}+\CMD.
\end{aligned} \label{EQcombinedEstimateConclusion1} \end{align}

Using that by \eqref{eq:RicE},
\begin{align*}
\Ric_{ij} = E_{ij}+k_{ia}k^{a}_j,
\end{align*}
we have that 
\begin{align} \label{EQRicEconnectionEstimate1}
\Vert \nab \RRRic \Vert_{L^2(\Si_{t^\ast})} \les \Vert \nab E \Vert_{L^2(\Si_{t^\ast})} + \sqrt{\CMD} \Vert \nab^2 k \Vert_{L^2(\Si_{t^\ast})}+ \CMD.
\end{align}
Plugging \eqref{EQRicEconnectionEstimate1} into \eqref{EQcombinedEstimateConclusion1}, we get that for $\CMD>0$ and $\varep>0$ sufficiently small,  
\begin{align*}
&\Vert \nab E \Vert_{L^2(\Si_{t^\ast})} +\Vert \nab H \Vert_{L^2(\Si_{t^\ast})}+ \Vert \nab^2 k \Vert_{L^2(\Si_{t^\ast})}+ \Vert \Nd^2 \nu \Vert_{L^2(\pr \Si_{t^\ast})}\\
\les& \OO_1^{\HH} + \RRt_1^\HH +\OO_1^{\Si} + \RRt_1^\Si + \sqrt{\CMD} \lrpar{\norm{\nab \RRRic}_{L^2(\Si_{t^\ast})}+ \norm{\nab^2 k}_{L^2(\Si_{t^\ast})}} +\CMD \\
\les& \OO_1^{\HH} + \RRt_1^\HH +\OO_1^{\Si} + \RRt_1^\Si + \sqrt{\CMD} \lrpar{\norm{\nab E}_{L^2(\Si_{t^\ast})}+(1+\sqrt{\CMD}) \norm{\nab^2 k}_{L^2(\Si_{t^\ast})} +\CMD} \\
\les&  \OO_1^{\HH} + \RRt_1^\HH +\OO_1^{\Si} + \RRt_1^\Si + \CMD,
\end{align*}
where we used the smallness of $\CMD>0$ to absorb the term into the left-hand side. This finishes the proof of \eqref{EQStatementHigherReg11000}. \\

For the proof of \eqref{EQStatementHigherReg11}, it remains to estimate $\Vert \Nd^2 \nu \Vert_{H^{1/2}(\pr \Si_{t^\ast})}$. Using the slope equation \eqref{eq:slope}, we get that
\begin{align*}
\Vert \Nd^2 \nu \Vert_{H^{1/2}(\pr \Si_{t^\ast})} \les& \Vert \Nd \zeta \Vert_{H^{1/2}(\pr \Si_{t^\ast})} + \Vert \Nd \ep \Vert_{H^{1/2}(\pr \Si_{t^\ast})} \\
\les& \OO_1^\HH + \lrpar{\Vert \nab^2 k \Vert_{L^2(\Si_{t^\ast})} + \Vert \nab \Ric \Vert_{L^2(\Si_{t^\ast})}+ \CMD} \\
\les& \OO_1^\HH + \RR_1^\Si + \RR_1^\HH + \CMD,
\end{align*}
where we used the above estimate for $k$ and $\nab \RRRic$ and Lemma \ref{Lemma:TraceEstimateH1toL4bdry}.

\subsubsection{Proof of \eqref{EQStatementHigherReg112}} \label{SECTestimateEHm1}

In this section we prove \eqref{EQStatementHigherReg112}, that is,
\begin{align*}
\Vert \D \Rbf \Vert_{L^2(\Si_{t^\ast})} \les  \OO_1^{\HH} + \RRt_1^\HH +\OO_1^{\Si} + \RRt_1^\Si + \CMD.
\end{align*}

By the electric-magnetic decomposition of $\Rbf$ into ${E}_{ab}:= \Rbf_{T a T b}$ and ${H}_{ab}:= {}^\ast\Rbf_{T a T b}$, it suffices to prove that
\begin{align} \label{EQreductioncurvatureestimate}
\int\limits_{\Si_{t^\ast}} \vert \D \Rbf_{T \cdot T \cdot} \vert_{\mathbf{{h}}}^2 + \vert \D {}^\ast \Rbf_{T \cdot T \cdot} \vert_{{\mathbf{h}}}^2 \les \lrpar{ \OO_1^{\HH} + \RRt_1^\HH +\OO_1^{\Si} + \RRt_1^\Si + \CMD}^2.
\end{align}

In the following, we prove \eqref{EQreductioncurvatureestimate}. We first bound $\D_{T} \Rbf_{T a T b}$ and $\D_{T} {}^\ast\Rbf_{T a T b}$. On the one hand, 
\begin{align*}
\D_T {E}_{ab} =& \D_T \Rbf_{T a T b} - {n}^{-1} {\nab}^c {n} \lrpar{\Rbf_{caT b} + \Rbf_{T a d b}}, \\
\D_T {H}_{ab} =& \D_T {}^\ast \Rbf_{T a T b} - {n}^{-1} {\nab}^c {n} \lrpar{{}^\ast\Rbf_{caT b} + {}^\ast\Rbf_{T a c b}}.
\end{align*}

On the other hand, by the Bianchi equations \eqref{eq:BianchiEH} we have
\begin{align*}
\D_{T} {E}_{ab} =& \Lieh_T {E}_{ab} -\lrpar{{k}_{ac}{E}_{cb} + {k}_{bc}{E}_{ca}- {k} \cdot {E} \, g_{ab}}, \\
=& -\Curl H_{ab} -(n^{-1} \nab n \wedge H)_{ab} + \half (k \times E)_{ab} -\lrpar{{k}_{ac}{E}_{cb} + {k}_{bc}{E}_{ca}- {k} \cdot {E} \, g_{ab}},\\
\D_{T} {H}_{ab} =& \Lieh_T {H}_{ab} -\lrpar{{k}_{ac}{H}_{cb} + {k}_{bc}{H}_{ca}- {k} \cdot {H} \, g_{ab}}\\
=&  \Curl E_{ab} + (n^{-1} \nab n \wedge E)_{ab} + \half (k \times H)_{ab}-\lrpar{{k}_{ac}{H}_{cb} + {k}_{bc}{H}_{ca}- {k} \cdot {H} \, g_{ab}}.
\end{align*}

By combining the two above, we get that
\begin{align*} \begin{aligned}
&\Vert \D_T \Rbf_{T a T b} \Vert_{L^2(\Si_{t^\ast})} + \Vert \D_T {}^\ast \Rbf_{T a T b}\Vert_{L^2(\Si_{t^\ast})} \\
\les& \Vert {\nab} {E} \Vert_{L^2(\Si_{t^\ast})} + \Vert {\nab} {H} \Vert_{L^2(\Si_{t^\ast})}\\
&+ \lrpar{\Vert {\nab}^2 {n} \Vert_{L^2(\Si_{t^\ast})}+\Vert {\nab} {n} \Vert_{L^2(\Si_{t^\ast})}+\Vert {n} \Vert_{L^2(\Si_{t^\ast})}}  \lrpar{\Vert {\nab} {E} \Vert_{L^2(\Si_{t^\ast})}+\Vert {E} \Vert_{L^2(\Si_{t^\ast})}} \\
&+ \lrpar{\Vert {\nab}^2 {n} \Vert_{L^2(\Si_{t^\ast})}+ \Vert {\nab} {n} \Vert_{L^2(\Si_{t^\ast})}+\Vert {n} \Vert_{L^2(\Si_{t^\ast})}}  \lrpar{\Vert {\nab} {H} \Vert_{L^2(\Si_{t^\ast})}+\Vert {H} \Vert_{L^2(\Si_{t^\ast})}} \\
&+ \lrpar{\Vert {\nab} {k} \Vert_{L^2(\Si_{t^\ast})}+\Vert {k} \Vert_{L^2(\Si_{t^\ast})}}  \lrpar{\Vert {\nab} {E} \Vert_{L^2(\Si_{t^\ast})}+\Vert {E} \Vert_{L^2(\Si_{t^\ast})}} \\
&+ \lrpar{\Vert {\nab} {k} \Vert_{L^2(\Si_{t^\ast})}+\Vert {k} \Vert_{L^2(\Si_{t^\ast})}} \lrpar{\Vert {\nab} {H} \Vert_{L^2(\Si_{t^\ast})}+\Vert {H} \Vert_{L^2(\Si_{t^\ast})}} \\
\les& \OO_1^{\HH} + \RRt_1^\HH +\OO_1^{\Si} + \RRt_1^\Si + \CMD,
\end{aligned} 
\end{align*}
where we used \eqref{EQStatementHigherReg11000}, \eqref{EQStatementHigherReg11} and \eqref{EQgeneralResult}. \\

We next bound $\D_{c} \Rbf_{T a T b}$ and $\D_{c} {}^\ast\Rbf_{T a T b}$. We have that
\begin{align*}
\D_c \Rbf_{T a T b}=& \nab_c E_{ab} + k_{cd} \Rbf_{daT b} + k_{cd} \Rbf_{T a T d}, \\
\D_c {}^\ast \Rbf_{T a T b}=& \nab_c H_{ab} + k_{cd} {}^\ast\Rbf_{daT b} + k_{cd} {}^\ast\Rbf_{T a T d}.
\end{align*}
Hence,
\begin{align*}
&\Vert \D_c \Rbf_{T a T b} \Vert_{L^2(\Si_{t^\ast})} + \Vert \D_c {}^\ast \Rbf_{T a T b}\Vert_{L^2(\Si_{t^\ast})} \\
\les& \OO_1^{\HH} + \RRt_1^\HH +\OO_1^{\Si} + \RRt_1^\Si + \CMD,
\end{align*}
where we used \eqref{EQStatementHigherReg11000} and \eqref{EQStatementHigherReg11}. This finishes the proof of \eqref{EQStatementHigherReg112}. 

\subsubsection{Proof of \eqref{EQStatementHigherReg13}} \label{SECm1estimatesLAPSE} In this section we prove \eqref{EQStatementHigherReg13}, that is,
\begin{align*}
\Vert \nab^3 n \Vert_{L^2(\Si_{t^\ast})} + \Vert \nab^2 T(n) \Vert_{L^2(\Si_{t^\ast})} + \Vert \nab T^2(n) \Vert_{L^2(\Si_{t^\ast})} \les& \OO_1^{\Si} + \RR_1^\Si + \OO_1^{\HH}+ \RRt_1^\HH +\CMD. 
\end{align*}

First, by applying Proposition \ref{PropEllipticEstimatesFORtriangle} to the boundary value problem for $n$ in \eqref{eq:ellEQforN}, that is,
\begin{align} \begin{aligned}
\Delta n =& \,n \vert k\vert_g^2 &\text{ on } \Si_{t^\ast}, \\
n=&\, \nu^{-1} \Om^{-1} &\text{ on } \pr \Si_{t^\ast},
\end{aligned}\label{EQnequationm1recap}\end{align}
and using \eqref{EQStatementHigherReg11000} and \eqref{EQStatementHigherReg11}, we have that for $\CMD>0$ sufficiently small,
\begin{align*}
&\sum\limits_{\vert \a \vert \leq 3} \Vert \nab^{\a} n \Vert_{L^2(\Si_{t^\ast})}\\
 \les& \Vert \nab\triangle n \Vert_{L^2(\Si_{t^\ast})} +\Vert \triangle n \Vert_{L^2(\Si_{t^\ast})} +\Vert \Nd^2 n \Vert_{H^{1/2}(\pr\Si_{t^\ast})} + \Vert \Nd n\Vert_{H^{1/2}(\pr\Si_{t^\ast})} + \Vert n\Vert_{L^2(\pr \Si_{t^\ast})}\\
\les&\Vert \nab \lrpar{n \vert k \vert^2} \Vert_{L^2(\Si_{t^\ast})} +\Vert n\vert k \vert^2 \Vert_{L^2(\Si_{t^\ast})} +\Vert \Nd^2 (\Om^{-1}\nu^{-1}) \Vert_{H^{1/2}(\pr\Si_{t^\ast})} \\
& + \Vert \Nd (\Om^{-1}\nu^{-1})\Vert_{H^{1/2}(\pr\Si_{t^\ast})} + \Vert \Om^{-1}\nu^{-1}\Vert_{L^2(\pr \Si_{t^\ast})} \\
\les& \OO_1^{\Si} + \RR_1^\Si + \OO_1^{\HH}+ \RRt_1^\HH+\CMD.
\end{align*}

Second, we turn to the estimation of $T(n)$ and $TT(n)$. On the one hand, by Lemma \ref{LEMcommutatorTriangleDT} and \eqref{eq:sndvar}, $T(n)$ satisfies on $\Si_{t^\ast}$ the equation
\begin{align} \begin{aligned}
\triangle (T(n)) =& T(\triangle n) +[\triangle, T]n \\ 
=& T(n) \vert k \vert^2 + 2n k \D_T k + 2 k \nab^2 n - 2 n^{-1} \nab n  \nab T(n) - \vert k \vert^2 T(n) \\
&+ 2 n^{-1} k\vert \nab n \vert^2\\
=& 2n k (E - n^{-1} \nab^2 n + k \cdot k)+  2 k \nab^2 n - 2 n^{-1} \nab n  \nab T(n) + 2 n^{-1} k \vert \nab n\vert^2 
\end{aligned} \label{EQDTNequation} \end{align}

On the other hand, we have by Lemma \ref{lemTransportNUalongL} that on $\pr \Si_t$,
\begin{align} \begin{aligned}
T(n) =& \nu L(n) - N(n)\\
=& \nu L(\nu^{-1}\Om^{-1}) -N(n) \\
=& -\frac{1}{\nu \Om} L(\nu) - \frac{1}{\Om^2} L(\Om) - N(n)\\
=&  -\frac{1}{\nu \Om} \Big(-n^{-1}N(n) - \de\Big) - \frac{1}{\Om^2} L(\Om) - N(n),
\end{aligned} \label{EQTnBoundaryValue} \end{align}
which implies by Lemma \ref{PROPtraceEstimate} and \eqref{eq:SmallData} that 
\begin{align} \label{EQboundaryEstimateTN}
\Vert T(n) \Vert_{H^{1/2}(\pr \Si_{t^\ast})}+\Vert \Nd T(n) \Vert_{H^{1/2}(\pr \Si_{t^\ast})} \les \OO_1^{\Si} + \RR_1^\Si + \OO_1^{\HH}+ \RRt_1^\HH+\CMD.
\end{align}

Applying standard elliptic estimates to \eqref{EQDTNequation}, see for example Lemma \ref{PropEllipticEstimatesFORtriangle}, and using  \eqref{EQboundaryEstimateTN}, \eqref{eq:SmallData} and \eqref{eq:BAspacetimeEstimates01}, we get that
\begin{align} \label{EQestimationTN1m1}
\Vert \nab^2 T(n) \Vert_{L^2(\Si_t)}+ \Vert \nab T(n) \Vert_{L^2(\Si_t)} \les \OO_1^{\Si} + \RR_1^\Si + \OO_1^{\HH}+ \RRt_1^\HH+\CMD.
\end{align}

The proof of the control of $\nab T^2(n)$ follows by commuting \eqref{EQnequationm1recap} once more with $T$, applying standard elliptic estimates and using Lemma \ref{lemTransportNUalongL} to control the boundary value of  $T^2(n)$; we leave details to the reader, and see also Appendix E in \cite{KRS}. This finishes our discussion of the proof of \eqref{EQStatementHigherReg13}.

\subsection{Higher regularity estimates for $m\geq2$} \label{SECm2estimatesOUTLINE} In this section we outline the proof of higher regularity estimates for $m\geq2$. The proof is based on an induction in $m\geq1$. The base case $m=1$ is proved in the previous sections. In the following we discuss the induction step $m \to m+1$.\\

We recall the geometric setup. Let $(\MM,\g)$ be a vacuum spacetime whose past is bounded by a compact spacelike maximal hypersurface $\Si \simeq \ol{B(0,1)}$ and the outgoing null hypersurface $\HH$ emanating from $\pr \Si$. For some real $1<t^\ast_0 \leq 2$, assume there exists a foliation $(\Si_t)_{1\leq t \leq t^\ast_0}$ of spacelike maximal hypersurfaces given as level sets of a time function $t$ with $t(\Si)=1$ and such that $\pr \Si_t = S_t$, where $(S_t)_{1\leq t \leq t_0^\ast}$ denotes the canonical foliation on $\HH$. Assume that for $\varep>0$ it holds for $1\leq t \leq t_0^\ast$ that
\begin{align*} \begin{aligned}
\Vert \RRRic \Vert_{L^\infty_tL^2(\Si_t)} \les& \,\varep, \\
\Vert k \Vert_{L^\infty_tL^2(\Si_t)} + \Vert \nab k \Vert_{L^\infty_tL^2(\Si_t)} + \Vert k \Vert_{L^\infty_tL^2(\pr \Si_t)} \les&\, \varep.
\end{aligned} 
\end{align*}

Assume as \emph{induction hypothesis} that for an integer $m\geq1$, we have that for $1 \leq t \leq t^\ast_0$,
\begin{align}\begin{aligned}
&\sum\limits_{\vert \a \vert \leq m} \Vert \D^{\a} \Rbf \Vert_{L^2(\Si_{t})}+\sum\limits_{\vert \a \vert \leq m+1} \Vert \D^{\a} \pi \Vert_{L^2(\Si_{t})} \\
&+\sum\limits_{\vert \a \vert \leq m} \Vert \nab^{\a} \Ric \Vert_{L^2(\Si_{t})}  + \Vert \Nd^{\a}\Nd \nu \Vert_{H^{1/2}(\pr \Si_t)}\\
\les& \, C(\OO_m^\HH, \RR_{m}^\HH, \OO_m^\Si, \RR_m^\Si, m).
\end{aligned} \label{EQinductionHypothesisMGEQ2}
\end{align}

In the following we prove the \emph{induction step}, that is, we show that for $0 \leq t \leq t^\ast_0$,
\begin{align} \begin{aligned}
&\sum\limits_{\vert \a \vert \leq m+1} \Vert \D^{\a} \Rbf \Vert_{L^2(\Si_{t})}+\sum\limits_{\vert \a \vert \leq m+2} \Vert \D^{\a} \pi \Vert_{L^2(\Si_{t})} \\
&+\sum\limits_{\vert \a \vert \leq m+1} \Vert \nab^{\a} \Ric \Vert_{L^2(\Si_{t})}  + \Vert \Nd^{\a}\Nd \nu \Vert_{H^{1/2}(\pr \Si_t)}\\
\les&\, C(\OO_{m+1}^\HH, \RR_{m+1}^\HH, \OO_{m+1}^\Si, \RR_{m+1}^\Si, m+1).
\end{aligned} \label{EQm2mand1Estimate1} \end{align}

We proceed as follows.
\begin{enumerate}

\item In Section \ref{SECm2EllipticCURVATUREestimates}, we prove that for each $0 \leq t \leq t^\ast_0$,
\begin{align}\begin{aligned}
&\sum\limits_{\vert \a \vert \leq m+1} \Vert {\nab}^{\a} {E} \Vert_{L^2({\Si}_{t})}+ \Vert {\nab}^{\a} {H} \Vert_{L^2({\Si}_{t})} \\
 \les& \, \Vert  \Lieh_T^{m+1} \Rbf \Vert_{L^2({\Si}_{t})} +C(\OO_{m+1}^\HH, \RR_{m+1}^\HH,\OO_{m+1}^\Si, \RR_{m+1}^\Si, {m+1}).
\end{aligned} \label{EQM2EllipticCurvatureEstimates1} \end{align}
The proof of \eqref{EQM2EllipticCurvatureEstimates1} is based on the fact that $ E$ and $ H$ satisfy a $3$-dimensional Hodge system on ${\Si}_t$ by the Bianchi equations.

\item In Section \ref{SECm2CurvatureEstimates}, we prove that for $0\leq t \leq t^\ast_0$,
\begin{align} \label{EQm2LIEHTRBFestimate1}
\Vert \Lieh_T^{m+1} \Rbf \Vert_{L^2({\Si}_t)} \les& \, C(\OO_{m+1}^\HH, \RR_{m+1}^\HH, \OO_{m+1}^\Si, \RR_{m+1}^\Si, m).
\end{align}
The proof of \eqref{EQm2LIEHTRBFestimate1} is based on an energy estimate for the curvature using the Bel-Robinson tensor together with the classical Gr\"onwall lemma.
\begin{remark} Contrary to the case $m=1$ where the error integral in the Bel-Robinson energy estimate needed to be bounded by a trilinear estimate, in the case $m\geq2$ we can argue solely by the classical Gr\"onwall lemma and the estimates for $m=1$. In particular, we do not use the bounded $L^2$ curvature theorem.
\end{remark}

\item In Section \ref{SECm2KEstimates}, we show that for $1 \leq t \leq t^\ast_0$,
\begin{align} \begin{aligned} 
&\sum\limits_{\vert \a \vert \leq m+1} \Vert \nab^{\a} \nab k \Vert_{L^2(\Si_{t^\ast})} + \sum\limits_{\vert \a \vert \leq m+1} \Vert \Nd^{\a} \Nd \nu \Vert_{L^2(\pr \Si_{t^\ast})}\\
 \les& \, \OO_{m+1}^\HH+ \RR_{m+1}^\HH + \OO_{m+1}^\Si +\RR_{m+1}^\Si + C_m \CMD.
\end{aligned}\label{EQm2KestimatesElliptic} \end{align}
The proof of \eqref{EQm2KestimatesElliptic} is based on standard higher regularity estimates for the Hodge system satisfied by $k$ together with the special structure of the boundary term.

\item In Section \ref{SECm2ConclusionCurvature}, we conclude the proof of \eqref{EQm2mand1Estimate1}.

\end{enumerate}

\subsubsection{Elliptic curvature estimates on ${\Si}_{t}$: The proof of \eqref{EQM2EllipticCurvatureEstimates1}} \label{SECm2EllipticCURVATUREestimates}

In this section we prove \eqref{EQM2EllipticCurvatureEstimates1}, that is,
\begin{align*}
&\sum\limits_{\vert \a \vert \leq m+1} \Vert {\nab}^{\a} {E} \Vert_{L^2({\Si}_{t})}+ \Vert {\nab}^{\a} {H} \Vert_{L^2({\Si}_{t})} \\
 \les&  \Vert  \Lieh_T^{m+1} \Rbf \Vert_{L^2({\Si}_{t})} + C(\OO_{m+1}^\HH, \RR_{m+1}^\HH, \OO_{m+1}^\Si , \RR_{m+1}^\Si , {m+1}).
\end{align*}

The idea is to apply elliptic estimates to the Hodge systems satisfied by $E(\Lieh_T^{i} \Rbf)$ and $H(\Lieh_T^{i} \Rbf)$ on ${\Si}_{t}$ for $0\leq i \leq m$. More specifically, denoting
\begin{align*}
J\lrpar{\Lieh_T^{i} \Rbf}_{\be \ga \de} := \D^\a \lrpar{\Lieh_T^{i} \Rbf}_{\a \be \ga \de},
\end{align*}
it holds by Proposition \ref{prop:MaxwellsEq1} with \eqref{EQTrelationEH}, see also \eqref{eq:BianchiEH2}, that
\begin{align} \begin{aligned}
\Div \,  E\lrpar{\Lieh_T^{i} \Rbf}_a =& +\lrpar{ k \wedge  H\lrpar{\Lieh_T^{i} \Rbf}}_a + J\lrpar{\Lieh_T^{i} \Rbf}_{T aT}, \\
\Curl \,  E\lrpar{\Lieh_T^{i} \Rbf}_{ab} =& + H\lrpar{\Lieh_T^{i+1} \Rbf}_{ab} - 3 \lrpar{{n}^{-1} \nab  n \wedge  E\lrpar{\Lieh_T^{i} \Rbf}}_{ab} \\
&- \frac{3}{2} \lrpar{ k \times  H\lrpar{\Lieh_T^{i} \Rbf}}_{ab} - J^\ast\lrpar{\Lieh_T^{i} \Rbf}_{aT b}, \\
\Div \,  H\lrpar{\Lieh_T^{i} \Rbf}_a =& -\lrpar{ k \wedge  E\lrpar{\Lieh_T^{i} \Rbf}}_a + J^\ast\lrpar{\Lieh_T^{i} \Rbf}_{T aT}, \\
\Curl \,  H\lrpar{\Lieh_T^{i} \Rbf}_{ab} =&- E \lrpar{\Lieh_T^{i+1} \Rbf}_{ab}  -3 \lrpar{n^{-1} \nab n \wedge  H\lrpar{\Lieh_T^{i} \Rbf}}_{ab} \\
&+ \frac{3}{2} \lrpar{ k \times  E\lrpar{\Lieh_T^{i} \Rbf}}_{ab} - J\lrpar{\Lieh_T^{i} \Rbf}_{aT b}.
\end{aligned} \label{eq:BianchiEH22222} 
\end{align}

By standard higher regularity \emph{elliptic estimates} applied to the above Hodge system (see the methods developed in Sections \ref{SUBSECgeneralHodgeEstimates}, \ref{SECehEnergyEstimateNotation} and \ref{SECcurv1112}), we have for each $0\leq i \leq m$, 
\begin{align} \begin{aligned}
&\norm{ {\nab}^{m+1-i} \, \lrpar{\Lieh_T^{i} \Rbf} }_{L^2({\Si}_t)}\\
\les&\norm{ {\nab}^{m+1-i} \,  E\lrpar{\Lieh_T^{i} \Rbf} }_{L^2({\Si}_t)}+\norm{ {\nab}^{m+1-i} \,  H\lrpar{\Lieh_T^{i} \Rbf} }_{L^2(\wt{\Si}_t)} \\
&+ C(\OO_{m+1}^\HH, \RR_{m+1}^\HH, \OO_{m+1}^\Si, \RR_{m+1}^\Si, {m+1}) \\
\les& \norm{ {\nab}^{m-i} \Lieh_T^{i+1} \Rbf }_{L^2({\Si}_t)} + \Vert {\nab}^{m-i} J(\Lieh^i_T \Rbf) \Vert_{L^2(\Si_t)} + \Vert {\nab}^{m-i} J^\ast(\Lieh^i_T \Rbf) \Vert_{L^2(\Si_t)} \\
&+ C(\OO_{m+1}^\HH, \RR_{m+1}^\HH, \OO_{m+1}^\Si, \RR_{m+1}^\Si, {m+1}) \\
\les& \norm{ {\nab}^{m-i} \Lieh_T^{i+1} \Rbf }_{L^2({\Si}_t)} + C(\OO_{m+1}^\HH, \RR_{m+1}^\HH, \OO_{m+1}^\Si, \RR_{m+1}^\Si, {m+1}),
\end{aligned} \label{EQsuccessiveEllipticEstimatem2} \end{align}
where we used \eqref{EQinductionHypothesisMGEQ2} and directly bounded the boundary integrals appearing in the elliptic estimates by initial data norms, see also Section \ref{SECcurv1112}. Furthermore, in \eqref{EQsuccessiveEllipticEstimatem2} we estimated the currents $J\lrpar{\Lieh_T^{i}\Rbf}$ on the right-hand side of \eqref{eq:BianchiEH22222} as product terms by standard product estimates with \eqref{EQinductionHypothesisMGEQ2}. Indeed, by Proposition 7.1.2 in \cite{ChrKl93}, $J\lrpar{\Lieh_T^{i}\Rbf}$, for $i\geq1$, can be expressed as follows,
\begin{align} \begin{aligned}
J\lrpar{\Lieh_T^{i}\Rbf} :=&\D^\a \lrpar{\Lieh_T^{i}\Rbf}_{\a \be \ga \de} \\
=& \widehat{\Lie}_T J\lrpar{\Lieh_T^{i-1}\Rbf}_{\be \ga \de} + \half {\pi}^{\mu \nu} \D_\nu \lrpar{\Lieh_T^{i-1}\Rbf}_{\mu \be \ga \de}\\
& + \half \D^\a {\pi}_{\a \la} \lrpar{\Lieh_T^{i-1}\Rbf}^\la_{\,\,\, \be \ga \de} + \half (\D_\be {\pi}_{\a \la} - \D_\la {\pi}_{\a \be}) \lrpar{\Lieh_T^{i-1}\Rbf}^{\a \la}_{\,\,\, \,\,\, \ga \de} \\
&+ \half (\D_\ga {\pi}_{\a \la} - \D_\la {\pi}_{\a \ga}) \lrpar{\Lieh_T^{i-1}\Rbf}^{\a \,\,\, \la}_{\,\,\, \be \,\,\, \de} \\
&+ \half(\D_\de {\pi}_{\a \la} - \D_\la {\pi}_{\a \de})  \lrpar{\Lieh_T^{i-1}\Rbf}^{\a \,\,\, \,\,\, \la}_{\,\,\, \be \ga  \,\,\,},
\end{aligned}\label{EQrecursiveJcurrentformula} \end{align}
where 
\begin{align*}
\widehat{\Lie}_T J\lrpar{\Lieh_T^{i-1}\Rbf}_{\be \ga \de} :=& \Lie_T J\lrpar{\Lieh_T^{i-1}\Rbf}_{\be \ga \de} -\half  {\pi}_\be^{\,\,\, \mu} J\lrpar{\Lieh_T^{i-1}\Rbf}_{\mu \ga \de} \\
&-\half {\pi}_\ga^{\,\,\,\mu} J\lrpar{\Lieh_T^{i-1}\Rbf}_{\be \mu \de} -\half {\pi}_\de^{\,\,\, \mu} J\lrpar{\Lieh_T^{i-1}\Rbf}_{\be \ga \mu}.
\end{align*}
By the above recursive relation together with the fact that $J(\Rbf)=0$ by the Bianchi equations, it follows that $J\lrpar{\Lieh_T^{i}\Rbf}$ consists of product terms.\\

Returning to the proof of \eqref{EQM2EllipticCurvatureEstimates1}, by successive combination of \eqref{EQsuccessiveEllipticEstimatem2} for $0\leq i \leq m$, we get that
\begin{align} \begin{aligned}
&\sum\limits_{\vert \a \vert + \vert \be \vert \leq m+1} \norm{{\nab}^{\a} \Lieh_T^{\be} \Rbf}_{L^2({\Si}_t)}\\
 \les& \norm{  \Lieh_T^{m+1} \Rbf }_{L^2({\Si}_{t})} + C(\OO_{m+1}^\HH, \RR_{m+1}^\HH,
 \OO_{m+1}^\Si, \RR_{m+1}^\Si,{m+1}),
\end{aligned}\label{EQliehTRbfm2estimateselliptic}\end{align}
which implies in particular that
\begin{align*}
&\sum\limits_{\vert \a \vert \leq m+1} \Vert {\nab}^{\a} {E} \Vert_{L^2({\Si}_{t})}+ \Vert {\nab}^{\a} {H} \Vert_{L^2({\Si}_{t})} \\
\les& \sum\limits_{\vert \a \vert \leq m+1} \norm{{\nab}^{\a} \Rbf}_{L^2({\Si}_t)} + C(\OO_{m+1}^\HH, \RR_{m+1}^\HH, \OO_{m+1}^\Si, \RR_{m+1}^\Si,{m+1}) \\
 \les&  \Vert  \Lieh_T^{m+1} \Rbf \Vert_{L^2({\Si}_{t})} + C(\OO_{m+1}^\HH, \RR_{m+1}^\HH, \OO_{m+1}^\Si, \RR_{m+1}^\Si,{m+1}).
\end{align*} 
This finishes the proof of \eqref{EQM2EllipticCurvatureEstimates1}.

\subsubsection{Energy estimate for the curvature: Proof of \eqref{EQm2LIEHTRBFestimate1}} \label{SECm2CurvatureEstimates} In this section we prove \eqref{EQm2LIEHTRBFestimate1}, that is, for $0\leq \ttt \leq t^\ast$,
\begin{align*}
\Vert \Lieh_T^{m+1} \Rbf \Vert_{L^2({\Si}_t} \leq& \, C\lrpar{\OO_{m+1}^\HH, \RR_{m+1}^\HH, \OO_{m+1}^\Si,\RR_{m+1}^\Si,m}.
\end{align*}

The idea is to apply the integral identity \eqref{lemma:IntegralidentityWeyl1} to the Weyl tensor $\Lieh_{T}^{m+1} \Rbf$ with multiplier field $T$. This yields for $0 \leq t \leq t^\ast_0$,
\begin{align} \begin{aligned}
\Vert \Lieh_{T}^{m+1} \R \Vert^2_{L^2({\Si}_{t})}
\les& \int\limits_{\Si_1} Q(\Lieh_{T}^{m+1} \R)_{TTTT}+ \int\limits_{\HH} Q(\Lieh_{T}^{m+1} \R)_{TTTL} \\
&- \underbrace{\int\limits_{\MM_{t}} \frac{3}{2} Q(\Lieh_{T}^{m+1} \R)_{\a \be T T}{\pi}^{\a\be}}_{:=\EE_1} - \underbrace{\int\limits_{\MM_{t}} \D^\a Q(\Lieh_{T}^{m+1} \R)_{\a T T T}}_{:=\EE_2},
\end{aligned} \label{EQintegralm2identity} \end{align}
where $\MM_{t}$ denotes the past of ${\Si}_t$ in $\MM_{t^\ast_0}$. In the following, we first bound $\EE_1$ and $\EE_2$. \\ 

\textbf{Estimation of $\EE_1$.} By \eqref{EQrelationQothervectors1},
\begin{align*}
\EE_1 :=& \int\limits_{\MM_{t}} \frac{3}{2} Q(\Lieh_{T}^{m+1} \R)_{\a \be T T}{\pi}^{\a\be} \\
\les& \norm{{\pi} }_{L^\infty(\MM_{t^\ast_0})} \int\limits_{\MM_{t}} \vert Q(\Lieh_{T}^{m+1} \R)_{\a \be T T} \vert\\
\les&  \norm{{\pi} }_{L^\infty(\MM_{t^\ast_0})} \lrpar{1+ \norm{{n}-1 }_{L^\infty(\MM_{t^\ast_0})}} \int\limits_0^{t} \norm{ \Lieh_{T}^{m+1} \R }_{L^2({\Si}_{t'})}^2 dt'.
\end{align*}
This finishes the estimation of $\EE_1$. \\

\textbf{Estimation of $\EE_2$.} Using that
\begin{align*}
\D^\a Q\lrpar{\Lieh_{T}^{m+1} \Rbf}_{\a T T T} =& 2 \lrpar{\Lieh_{T}^{m+1} \R}_{T \,\,\, T}^{\,\,\,\mu \,\,\, \nu} J\lrpar{\Lieh_T^{m+1} \R}_{\mu T \nu} \\
&+ 2 {}^\ast\lrpar{\Lieh_{T}^{m+1} \R}_{T \,\,\, T}^{\,\,\,\mu \,\,\, \nu} J^\ast\lrpar{\Lieh_{T}^{m+1} \R}_{\mu T \nu},
\end{align*}
we can estimate
\begin{align*}
\EE_2 :=& \int\limits_{\MM_{t}} \D^\a Q(\Lieh_{T}^{m+1} \R)_{\a T T T} \\
\les& \int\limits_0^{t} \norm{ \Lieh_{T}^{m+1} \R }_{L^2({\Si}_{t'})} \norm{ J\lrpar{\Lieh_{T}^{m+1} \R} }_{L^2({\Si}_{t'})} dt' \\
\les& \lrpar{\sum\limits_{\vert \a \vert \leq m} \Vert \D^{\a} \Rbf \Vert_{L^\infty_t L^2(\Si_{t})}+ \sum\limits_{\vert \a \vert \leq m+1}\Vert \D^{\a} \pi \Vert_{L^\infty_tL^2(\Si_{t})}+1} \\
&\times \int\limits_0^{t} \norm{ \Lieh_{T}^{m+1} \R }^2_{L^2({\Si}_{t'})} dt' +C(\OO_{m+1}^\HH,\RR_{m+1}^\HH,\OO_{m+1}^\Si,\RR_{m+1}^\Si,m+1).
\end{align*}

where we used that by \eqref{EQrecursiveJcurrentformula}, $J\lrpar{\Lieh_{T}^{m+1} \R}$ consists of product terms which can, together with the elliptic estimates \eqref{EQliehTRbfm2estimateselliptic}, be estimated as follows,
\begin{align*}
&\norm{ J\lrpar{\Lieh_{T}^{m+1} \R} }_{L^2({\Si}_{t'})} \\
\les& \norm{ \Lieh_{T}^{m+1} \R }_{L^2({\Si}_{t'} )} \lrpar{\sum\limits_{\vert \a \vert \leq 2}\Vert \D^{\a} \pi \Vert_{L^2({\Si}_{t'})}} \\
&+ \Vert \Rbf \Vert_{L^\infty({\Si}_{t'})} \lrpar{\sum\limits_{\vert \a \vert \leq m+1} \Vert \D^{\a}\pi \Vert_{L^2({\Si}_{t'})} +\sum\limits_{\vert \a \vert \leq m} \Vert \D^{\a} \Rbf \Vert_{ L^2(\Si_{t'})}} \\
&+C(\OO_{m+1}^\HH,\RR_{m+1}^\HH,\OO_{m+1}^\Si,\RR_{m+1}^\Si,m+1) \\
\les& \norm{ \Lieh_{T}^{m+1} \R }_{L^2({\Si}_{t'})} \lrpar{\sum\limits_{\vert \a \vert \leq m+1} \Vert \D^{\a}\pi \Vert_{L^\infty L^2({\Si}_{t})}+ \sum\limits_{\vert \a \vert \leq m} \Vert \D^{\a} \Rbf \Vert_{L^\infty_t L^2(\Si_{t})}} \\
&+C(\OO_{m+1}^\HH,\RR_{m+1}^\HH,\OO_{m+1}^\Si,\RR_{m+1}^\Si,m+1).
\end{align*}

Plugging the above estimates for $\EE_1$ and $\EE_2$ into \eqref{EQintegralm2identity}, we get that for $0 \leq t \leq t^\ast_0$,
\begin{align*}
&\Vert \Lieh_{T}^{m+1} \R \Vert^2_{L^2({\Si}_{t})} \\
\les& \int\limits_{\Si_1} Q(\Lieh_{T}^{m+1} \R)_{T T T  T}+ \int\limits_{\HH } Q(\Lieh_{T}^{m+1} \R)_{T T T L}+ C(\OO_{m+1}^\HH,\RR_{m+1}^\HH,\OO_{m+1}^\Si,\RR_{m+1}^\Si,m+1)\\
&+ \lrpar{\sum\limits_{\vert \a \vert \leq m+1} \Vert \D^{\a}\pi \Vert_{L^\infty L^2({\Si}_{t})}+ \sum\limits_{\vert \a \vert \leq m} \Vert \D^{\a} \Rbf \Vert_{L^\infty_t L^2(\Si_{t})}+1}  \int\limits_0^{t} \norm{ \Lieh_{T}^{m+1} \R }^2_{L^2({\Si}_{t'} )} dt'.
\end{align*}

Therefore Gr\"onwall's lemma yields that for $0 \leq t \leq t^\ast_0$,
\begin{align*}
&\Vert \Lieh_{T}^{m+1} \R \Vert^2_{L^\infty_{t}L^2({\Si}_{t})}\\
\les& \underbrace{\int\limits_{\Si_1} Q(\Lieh_{T}^{m+1} \R)_{T T T  T}}_{:= \II_1}+ \underbrace{\int\limits_{\HH} Q(\Lieh_{T}^{m+1} \R)_{T T T L}}_{:= \II_2} +C(\OO_{m+1}^\HH,\RR_{m+1}^\HH,\OO_{m+1}^\Si,\RR_{m+1}^\Si,m).
\end{align*}
The boundary integrals $\II_1$ and $\II_2$ are estimated similarly as in Sections \ref{sec:CurvatureEstimatesForBackground} and \ref{SECcurv111} by
\begin{align*}
\II_1 + \II_2 \les C(\OO_{m+1}^\HH,\RR_{m+1}^\HH,\OO_{m+1}^\Si,\RR_{m+1}^\Si,m).
\end{align*}
This finishes the outline of the proof of \eqref{EQM2EllipticCurvatureEstimates1}.

\subsubsection{Elliptic estimate for $k$ on $\Si_{t^\ast}$: Proof of \eqref{EQm2KestimatesElliptic}} \label{SECm2KEstimates} In this section, we prove \eqref{EQm2KestimatesElliptic}, that is,
\begin{align*}
\sum\limits_{\vert \a \vert \leq m+1} \Vert \nab^{\a} \nab k \Vert_{L^2(\Si_{t})} + \Vert \Nd^{\a} \nu \Vert_{L^2(\pr \Si_{t})}\les& C(\OO_{m+1}^\HH,\RR_{m+1}^\HH,\OO_{m+1}^\Si,\RR_{m+1}^\Si,m+1).
\end{align*}

The idea is to use higher regularity elliptic estimates for the Hodge system satisfied by $k$, see Sections \ref{SUBSECgeneralHodgeEstimates} and \ref{SECellipticEstimatesKnotation}, which yields
\begin{align*}
&\sum\limits_{\vert \a \vert \leq m+1} \Vert \nab^{\a} \nab k \Vert^2_{L^2(\Si_{t^\ast})} +  \BB \les C(\OO_{m+1}^\HH,\RR_{m+1}^\HH,\OO_{m+1}^\Si,\RR_{m+1}^\Si,m+1),
\end{align*}
where we used \eqref{EQM2EllipticCurvatureEstimates1} \eqref{EQm2LIEHTRBFestimate1}, and $\BB$ is a boundary integral whose special structure allows to bound it from below by
\begin{align} \label{EQcoercivityBoundary}
\Vert \Nd^{m+1} \nu \Vert^2_{L^2(\pr \Si_{t^\ast})} \les \BB.
\end{align}
Indeed, the analysis of the structure of the boundary integral $\BB$ is similar to the analysis done in Section \ref{SECellipticEstimateM1}. The only difference in the analysis of $m\geq2$ is that normal derivatives in the boundary integral are systematically reduced to $\pr \Si_t$-tangential derivatives by applying \eqref{eq:FirstOfDivEq} and \eqref{eq:NepNabDeRelation}, that is,
\begin{align*}
N(\de)  =& - \Divd \ep-2a^{-1} \Nd a \cdot \ep + \eta \cdot \Th - \de \tr \Th, \\
\Nd_N \ep_B  =& - (\Divd \eta)_B-a^{-1} \Nd_C a \eta^{C}_{\,\,\, B} + a^{-1} \de \Nd_B a  - \tr \Th \ep_B - \Th_{BC} \ep^C,
\end{align*}
an \emph{even number of times}. This is due to the fact that the integrand of the boundary integral is a contraction of two tensors. As a consequence, \emph{the sign is conserved} and the coercivity, that is, the constant of \eqref{EQcoercivityBoundary} is bounded from below by a positive constant. This finishes our discussion of \eqref{EQm2KestimatesElliptic}.

\subsubsection{Conclusion of \eqref{EQm2mand1Estimate1}} \label{SECm2ConclusionCurvature} In this section, we conclude the proof of \eqref{EQm2mand1Estimate1}, that is,
\begin{align*} \begin{aligned}
&\sum\limits_{\vert \a \vert \leq m+1} \Vert \D^{\a} \Rbf \Vert_{L^2(\Si_{t})}+ \sum\limits_{\vert \a \vert \leq m+2} \Vert \D^{\a} \pi \Vert_{L^2(\Si_{t})} \\
&+\sum\limits_{\vert \a \vert \leq m+1} \Vert \nab^{\a} \Ric \Vert_{L^2(\Si_{t})}  + \Vert \Nd^{\a}\Nd \nu \Vert_{H^{1/2}(\pr \Si_t)}\\
\les&C(\OO_{m+1}^\HH, \RR_{m+1}^\HH, \OO_{m+1}^\Si, \RR_{m+1}^\Si, m+1).
\end{aligned}  \end{align*}

Indeed, first, the estimate
\begin{align*}
\sum\limits_{\vert \a \vert \leq m+1} \Vert \D^{\a} \Rbf \Vert_{L^2(\Si_{t})} \les&C(\OO_{m+1}^\HH, \RR_{m+1}^\HH, \OO_{m+1}^\Si, \RR_{m+1}^\Si, m+1)
\end{align*}
follows directly from the previous estimates for $\nab^{\a} \Rbf$, $\vert \a \vert=m+1$, and $\Lieh_T^{m+1} \Rbf$, that is, \eqref{EQM2EllipticCurvatureEstimates1} and \eqref{EQm2LIEHTRBFestimate1}; see also Section \ref{SECTestimateEHm1}.\\

Second, the estimate
\begin{align*}
\sum\limits_{\vert \a \vert \leq m+2} \Vert \D^{\a} \pi \Vert_{L^2(\Si_{t})} \les C(\OO_{m+1}^\HH, \RR_{m+1}^\HH, \OO_{m+1}^\Si, \RR_{m+1}^\Si, m+1)
\end{align*}
follows by the previous estimates for $\nab^{\a} E$ and $\nab^{\a} H$, $\vert \a \vert=m+1$, and $\nab^{\a} k$, $\vert \a \vert =m+2$, on $\Si_t$, that is, \eqref{EQM2EllipticCurvatureEstimates1} and \eqref{EQm2KestimatesElliptic}, by consecutively applying higher regularity elliptic estimates (see Proposition \ref{PropEllipticEstimatesFORtriangle}) to the boundary value problems satisfied by $T^{i}(n)$, $0\leq i \leq m+1$ (see also Section \ref{SECm1estimatesLAPSE}) and applying $T$-derivatives to the second variation equation \eqref{eq:sndvar} (see Lemma \ref{LemmaDTcontrolk}). We leave details to the reader.\\

Third, the estimate
\begin{align*}
\sum\limits_{\vert \a \vert \leq m+1} \Vert \nab^{\a} \Ric \Vert_{L^2(\Si_{t})} \les C(\OO_{m+1}^\HH, \RR_{m+1}^\HH, \OO_{m+1}^\Si, \RR_{m+1}^\Si, m+1)
\end{align*}
follows by the previous estimates for $\nab^{\a} E$ and $\nab^{\a} H$, $\vert \a \vert=m+1$, and $\nab^{\a} k$, $\vert \a \vert =m+2$ on $\Si_t$, that is, \eqref{EQM2EllipticCurvatureEstimates1} and \eqref{EQm2KestimatesElliptic}, by using the traced Gauss equation \eqref{eq:RicE}.\\

Fourth, the estimate
\begin{align*}
\Vert \Nd^{\a}\Nd \nu \Vert_{H^{1/2}(\pr \Si_t)}\les&C(\OO_{m+1}^\HH, \RR_{m+1}^\HH, \OO_{m+1}^\Si, \RR_{m+1}^\Si, m+1)
\end{align*}
follows by using the slope equation \eqref{eq:slope}, that is,
\begin{align*}
\nut^{-1}\Nd_A\nut = -\ep_A + \zet_A \text{ on } \pr \Si_t.
\end{align*}
together with \eqref{EQM2EllipticCurvatureEstimates1}, \eqref{EQm2LIEHTRBFestimate1}, \eqref{EQm2KestimatesElliptic} and Lemma \ref{Lemma:TraceEstimateH1toL4bdry}. This finishes our discussion of the proof of \eqref{EQm2mand1Estimate1}.

\section{Classical local existence of spacetime with maximal foliation} \label{Section:LocalExistence} In this section, we prove Proposition \ref{thm:LocalExistenceMixedMaximal}. First, we have the next classical local existence result for the spacelike-characteristic Cauchy problem of general relativity.

\begin{theorem}[Classical local existence] \label{thm:ClassicalExistenceMixedProblem} 
Let there be given smooth initial data for the spacelike-characteristic Cauchy problem on a maximal hypersurface with boundary $\Si$ and the outgoing null hypersurface $\HH$ emanating from $\pr \Si$. Then its unique maximal smooth globally hyperbolic future development $(\MM,\g)$ has past boundary $\Si \cup \HH$. \end{theorem}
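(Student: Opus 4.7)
The plan is to construct the development by patching together solutions to two separate, well-posed local problems, and then invoking the Choquet-Bruhat--Geroch uniqueness theorem to obtain the maximal globally hyperbolic development.

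First, I would decompose the problem near the past boundary into two overlapping regions. Pick an auxiliary spacelike hypersurface $\Si'$ that agrees with $\Si$ near $\pr \Si$ and extends slightly beyond $\pr \Si$ to the exterior; using the extension procedure for the constraint equations (Theorem \ref{THMextensionConstraintsCZ1}) applied in a collar neighborhood of $\pr \Si$, together with compatibility of the smooth initial data along $\pr\Si$, this can be arranged smoothly. In a neighborhood of the interior of $\Si$ (away from $\pr\Si$), standard Cauchy theory for the Einstein vacuum equations in wave coordinates (Choquet-Bruhat's classical local existence, see \cite{Wald}) produces a smooth globally hyperbolic development $(\MM_1,\g_1)$ whose past boundary contains an open neighborhood of the interior of $\Si$.

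Second, in a neighborhood of $\HH$ I would invoke the local existence theorem for the characteristic Cauchy problem with initial data posed on a spacelike $2$-sphere ($S_1 = \pr\Si$) together with the outgoing null hypersurface $\HH$ and an incoming piece. Concretely, using Rendall's theorem \cite{Rendall} (or Luk's refinement \cite{LukChar}), and exploiting the existence of the maximal data on a small inward null hypersurface from $\pr\Si$ derived from $(\Si,g,k)$ via the Ricci equations \eqref{eq:Null_Id}, one obtains a smooth solution $(\MM_2,\g_2)$ in a neighborhood of $\HH$ (to its future) whose past boundary includes a neighborhood of $\HH$ and of $\pr\Si$. The compatibility conditions along $\pr\Si$ (the matching of $g$, $k$ and their transverse derivatives with the null data, see \cite{ChruscielPaetz2}, Section 7.6) guarantee that the data used for these two constructions are consistent.

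Third, I would patch $\MM_1$ and $\MM_2$ along their overlap. By the classical domain-of-dependence and uniqueness for the Einstein vacuum equations in wave coordinates (Choquet-Bruhat--Geroch), the two developments agree in the intersection of their domains of dependence, and can therefore be glued to a single smooth globally hyperbolic development $(\MM,\g)$ whose past boundary equals $\Si \cup \HH$ in a neighborhood. Finally, applying the Choquet-Bruhat--Geroch theorem to this local development produces the unique maximal smooth globally hyperbolic future development, still with past boundary $\Si \cup \HH$.

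The main obstacle is the patching across the corner $\pr \Si$, where spacelike and characteristic data meet: one must check that the smooth compatibility conditions ensure the two local developments admit overlapping smooth wave-coordinate charts, and that uniqueness applies despite the mixed nature of the past boundary. This is handled in the classical references (see \cite{ChruscielPaetz2}, \cite{LukChar}) and is where the smoothness and compatibility assumptions on the initial data are essentially used. The continuation statement then follows from the standard breakdown criterion in wave coordinates, since as long as the spacetime and the foliation remain smooth the local existence argument can be reapplied at any interior slice.
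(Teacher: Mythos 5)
Your overall strategy (solve a spacelike piece, solve a characteristic piece, patch, pass to the maximal development) is in the right spirit, but it departs from the paper's argument in a way that introduces a genuine gap. The paper's proof does not extend $\Si$ outward at all: it first develops $\Si$ by Bruhat's classical spacelike Cauchy theory to obtain the future domain of dependence $\DD(\Si)$, and then applies Rendall's theorem to the \emph{characteristic--characteristic} problem with data on $\pr\DD(\Si)$ (the incoming null boundary of that domain of dependence, with data \emph{induced} by the already-constructed solution) and on $\HH$ (with the prescribed null data). The compatibility conditions at $\pr\Si$ are used precisely to make the data on these two null hypersurfaces consistent at the corner. No constraint-equation extension is needed or used.

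Your proposal has two concrete problems. First, you invoke Theorem \ref{THMextensionConstraintsCZ1} to extend $\Si$ past $\pr\Si$, but that theorem carries a universal smallness hypothesis $\Vert \bar g - e\Vert_{H^2} + \Vert\bar k\Vert_{H^1} < \varep$, while the classical local existence statement in Theorem \ref{thm:ClassicalExistenceMixedProblem} makes no smallness assumption on the data; so this step is not available in general. Second, you assert the existence of "maximal data on a small inward null hypersurface from $\pr\Si$ derived from $(\Si,g,k)$ via the Ricci equations," but the Ricci equations relate quantities on a null hypersurface \emph{inside a spacetime} that already exists; they do not allow you to manufacture admissible characteristic data ab initio from $(\Si,g,k)$ alone. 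This is exactly what the paper's use of $\pr\DD(\Si)$ avoids: there the incoming null hypersurface is produced, together with its induced data, by the solution from the spacelike step, and Rendall's theorem is applied to the pair $(\pr\DD(\Si),\HH)$. If you replace your extension-plus-constructed-null-data step with the paper's $\pr\DD(\Si)$ device, the rest of your patching and Choquet-Bruhat--Geroch argument goes through.
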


\begin{proof} The proof follows from the literature results \cite{BruhatExistence} and \cite{Rendall}, see also \cite{LukChar}. Indeed, on the one hand, by classical local existence for the spacelike Cauchy problem \cite{BruhatExistence} with data on $\Si$, the past boundary of the intersection of $(\MM,\g)$ with the future domain of dependence $\DD(\Si)$ of $\Si$ equals $\Si$.\\

On the other hand, considering data on $\pr \DD(\Si)$ together with the data on $\HH$, it follows by the work of Rendall \cite{Rendall} that the past boundary of $(\MM,\g)$ intersected with the future of $\HH$ contains $\HH$. We remark that the data at $\pr \Si$ is well-posed by the assumption that given initial data for the spacelike-characteristic Cauchy problem satisfies the necessary compatibility conditions on $\pr \Si$, see Section 7.6 in \cite{ChruscielPaetz2} for details. This finishes the proof of Theorem \ref{thm:ClassicalExistenceMixedProblem}. \end{proof}

In the rest of this section, we prove Proposition \ref{thm:LocalExistenceMixedMaximal}. Let $\Si$ be a compact maximal hypersurface with boundary and let $\HH$ be the outgoing null hypersurface emanating from $\pr \Si$. Let $(S_v)_{v\geq1}$ be a foliation on $\HH$ by spacelike $2$-spheres $S_v$. Let $(\MM, \g)$ denote the maximal globally hyperbolic future development of the initial data on $\Si$ and $\HH$. \\

In the following, we construct a local time function $t$ in the future of $\Si$ such that $t\vert_{\Si}=1$ and for small values $t\geq1$,
\begin{itemize}
\item we have $t = v$ on $\HH$, 
\item the level sets $\Si_t$ of $t$ are maximal spacelike hypersurfaces in $(\MM, \g)$.
\end{itemize}

The main ingredient for this construction is the work \cite{PerturbationBruhat} of Bruhat which shows that \emph{in shift-free background coordinates} the linearisation of the mean curvature functional is surjective, see Theorem \ref{theorem:BruhatPerturbation} below.\\

Our construction of the time function is thus split into three steps.
\begin{enumerate}
\item Construction of local shift-free background coordinates $(x^\mu)_{\mu=0,1,2,3}$ in the future of $\Si$,
\item Construction of a family of maximal spacelike hypersurfaces on $\MM$, 
\item Proof that the above family of spacelike maximal hypersurfaces can be written as level sets of a smooth time function $t$ on $\MM$ satisfying $\{ t=1\}=\Si$ and $t(S_v)=v$ for $v\geq1$ small.
\end{enumerate}
 
{\bf Step 1. Construction of shift-free background coordinates.} First, define a scalar function $x^0$ on $\Si \cup \HH$ by
\begin{align*}
x^0 = 1 \text{ on } \Si, \,\, x^0 = v \text{ on } \HH,
\end{align*}
where $v$ denotes the parameter of the given foliation $(S_v)_{v\geq1}$ on $\HH$. By the Whitney extension theorem (see its similar application in \cite{Rendall} and references therein) there exists a smooth extension of $x^0$ into $\MM$ such that its level sets $(\Si_{x^0})_{x^0 \geq1}$ are a local foliation of the future of $\Si$ in $(\MM,\g)$. Let $e_0$ be the future-pointing timelike unit normal to $\Si_{x^0}$. \\

Second, let $(\overline{x}^i)_{i=1,2,3}$ be given coordinates on $\Si$. We extend them as local coordinates $(x^i)_{i=1,2,3}$ onto $\MM$ as follows. First let 
$$x^i := f_i \text{ on } \HH,$$ 
where $(f_i)_{i=1,2,3} \in C^\infty(\HH)$ are smooth, increasing functions chosen below. Then define $(x^i)_{i=1,2,3}$ on $\MM$ as solution to
\begin{align*}
e_0(x^i)=0 \text{ on } \MM, \,\, x^i = \overline{x}^i \text{ on } \Si_0, \,\, x^i = f_i \text{ on } \HH.
\end{align*}
The smoothness of $x^i$ in $\MM$ requires compatibility conditions on $(f_i)_{i=1,2,3}$ and their derivatives at $\pr \Si = \HH \cap \Si$. By the Whitney extension theorem (see its similar application in \cite{Rendall} and references therein), there exists a choice $(f_i)_{i=1,2,3}$ such that these compability conditions are satisfied. \\

By construction, $(x^\mu)_{\mu=0,1,2,3}$ locally form a coordinate system on the future of $\Si$. Moreover, the coordinates are by construction shift-free, that is, $e_0(x^i)=0$ for $i=1,2,3$.\\

{\bf Step 2: Construction of a foliation of maximal spacelike hypersurfaces.} In the following, near the maximal hypersurface $\Si$ we perturb the level sets $\Si_{x^0}$ on $\MM$ to maximal hypersurfaces. The next perturbation result is a paraphrase of \cite{PerturbationBruhat}.

\begin{theorem}[Construction of nearby maximal spacelike hypersurfaces by perturbation] \label{theorem:BruhatPerturbation} 
Let $m\geq0$ be an integer. Let $(\MM,\g)$ be a vacuum spacetime and let $\Si \subset \MM$ be a compact maximal spacelike hypersurface with boundary, that is, satisfying
\begin{align*}
H_\g(\Si)=0,
\end{align*}
where $H_\g(\Si)$ denotes the mean curvature of $\Si$ with respect to $\g$. Let $(x^\mu)_{\mu=0,1,2,3}$ be a shift-free coordinate system on $\MM$ such that $\Si = \{ x^0 = 1\}$. Let $\g'$ be another Lorentzian metric on $\MM$ such that for some $\varep>0$, with respect to the coordinate system $(x^\mu)_{\mu=0,1,2,3}$,
\begin{align*}
\Vert \g - \g' \Vert_{C^{m'}(\MM)} < \varep.
\end{align*}
There are universal $m'_0>$ and $\varep_0>0$ such that if $m'\geq m'_0$ and $0<\varep<\varep_0$, then there is a $C^{m}(\Si)$-function $\varphi: \Si \to \RRR$ with $\varphi \vert_{\pr \Si} = 0$ such that
\begin{align*}
H_{\g'}(\mathrm{graph}_{\Si}(\varphi)) =0,
\end{align*} 
where 
$$\mathrm{graph}_{\Si}(\varphi) := \{ x^0 = \varphi(x^1,x^2,x^3)+1 \} \subset \MM.$$
Moreover, we have the bound
\begin{align*}
\Vert \varphi \Vert_{C^{m}(\Si)} \leq C \varep,
\end{align*}
where the constant $C>0$ depends on $(\MM,\g)$, $\Si$ and $m$.
\end{theorem}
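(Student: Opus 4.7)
The plan is to apply the implicit function theorem to the mean curvature functional evaluated on graphs in the given shift-free coordinate system $(x^\mu)$, in which $\Si = \{x^0=1\}$. For $\varphi : \Si \to \RRR$ with $\varphi\vert_{\pr \Si}=0$ and $\Vert\varphi\Vert_{C^1}$ small, the set $\mathrm{graph}_\Si(\varphi) = \{x^0 = 1+\varphi(x^1,x^2,x^3)\}$ is a spacelike hypersurface whose first and second fundamental forms depend smoothly on $\varphi$ and on the ambient metric. I would introduce
\begin{align*}
F(\g',\varphi) := H_{\g'}\lrpar{\mathrm{graph}_\Si(\varphi)}
\end{align*}
as a $C^1$-map from a $C^{m'}$-neighborhood of $\g$ times a neighborhood of $0$ in $C^{m+2,\al}_0(\Si) := \{\psi \in C^{m+2,\al}(\Si) : \psi\vert_{\pr\Si}=0\}$ into $C^{m,\al}(\Si)$. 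Since $\Si$ is $\g$-maximal, $F(\g,0)=0$.

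Next, I would compute the partial derivative $L := D_\varphi F(\g,0)$. The standard variational formula for the mean curvature under a normal deformation (in vacuum the Ricci term drops), combined with the positive lapse prefactor relating graph variations to normal variations in shift-free coordinates, produces the Jacobi-type elliptic operator
\begin{align*}
L \varphi = -\lap \varphi - \vert k \vert_g^2 \, \varphi,
\end{align*}
which on $C^{m+2,\al}_0(\Si)$ is a second order elliptic operator with Dirichlet data and hence Fredholm of index zero. The delicate step is to verify that its kernel is trivial: integration by parts yields the quadratic form $\int_\Si \lrpar{\vert\nab\varphi\vert^2 - \vert k\vert_g^2 \varphi^2}$, in which the potential enters with the unfavourable sign, so triviality of the kernel must be argued from the specific geometry of the ball $\Si$, e.g.\ by a Poincar\'e inequality if $\vert k\vert_g$ is small enough compared to the first Dirichlet eigenvalue of $-\lap$ on $\Si$; this is the analytic heart of Bruhat's construction in \cite{PerturbationBruhat}.

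With invertibility of $L$ secured, the implicit function theorem on Banach spaces produces, for every $\g'$ with $\Vert\g-\g'\Vert_{C^{m'}(\MM)}<\varep$, a unique small solution $\varphi = \varphi(\g') \in C^{m+2,\al}_0(\Si)$ of $F(\g',\varphi)=0$, so that $\mathrm{graph}_\Si(\varphi)$ is $\g'$-maximal, together with the Lipschitz bound $\Vert\varphi\Vert_{C^{m+2,\al}(\Si)} \lesssim \Vert\g'-\g\Vert_{C^{m'}(\MM)} \leq C\varep$, whence the claimed $\Vert\varphi\Vert_{C^{m}(\Si)} \leq C\varep$. The threshold $m'_0$ is dictated by how many derivatives of the ambient metric enter the nonlinear map $F$ (essentially $m+2$ derivatives are needed to differentiate $F$ once as a map into $C^{m,\al}$), while $\varep_0$ is fixed by the smallness needed to keep $\mathrm{graph}_\Si(\varphi)$ spacelike and to apply the quantitative implicit function theorem. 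The main obstacle of the proof is, as indicated, the invertibility of the Dirichlet Jacobi operator $L$; once that is established, the remainder of the argument is a routine application of the implicit function theorem.
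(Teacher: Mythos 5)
Your strategy---applying the implicit function theorem to the mean curvature functional $F(\g',\varphi) = H_{\g'}(\mathrm{graph}_\Si(\varphi))$ in shift-free coordinates, with the crux being invertibility of $D_\varphi F(\g,0)$ on Dirichlet H\"older/Sobolev spaces---is exactly the route the paper takes, which outsources that invertibility to Bruhat \cite{PerturbationBruhat}. However, your computation of the linearized operator carries a sign error that leads you to flag a difficulty that is not there.

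You write $L\varphi = -\triangle\varphi - |k|_g^2\varphi$ and hence the quadratic form $\int_\Si(|\nab\varphi|^2 - |k|_g^2\varphi^2)$, claim the potential enters "with the unfavourable sign," and propose patching this with a Poincar\'e inequality and smallness of $|k|_g$. For \emph{spacelike maximal hypersurfaces in Lorentzian signature} the relative sign between the Laplacian and the curvature potential in the Jacobi operator is the opposite of what you wrote, and the opposite of the Riemannian minimal-surface case. This is visible directly from the maximal lapse equation \eqref{eq:Deltan}, $\triangle n = n|k|_g^2$: the lapse $n$ is precisely the infinitesimal graph deformation between adjacent leaves of a maximal foliation and thus lies in the kernel of the linearization, which identifies the linearization (up to the positive lapse prefactor) as $\triangle - |k|_g^2$, not $-\triangle - |k|_g^2$. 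The associated quadratic form is therefore $\int_\Si(|\nab\varphi|^2 + |k|_g^2\varphi^2)$, which is manifestly coercive: with $\varphi|_{\pr\Si}=0$ it vanishes only if $\varphi\equiv 0$. Kernel triviality is automatic, with no Poincar\'e inequality and no smallness of $k$; this coercivity, a Lorentzian-signature phenomenon, is exactly why Bruhat's isomorphism holds unconditionally and why the theorem imposes no smallness assumption on the second fundamental form. Your proposed patch would, if it were actually needed, require hypotheses the theorem does not make; correcting the sign removes the issue entirely and this is the structural fact one must not miss.
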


\begin{proof} The proof is based on the implicit function theorem. It is shown in \cite{PerturbationBruhat} that the linearisation of the mean curvature functional under graphs in shift-free coordinates is an isomorphism. We refer the reader to \cite{PerturbationBruhat} for more details.\end{proof} 

In the following we use Theorem \ref{theorem:BruhatPerturbation} to construct for a sufficiently small real $\tau>0$ a family of functions 
\begin{align} \label{EQfamilyFunctions}
(\varphi_{x^0}: \Si_{x^0} \to \RRR)_{1 \leq x^0 \leq 1+\tau}
\end{align}
such that for $1 \leq x^0 \leq1+ \tau$,
\begin{align*}
H_\g(\mathrm{graph}_{\Si_{x^0}}(\varphi_{x^0})) = 0.
\end{align*}
In Step 3 below, we show that the graphs $\mathrm{graph}_{\Si_{x^0}}(\varphi_{x^0})$ can be realised as level sets of a well-defined smooth time function $t$.\\

We turn to the construction of the family \eqref{EQfamilyFunctions}. By construction, in the $(x^1,x^2,x^3)$-coordinates, the boundary $\pr \Si_{x^0}$ of $\Si_{x^0}$ varies smoothly in $x^0$. Therefore, for $\tau>0$ sufficiently small, there is a smooth family of diffeomorphisms $(\Psi_{\la}: \RRR^3 \to \RRR^3)_{\la \geq0}$ such that for each fixed $\la\geq0$, for $x^0\geq1+\la$,
\begin{align*}
\Psi_{\la}(\Si_{x^0}) = \Si_{x^0-\la}.
\end{align*}
For $\tau>0$ sufficiently small, define further the next smooth family of spacetime diffeomorphisms. For $\la\geq0$, let $\mathbf{\Psi}_{\la}$ be such that
\begin{align*}
\mathbf{\Psi}_{\la}(x^0,x^1,x^2,x^3) = (x^0-\la, \Psi_\la(x^1,x^2,x^3)).
\end{align*}
In particular, $\mathbf{\Psi}_{\la}$ maps $\Si_{x^0}$ into $\Si_{x^0-\la}$ for $x^0 \geq \la$. We note that $\mathbf{\Psi}_{\la}(\Si_{1+\la}) = \Si_1$.\\

Let $m\geq0$ be an integer and $\varep>0$ be a real. For $\tau>0$ sufficiently small, it holds that for $0\leq \la \leq \tau$,
\begin{align*} 
\Vert \mathbf{\Psi}_{\la}^\ast \mathbf{g} - \mathbf{g} \Vert_{C^m(V)} \leq \varep,
\end{align*}
where $V$ is a fixed open portion of the future of $\Si$ in $(\MM,\g)$.\\

Therefore for $\tau>0$ sufficiently small, by Theorem \ref{theorem:BruhatPerturbation}, there exists a family of graphs denoted by
\begin{align*}
(\varphi_{1+\la}: \Si_{1} \to \RRR)_{0 \leq \la \leq \tau}
\end{align*}
such that
\begin{align*}
H_{\mathbf{\Psi}_{\la}^\ast \mathbf{g}}(\mathrm{graph}_{\Si_1}(\varphi_{1+\la})) =0.
\end{align*}
By applying the inverse diffeomorphism $\mathbf{\Psi}_{\la}^{-1}$, it follows that for $0 \leq \la \leq \tau$,
\begin{align*}
H_{\mathbf{g}}(\mathrm{graph}_{\Si_{1+\la}}(\varphi_{1+\la})) =0,
\end{align*}
where we abused notation by writing $\varphi_{1+\la}$ instead of $\varphi_{1+\la} \circ \mathbf{\Psi}_{\la}^{-1}$. This finishes the construction of the family \eqref{EQfamilyFunctions}.\\ 

{\bf Step 3: Analysis of the time function.} Let the scalar function $$1\leq t(x^0,x^1,x^2,x^3)\leq 1+\tau$$ on $\MM$ be implicity defined by
\begin{align} \label{eq:defOFtimeFunction}
x^0= \varphi_{t}(x^1,x^2,x^3) + t.
\end{align}

\begin{claim} \label{claim:AnalysisTimeFunction} The function $t(x^0,x^1,x^2,x^3)$ is locally well-defined and smooth. Moreover, $(t,x^1,x^2,x^3)$ locally are coordinates for the future of $\Si$ in $(\MM,\g)$.
\end{claim}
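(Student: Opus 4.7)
The plan is to apply the implicit function theorem to the defining relation
\[
F(x^0,x^1,x^2,x^3,t) \;:=\; x^0 - \varphi_t(x^1,x^2,x^3) - t \;=\; 0
\]
and then verify that the resulting map $(x^0,x^1,x^2,x^3) \mapsto (t,x^1,x^2,x^3)$ is a local diffeomorphism, so that $(t,x^i)$ are admissible coordinates. Note $t = 1$ on $\Si$ because $\varphi_1 \equiv 0$ (the unperturbed graph over $\Si_1$ is $\Si_1$ itself, which is already maximal), and $t = v$ on $\HH$ because the boundary condition $\varphi_{x^0}\vert_{\pr \Si_{x^0}} = 0$ in Theorem \ref{theorem:BruhatPerturbation} forces $\mathrm{graph}_{\Si_{x^0}}(\varphi_{x^0})$ to coincide with $\Si_{x^0}$ along $\HH$, so $t = x^0 = v$ there.

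The first step is to upgrade Theorem \ref{theorem:BruhatPerturbation} to obtain smooth dependence of $\varphi_\lambda$ on the parameter $\lambda \in [0,\tau]$. This follows from the proof in \cite{PerturbationBruhat}: the graph $\varphi_\lambda$ is constructed via the implicit function theorem from the fact that the linearisation of the mean curvature operator (in shift-free coordinates, with Dirichlet boundary condition on $\pr \Si$) is an isomorphism between appropriate H\"older/Sobolev spaces. Since the family of background metrics $\mathbf{\Psi}_\lambda^\ast \g$ depends smoothly on $\lambda$, so does the solution $\varphi_\lambda$; equivalently, the map $(\lambda,x^1,x^2,x^3) \mapsto \varphi_\lambda(x^1,x^2,x^3)$ is a smooth function of all its arguments on $[0,\tau]\times\Si$. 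Pulling back by $\mathbf{\Psi}_\lambda^{-1}$, the family $\varphi_t$ introduced in \eqref{EQfamilyFunctions} is also smooth jointly in $(t,x^1,x^2,x^3)$.

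Given this smoothness, $F$ is a smooth function of five variables. On $\Si = \{x^0 = 1\}$ we have $t = 1$, $\varphi_1 = 0$, and the $C^m$-smallness bound in Theorem \ref{theorem:BruhatPerturbation} together with smoothness in $t$ yields $\Vert \pr_t \varphi_t \Vert_{C^0} \les \varepsilon$ uniformly in $t \in [1, 1+\tau]$ (shrinking $\tau$ if necessary). Hence
\[
\frac{\pr F}{\pr t} \;=\; -\frac{\pr \varphi_t}{\pr t} - 1 \;\leq\; -1 + O(\varepsilon) \;<\; 0,
\]
so the implicit function theorem produces a locally unique smooth solution $t = t(x^0,x^1,x^2,x^3)$ defined in a neighborhood of $\Si$. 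To see that $(t,x^1,x^2,x^3)$ form a coordinate system, compute the Jacobian of the map $(x^0,x^1,x^2,x^3) \mapsto (t,x^1,x^2,x^3)$: by the chain rule applied to $F = 0$,
\[
\frac{\pr t}{\pr x^0} \;=\; \frac{1}{1 + \pr_t \varphi_t} \;=\; 1 + O(\varepsilon),
\]
and the Jacobian matrix is lower-triangular apart from the top row, with diagonal entries $(\pr t/\pr x^0, 1, 1, 1)$, hence determinant $\pr t/\pr x^0 \neq 0$. The inverse function theorem then gives that $(t,x^1,x^2,x^3)$ is a smooth coordinate chart on a neighborhood of $\Si$ in $\MM$, proving the claim.

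The only nontrivial point is the smooth dependence of $\varphi_t$ on $t$, which as noted follows from a standard reading of \cite{PerturbationBruhat}: once the linearised operator is an isomorphism and depends smoothly on the parameter, so does the solution of the implicit function theorem problem. Everything else is elementary calculus.
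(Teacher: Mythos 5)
You follow the same overall strategy as the paper: apply the implicit function theorem to the defining relation $x^0 - \varphi_t(x^1,x^2,x^3) - t = 0$ and then check that the Jacobian is nondegenerate. Your preliminary observations are correct, including the point that smooth dependence of $\varphi_t$ on $t$ follows from the implicit-function-theorem construction in \cite{PerturbationBruhat} (the paper also implicitly relies on this). However, there is a genuine gap at the central step.

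You assert that ``the $C^m$-smallness bound in Theorem \ref{theorem:BruhatPerturbation} together with smoothness in $t$ yields $\Vert\pr_t\varphi_t\Vert_{C^0}\les\varepsilon$.'' This does not follow. The bound $\Vert\varphi_t\Vert_{C^m(\Si)}\leq C\varepsilon$ controls the size of $\varphi_t$ and its \emph{spatial} derivatives at each fixed $t$; it places no constraint on the parameter derivative $\pr_t\varphi_t$. Smoothness in $t$ gives finiteness of $\pr_t\varphi_t$, not smallness, and shrinking $\tau$ does not help: the allowed metric perturbation $\varepsilon$ and the interval width $\tau$ scale together, so a mean-value bound yields only $|\pr_t\varphi_t|\les\varepsilon/\tau\les 1$. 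Even differentiating the implicit function theorem in $\lambda$ does not give smallness, because the source term is $\pr_\lambda(\mathbf{\Psi}_\lambda^\ast\g)$, a Lie derivative of $\g$ along the generating vector field, which is of size $O(1)$ rather than $O(\varepsilon)$. With only an $O(1)$ bound on $\pr_t\varphi_t$ it is not excluded that $\pr_t\varphi_t = -1$ somewhere, which would kill $\pr F/\pr t$ and the Jacobian.

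The paper closes this gap by a structural argument you are missing. Differentiating the maximal surface equation \eqref{eq:MaximalSurfaceEqH} in $t$ at $t=1$ and using $\varphi_1\equiv0$ (the unperturbed surface $\Si$ is already maximal) shows that $\pr_t\varphi_t|_{t=1}$ lies in the kernel of the linearised mean curvature functional at $\Si$. Theorem \ref{theorem:BruhatPerturbation} shows this kernel is trivial, so $\pr_t\varphi_t|_{t=1}=0$ \emph{exactly}. Only then does continuity of $\pr_t\varphi_t$ in $t$ give $\Vert\pr_t\varphi_t\Vert_{C^0(\Si_t)}\leq 1/2$ for $\tau$ sufficiently small, after which the $\pr_{x^0}t$ bound and your Jacobian computation go through unchanged. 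The smallness of $\pr_t\varphi_t$ for small $\tau$ is a consequence of exact vanishing at $t=1$ plus continuity, not of the $\varepsilon$-smallness of $\varphi_t$ itself.
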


We start the proof of Claim \ref{claim:AnalysisTimeFunction} by estimating $\pr_t \varphi_t(x^1,x^2,x^3)$. By construction, on the one hand, for each $1\leq t\leq1+ \tau$, $\varphi_t$ satisfies the maximal surface equation
\begin{align} \label{eq:MaximalSurfaceEqH}
H_\g(\mathrm{graph}_{\Si_t}(\varphi_t)) = 0.
\end{align}
On the other hand, $\varphi_t \vert_{t=1} =0$ by the maximality of $\Si$.\\

Therefore, taking the $\pr_t$-derivative of \eqref{eq:MaximalSurfaceEqH} shows that $\pr_t \varphi_t \vert_{t=1}$ lies in the kernel of the linearisation of the mean curvature functional at $t=0$. However, in Theorem \ref{theorem:BruhatPerturbation} (see also the given remarks on the proof) it is shown that this kernel is trivial, hence we conclude
\begin{align*}
\pr_t \varphi_t \vert_{t=0} =0.
\end{align*}
By smoothness of the family $(\varphi_t)_t$ in $t$, it follows that for $\tau>0$ sufficiently small, for $1\leq t\leq 1+\tau$,
\begin{align*}
\Vert \pr_t \varphi_t \Vert_{C^0(\Si_t)} \leq 1/2.
\end{align*}

Plugging this into \eqref{eq:defOFtimeFunction}, we get that for $\tau>0$ sufficiently small, on $1 \leq t \leq 1+\tau$,
\begin{align*}
\pr_{x^0} t = \frac{1}{\pr_t x^0} = \frac{1}{ \pr_t\varphi_t + 1} \leq 2< \infty.
\end{align*}
This shows that $1\leq t \leq 1+\tau$ is well-defined and the level sets of $t$ (which are exactly the constructed maximal spacelike hypersurfaces) locally foliate the future of $\Si$ in $(\MM,\g)$. By construction it follows moreover that $(t,x^1,x^2,x^3)$ locally is a coordinate system. The smoothness of $t$ is then deduced from \eqref{eq:defOFtimeFunction}, details are left to the reader.

\begin{remark} In the continuity argument of Section \ref{sec:ProofOfMainTheorem}, we need to smoothly \emph{continue} a given time function. The proof is similar and details are left to the reader. The main point is to show that
\begin{align*}
\pr_t^{m}\varphi_t = 0 \text{ at } \{t=1\} = \Si.
\end{align*}
Indeed, this holds because $\varphi_t =0$ at $t=1$, so that arguing as for $\pr_t \varphi_t \vert_{t=1}$ above yields the result.
\end{remark}

\section{Existence of global coordinates on $\Si$ by Cheeger-Gromov theory} \label{SectionGlobalExistence} \label{Section:ProofOfCG}

In this section we prove Theorem \ref{prop:CGestimation1} by applying the Cheeger-Gromov theory developed in \cite{Czimek21}. We remark that Theorem \ref{prop:CGestimation1} is an example of a low regularity \emph{curvature pinching result} and its proof is based, like the standard higher regularity curvature pinching theory (see \cite{PetersenBook}), on a convergence result and a rigidity result, see Theorem \ref{THMconvergence123} and Lemma \ref{lemma:rigidityResult} below, respectively.\\

In the following, we first introduce the necessary definitions and prerequisite results before turning to the proof of Theorem \ref{prop:CGestimation1}.\\

\textbf{Notation.} We denote diffeomorphism equivalence and isometry of manifolds by $\simeq$ and $\cong$, respectively.

\begin{definition}[$H^2$-convergence of functions and tensors] Let $(M,g)$ be a compact Riemannian $3$-manifold with boundary. Let $(\varphi_i)$ be a finite number of fixed charts covering $M$. A sequence of functions $(f_n)_{n\in \mathbb{N}}$ on $M$ is said to converge in $H^2$ as $n \to \infty$, if for each $\varphi_i$, the pullbacks $(\varphi_i)^*f_n$ converge in $H^2$ as $n\to \infty$. The convergence of a sequence of tensors on $M$ in $H^2$ is defined similarly. \end{definition}

\begin{definition}[$H^2$-convergence of manifolds with boundary] \label{def:convergenceofmanifolds} 
A sequence $(M_n,g_n)$ of compact Riemannian $3$-manifolds with boundary is said to converge to a Riemannian manifold with boundary $(M,g)$ in the $H^2$-topology as $n \to \infty$, if for large $n$ there exist diffeomorphisms $\Psi_n: M \to M_n$ such that $(\Psi_n)^\ast g_n \to g$ in the $H^2$-topology on $M$.
\end{definition}

The following convergence theorem is a direct consequence of the theory developed in \cite{Czimek21}.
\begin{theorem}[$H^2$-convergence] \label{THMconvergence123} Let $(M_n,g_n)$ be a sequence of smooth compact Riemannian $3$-manifolds with boundary such that $M_n \simeq \overline{B(0,1)} \subset \RRR^3$ and for a real number $0< V< \infty$,
\begin{align} \begin{aligned}
\Vert \RRRic_n \Vert_{L^2(M_n)} \to&\, 0 \text{ as } n\to \infty,\\ 
\Vert \tr \Th_n -2  \Vert_{L^4(\pr M_n)}+ \Vert \widehat{\Th}_n   \Vert_{L^4(\pr M_n)} \to&\, 0 \text{ as } n\to \infty,\\
 r_{vol}(M_n,1/2) \geq& \,1/4, \\
 \mathrm{vol}_{g_n}(M_n) \leq&\, V.
\end{aligned} \label{eq:quantconvASSUM000} \end{align}
Then, there is a smooth compact Riemannian $3$-manifold $(M,g)$ with $M \simeq \overline{B(0,1)} \subset \RRR^3$ such that as $n\to \infty$,
\begin{align*}
(M_n,g_n) \to (M,g) \text{ in the }H^2\text{-topology,}
\end{align*}
that is, for large $n$ there are global diffeomorphisms $\Psi_n: M \to M_n$ such that, with respect to charts on $M$,
\begin{align*}
\Vert (\Psi_n^\ast g_n)_{ij} -g_{ij} \Vert_{H^2} \to 0.
\end{align*}
Moreover, for integers $m\geq1$ it holds that
\begin{align} \begin{aligned}
\Vert (\Psi_n^\ast g_n)_{ij} -g_{ij} \Vert_{H^{m+2}} 
\leq& \, C_{V} \sum\limits_{\vert \a \vert\leq m} \lrpar{ \Vert \nab^{\a} \RRRic \Vert_{L^2(M)} +\Vert \nab^{\a} \RRRic_n \Vert_{L^2(M_n)}} \\
& + C_{m,V} \Vert (\Psi_n^\ast g_n)_{ij} -g_{ij} \Vert_{H^2}.
\end{aligned}\label{EQhigherREGCGestimates1} \end{align}
\end{theorem}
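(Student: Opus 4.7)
The plan is to obtain Theorem \ref{THMconvergence123} by invoking the Cheeger-Gromov-type compactness theory developed in \cite{Czimek21}, identifying the limit via a rigidity statement, promoting subsequential convergence to convergence of the full sequence, and finally bootstrapping to higher regularity in harmonic coordinates.

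\textbf{Step 1 (Precompactness in $H^2$).} The four assumptions in \eqref{eq:quantconvASSUM000} are exactly the quantitative hypotheses that drive the $H^2$-Cheeger-Gromov compactness theorem of \cite{Czimek21}. The uniform $L^2$ interior Ricci control and the lower volume radius bound supply interior $H^2$-harmonic coordinate charts with uniform quantitative estimates on overlaps, while the $L^4$ boundary data on $\mathrm{tr}\,\Theta_n-2$ and $\widehat{\Theta}_n$ allow the construction of compatible boundary-fitted harmonic charts. The volume upper bound $\mathrm{vol}_{g_n}(M_n)\le V$ prevents escape to infinity and gives a uniform cover by a bounded number of such charts. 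Extracting a diagonal subsequence, one obtains diffeomorphisms $\Psi_{n_k}:M\to M_{n_k}$ from a fixed smooth $M\simeq\overline{B(0,1)}$ such that $(\Psi_{n_k})^\ast g_{n_k}\to g_\infty$ in $H^2$ for some limiting Riemannian metric $g_\infty$.

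\textbf{Step 2 (Identification of the limit via rigidity).} Because $H^2$-convergence in three dimensions is compact into $L^p$ for any $p<\infty$ and $\|\RRRic_n\|_{L^2(M_n)}\to 0$, the pulled-back Ricci tensors converge to zero in $L^2$, so $\RRRic_{g_\infty}=0$ in the sense of distributions. Similarly the boundary second fundamental forms pass to a limit satisfying $\mathrm{tr}\,\Theta_\infty=2$ and $\widehat{\Theta}_\infty=0$ in $L^4(\pr M)$. Standard elliptic regularity for the harmonic-coordinate equation $-\tfrac12 g^{kl}\pr_k\pr_l g_{ij}=\RRRic_{ij}+Q(g,\pr g)$ then upgrades $g_\infty$ to a smooth, flat metric with a totally umbilic boundary of constant mean curvature $2$. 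The rigidity lemma of \cite{Czimek21} (a geometric classification of compact, flat, simply-connected $3$-manifolds with umbilic spherical boundary) identifies $(M,g_\infty)$ isometrically with the Euclidean unit ball.

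\textbf{Step 3 (Full sequence convergence).} Since every subsequence admits a further subsequence converging to the same (up to isometry) Euclidean limit, the full sequence converges in $H^2$ by the standard topological argument. This proves the first part of the theorem.

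\textbf{Step 4 (Higher regularity bootstrap).} In harmonic coordinates adapted to $\pr M$, the components $g_{ij}$ satisfy the quasilinear elliptic system
\begin{equation*}
-\tfrac12 g^{kl}\pr_k\pr_l g_{ij}=\RRRic_{ij}+Q(g,\pr g),
\end{equation*}
with $Q$ quadratic in $\pr g$, together with boundary conditions prescribed by $\mathrm{tr}\,\Theta$ and $\widehat{\Theta}$. Applying elliptic regularity up to the boundary to the difference $\Psi_n^\ast g_n-g_{ij}$ and commuting successively with $\pr^\alpha$, each extra derivative of the right-hand side is controlled by $\|\nab^{\a}\RRRic\|_{L^2}+\|\nab^{\a}\RRRic_n\|_{L^2}$ plus nonlinear products of lower-order metric derivatives which, by Sobolev embedding in dimension $3$, are absorbed using the already-established $H^2$ baseline. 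Iterating in $m$ yields \eqref{EQhigherREGCGestimates1}.

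The main obstacle is \textbf{Step 1}: setting up a Cheeger-Gromov compactness framework at the $L^2$-curvature threshold, \emph{with boundary}, where the extrinsic data is only $L^4$-controlled. This is what requires the substantial work of \cite{Czimek21} (construction of boundary-fitted harmonic charts, quantitative doubling and non-collapsing at the boundary, control of transition maps in $H^2$). Once that package is in place, Steps 2--4 follow from standard elliptic PDE arguments combined with the rigidity statement.
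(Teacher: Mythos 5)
The paper does not actually give a proof of this theorem: it is stated as ``a direct consequence of the theory developed in \cite{Czimek21},'' with the substantive compactness machinery (boundary harmonic charts, non-collapsing, control of transition maps) deferred entirely to that reference. Your Step~1 defers to \cite{Czimek21} in exactly the same way, so the core of your proposal is compatible with the paper's. The genuine organizational difference lies in Steps~2--3: you incorporate the rigidity argument (identifying the limit as the Euclidean unit ball via $\RRRic_\infty=0$, $\tr\Theta_\infty=2$, $\widehat\Theta_\infty=0$, and a Liouville-type classification) into the proof of Theorem \ref{THMconvergence123}, and then use uniqueness of the limit to upgrade subsequential to full-sequence convergence. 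The paper keeps that identification separate, as Lemma \ref{lemma:rigidityResult}, and invokes it only inside the proof of Theorem \ref{prop:CGestimation1}. Your route has the advantage of making the full-sequence-convergence claim in the statement of Theorem \ref{THMconvergence123} self-justifying -- a Cheeger--Gromov-type theorem a priori only produces subsequences, and without identifying the limit the full-sequence claim is not automatic -- whereas the paper implicitly relies on the downstream application (a contradiction argument where passing to subsequences is harmless) to sidestep this. Your Step~4 (elliptic bootstrap for $g_{ij}$ in boundary-adapted harmonic coordinates, with lower-order products absorbed via Sobolev embedding and the $H^2$ baseline) is a reasonable sketch of how \eqref{EQhigherREGCGestimates1} is established in \cite{Czimek21}. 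One small attribution slip: you credit the rigidity classification to \cite{Czimek21}, whereas the paper proves it directly in Lemma \ref{lemma:rigidityResult} (using Proposition 4.4 of \cite{Czimek21} only for the final step). None of this is a gap; it is a reorganization that, if anything, makes the statement more honestly provable as written.
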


In the proof of Theorem \ref{prop:CGestimation1}, we use in addition to the above the following rigidity result which identifies the limit manifold $(M,g)$ of Theorem \ref{THMconvergence123} as the unit ball in Euclidean space. 

\begin{lemma}[Rigidity result] \label{lemma:rigidityResult} Let $(M,g)$ be a smooth compact Riemannian $3$-manifold with boundary such that $M \simeq \overline{B(0,1)} \subset \RRR^3$ and 
\begin{align*}
\RRRic = 0 \text{ on } M, \,\, \tr\, \Th=2, \, \widehat{\Theta}=0 \text{ on } \pr M.
\end{align*}
Then, $$(M,g) \cong (\overline{B(0,1)},e).$$
\end{lemma}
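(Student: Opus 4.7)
The plan is to combine the fact that Ricci-flatness in dimension $3$ implies flatness with a rigidity analysis of the umbilic boundary, via a developing map.

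First, in dimension $3$ the full Riemann tensor is algebraically determined by the Ricci tensor, so $\RRRic=0$ implies $(M,g)$ is flat. Since $M \simeq \overline{B(0,1)}$ is simply connected, parallel transport of an orthonormal frame at an interior basepoint along paths yields a well-defined local isometry from the interior of $M$ into $(\RRR^3,e)$; by flatness of $g$ up to the boundary, this developing map extends smoothly to a local isometry
\[\Phi:(M,g)\to(\RRR^3,e).\]

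Second, I translate the boundary data. Combining $\tr \Theta=2$ with $\widehat{\Theta}=0$ gives $\Theta_{AB}=\gd_{AB}$, so the shape operator of $\pr M$ relative to the outward unit normal $N$ is the identity. Since $\Phi$ is a local isometry, writing $P$ for the Euclidean position vector along $\Sigma:=\Phi(\pr M)$ and (by abuse of notation) $N$ also for its outward Euclidean normal, one has $D_X N = X$ for every vector $X$ tangent to $\Sigma$. Therefore $D(P-N)=0$; connectedness of $\pr M\simeq S^2$ gives a single constant $c\in\RRR^3$ with $P-N\equiv c$ on $\Sigma$, and $|P-c|=|N|=1$ shows $\Sigma\subset S_1(c)$, the Euclidean unit sphere centred at $c$. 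The map $\Phi|_{\pr M}:\pr M\to S_1(c)$ is a local isometry between connected closed oriented $2$-spheres, hence a Riemannian covering, and thus (by simple connectedness of $S^2$) an isometric diffeomorphism.

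Third, I upgrade to a global isometry of $M$. Since $\Phi$ is a local diffeomorphism preserving outward normals and $\Theta>0$, a collar of $\pr M$ is sent to the inward-normal side of $S_1(c)$, i.e.\ into $\overline{B(c,1)}$. Compactness of $M$ then makes $\Phi:M\to\overline{B(c,1)}$ a proper local diffeomorphism sending $\pr M$ onto $\pr \overline{B(c,1)}$, hence a covering of $\overline{B(c,1)}$; simple connectedness of the ball forces $\Phi$ to be a global diffeomorphism, and therefore a global isometry. Post-composition with the translation $q\mapsto q-c$ yields $(M,g)\cong(\overline{B(0,1)},e)$.

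The main obstacle is the last, global step. Having the boundary isometrically identified with a round unit sphere is an essentially local matter, but deducing that the developing map is globally injective---and that its image is the \emph{closed} unit ball rather than its complement or some multi-sheeted cover---requires using simultaneously the orientation of the outward normal, properness of $\Phi$ on the compact manifold with boundary $M$, and simple connectedness of the target ball.
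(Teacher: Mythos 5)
Your proof is correct in outline, but takes a genuinely different route from the paper. The paper does \emph{not} work with a developing map: instead it first uses the Gauss equation to conclude that $(\pr M, \gd)$ has Gauss curvature $K\equiv1$ (so is intrinsically a round unit sphere), then \emph{glues} $(M,g)$ to the flat exterior $(\RRR^3\setminus \overline{B(0,1)},e)$ along $\pr M\cong S^2$ — the matching of first and second fundamental forms makes the glued metric smooth — and invokes a characterization of $(\RRR^3,e)$ among flat complete $3$-manifolds with Euclidean volume growth (Proposition 4.4 of \cite{Czimek21}) together with Liebmann's theorem to conclude. Your approach avoids the gluing and the volume-growth characterization entirely, working directly with the intrinsic developing map $\Phi$; in exchange you must carry out the global topological argument yourself rather than outsource it. Both are legitimate, and yours is arguably more self-contained.

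One step you flag as "the main obstacle" is indeed left as a genuine gap: from "a collar of $\pr M$ is sent into $\overline{B(c,1)}$" you pass directly to "$\Phi:M\to\overline{B(c,1)}$," but compactness alone does not give $\Phi(M)\subset\overline{B(c,1)}$; the map could a priori re-cross $S_1(c)$ from an interior sheet. This is easily repaired: the function $f:=|\Phi-c|^2$ satisfies $\triangle_M f=6$ (since $\Phi$ is a local isometry and $\triangle_e|P-c|^2=6$), so $f$ is strictly subharmonic, attains its maximum on $\pr M$ where $f\equiv1$, and therefore $\Phi(M)\subset\overline{B(c,1)}$. With this inserted, the remaining assertions (properness, boundary-preserving local diffeomorphism, covering-space conclusion for the simply connected ball) are standard and your proof goes through.
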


\begin{proof} First, by the Gauss equation it follows that the Gauss curvature of $(\pr M, \gd)$ is $K=1$, and hence by classical differential geometry, $\gd$ is isometric to the standard round metric, that is, there exist smooth coordinates $(\th^1, \th^2)$ on $\pr M$ such that
\begin{align*}
\gd = (d\th^1)^2 + \sin^2(\th^1) (d\th^2)^2.
\end{align*}
Using the above coordinates $(\th^1, \th^2)$ together with $\tr \, \Th =2$ and $\widehat{\Theta}=0$ on $\pr M$, and $\RRRic=0$ on $M$, it is straight-forward to show that $(M,g)$ smoothly extends to $(\RRR^3 \setminus \overline{B(0,1)},e)$ when identifying $\pr M$ and $\pr \overline{B(0,1)} \subset \RRR^3$; see for example the analogous argument in Section 5.3.3 of \cite{Czimek21}. The resulting smooth Riemannian $3$-manifold is in particular flat, complete and has cubic volume growth of geodesic balls. Hence by Proposition 4.4 in \cite{Czimek21} it must be isometric to $(\RRR^3,e)$. Further, by the classical Liebmann's theorem, the only smooth simply connected closed $2$-surface with constant Gauss curvature $K=1$ in $\RRR^3$ is the round unit sphere, so we deduce that
\begin{align*}
(M,g) \cong (\overline{B(0,1)},e).
\end{align*}
This finishes the proof of Lemma \ref{lemma:rigidityResult}. 
\end{proof}

We are now in position to prove Theorem \ref{prop:CGestimation1}.

\begin{proof}[Proof of Theorem \ref{prop:CGestimation1}] The proof is by contradiction. Let $(M_n,g_n)$ be a sequence of smooth compact Riemannian $3$-manifolds with boundary such that $M_n \simeq \overline{B(0,1)} \subset \RRR^3$ and for a real number $0< V< \infty$,
\begin{align} \begin{aligned}
\Vert \RRRic_n \Vert_{L^2(M_n)} \leq&\, \frac{1}{n},\\ 
\Vert \tr \Th_n -2  \Vert_{L^4(\pr M_n)}+ \Vert \widehat{\Th}_n   \Vert_{L^4(\pr M_n)} \leq&\, \frac{1}{n},\\
 r_{vol}(M_n,1/2) \geq& \,1/4, \\
 \mathrm{vol}_{g_n}(M_n) \leq&\, V.
\end{aligned} \label{eq:quantconvASSUM} \end{align}

Given $0<\CMD<1/2$, \emph{assume there does not exist} an integer $N \geq1$ and finite constants $C_V>0$ and $C_{m,V}>0$, together with a family of global charts 
$$(\varphi_n: \overline{B(0,1)} \to M_n)_{n\geq N}$$ 
such that on $\overline{B(0,1)}$,
\begin{align} \begin{aligned} 
&\Vert (g_n)_{ij} -e_{ij} \Vert_{H^2(B(0,1))} \lesssim \CMD,\\  
&(1-\CMD) \vert \xi \vert^2 \leq (g_n)_{ij}\xi^i \xi^j \leq (1+\CMD) \vert \xi \vert^2 \text{ for all } \xi \in \RRR^2. 
\end{aligned} \label{eq:contraAssum} \end{align}
and for integers $m\geq1$,
\begin{align} \label{eq:contraAssum2}
\Vert (g_n)_{ij} -e_{ij} \Vert_{H^{m+2}({B(0,1)})} \les C_{V}\left( \sum\limits_{\vert \a \vert \leq m} \Vert \nab^{\a} \RRRic_n \Vert_{L^2(M_n)}+ C_{m,V} \right),
\end{align}
where, as in the following, we abuse notation by writing $g_n$ instead of $\Psi_n^\ast g_n$.\\

On the one hand, by Theorem \ref{THMconvergence123}, there is a smooth limit manifold $(M,g)$ such that as $n\to \infty$,
\begin{align} \label{EQCGproof1}
(M_n,g_n) \to (M,g) \text{ in the }H^2\text{-topology}
\end{align}
and the estimates \eqref{EQhigherREGCGestimates1} hold.\\

From the above convergence and \eqref{eq:quantconvASSUM}, it follows that the smooth limit manifold $(M,g)$ satisfies $\RRRic=0$ in $M$, and $\tr\Th=2$ and $\widehat{\Th}=0$ on $\pr M$. Hence by Lemma \ref{lemma:rigidityResult},
\begin{align}\label{EQCGproof2}
(M,g) \cong (\overline{B(0,1)},e),
\end{align}
which trivially admits global smooth coordinates. \\

Combining \eqref{EQCGproof1}, \eqref{EQCGproof2} with \eqref{EQhigherREGCGestimates1}, we get that for $n$ large, there are global diffeomorphisms $\Psi_n: \overline{B(0,1)} \to M_n$ satisfying
\begin{align*}
\Vert (g_n)_{ij} - e_{ij} \Vert_{H^2({B(0,1)})} \to 0 \text{ as } n \to \infty,
\end{align*}
and, for integers $m\geq1$,
\begin{align*}
\Vert (g_n)_{ij} - e_{ij} \Vert_{H^{m+2}({B(0,1)})} \leq& \, C_{V} \sum\limits_{\vert \a \vert\leq m} \lrpar{ \Vert \nab^{\a} \RRRic \Vert_{L^2(B(0,1))} +\Vert \nab^{\a} \RRRic_n \Vert_{L^2(M_n)}} \\
& + C_{m,V} \Vert (g_n)_{ij} -g_{ij} \Vert_{H^2(B(0,1))} \\
\les& \, C_{V} \sum\limits_{\vert \a \vert \leq m} \Vert \nab^{\a} \RRRic_n \Vert_{L^2(M_n)}  + C_{m,V},
\end{align*}
where we used that $n$ is large and $\RRRic=0$ on $M$. This yields a contradiction to \eqref{eq:contraAssum} and \eqref{eq:contraAssum2}, and hence finishes the proof of Theorem \ref{prop:CGestimation1}. \end{proof}

\appendix


\section{Global elliptic estimates for Hodge systems on $\Si$} \label{sec:AppendixProofOfLowRegEllipticEstimateForK} \label{sec:TensorialIdentities} \label{sec:AppendixHodgeElliptic} In this section we discuss global elliptic estimates for general Hodge systems on compact Riemannian $3$-manifolds with boundary $\Si$. This is a slight generalisation of the elliptic estimates in \cite{ChrKl93} where non-compact manifolds without boundary are considered.


\subsection{General Hodge systems on $\Si$} \label{SUBSECgeneralHodgeEstimates} In this section, we introduce tools and results to obtain elliptic estimates for general Hodge systems. They are applied in Sections \ref{SECellipticEstimatesKnotation} and \ref{SECehEnergyEstimateNotation} to the specific Hodge systems of this paper. We have the following notation.
\begin{definition} \label{def:AandDdef} Let $m\geq0$ be an integer. For a given totally symmetric $(m+2)$-tensor $F$, define
\begin{align*}
A(F)_{a_1 \dots a_{m+1}bc} :=& \nab_c F_{a_1 \dots a_{m+1} b} - \nab_b F_{a_1 \dots a_{m+1} c}, \\
D(F)_{a_1 \dots a_{m+1}} :=& \nab^c F_{a_{1} \dots a_{m+1} c}.
\end{align*} 
\end{definition}

The following lemma is a straight-forward generalisation of Lemma 4.4.1 in \cite{ChrKl93} to manifolds with boundary. The proof is by integration by parts and left to the reader.
\begin{lemma}[Fundamental integral identity for Hodge systems] \label{lem:IntegrationByPartsIdentity} Let $(\Si,g)$ be a compact Riemannian $3$-manifold with boundary and let $m\geq0$ be an integer. Let $F$ be a totally symmetric $(m+2)$-tensor on $\Si$. Then it holds that
\begin{align*}
\intSI \vert \nab F \vert^2 =& \intSI \half \vert A(F) \vert^2 + \vert D(F) \vert^2 \\
&- \intSI \sum\limits_{i=1}^{m+1}\Big( \mathrm{R}^l_{\,\,\, a_i bc} F_{a_1 \dots l \dots a_{m+1}c} +\RRRic^{l}_{\,\,\, b} F_{a_1 \dots a_{m+1}l} \Big) F^{a_1 \dots a_{m+1} b} \\
&- \intpSI F^{a_1 \dots a_{m+1} N} D(F)_{a_1 \dots a_{m+1}} + \intpSI \nab_b F_{a_1 \dots a_{m+1} N} F^{a_1 \dots a_{m+1} b}.
\end{align*}
\end{lemma}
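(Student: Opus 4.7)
The plan is to follow the proof of Lemma 4.4.1 in \cite{ChrKl93}, the only new ingredient being the careful accounting of boundary terms arising from Stokes' theorem. Throughout I abbreviate the symmetric multi-index $a_1 \dots a_{m+1}$ by $\alpha$. The starting point is the pointwise algebraic identity, obtained by expanding the definition of $A(F)$ and exploiting the symmetry of $F$ in its first $m+1$ slots,
\[
\tfrac{1}{2}|A(F)|^2 = |\nab F|^2 - \nab_c F_{\alpha b}\,\nab^b F^{\alpha c},
\]
so that the entire proof reduces to evaluating $\int_\Si \nab_c F_{\alpha b}\,\nab^b F^{\alpha c}$.

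A first integration by parts, applied to the vectorfield $X^c := F_{\alpha b}\,\nab^b F^{\alpha c}$, transfers $\nab_c$ onto the second factor and yields
\[
\int_\Si \nab_c F_{\alpha b}\,\nab^b F^{\alpha c} = \int_{\partial \Si} F^{\alpha b}\,\nab_b F_{\alpha N} - \int_\Si F_{\alpha b}\,\nab_c \nab^b F^{\alpha c},
\]
accounting for the second boundary integral in the statement. In the remaining bulk integrand I commute derivatives,
\[
\nab_c \nab^b F^{\alpha c} = \nab^b D(F)^\alpha + [\nab_c,\nab^b]\,F^{\alpha c},
\]
and expand $[\nab_c,\nab^b]$ using the standard commutation rule on contravariant tensors together with the trace identity $R^c{}_{lcd} = \RRRic_{ld}$: each of the $m+1$ free upper indices of $F^{\alpha c}$ contributes a full Riemann term of the form $R^l{}_{a_i bc} F_{a_1 \dots l \dots a_{m+1} c}$, while the contracted slot produces the Ricci term $\RRRic^l{}_b F_{a_1 \dots a_{m+1} l}$, matching precisely the curvature sum displayed in the statement.

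A second integration by parts is then applied to $-\int_\Si F_{\alpha b}\,\nab^b D(F)^\alpha$. Using the key observation that $\nab_b F^{\alpha b} = D(F)^\alpha$, which is a direct consequence of the total symmetry of $F$, Stokes' theorem gives
\[
-\int_\Si F_{\alpha b}\,\nab^b D(F)^\alpha = \int_\Si |D(F)|^2 - \int_{\partial \Si} F^{\alpha N}\,D(F)_\alpha,
\]
which supplies the $|D(F)|^2$-term together with the first boundary integral. Assembling the three steps yields the claimed identity.

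The only genuine bookkeeping burden lies in the commutator computation, where one must chase the antisymmetries of the Riemann tensor and the sign conventions in order to match the exact form of the curvature terms in the statement; the boundary fluxes in the two integration-by-parts steps arise transparently from the divergence theorem on $(\Si,g)$ with outward unit normal $N$, and apart from these the identity is purely algebraic and integration-by-parts driven.
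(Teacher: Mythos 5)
Your proposal is correct and follows exactly the route the paper indicates (integration by parts as in Lemma~4.4.1 of \cite{ChrKl93}, keeping track of the boundary fluxes that arise on a manifold with boundary): the algebraic identity $\tfrac12|A(F)|^2=|\nab F|^2-\nab_cF_{\alpha b}\nab^bF^{\alpha c}$, two applications of Stokes' theorem, and a commutation of covariant derivatives producing one Riemann term per free index of $F^{\alpha c}$ and a single Ricci term from the contracted slot. One minor slip worth noting: the equality $\nab_bF^{\alpha b}=D(F)^\alpha$ is simply the definition of $D(F)$ in Definition~\ref{def:AandDdef} (contraction on the last slot), not a consequence of the total symmetry of $F$; the symmetry is what lets one identify $D(F)$ with the contraction on \emph{any} slot, which is not the one used in your second integration by parts.
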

In this paper we use Lemma \ref{lem:IntegrationByPartsIdentity} to derive elliptic estimates of Hodge systems. Higher regularity elliptic estimates for Hodge systems are proved by introducing the symmetrised derivative and reapplying Lemma \ref{lem:IntegrationByPartsIdentity}, see details below.

\begin{definition}[Symmetrised derivative] Let $m\geq0$ be an integer. For a given totally symmetric $(m+2)$-tensor $F$, let
\begin{align*}
(\overline{\nab} F)_{a_1 \dots a_{m+2} c} :=&  \frac{1}{m+3}\left( \nab_{c} F_{a_1 \dots a_{m+2}} + \sum\limits_{i=1}^{m+2} \nab_{a_i} F_{a_1 \dots c \dots a_{m+2}} \right).
\end{align*}
\end{definition}

In the next lemma, we express $A(\ol{\nab}F)$ and $D(\ol{\nab}F)$ in terms of $A(F)$ and $D(F)$.
\begin{lemma} \label{lemma:commutatorDA} Let $F$ be a totally symmetric $(m+2)$-tensor. Then it holds that
\begin{align*}
A(\overline{\nab} F) =& \nab A(F) + \mathrm{R} \cdot F, \\
D(\overline{\nab} F) =& \nab A(F) + \nab D(F) + \mathrm{R} \cdot F,
\end{align*}
where $\mathrm{R} \cdot F$ denotes contractions between $\mathrm{R}$ and $F$.
\end{lemma}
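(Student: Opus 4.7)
The plan is to prove both identities by direct computation, unfolding the definitions of $A$, $D$, and $\ol\nab$, and then using the symmetry of $F$ together with the commutator formula $[\nab_a,\nab_b]F = \mathrm{R}\cdot F$ to rearrange terms. Since the two identities share the same underlying mechanism, I will handle them in parallel.

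First, for $A(\ol\nab F)$, I would write
\[
A(\ol\nab F)_{a_1\dots a_{m+2}bc} = \nab_c(\ol\nab F)_{a_1\dots a_{m+2}b} - \nab_b(\ol\nab F)_{a_1\dots a_{m+2}c},
\]
then substitute the definition of $\ol\nab F$. The term $\nab_c\nab_b F_{a_1\dots a_{m+2}} - \nab_b\nab_c F_{a_1\dots a_{m+2}}$ that appears from the $\nab_c F$-piece of $\ol\nab F$ is a pure commutator and contributes only $\mathrm{R}\cdot F$. For each $i=1,\dots,m+2$, the remaining pair
\[
\nab_c\nab_{a_i}F_{a_1\dots b\dots a_{m+2}} - \nab_b\nab_{a_i}F_{a_1\dots c\dots a_{m+2}}
\]
is rewritten via $\nab_c\nab_{a_i} = \nab_{a_i}\nab_c + [\nab_c,\nab_{a_i}]$ (and similarly with $b$) to move $\nab_{a_i}$ outside. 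Using the total symmetry of $F$ to relabel slots, the quantity in parentheses becomes precisely $A(F)$ evaluated on a suitable permutation of indices, so the summation yields $\nab A(F)$ modulo further $\mathrm{R}\cdot F$ terms from the commutators.

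Second, for $D(\ol\nab F) = \nab^c(\ol\nab F)_{a_1\dots a_{m+2}c}$, I unfold $\ol\nab F$ in the same way. The $\nab_c F$-piece gives the Laplacian $\nab^c\nab_c F_{a_1\dots a_{m+2}}$, which I rewrite by inserting a copy of $A(F)$: picking any slot, say $a_{m+2}$, and using
\[
\nab_c F_{a_1\dots a_{m+2}} = A(F)_{a_1\dots a_{m+1}a_{m+2}c} + \nab_{a_{m+2}}F_{a_1\dots a_{m+1}c},
\]
followed by commuting $\nab^c\nab_{a_{m+2}} = \nab_{a_{m+2}}\nab^c + [\nab^c,\nab_{a_{m+2}}]$, produces $\nab A(F) + \nab D(F) + \mathrm{R}\cdot F$. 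For each $i$, the term $\nab^c\nab_{a_i}F_{a_1\dots c\dots a_{m+2}}$ is commuted to $\nab_{a_i}\nab^c F_{a_1\dots c\dots a_{m+2}}$, and by symmetry of $F$ the trace $\nab^c F_{a_1\dots c\dots a_{m+2}}$ equals $D(F)$ (after a permutation of indices), giving a $\nab D(F)$ contribution plus curvature error.

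The computations are routine; the only point requiring care is the bookkeeping of the many permutations of indices produced by the symmetrized derivative, and verifying that all commutator corrections are indeed of the schematic form $\mathrm{R}\cdot F$ (i.e.\ a zero-order contraction with the Riemann tensor). Because the exact coefficients of the $\mathrm{R}\cdot F$ and $\nab A(F)$, $\nab D(F)$ terms are irrelevant for later applications (the identities are only used up to the schematic contraction structure), I would not track them explicitly but merely verify their form, which constitutes the entire proof.
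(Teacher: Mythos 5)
Your proposal is correct and follows essentially the same computation as the paper: unfold the definition of $\ol\nab F$, split off the pure commutator $[\nab_c,\nab_b]F$ from the $\nab_c F$-piece, and for each summand move $\nab_{a_i}$ outside via $[\nab_c,\nab_{a_i}]$, after which the symmetry of $F$ identifies the inner factor as $A(F)$ (resp.\ $D(F)$ for the divergence); for the Laplacian term in $D(\ol\nab F)$ you use exactly the paper's identity $\nab^c\nab_cF = \nab^cA(F)_{\cdots c} + [\nab^c,\nab_{a_{m+2}}]F_{\cdots c} + \nab_{a_{m+2}}D(F)$.
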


\begin{proof} First,
\begin{align*}
A(\ol{\nab}F)_{a_1 \cdots a_{m+2}bc} =& \nab_c \lrpar{\ol{\nab}F}_{a_1 \cdots a_{m+2}b}-\nab_b \lrpar{\ol{\nab}F}_{a_1 \cdots a_{m+2}c} \\
=& \frac{1}{m+3} \nab_c \lrpar{\nab_b F_{a_1 \cdots a_{m+2}} +\sum\limits_{i=1}^{m+2} \nab_{a_i} F_{a_1 \cdots b \cdots a_{m+2}}} \\
& -\frac{1}{m+3} \nab_b \lrpar{\nab_c F_{a_1 \cdots a_{m+2}} +\sum\limits_{i=1}^{m+2} \nab_{a_i} F_{a_1 \cdots c \cdots a_{m+2}}} \\
=& \frac{1}{m+3} \lrpar{\nab_c \nab_b - \nab_b \nab_c} F_{a_1 \cdots a_{m+2}} \\
&+\frac{1}{m+3} \sum\limits_{i=1}^{m+2} \lrpar{\lrpar{\nab_c \nab_{a_i} - \nab_{a_i} \nab_c} F_{a_1 \cdots b \cdots m+2} - \lrpar{\nab_b \nab_{a_i} - \nab_{a_i} \nab_b} F_{a_1 \cdots c \cdots m+2}} \\
&+\frac{1}{m+3} \sum\limits_{i=1}^{m+2} \nab_{a_i} \underbrace{\lrpar{\nab_c F_{a_1 \cdots b \cdots a_{m+2}} - \nab_b F_{a_1 \cdots c \cdots a_{m+2}}}}_{=A(F)_{a_1 \cdots a_{i-1} a_{i+1} \cdots a_{m+2} bc}},
\end{align*}
where we can further express
\begin{align*}
\lrpar{\nab_c \nab_b - \nab_b \nab_c} F_{a_1 \cdots a_{m+2}} = \sum\limits_{i=1}^{m+2} \mathrm{R}^d_{\,\,\, a_i b c} F_{a_1 \cdots d  \cdots a_{m+2}}.
\end{align*}

Second,
\begin{align*} \begin{aligned}
D(\ol{\nab}F)_{a_1 \cdots a_{m+2}} =&  \nab^c \lrpar{\ol{\nab}F}_{a_1 \cdots a_{m+2}c} \\
=& \frac{1}{m+3} \nab^c \lrpar{\nab_c F_{a_1 \cdots a_{m+2}} + \sum\limits_{i=1}^{m+2} \nab_{a_i} F_{a_1 \cdots c \cdots a_{m+2}}}\\
=& \frac{1}{m+3} \nab^c \nab_c F_{a_1 \cdots a_{m+2}} + \frac{1}{m+3}\sum\limits_{i=1}^{m+2} \nab^c \nab_{a_i} F_{a_1 \cdots c \cdots a_{m+2}},
\end{aligned} 
\end{align*}
where the first term on the right-hand side equals
\begin{align*}
 \nab^c \nab_c F_{a_1 \cdots a_{m+2}} =& \nab^c A(F)_{a_1 \cdots a_{m+2}c} +\lrpar{\nab^c \nab_{a_{m+2}} -\nab_{a_{m+2}} \nab^c} F_{a_1 \cdots a_{m+1} c} + \nab_{a_{m+2}} D(F)_{a_1 \cdots a_{m+1}},
\end{align*}
and for the sum on the right-hand side, 
\begin{align*}
\nab^c \nab_{a_i} F_{a_1 \cdots c \cdots a_{m+2}} = \lrpar{\nab^c \nab_{a_i}- \nab_{a_i} \nab^c} F_{a_1 \cdots c \cdots a_{m+2}} + \nab_{a_i} D(F)_{a_1 \cdots a_{i-1}a_{i+1} \cdots a_{m+2}}.
\end{align*}

This finishes the proof of Lemma \ref{lemma:commutatorDA}. \end{proof}

Thus by Lemma \ref{lemma:commutatorDA}, to derive higher regularity estimates for $F$ we can apply Lemma \ref{lem:IntegrationByPartsIdentity} to the Hodge system satisfied by $\ol{\nab}F$ and get an estimate for $\nab \ol{\nab} F$. To relate the regularity of the symmetrised derivative $\ol{\nab}F$ to the regularity of $\nab F$, we have the next lemma.

\begin{lemma} \label{lem:controlSYM} Let $m\geq0$ be an integer. Let $F$ be a totally symmetric $(m+2)$-tensor. Then, schematically,
\begin{align*}
\nab F = \overline{\nab} F + A(F).
\end{align*}
\end{lemma}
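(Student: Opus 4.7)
The plan is to express $\nab_c F_{a_1\cdots a_{m+2}}$ as the sum of $(\overline{\nab}F)_{a_1\cdots a_{m+2}c}$ and a linear combination of components of $A(F)$, using only the total symmetry of $F$ and the definitions. Since $F$ is totally symmetric, for each $i\in\{1,\dots,m+2\}$ the component $F_{a_1\cdots c\cdots a_{m+2}}$ (with $c$ in the $i$-th slot) equals $F_{a_1\cdots\widehat{a_i}\cdots a_{m+2}\,c}$ (with $a_i$ omitted and $c$ appended at the end).

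First, I would observe that this symmetry allows us to apply the antisymmetrisation $A(F)$ with the index pair $(a_i,c)$ placed in the last two slots. Concretely, from the definition of $A(F)$,
\[
A(F)_{a_1\cdots\widehat{a_i}\cdots a_{m+2}\,a_i c}=\nab_c F_{a_1\cdots\widehat{a_i}\cdots a_{m+2}\,a_i}-\nab_{a_i} F_{a_1\cdots\widehat{a_i}\cdots a_{m+2}\,c},
\]
and invoking the total symmetry of $F$ on each right-hand side factor yields
\[
A(F)_{a_1\cdots\widehat{a_i}\cdots a_{m+2}\,a_i c}=\nab_c F_{a_1\cdots a_{m+2}}-\nab_{a_i} F_{a_1\cdots c\cdots a_{m+2}}.
\]

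Next, I would substitute this into the definition of $\overline{\nab}F$,
\[
(m+3)(\overline{\nab}F)_{a_1\cdots a_{m+2}c}=\nab_c F_{a_1\cdots a_{m+2}}+\sum_{i=1}^{m+2}\nab_{a_i}F_{a_1\cdots c\cdots a_{m+2}},
\]
to obtain
\[
(m+3)(\overline{\nab}F)_{a_1\cdots a_{m+2}c}=(m+3)\nab_c F_{a_1\cdots a_{m+2}}-\sum_{i=1}^{m+2}A(F)_{a_1\cdots\widehat{a_i}\cdots a_{m+2}\,a_i c},
\]
so that
\[
\nab_c F_{a_1\cdots a_{m+2}}=(\overline{\nab}F)_{a_1\cdots a_{m+2}c}+\frac{1}{m+3}\sum_{i=1}^{m+2}A(F)_{a_1\cdots\widehat{a_i}\cdots a_{m+2}\,a_i c},
\]
which is precisely the schematic identity $\nab F=\overline{\nab}F+A(F)$.

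This lemma is a purely algebraic identity, so I do not anticipate any obstacles. The only subtlety is bookkeeping of indices: one must be careful when reordering the entries of $F$ to apply the definition of $A(F)$ with the differentiated index and the ``swapped'' index in the two final slots, but the total symmetry of $F$ makes every such rearrangement trivial.
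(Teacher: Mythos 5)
Your proposal is correct and follows essentially the same algebraic route as the paper: use the total symmetry of $F$ to move the derivative index into the last slot, apply the definition of $A(F)$ to rewrite $\nab_{a_i}F_{a_1\cdots c\cdots a_{m+2}}$ in terms of $\nab_c F_{a_1\cdots a_{m+2}}$ and an $A(F)$-component, and substitute into the definition of $\overline{\nab}F$. The resulting identity agrees with the paper's up to the antisymmetry of $A(F)$ in its last two indices (the paper writes $A(F)_{\cdots\,b a_i}$ with a minus sign, you write $A(F)_{\cdots\,a_i c}$ with a plus sign, which are the same thing).
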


\begin{proof}[Proof of Lemma \ref{lem:controlSYM}] By direct calculation,
\begin{align*}
\nab_b F_{a_1 \dots a_{m+2}} =&(m+3) (\overline{\nab}F)_{a_1 \dots a_{m+2} b} - \sum\limits_{i=1}^{m+2} \nab_{a_i} F_{a_1 \dots b \dots a_{m+2}} \\
=& (m+3) (\overline{\nab}F)_{a_1 \dots a_{m+2} b} - \sum\limits_{i=1}^{m+2} \nab_{a_i} F_{a_1 \dots a_{i-1} a_{i+1} \dots a_{m+2}b} \\
=& (m+3) (\overline{\nab}F)_{a_1 \dots a_{m+2} b} - \sum\limits_{i=1}^{m+2} \Big(  \nab_{b} F_{a_1 \dots a_{m+2}} +  A(F)_{a_1 \dots a_{i-1}a_{i+1} \dots a_{m+2}b a_i } \Big) \\
=& (m+3) (\overline{\nab}F)_{a_1 \dots a_{m+2} b} - (m+2) \nab_{b} F_{a_1 \dots a_{m+2}} - \sum\limits_{i=1}^{m+2}  A(F)_{a_1 \dots a_{i-1}a_{i+1} \dots a_{m+2} b a_i}
\end{align*}
which shows that
\begin{align*}
\nab_b F_{a_1 \dots a_{m+2}}=&(\overline{\nab}F)_{a_1 \dots a_{m+2}b} - \frac{1}{m+3} \sum\limits_{i=1}^{m+2} A(F)_{a_1 \dots a_{i-1} a_{i+1} \dots a_{m+2} b a_i}.
\end{align*}
This finishes the proof of Lemma \ref{lem:controlSYM}. \end{proof}

To summarise the above, higher regularity estimates for Hodge systems can be proved by induction, using the recursive relation of Lemma \ref{lemma:commutatorDA}, the relation between $\ol{\nab}$ and $\nab$ of Lemma \ref{lem:controlSYM} and the basic integral identity of Lemma \ref{lem:IntegrationByPartsIdentity}. For ease of presentation, in the next sections we discuss more specifically the Hodge systems which appear in this paper.

\subsection{Elliptic estimates for the second fundamental form $k$ on a maximal hypersurface $\Si$} \label{SECellipticEstimatesKnotation}

Let $(\MM,\g)$ be a vacuum spacetime and let $\Si \simeq \ol{B(0,1)}$ be a compact spacelike maximal hypersurface in $\MM$. By \eqref{eq:divk}, \eqref{eq:curlk} and \eqref{eqtrkiszero}, the second fundamental form $k$ of $\Si$ satisfies the following Hodge system,
\begin{align*} \begin{aligned}
\Rscal(g) =& \vert k \vert_g^2, \\
\Div_g k =&0, \\
{\Curl}_g \, k=&H, \\
\tr_g \, k=&0.
\end{aligned} 
\end{align*}

In the notation of Definition \ref{def:AandDdef}, $k$ satisfies in particular
\begin{align} \label{eq:RelationAH}
A(k)_{iab} = \in^m_{\,\,\,\, ab}H_{im}, \,\, D(k)=0.
\end{align}

Lemma \ref{lem:IntegrationByPartsIdentity} together with \eqref{eq:RelationAH} yields the following corollary (see Section 8.3 in \cite{KlRodBreakdown} for the case of manifolds without boundary).
\begin{corollary}[Fundamental global elliptic estimate for $k$] \label{integralIDKEST} Let $(\MM,\g)$ be a vacuum spacetime and let $\Si \simeq \ol{B(0,1)}$ be a compact spacelike maximal hypersurface in $\MM$. Then it holds that
\begin{align*}
\int\limits_{\Si} \vert \nab k \vert^2 + \frac{1}{4} \vert k \vert^4- \int\limits_{\pr \Si}  \nab_a k_{bN} k^{ba} \lesssim \int\limits_{\Si} \vert \mathbf{R} \vert_{\mathbf{h}^t}^2,
\end{align*}
where $N$ denotes the outward-pointing unit normal to $\pr \Si \subset \Si$ and $T$ denotes the timelike unit normal to $\Si$.
\end{corollary}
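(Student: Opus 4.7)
The plan is to apply the fundamental integral identity of Lemma~\ref{lem:IntegrationByPartsIdentity} directly to $F = k$ with $m=0$, and then use the constraint equations together with the three-dimensional form of the Gauss equation to extract a coercive quartic term in $k$.

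First, since $k$ is symmetric, Lemma~\ref{lem:IntegrationByPartsIdentity} applied with $F=k$ gives
\begin{align*}
\int_\Si |\nabla k|^2
= \int_\Si \Big(\tfrac12 |A(k)|^2 + |D(k)|^2\Big)
- \int_\Si \Big(\mathrm{R}^{l}{}_{abc}\,k_{lc} + \RRRic^{l}{}_{b}\,k_{al}\Big) k^{ab}
- \int_{\pr\Si} k^{aN} D(k)_a
+ \int_{\pr\Si} \nabla_b k_{aN}\, k^{ab}.
\end{align*}
By \eqref{eq:RelationAH} we have $D(k) = 0$ (so the first boundary integral and the $|D(k)|^2$ bulk term drop), while $A(k)_{iab}=\in^m_{\,\,ab}H_{im}$ yields $|A(k)|^2 \lesssim |H|^2 \lesssim |\mathbf{R}|_{\mathbf{h}^t}^2$, hence the $\frac12|A(k)|^2$ term is absorbed into the right-hand side of the claimed estimate.

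The main step is the treatment of the bulk curvature contraction. Using the three-dimensional identity expressing $\mathrm{R}_{abcd}$ in terms of $\RRRic$ and $\Rscal$, the symmetries of Riemann, and the fact that $\tr_g k = 0$, a direct calculation reduces
\[
\mathrm{R}^{l}{}_{abc} k_{lc} k^{ab} = 2\,\RRRic_{ac}(k\cdot k)^{ac} - \tfrac{\Rscal}{2}|k|^2,
\]
where $(k\cdot k)_{ac} := k_{ab}k^{b}{}_{c}$. Substituting the maximal constraints $\RRRic_{ac} = E_{ac} + (k\cdot k)_{ac}$ from \eqref{eq:RicE} and $\Rscal = |k|^2$ from \eqref{eq:HamiltonianConstraint}, together with $\RRRic^{l}{}_{b} k_{al}k^{ab} = \RRRic_{ac}(k\cdot k)^{ac}$, yields
\[
\mathrm{R}^{l}{}_{abc} k_{lc}k^{ab} + \RRRic^{l}{}_{b} k_{al}k^{ab}
= 3\, E_{ac}(k\cdot k)^{ac} + 3\,|k\cdot k|^2 - \tfrac12 |k|^4.
\]
Inserting this into the integral identity and rearranging produces
\[
\int_\Si |\nabla k|^2 + 3\int_\Si |k\cdot k|^2 - \tfrac12\int_\Si |k|^4 - \int_{\pr\Si}\nabla_b k_{aN}\,k^{ab}
\;\lesssim\; \int_\Si |\mathbf{R}|^2_{\mathbf{h}^t} + \Big| \int_\Si E_{ac}(k\cdot k)^{ac}\Big|.
\]
Applying Young's inequality to the $E \cdot (k\cdot k)$ term, then using the sharp algebraic inequality $|k\cdot k|^2 \geq \tfrac13|k|^4$, valid for symmetric traceless $2$-tensors in dimension $3$ (by the power-mean inequality applied to the eigenvalues of $k$), absorbs the $|k\cdot k|^2$ remainder and leaves a positive multiple of $|k|^4$, which can be taken to be $\tfrac14$.

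The main obstacle will be the algebra in deriving the identity for $\mathrm{R}^{l}{}_{abc}k_{lc}k^{ab}$: one must use the $3$-dimensional decomposition of Riemann into Ricci and scalar curvature, keep track of all contractions, and exploit $\tr k = 0$ to kill several trace terms before the constraints $\Rscal = |k|^2$ and $\RRRic = E + k\cdot k$ can be inserted cleanly. Once this algebraic step is in place, the remaining work --- applying Young's inequality, the $|k\cdot k|^2 \geq |k|^4/3$ inequality, and the bounds $|E|,|H|\lesssim |\mathbf{R}|_{\mathbf{h}^t}$ from \eqref{EQEquivalenceNORMS1} --- is routine.
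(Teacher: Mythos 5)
Your proposal is correct and follows essentially the same route as the paper: apply Lemma~\ref{lem:IntegrationByPartsIdentity} with $F=k$, use the $3$-dimensional decomposition of $\mathrm{R}$ into $\RRRic$ and $\Rscal$ together with the maximal constraints $\Rscal=|k|^2$ and $\RRRic = E + k\cdot k$, invoke the sharp inequality $3\,\mathrm{tr}(k^4) \geq |k|^4$ for symmetric tracefree $2$-tensors in dimension $3$, and absorb the cubic $E\cdot(k\cdot k)$ term by Young's inequality and \eqref{EQEquivalenceNORMS1}--\eqref{EQEquivalenceNORMS2}. The intermediate identity $\mathrm{R}^{l}{}_{abc}k_{lc}k^{ab} = 2\RRRic_{ac}(k\cdot k)^{ac} - \tfrac12\Rscal|k|^2$ and the subsequent rearrangement match the paper's computation line for line.
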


\begin{proof} By Lemma \ref{lem:IntegrationByPartsIdentity} with $F=k$, we have
\begin{align} \label{EQkLowRegEllEst1}
\int\limits_{\Si} \vert \nab k \vert^2 =& \int\limits_{\Si} \half \vert H \vert^2 - \int\limits_{\Si} \left( \mathrm{R}^l_{\,\,\,abc} k_{lc} + \RRRic^l_{\,\,\, b} k_{al}\right) k^{ab} + \int\limits_{\pr \Si} \nab_b k_{aN} k^{ab}.
\end{align}
In dimension $n=3$, the full Riemann curvature tensor is determined by $\RRRic$, yielding
\begin{align*}
\mathrm{R}^l_{\,\,\,abc} k_{lc} k^{ab} = 2 \RRRic_{jl} k^{ji} k^{l}_{\,\,\, i} - \half \Rscal \vert k \vert^2.
\end{align*}
Plugging this into \eqref{EQkLowRegEllEst1}, we get
\begin{align} \label{EQkLowRegEllEst2}
\int\limits_{\Si} \vert \nab k \vert^2 =& \int\limits_{\Si} \half \vert H \vert^2 - \int\limits_{\Si} \left( 3 \RRRic^l_{\,\,\, b} k_{al} k^{ab} - \half \Rscal \vert k \vert^2 \right) + \int\limits_{\pr \Si} \nab_b k_{aN} k^{ab}.
\end{align}

On the one hand, by \eqref{eq:RicE}, we have
$$E_{ij} := \R_{TiTj} = \RRRic_{ij} - k_{im}k^m_{\,\,j}.$$
On the other hand, in dimension $n=3$, it holds for symmetric tracefree $2$-tensors $F$ that
$$3tr (F^4) \geq \vert F \vert^4.$$
Hence it follows from \eqref{EQkLowRegEllEst2} that
\begin{align*}
\int\limits_{\Si}\half \vert H \vert^2 =& \int\limits_{\Si} \vert \nab k \vert^2 + 3 \RRRic_{\,\,a}^{s} k_{bs} k^{ba} - \half \vert k \vert^4 - \int\limits_{\pr \Si}  \nab_a k_{bN} k^{ba} \\
=&   \int\limits_{\Si} \vert \nab k \vert^2 + 3 (E_{\,\,a}^{s} + k_m^{\,\, \, s} k^m_{\,\,\,  a} ) k_{bs} k^{ba} - \half \vert k \vert^4 - \int\limits_{\pr \Si}  \nab_a k_{bN} k^{ba}\\
\geq&   \int\limits_{\Si} \vert \nab k \vert^2 + 3 E_{\,\,a}^{s} k_{bs} k^{ba} + \half \vert k \vert^4 - \int\limits_{\pr \Si}  \nab_a k_{bN} k^{ba}.
\end{align*}
Using that $\vert E \vert^2 + \vert H \vert^2 \les \vert \mathbf{R} \vert_{\mathbf{h}^t}^2$ by \eqref{EQEquivalenceNORMS1} and \eqref{EQEquivalenceNORMS2}, we obtain
\begin{align*}
\int\limits_{\Si} \vert \nab k \vert^2 + \frac{1}{4} \vert k \vert^4- \int\limits_{\pr \Si}  \nab_a k_{bN} k^{ba} \lesssim \int\limits_{\Si} \vert \mathbf{R} \vert_{\mathbf{h}^t}^2.
\end{align*}
This finishes the proof of Corollary \ref{integralIDKEST}. \end{proof}

Furthermore, the next higher regularity estimates for $k$ follow by a standard induction argument as outlined above in Section \ref{SUBSECgeneralHodgeEstimates}; we leave details to the reader.
\begin{lemma}[Higher regularity elliptic estimates for $k$] \label{LEMkHigherRegEllEstBdryTerms} Let $(\MM,\g)$ be a vacuum spacetime and let $\Si \simeq \ol{B(0,1)}$ be a compact spacelike maximal hypersurface in $\MM$. For integers $m\geq1$, it holds that
\begin{align*}
\sum\limits_{\vert \a \vert \leq m+1} \Vert \nab^{\a} k \Vert_{L^2(\Si)}+\BB  \les&  \sum\limits_{\vert \a \vert \leq m} \Vert \nab^{\a} H \Vert_{L^2(\Si)}+C\lrpar{ \Vert k \Vert_{L^2(\Si)}+ \Vert \nab k \Vert_{L^2(\Si)}} ,
\end{align*}
where $\BB$ denotes boundary integrals and the constant $C>0$ depends on $m$.
\end{lemma}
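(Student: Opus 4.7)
The plan is to prove the estimate by induction on $m\geq 1$, using the symmetrised derivative machinery of Section \ref{SUBSECgeneralHodgeEstimates} in order to reduce higher regularity estimates for $k$ to successive applications of the fundamental integral identity of Lemma \ref{lem:IntegrationByPartsIdentity}. The case $m=0$ is already covered by Corollary \ref{integralIDKEST} (which produces the $\int_\Si |\nab k|^2$-term on the left and the $\int_\Si |\mathbf{R}|_{\mathbf{h}^t}^2 \lesssim \int_\Si |H|^2 + \int_\Si \RRRic \cdot k \cdot k$-term on the right, using the traced Gauss equation \eqref{eq:RicE} to exchange $\RRRic$ for $E$ and quadratic terms in $k$).

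For the inductive step, assume the estimate holds at level $m-1$. I would apply Lemma \ref{lem:IntegrationByPartsIdentity} to the totally symmetric $(m+2)$-tensor $F := \ol{\nab}^m k$. By Lemma \ref{lemma:commutatorDA} applied iteratively, $A(\ol{\nab}^m k)$ and $D(\ol{\nab}^m k)$ can be expressed schematically as $\nab^m A(k) + \nab^{m-1}(\mathrm{R}\cdot k) + \text{l.o.t.}$ and $\nab^m A(k) + \nab^m D(k) + \nab^{m-1}(\mathrm{R}\cdot k) + \text{l.o.t.}$, respectively. Since $D(k)=0$ and $A(k) = \in \cdot H$ by \eqref{eq:RelationAH}, these sources are controlled by $\sum_{|\a|\leq m}\|\nab^\a H\|_{L^2(\Si)}$ plus curvature$\times k$-terms. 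The Riemann tensor terms are handled in dimension three by the formula used in the proof of Corollary \ref{integralIDKEST}, namely expressing the full Riemann tensor in terms of $\RRRic$, hence in terms of $E$ and quadratic expressions in $k$ via \eqref{eq:RicE}, so that they are in turn absorbed into $H$-terms plus lower order $k$-norms, the latter being already controlled by the induction hypothesis.

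The relation between $\ol{\nab}^m k$ and $\nab^m k$ is then handled by iterating Lemma \ref{lem:controlSYM}: one has schematically $\nab^{m+1} k = \nab \ol{\nab}^m k + \text{terms involving } A(\ol{\nab}^j k)$ for $j < m$, and each such $A$-term reduces, again through Lemma \ref{lemma:commutatorDA}, to derivatives of $H$ plus lower-order curvature and $k$ contractions. Combining the resulting bound on $\|\nab \ol{\nab}^m k\|_{L^2(\Si)}$ with the induction hypothesis at level $m-1$ then yields the claimed control of $\sum_{|\a|\leq m+1}\|\nab^\a k\|_{L^2(\Si)}$ by the right-hand side of the lemma. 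The boundary integrals $\BB$ appearing on the left are simply the boundary terms generated by the successive integrations by parts in Lemma \ref{lem:IntegrationByPartsIdentity}, and are carried along untouched --- it is exactly their careful analysis (performed in Sections \ref{SECellipticEstimateM1} and \ref{SECm2KEstimates}) that allows one to extract control of $\Nd^{m+1}\nu$ on $\pr\Si$.

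The main technical obstacle, and the reason why the lemma is stated only schematically with the boundary term $\BB$ kept abstract, is bookkeeping: one must verify that at each inductive step all commutator terms generated by $[\nab, \text{contractions}]$ and all curvature--$k$ contractions produced via Lemma \ref{lemma:commutatorDA} remain of the right order to be absorbed, using only $\|k\|_{L^2}$, $\|\nab k\|_{L^2}$ and $\|\nab^\a H\|_{L^2}$ for $|\a|\leq m$, with constants depending on $m$. This is routine but tedious, and is a direct generalisation of the arguments in Chapter 4 of \cite{ChrKl93} and Section 8.3 of \cite{KlRodBreakdown} from the boundaryless setting to a compact manifold with boundary, the only modification being that each integration by parts now produces the additional boundary contributions collected into $\BB$.
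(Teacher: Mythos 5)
Your proposal is correct and follows precisely the approach the paper intends: Section \ref{SUBSECgeneralHodgeEstimates} explicitly states that higher regularity Hodge estimates "can be proved by induction, using the recursive relation of Lemma \ref{lemma:commutatorDA}, the relation between $\ol{\nab}$ and $\nab$ of Lemma \ref{lem:controlSYM} and the basic integral identity of Lemma \ref{lem:IntegrationByPartsIdentity}," and the paper then leaves the details to the reader exactly as you have filled them in. Your handling of the Riemann tensor via the dimension-three reduction to $\RRRic$ and \eqref{eq:RicE}, the use of $D(k)=0$ and $A(k)=\in\cdot H$ from \eqref{eq:RelationAH}, and the remark that the boundary terms are simply carried into $\BB$ all match the structure the paper uses both here and in the detailed boundary analysis of Sections \ref{SECellipticEstimateM1} and \ref{SECm2KEstimates}.
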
 
\begin{remark} The boundary integrals in Corollary \ref{integralIDKEST} and Lemma \ref{LEMkHigherRegEllEstBdryTerms} discussed in detail in Sections \ref{SectionBAImprovementMAIN} and \ref{SectionHigherRegularity}.
\end{remark}

\subsection{Elliptic estimates on $\Si_t$ for curvature} \label{SECehEnergyEstimateNotation} Let $(\MM,\g)$ be a vacuum spacetime and let $(\Si_t)$ be a maximal foliation on $\MM$. Let $T$ denote the timelike unit normal to $\Si_t$. We recall from Proposition \ref{prop:MaxwellsEq1} and \eqref{EQTrelationEH} that for a Weyl tensors $\mathbf{W}$ satisfying the inhomogeneous Bianchi equations
\begin{align*}
\D^\a \mathbf{W}_{\a \be \ga \de} = J_{\be \ga \de}, 
\end{align*}
it holds that
\begin{align} \begin{aligned}
\Div E(\mathbf{W})_a =& +\lrpar{k \wedge H(\mathbf{W})}_a + J_{TaT}, \\
\Curl E(\mathbf{W})_{ab} =& +H\lrpar{\hat{\Lie}_T \mathbf{W}}_{ab} - 3 \lrpar{n^{-1} \nab n \wedge E(\mathbf{W})}_{ab} \\
& - \frac{3}{2} \lrpar{ k \times H(\mathbf{W})}_{ab} - J^\ast_{aTb}, \\
\Div H(\mathbf{W})_a =& -\lrpar{k \wedge E(\mathbf{W})}_a + J^\ast_{TaT}, \\
\Curl H(\mathbf{W})_{ab} =&-E \lrpar{\hat{\Lie}_T \mathbf{W}}_{ab}  -3 \lrpar{n^{-1} \nab n \wedge H(\mathbf{W})}_{ab} \\
&+ \frac{3}{2} \lrpar{k \times E(\mathbf{W})}_{ab} - J_{aTb}.
\end{aligned} \label{eq:BianchiEH2} \end{align}

Interpreting \eqref{eq:BianchiEH2} as coupled Hodge system for $E(\mathbf{W})$ and $H(\mathbf{W})$, we get the next global elliptic estimates on $\Si$ as corollary of the fundamental integral identity of Lemma \ref{lem:IntegrationByPartsIdentity}; we leave details to the reader.

\begin{corollary}[Elliptic estimates for $E(\mathbf{W})$ and $H(\mathbf{W})$] \label{CORellipticEstimatesEH1} Let $E(\mathbf{W})$ and $H(\mathbf{W})$ be solutions to \eqref{eq:BianchiEH2} on $\Si_t$. Then it holds that
\begin{align*}
&\int\limits_{\Si_t} \vert \nab E(\mathbf{W}) \vert^2 + \vert \nab H(\mathbf{W}) \vert^2 \\
\les& \int\limits_{\Si_t} \lrpar{\vert \Lieh_T\mathbf{W} \vert_{\mathbf{h}^t}^2 + \vert J \vert^2} + \int\limits_{\pr\Si_t} \nab_b E(\mathbf{W})_{aN} \, E(\mathbf{W})^{ab} +  \int\limits_{\pr\Si_t} \nab_b H(\mathbf{W})_{aN} \, H(\mathbf{W})^{ab} \\
&- \int\limits_{\pr\Si_t} J_{TaT} \, E(\mathbf{W})^{aN} -  \int\limits_{\pr\Si_t} J^\ast_{TaT} \, H(\mathbf{W})^{aN} \\
&+ \lrpar{ \Vert n^{-1} \nab n \Vert_{L^\infty(\Si_t)} + \Vert \nab k \Vert^2_{L^2(\Si_t)} + \Vert k \Vert^2_{L^2(\Si_t)} } \lrpar{\Vert \nab E \Vert^2_{L^2(\Si_t)} + \Vert E \Vert^2_{L^2(\Si_t)}}\\
&+ \lrpar{ \Vert n^{-1} \nab n \Vert_{L^\infty(\Si_t)} + \Vert \nab k \Vert^2_{L^2(\Si_t)} + \Vert k \Vert^2_{L^2(\Si_t)} } \lrpar{\Vert \nab H \Vert^2_{L^2(\Si_t)} + \Vert H \Vert^2_{L^2(\Si_t)}}.
\end{align*} 

\end{corollary}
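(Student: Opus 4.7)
\textbf{Proof plan for Corollary \ref{CORellipticEstimatesEH1}.}

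The plan is to apply the fundamental integral identity Lemma \ref{lem:IntegrationByPartsIdentity} separately to the symmetric $g$-tracefree $2$-tensors $F_1 := E(\mathbf{W})$ and $F_2 := H(\mathbf{W})$ (the case $m=0$), and then substitute the Bianchi-type expressions for $D(F_i)$ and $A(F_i)$ coming from \eqref{eq:BianchiEH2}. Concretely, for each $F \in \{E(\mathbf{W}),H(\mathbf{W})\}$, Lemma \ref{lem:IntegrationByPartsIdentity} yields
\begin{align*}
\int_{\Si_t} |\nab F|^2 = \int_{\Si_t}\tfrac{1}{2}|A(F)|^2 + |D(F)|^2 - \int_{\Si_t}(\text{Riemann}\cdot F\cdot F) - \int_{\pr\Si_t} F^{aN}D(F)_a + \int_{\pr\Si_t}\nab_b F_{aN}F^{ab}.
\end{align*}
The identities $\Div E(\mathbf{W}) = k\wedge H(\mathbf{W}) + J_{T\cdot T}$ and the analogous one for $H(\mathbf{W})$ from \eqref{eq:BianchiEH2} give $D(F)$ directly, while the curl equations in \eqref{eq:BianchiEH2} let us express $A(F)$ in terms of $\Lieh_T\mathbf{W}$, $n^{-1}\nab n\wedge F$, $k\times F'$ (with $F'$ the other component), and $J^\ast$ (or $J$).

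First I would verify the pointwise algebraic identity $\tfrac{1}{2}|A(F)|^2 \lesssim |\Curl F|^2 + |D(F)|^2$ for symmetric $g$-tracefree $2$-tensors (using the definitions of $A$, $\Curl$, $\Div$ and the tracefree condition). This allows us to replace the $|A(F)|^2$ term by $|\Curl F|^2$ plus already-controlled divergence contributions. Substituting the curl expressions from \eqref{eq:BianchiEH2}, one obtains in the interior integrand the main terms
\begin{align*}
|\Lieh_T\mathbf{W}|_{\mathbf{h}^t}^2 + |J|^2 + |n^{-1}\nab n|^2|F|^2 + |k|^2|F'|^2,
\end{align*}
together with cross terms that are handled by Cauchy-Schwarz and absorbed into the $(n^{-1}\nab n,k)$-product terms appearing on the right-hand side of the Corollary.

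Second, the Riemann curvature interior terms from Lemma \ref{lem:IntegrationByPartsIdentity} are handled by noting that in dimension three the full Riemann tensor is determined by $\RRRic$, and by the traced Gauss equation \eqref{eq:RicE} we have $\RRRic = E + k\cdot k$. Hence these terms split into one piece involving $|E|^2|F|^2$ (absorbed into the $|E|^2$-type product factors) and one piece involving $|k|^2|F|^2$ (absorbed into the $|k|^2$-type product factors). Using Sobolev/H\"older control to distribute the $L^\infty$ or $L^2$ norms as displayed in the statement is routine and mirrors the handling in Corollary \ref{integralIDKEST}.

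Third, I collect the boundary contributions. The term $-\int_{\pr\Si_t}F^{aN}D(F)_a$ with $D(E) = k\wedge H + J_{T\cdot T}$ yields the announced $-\int_{\pr\Si_t}J_{TaT}E(\mathbf{W})^{aN}$ boundary term plus a $\int_{\pr\Si_t} F^{aN}(k\wedge F')$ piece, which is absorbed (via trace inequalities as in Lemma \ref{PROPtraceEstimate}) into the $(k,\nab k)$-product terms on the right-hand side; similarly for $H(\mathbf{W})$. The term $\int_{\pr\Si_t}\nab_b F_{aN}F^{ab}$ is kept as the explicit boundary integral appearing in the conclusion. Summing the $E$- and $H$-identities yields the claimed estimate. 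The main (mild) obstacle will be the algebraic step comparing $|A(F)|^2$ with $|\Curl F|^2$ for symmetric tracefree tensors, since one must be careful not to lose information in the $D(F)$ direction; everything else is a direct substitution followed by Cauchy-Schwarz and absorption.
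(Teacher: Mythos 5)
Your plan is correct and matches the approach the paper indicates: apply Lemma \ref{lem:IntegrationByPartsIdentity} to $E(\mathbf{W})$ and $H(\mathbf{W})$, rewrite $A(F)$ in terms of the symmetrized curl and divergence furnished by the Hodge system \eqref{eq:BianchiEH2}, handle the three-dimensional Riemann terms via $\Ric = E + k\cdot k$, and track the boundary contributions exactly as you describe. The algebraic step you flag (comparing $|A(F)|^2$ to $|\Curl F|^2 + |\Div F|^2$ for symmetric tracefree $2$-tensors) is indeed the only non-routine identity, and it holds up to universal constants since $A(F)_{iab}$ is antisymmetric in $(a,b)$ with vanishing total antisymmetrization, so its Hodge-dual $2$-tensor decomposes into a symmetric part (the symmetrized curl) and an antisymmetric part (the divergence).
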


Furthermore, the next higher regularity estimates for $E(\mathbf{W})$ and $H(\mathbf{W})$ follow by a standard induction argument as outlined above in Section \ref{SUBSECgeneralHodgeEstimates}; we leave details to the reader.
\begin{lemma}[Higher regularity elliptic estimates for $E(\mathbf{W})$ and $H(\mathbf{W})$] \label{LEMEandHHigherRegEllEstBdryTerms} Let $(\MM,\g)$ be a vacuum spacetime and let $\Si_t \simeq \ol{B(0,1)}$ be a compact spacelike maximal hypersurface in $\MM$. For integers $m\geq1$, it holds that
\begin{align*}
&\sum\limits_{\vert \a \vert \leq m+1} \Vert \nab^{\a} E(\mathbf{W}) \Vert_{L^2(\Si_t)}+\Vert \nab^{\a} E(\mathbf{W}) \Vert_{L^2(\Si_t)}  \\
\les&  \sum\limits_{\vert \a \vert \leq m} \Vert \nab^{\a} \Lieh_T\mathbf{W} \Vert_{L^2(\Si_t)}+\Vert \nab^{\a} J \Vert_{L^2(\Si_t)} + \Vert \nab^{\a} k \Vert_{L^2(\Si_t)} +  \Vert \nab^{\a} \nab n \Vert_{L^2(\Si_t)}+ \BB \\
&+C\lrpar{ \Vert k \Vert_{L^2(\Si_t)}+ \Vert \nab k \Vert_{L^2(\Si_t)} +\Vert \nab n \Vert_{L^\infty(\Si_t)} } \\
&\qquad \cdot \lrpar{ \sum\limits_{\vert \a \vert \leq m+1} \Vert \nab^{\a} E(\mathbf{W}) \Vert_{L^2(\Si_t)}+\Vert \nab^{\a} E(\mathbf{W}) \Vert_{L^2(\Si_t)}},
\end{align*}
where $\BB$ denotes boundary integrals and the constant $C>0$ depends on $m$.
\end{lemma}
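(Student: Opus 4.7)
The proof proceeds by induction on $m \geq 1$, following the symmetrised-derivative framework of Section \ref{SUBSECgeneralHodgeEstimates}. The base case $m=1$ is essentially Corollary \ref{CORellipticEstimatesEH1} applied to $F = E(\mathbf{W})$ and $F = H(\mathbf{W})$: its right-hand side already contains the source norms $\Vert \Lieh_T\mathbf{W}\Vert_{L^2(\Si_t)}$ and $\Vert J\Vert_{L^2(\Si_t)}$, the boundary contributions may be collected into $\BB$, and the remaining quadratic error assembles into the coupling $(\Vert k\Vert + \Vert \nab k\Vert + \Vert \nab n\Vert_{L^\infty})(\Vert E(\mathbf{W})\Vert + \Vert \nab E(\mathbf{W})\Vert + \Vert H(\mathbf{W})\Vert + \Vert \nab H(\mathbf{W})\Vert)$, matching the conclusion at $m=1$.

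For the induction step $m \to m+1$, I would apply the symmetrised derivative $\ol{\nab}^m$ to $F = E(\mathbf{W})$ and $F = H(\mathbf{W})$. By Lemma \ref{lem:controlSYM}, controlling $\nab \ol{\nab}^m F$ in $L^2(\Si_t)$ suffices to control $\nab^{m+1} F$ in $L^2(\Si_t)$, up to lower-order $A(F)$-derivatives already provided by the induction hypothesis. Iterating Lemma \ref{lemma:commutatorDA} yields the schematic Hodge system
\begin{align*}
A(\ol{\nab}^m F) &= \nab^m A(F) + \mathrm{R} \cdot \nab^{\leq m-1} F, \\
D(\ol{\nab}^m F) &= \nab^m D(F) + \nab^{m-1} A(F) + \mathrm{R} \cdot \nab^{\leq m-1} F,
\end{align*}
where $\mathrm{R}$ is the intrinsic Riemann tensor of $\Si_t$, which in three dimensions is algebraically determined by $\RRRic = E(\mathbf{W}) + k \cdot k$ via the traced Gauss equation \eqref{eq:RicE}. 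Applying the fundamental integral identity of Lemma \ref{lem:IntegrationByPartsIdentity} to the totally symmetric tensor $\ol{\nab}^m F$ then yields an $L^2(\Si_t)$ bound for $\nab \ol{\nab}^m F$ in terms of $\Vert A(\ol{\nab}^m F)\Vert_{L^2}^2 + \Vert D(\ol{\nab}^m F)\Vert_{L^2}^2$, an intrinsic Ricci coupling, and a boundary contribution absorbed into $\BB$.

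The next step is to substitute the Bianchi identities \eqref{eq:BianchiEH2} to rewrite $A(F)$ and $D(F)$ as $\Lieh_T \mathbf{W}$, $J$, and the nonlinear products $k \wedge F$, $k \times F$, $n^{-1} \nab n \wedge F$. Differentiating $m$ times produces the desired source norms $\sum_{|\a|\leq m} \Vert \nab^\a \Lieh_T\mathbf{W}\Vert_{L^2} + \Vert \nab^\a J\Vert_{L^2}$ together with nonlinear product terms. By H\"older's inequality and the Sobolev embedding of Lemma \ref{LEMsobolevEmbeddingSigmaT} on $\Si_t \simeq \ol{B(0,1)}$, every such product splits into (i) a fully lower-order factor bounded by the induction hypothesis and repackaged on the right-hand side as $\Vert \nab^\a k\Vert_{L^2}$, $\Vert \nab^\a \nab n\Vert_{L^2}$, and (ii) a single top-order factor that is paired with $(\Vert k\Vert_{L^2} + \Vert \nab k\Vert_{L^2} + \Vert \nab n\Vert_{L^\infty})$ to produce the quadratic coupling in the conclusion.

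The main obstacle is the systematic bookkeeping of the commutator corrections $[\nab_a,\nab_b]$ generated each time $\ol{\nab}$ is iterated: each such commutator produces a further copy of $\mathrm{R}$ contracted with $\nab^{\leq m-1} F$, which must be re-expressed through \eqref{eq:RicE} and reabsorbed into products of $E(\mathbf{W})$, $k$, and their derivatives so as to land in one of the two admissible categories above. Once this bookkeeping is carried out uniformly in $m$, the combinatorics produces a constant $C$ depending only on $m$, and the boundary integrals $\BB$ remain schematic, to be analysed in each concrete application (compare the treatment in Section \ref{SECellipticEstimateM1}).
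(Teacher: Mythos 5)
Your outline implements exactly what the paper intends: the induction via symmetrised derivatives of Section \ref{SUBSECgeneralHodgeEstimates}, the fundamental Hodge-system integral identity of Lemma \ref{lem:IntegrationByPartsIdentity}, and the Bianchi equations \eqref{eq:BianchiEH2}; the paper itself gives no further detail beyond pointing to that section. One indexing slip worth flagging: Corollary \ref{CORellipticEstimatesEH1} controls only the first derivatives $\nab E(\mathbf{W})$, $\nab H(\mathbf{W})$ (the $m=0$ level), so the base case $m=1$ already requires one pass of the $\ol{\nab}$-machinery (apply $\ol{\nab}$, use Lemmas \ref{lemma:commutatorDA}--\ref{lem:controlSYM} and the integral identity), and correspondingly the step $m\to m+1$ should apply $\ol{\nab}^{m+1}$ rather than $\ol{\nab}^{m}$.
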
 
\begin{remark} The boundary integrals in Corollary \ref{CORellipticEstimatesEH1} and Lemma \ref{LEMEandHHigherRegEllEstBdryTerms} are estimated by the initial data norms in Section \ref{SectionHigherRegularity}, where we do (in contrast to the elliptic estimates for $k$) not use their particular structure.
\end{remark}

\section{Proof of Lemma \ref{Lemma:TraceEstimateH1toL4bdry}} \label{SECproofTRACEEST} In this section we prove the trace estimates of Lemma \ref{Lemma:TraceEstimateH1toL4bdry}. We have to show that for Riemannian metrics $g$ on a manifold $\Si$ satisfying in global coordinates
\begin{align*}
\frac{1}{4} \vert \xi \vert \leq g_{ij}\xi^i \xi^j \leq 2 \vert \xi \vert^2 \text{ for all } \xi \in \RRR^2
\end{align*}
the next trace estimate for tensors $F$ holds,
\begin{align} \begin{aligned}
\Vert F \Vert_{L^2(\pr \Si)} \lesssim&\, \Vert F \Vert_{L^2(\Si)} + \Vert \nab F \Vert_{L^2(\Si)}, \\
\Vert F \Vert_{L^4(\pr \Si)} \lesssim& \,\Vert F \Vert_{L^2(\Si)} + \Vert \nab F \Vert_{L^2(\Si)}.
\end{aligned} \label{EQtraceEstimateAPPPROOF} \end{align}
Moreover, we show that on weakly regular balls $(\Si,g)$ with constant $0<\CMD<1/2$, it holds that
\begin{align} \label{EQtraceEstimate12prereqAPPPROOF}
\Vert F \Vert_{H^{1/2}(\pr \Si)} \lesssim&\, \Vert F \Vert_{L^2(\Si)} + \Vert \nab F \Vert_{L^2(\Si)},
\end{align}
and for integers $m\geq1$,
\begin{align} \label{EQhigherregtraceEstimateAPPPROOF}
\sum\limits_{\vert \a \vert \leq m} \Vert \Nd^{\a}F \Vert_{H^{1/2}(\pr \Si)} \les&\, \sum\limits_{\vert \a \vert \leq m+1} \Vert \nab^{\a} F \Vert_{L^2(\Si)}+ \sum\limits_{\vert \a \vert \leq m} \Vert \nab^{\a} \RRRic \Vert_{L^2(\Si)} + C_m\CMD.
\end{align}


First, the estimates \eqref{EQtraceEstimateAPPPROOF} are straight-forward, see for example Corollary 3.26 in \cite{J3} for a concise proof. \\

We turn to discuss \eqref{EQtraceEstimate12prereqAPPPROOF}. On the one hand, by Sections 7.50 to 7.56 in \cite{Adams}, for each open smooth sets $U \subset \subset \pr \ol{B(0,1)}$ and $V \subset \ol{B(0,1)}$ with $U \subset V \cap \pr \ol{B(0,1)}$, it holds that
\begin{align*}
H^1(V) \hookrightarrow H^{1/2}(U),
\end{align*}
where $H^{1/2}(U)$ is a local, coordinate-defined fractional Sobolev space on $U$. \\

On the other hand, if $g_{ij} \in H^2(\ol{B(0,1)})$ then in particular $\gd_{AB} \in W^{1,4}(\pr \ol{B(0,1)})$ in local coordinates on the unit sphere. By Proposition 3.2 in \cite{ShaoBesov}, this control suffices to compare the coordinate-defined spaces $H^{1/2}(U)$ with the space $H^{1/2}(\pr \ol{B(0,1)})$ which is defined more geometrically in Definition \ref{DEFHSspaces}. We refer also to Appendix B of \cite{ShaoBesov}. This finishes the proof of \eqref{EQtraceEstimate12prereqAPPPROOF}, details are left to the reader.\\

The higher regularity trace estimate \eqref{EQhigherregtraceEstimateAPPPROOF} is a straight-forward generalisation of \eqref{EQtraceEstimate12prereqAPPPROOF} using the fact that on a weakly regular ball we have, by definition,
\begin{align*}
\sum\limits_{\vert \a \vert \leq m+2} \Vert \pr^{\a} (g_{ij}-e_{ij}) \Vert_{L^2({B(0,1)})} \les \sum\limits_{\vert \a \vert \leq m} \Vert \nab^{\a} \RRRic \Vert_{L^2(\Si)} + C_m \CMD.
\end{align*}
This finishes the proof of Lemma \ref{PROPtraceEstimate}.


\section{Comparison estimates between two maximal foliations on $\MM$} \label{secComparisonAppendix} In this section we prove Lemma \ref{lem:comparisonfoliationMM}. First, we recall the geometric setup. Let $1 < t^\ast \leq 2$ be a real and let $(\MM_{t^\ast},\g)$ be a vacuum spacetime whose past boundary consists of a spacelike maximal hypersurface with boundary $\Si \simeq \ol{B(0,1)}$ and its outgoing null hypersurface $\HH$ emanating from $\pr \Si$. \\

Assume that $\MM_{t^\ast}$ is foliated by a maximal spacelike foliations $(\Si_t)_{1\leq t \leq t^\ast}$ with $\Si_1 = \Si$ and satisfying
\begin{align} \begin{aligned}
\Vert \R \Vert_{L^\infty_{t} L^2({\Si}_{t})} \leq& \,D\varep, \\
\Vert \nab k \Vert_{L^\infty_{ t} L^2({\Si}_{ t})} + \Vert {k} \Vert_{L^\infty_{ t} L^2({\Si}_{ t})} \leq&\, D\varep, \\
 \Vert n-1 \Vert_{L^\infty(\MM_{t^\ast})} + \Vert \nab n \Vert_{L^\infty_tL^2(\Si_t)} +\Vert \nab^2 n \Vert_{L^\infty_tL^2(\Si_t)}\les &\,D\varep.
\end{aligned} \label{eqCOMPARISONspacetimeControl} 
\end{align}
Let $T$ denote the timelike unit normal to $\Si_t$.\\

Assume further that there is a second foliation on $\MM_{t^\ast}$ by maximal hypersurfaces $(\wt{\Si}_{\tilde t})_{0 \leq \tilde t \leq t^\ast}$ with $\Si_{t^\ast} = \wt{\Si}_{t^\ast}$ satisfying
\begin{align} \begin{aligned}
\Vert \R \Vert_{L^\infty_{\tilde t} L^2(\wt{\Si}_{\tilde t})} \leq& \,\CMD, \\
\Vert \nabtt \tilde{k} \Vert_{L^\infty_{\tilde t} L^2(\wt{\Si}_{\tilde t})} + \Vert \tilde{k} \Vert_{L^\infty_{\tilde t} L^2(\wt{\Si}_{\tilde t})}+ \Vert \D \wt{\pi} \Vert_{L^\infty_\ttt L^2(\Sitt_\ttt)}+\Vert \wt{\mathbf{A}} \Vert_{L^\infty_\ttt L^4(\Sitt_\ttt)} \leq& \,\CMD, \\
\Vert \tilde{n}-1 \Vert_{L^\infty(\MM_{t^\ast})} + \Vert \nabtt \tilde{n} \Vert_{L^\infty(\MM_{t^\ast})} + \Vert \nabtt^2 \tilde{n} \Vert_{L^\infty_\ttt L^2(\wt{\Si}_\ttt)} \les&\, \CMD,
\end{aligned} \label{eqCOMPARISONspacetimeControl2222} \end{align}
where $\nabtt$ and $\tilde{k}$ denote the induced covariant derivative and the second fundamental form on $\wt{\Si}_{\ttt}$, respectively. Let $\Ttt$ denote the timelike unit normal to  $\wt{\Si}_{\tilde t}$. \\

We turn to the proof of Lemma \ref{lem:comparisonfoliationMM}. We need to show that for $\varep>0$ and $\CMD>0$ sufficiently small, for $1\leq t \leq t^\ast$,
\begin{align} \label{EQlemmaComparisonTwoEstimatesREMINDER}
\norm{\nutt-1}_{L^\infty_t L^\infty(\Si_t)} \les& \,\CMD, \\
\label{EQlemmaComparisonTwoEstimatesREMINDER2222}
\Vert \wt{k} \Vert_{L^\infty_t L^4(\Si_t)} \les&\, \CMD,
\end{align}
where the \emph{angle} $\tilde{\nu}$ between $T$ and $\Ttt$ is defined as
\begin{align} \label{EQreminderDefinitionNUTT}
\nutt := -\g(T, \Ttt).
\end{align}

The proof of Lemma \ref{lem:comparisonfoliationMM} is based on a standard continuity argument starting at $\Si_{t^\ast}=\Sitt_{t^\ast}$, where by construction $\nutt=1$ and $k=\ktt$, and going backwards in $t$. In the following, we only discuss the bootstrap assumption and its improvement. Details are left to the reader. \\

\textbf{Bootstrap assumption.} Let $1\leq t_0^\ast<t^\ast$ be a real. Assume that for a large constant $M>0$,
\begin{align} \begin{aligned}
\norm{\nutt-1}_{L^\infty(\MM_{t^\ast})} \leq M \CMD.
\end{aligned}\label{est:BAnutett}\end{align}

\textbf{First consequences of the bootstrap assumption.} By writing for an orthonormal frame $(\ett_i)_{i=1,2,3}$ on $\wt{\Si}_\ttt$ the relation
\begin{align*}
T=\nutt \Ttt + \g(T, \tilde{e}_i) \tilde{e}_i,
\end{align*}
and using that $\g(T,T)=-1$, we have that 
\begin{align*}
-1 = - \vert \nutt \vert^2 + \sum\limits_{i=1,2,3} \vert \g(T,\ett_i)\vert^2,
\end{align*}
which implies by \eqref{est:BAnutett} that
\begin{align} \label{EQestContraCoeff}
\norm{\g(T,\ett_i)}_{L^\infty(\MM_{t^\ast})}  \les \sqrt{M\CMD}.
\end{align}

Let $(e_i)_{i=1,2,3}$ be the orthonormal frame tangent to $\Si_t$ constructed by the Gram-Schmidt method applied to the $\Si_t$-tangential frame 
$$(\ett_1+\g(\ett_1,T)T,\ett_2+\g(\ett_2,T)T,\ett_3+\g(\ett_3,T)T),$$ 
and set moreover $e_0 := T$. By the Gram-Schmidt construction and \eqref{est:BAnutett} and \eqref{EQestContraCoeff}, we get that for $i \neq j$ and $\varep', \varep>0$ sufficiently small,
\begin{align}
\begin{aligned}
\norm{\g(e_i,\ett_i)-1}_{L^\infty(\MM_{t^\ast})}  + \norm{\g(\ett_i,e_j)}_{L^\infty(\MM_{t^\ast})} + \norm{\g(\Ttt,e_i)}_{L^\infty(\MM_{t^\ast})} \les \sqrt{M\CMD}.                                
\end{aligned} \label{est:nuttettcomp}           
\end{align}

\textbf{Improvement of the bootstrap assumption.} Define shift-free coordinates $(t,x^1,x^2,x^3)$ on $\MM_{t^\ast}$ by transporting $(x^1,x^2,x^3)$ from $\Si_{t^\ast}$ along $T$. Then it holds that $\pr_t = n^{-1} T$.\\

First, by definition of $\nutt$, see \eqref{EQreminderDefinitionNUTT}, we get
\begin{align*}
n \pr_t \nutt = & -\g(\D_{T} T,\Ttt) -\g(T,\D_{T}\Ttt) \\
= & - \left( \g(\Ttt,e_i)\g(\D_{T}T,e_i) + \g(T,\ett_\mu) \g(T,\ett_i) \g(\ett_i, \D_{\ett_\mu} \Ttt) \right) \\
= & -\left(  \g(\Ttt,e_i)(-n^{-1}\nab_{e_i} n) - \g(T,\ett_j) \g(T,\ett_i) \wt{k}_{ij} - \nutt \g(T,\ett_i) (\wt{n}^{-1}\wt{\nab}_i\wt{n}) \right).
\end{align*}
Integrating in $t$ and using that $\nutt \vert_{t=t^\ast}=1$, we get that for $\varep>0$ and $\CMD>0$ sufficiently small, for $t^\ast_0 \leq t \leq t^\ast$,
\begin{align} \begin{aligned}
&\norm{\nutt-1}_{L^\infty_t L^4(\Si_t)}\\
 \les & \norm{\g(\Ttt,e_i)}_{L^\infty(\MM_{t^\ast})}\norm{\nab_{e_i} n}_{L^4(\MM_{t^\ast})} \\
 &+ \norm{\g(T,\ett_i)}_{L^\infty(\MM_{t^\ast})} \norm{\g(T,\ett_j)}_{L^\infty(\MM_{t^\ast})}\norm{\wt{k}_{ij}}_{L^4(\MM_{t^\ast})} \\
&+ \norm{\nutt}_{L^\infty(\MM_{t^\ast})}\norm{\g(T,\ett_i)}_{L^\infty(\MM_{t^\ast})}\norm{\wt{\nab}_i\wt{n}}_{L^4(\MM_{t^\ast})} \\
 \les & \norm{\g(\Ttt,e_i)}_{L^\infty(\MM_{t^\ast})}\left(\norm{\nab n}_{L^\infty_tL^2(\Si_t)}+\norm{\nab^2 n}_{L^\infty_tL^2(\Si_t)} \right) \\
 &+ \norm{\g(T,\ett_i)}_{L^\infty(\MM_{t^\ast})} \norm{\g(T,\ett_j)}_{L^\infty(\MM_{t^\ast})}\left(\norm{\wt{k}}_{L^\infty_\ttt L^2(\wt{\Si}_\ttt)}+\norm{\wt{\nab} \wt{k}}_{L^\infty_\ttt L^2(\wt{\Si}_\ttt)} \right) \\
&+ \norm{\nutt}_{L^\infty(\MM_{t^\ast})}\norm{\g(T,\ett_i)}_{L^\infty(\MM_{t^\ast})}\left(\norm{\wt{\nab}\wt{n}}_{L^\infty_tL^2(\Si_t)}+\norm{\wt{\nab}^2 \wt{n}}_{L^\infty_tL^2(\Si_t)} \right) \\
\les & \sqrt{M\CMD} (D\varep) + \sqrt{M\CMD} \CMD,
\end{aligned} \label{EQcomparisonEST1} \end{align}
where we used \eqref{eqCOMPARISONspacetimeControl}, \eqref{est:BAnutett}, \eqref{EQestContraCoeff} and \eqref{est:nuttettcomp}, and Lemma \ref{LEMsobolevEmbeddingSigmaT}.\\

Second, by definition of $\nutt$ in \eqref{EQreminderDefinitionNUTT}, for $i=1,2,3$,
\begin{align*}
\nab_{e_i}\nutt = & -\g(\D_{e_i}T,\Ttt) - \g(T,\D_{e_i}\Ttt) \\
= & - \left( \g(\Ttt,e_j)k_{ij} + \g(T,\ett_j) \g(e_i,\ett_\mu) \g(\ett_j, \D_{\ett_\mu} \Ttt) \right) \\
=& - \left( \g(\Ttt,e_j)k_{ij} - \g(T,\ett_j) \g(e_i,\ett_l) \wt{k}_{lj}+ \g(T,\ett_j) \g(e_i,\Ttt) (\wt{n}^{-1} \wt{\nab}_j \wt{n}) \right)
\end{align*}

By \eqref{eqCOMPARISONspacetimeControl}, \eqref{est:nuttettcomp} and standard Sobolev embedding on $\Si_t$, we have
\begin{align} \begin{aligned}
\norm{\nab \nutt}_{L^\infty_t L^4(\Si_t)} \les & \norm{\g(\Ttt,e_j)}_{L^\infty(\MM_{t^\ast})}\norm{k}_{L^\infty_t L^4(\Si_t)} \\
&+ \norm{\g(T,\ett_j)}_{L^\infty(\MM_{t^\ast})}\norm{\g(e_i,\ett_l)}_{L^\infty(\MM_{t^\ast})}\norm{\wt{k}}_{L^\infty_t L^4(\Si_t)} \\
&+ \norm{\g(T,\ett_j)}_{L^\infty(\MM_{t^\ast})}\norm{\g(e_i,\Ttt)}_{L^\infty(\MM_{t^\ast})}\norm{\wt{\nab} \wt{n}}_{L^\infty_t L^\infty(\Si_t)} \\
\les & \sqrt{M\CMD} D\varep + \sqrt{M\CMD }\norm{ \wt{k}}_{L^\infty_t L^4(\Si_t)} +\sqrt{M\CMD} \CMD.
\end{aligned} \label{EQcomparisonEST2} \end{align}

By \eqref{EQcomparisonEST1} and \eqref{EQcomparisonEST2}, it follows with standard Sobolev embedding on $\Si_t$ (see Lemma \ref{LEMsobolevEmbeddingSigmaT}) that 
\begin{align} \label{eqPreliminaryComparisonEstimate1}
\norm{\nutt-1}_{L^\infty(\MM_{t^\ast})} \les \sqrt{M\CMD} (D\varep+\CMD) + \sqrt{M\CMD} \norm{\wt{k} }_{L^\infty_t L^4(\Si_t)}. 
\end{align}

To estimate the remaining term $\norm{\wt{k} }_{L^\infty_t L^4(\Si_t)}$ on the right-hand side of \eqref{eqPreliminaryComparisonEstimate1}, we apply the following technical lemma whose proof is postponed to the end of this section.

\begin{lemma}[Technical lemma] \label{lem:compL4MM} Under the assumption of \eqref{eqCOMPARISONspacetimeControl}, \eqref{eqCOMPARISONspacetimeControl2222}, \eqref{est:BAnutett} and $\CMD>0$ and $\varep>0$ sufficiently small, it holds for every scalar function $f$ on $\MM_{t^\ast}$ that for $1\leq t \leq t^\ast$,
\begin{align*}
\norm{f}_{L^\infty_t L^4(\Si_t)} \les& \norm{ \D f }_{L^\infty_\ttt L^2(\Sitt_\ttt)}^{1/4} \norm{f}_{L^\infty_\ttt L^6(\Sitt_\ttt)}^{3/4} + \norm{f}_{L^4(\Si_{t^\ast})}.
\end{align*}
\end{lemma}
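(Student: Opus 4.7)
The plan is to compare the $L^4$-norm on $\Si_t$ with quantities on the foliation $(\Sitt_\ttt)$ by transporting along the timelike unit normal $T$ of $(\Si_t)$ and then refoliating the resulting spacetime integral using $\Sitt_\ttt$. First, I would work in the shift-free coordinates $(t, x^1, x^2, x^3)$ on $\MM_{t^\ast}$ adapted to $(\Si_t)$, in which $\pr_t = n^{-1} T$ and the spacetime volume form is $dV_{\MM} = n \, dt \, d\mu_g$. Using the first variation equation \eqref{eq:firstvar} together with the maximality $\tr_g k = 0$, the volume element $d\mu_g$ has vanishing $t$-derivative, so
\begin{align*}
\pr_t \int\limits_{\Si_t} f^4 \, d\mu_g = \int\limits_{\Si_t} 4 f^3 \, n^{-1} T(f) \, d\mu_g.
\end{align*}

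Integrating this identity backwards in $t$ from $t^\ast$ (where $\Si_{t^\ast} = \Sitt_{t^\ast}$) and using $dt \, d\mu_g = n^{-1} dV_{\MM}$, I get
\begin{align*}
\int\limits_{\Si_t} f^4 \, d\mu_g \les \int\limits_{\Si_{t^\ast}} f^4 \, d\mu_g + \int\limits_{\MM_{t^\ast}} \vert f \vert^3 \, \vert T(f) \vert \, dV_{\MM},
\end{align*}
where I used $\Vert n - 1 \Vert_{L^\infty(\MM_{t^\ast})} \les D \varep$ from \eqref{eqCOMPARISONspacetimeControl} to absorb the factors of $n^{-1}$ and $n^{-2}$. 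Next I would refoliate the spacetime integral using $(\Sitt_\ttt)$, writing $dV_{\MM} = \ntt \, d\ttt \, d\mu_{\gtt}$ and using $\Vert \ntt - 1 \Vert_{L^\infty(\MM_{t^\ast})} \les \CMD$ from \eqref{eqCOMPARISONspacetimeControl2222} to reduce to
\begin{align*}
\int\limits_{\MM_{t^\ast}} \vert f \vert^3 \, \vert T(f) \vert \, dV_{\MM} \les \int\limits_0^{t^\ast} \Vert f \Vert_{L^6(\Sitt_\ttt)}^3 \, \Vert T(f) \Vert_{L^2(\Sitt_\ttt)} \, d\ttt
\end{align*}
by H\"older on each leaf $\Sitt_\ttt$.

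To control $\Vert T(f) \Vert_{L^2(\Sitt_\ttt)}$ by $\Vert \D f \Vert_{L^2(\Sitt_\ttt)}$, I would decompose $T = \nutt \Ttt + \g(T, \ettA) \ett_A + \g(T, \ett_3) \ett_3$ in an orthonormal frame $(\Ttt, \ett_1, \ett_2, \ett_3)$ adapted to $\Sitt_\ttt$. By the bootstrap assumption \eqref{est:BAnutett} and the derived bound \eqref{EQestContraCoeff}, the coefficients are bounded in $L^\infty(\MM_{t^\ast})$ by universal constants, so
\begin{align*}
\vert T(f) \vert \les \vert \Ttt(f) \vert + \sum\limits_i \vert \ett_i(f) \vert \les \vert \D f \vert_{\mathbf{h}^\ttt}.
\end{align*}
Combining everything and taking suprema in $\ttt$ and $t$,
\begin{align*}
\Vert f \Vert_{L^\infty_t L^4(\Si_t)}^4 \les \Vert f \Vert_{L^4(\Si_{t^\ast})}^4 + \Vert f \Vert_{L^\infty_\ttt L^6(\Sitt_\ttt)}^3 \, \Vert \D f \Vert_{L^\infty_\ttt L^2(\Sitt_\ttt)},
\end{align*}
and taking the fourth root gives the conclusion.

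The main (and only nontrivial) obstacle is the switch between the two foliations: the identity $\pr_t \int_{\Si_t} f^4 \, d\mu_g$ needs maximality of $(\Si_t)$ to kill the volume-form derivative, and the comparison $\vert T(f) \vert \les \vert \D f \vert_{\mathbf{h}^\ttt}$ requires the angle control \eqref{est:BAnutett}. Everything else is a routine application of H\"older and the fundamental theorem of calculus; I do not anticipate any further subtlety.
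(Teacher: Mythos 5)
Your proof is correct and takes essentially the same approach as the paper's: fundamental theorem of calculus in shift-free coordinates, maximality to kill the $\pr_t$-derivative of the volume form, comparison of the two foliations via the angle bound, and H\"older on the $\Sitt_\ttt$ leaves. The only cosmetic difference is that the paper writes $|n^{-1}T(f)| \les |\D f|_{\mathbf{h}^t}$ and then compares $\mathbf{h}^t$ with $\mathbf{h}^\ttt$, whereas you decompose $T$ directly in the $\Sitt_\ttt$-adapted frame to land on $|\D f|_{\mathbf{h}^\ttt}$; these are the same estimate.
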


Applying Lemma \ref{lem:compL4MM} to $f = \wt{k}_{ij}$, we have
\begin{align}  \begin{aligned}
\Vert\wt{k}_{ij} \Vert_{L^\infty_t L^4(\Si_t)} \les& \, \Vert \D (\wt{k}_{ij})\Vert_{L^\infty_\ttt L^2(\Sitt_\ttt)}^{1/4} \Vert \wt{k}_{ij} \Vert^{3/4}_{L^\infty_\ttt L^6(\Sitt_\ttt)} +\Vert \wt{k}_{ij}\Vert_{L^4(\Si_{t^\ast})}\\
\les& \lrpar{\Vert \D \wt{\pi} \Vert_{L^\infty_\ttt L^2(\Sitt_\ttt)} + \Vert \wt{\pi} \Vert_{L^\infty_\ttt L^4(\Sitt_\ttt)}\Vert \wt{\mathbf{A}} \Vert_{L^\infty_\ttt L^4(\Sitt_\ttt)} }^{1/4} \Vert \wt{k} \Vert^{3/4}_{L^\infty_\ttt L^6(\Sitt_\ttt)} \\
&+\Vert k\Vert_{L^4(\Si_{t^\ast})}\\
\les& \, \CMD^{1/4} \CMD^{3/4} + D\varep.
\end{aligned}\label{EQapplicationTechnicalLemmaAppendix} \end{align}

Plugging \eqref{EQapplicationTechnicalLemmaAppendix} into \eqref{eqPreliminaryComparisonEstimate1}, we get that for $\CMD>0$ and $\varep>0$ sufficiently small, 
\begin{align*}
\norm{\nutt-1}_{L^\infty(\MM_{t^\ast})} \les&  \sqrt{M\CMD}(D\varep+\CMD) \\
\les& \,\CMD,
\end{align*}
where the constant is strictly smaller than $M$. This improves the bootstrap assumption \eqref{est:BAnutett} and hence proves \eqref{EQlemmaComparisonTwoEstimatesREMINDER}. The estimate \eqref{EQlemmaComparisonTwoEstimatesREMINDER2222} follows directly from \eqref{EQapplicationTechnicalLemmaAppendix}. This finishes the proof of Lemma \ref{lem:comparisonfoliationMM}. \\

It remains to prove Lemma~\ref{lem:compL4MM}. Let $(t,x^1,x^2,x^3)$ denote the shift-free coordinate system constructed on $\MM_{t^\ast}$ above. Using that $\tr k =0$, \eqref{eqCOMPARISONspacetimeControl}, \eqref{eqCOMPARISONspacetimeControl2222} and $\pr_t = n^{-1} T$, we have for $t^\ast_0 \leq t \leq t^\ast$,
\begin{align*}
\int\limits_{\Si_{t}} f^4 \,d\mu_{g} =& \int\limits_{t^\ast}^{t} \pr_t\left(\, \int\limits_{\Si_{t'}}f^4 \,d\mu_g \right) dt' + \int\limits_{\Si_{t^\ast}} f^4 \,d\mu_{g} \\
=&\, 4\int\limits_{t^\ast}^{t } \left( \, \int\limits_{\Si_{t'}}\pr_t f f^3 \,d\mu_{g} \right) dt' + \int\limits_{\Si_{t^\ast}} f^4 \,d\mu_{g}\\
\les& \int\limits_{\MM_{t^\ast}} \vert \D f\vert_{\mathbf{h}^t} \vert f \vert^3  + \int\limits_{\Si_{t^\ast}} f^4 \,d\mu_{g} \\ 
\les& \int\limits_{\MM_{t^\ast}} \vert \D f\vert_{\mathbf{h}^\ttt} \vert f \vert^3  + \int\limits_{\Si_{t^\ast}} f^4 \,d\mu_{g} \\ 
\les&  \norm{ \D f}_{L^\infty_\ttt L^2(\Sitt_\ttt)}  \norm{f}_{L^\infty_\ttt L^6(\Sitt_\ttt)}^3+ \int\limits_{\Si_{t^\ast}} f^4 \,d\mu_{g},
\end{align*}
where we used \eqref{est:BAnutett} to compare the positive-definite norms $\mathbf{h}^\ttt$ and $\mathbf{h}^t$. This finishes the proof of Lemma \ref{lem:compL4MM}.


\section{Proof of Proposition \ref{PropTrilinearM1}}

\label{SECtrilinearEstimateM1} In this section, we provide more details on the proof of Proposition \ref{PropTrilinearM1}, that is, the claim that for
\begin{align*}
\EE_1 := \int\limits_{\MM_{t^\ast}} \frac{3}{2} Q(\Lieh_{\Ttt} \R)_{\a \be \Ttt\Ttt}\wt{\pi}^{\a\be}, \,\,\, \EE_2 := \int\limits_{\MM_{t^\ast}} \D^\a Q(\Lieh_{\Ttt} \R)_{\a \Ttt\Ttt\Ttt},
\end{align*}
it holds that 
\begin{align*}
\vert \EE_1\vert  + \vert \EE_2 \vert \les& \CMD \Vert \Lieh_{\Ttt} \R \Vert^2_{L^\infty_{\ttt} L^2(\wt{\Si}_{\ttt}\cap \MM_{t^\ast})} + \CMD \sup\limits_{\om \in \mathbb{S}^2} \Vert \Lieh_{\Ttt} \Rbf \cdot \Ltt \Vert^2_{L^\infty_{{}^\om u}L^2(\HH_{{}^\om u} \cap \MM_{t^\ast})} \\
&+ \CMD \lrpar{\norm{\nab \RRRic}_{L^2(\Si_{t^\ast})}+ \norm{\nab^2 k}_{L^2(\Si_{t^\ast})} + \CMD}^2 + \CMD^2.
\end{align*}

The necessary estimates are essentially provided in Sections 11-13 of \cite{KRS}. For completeness, we outline below how to relate the most crucial terms to the parametrix formalism of \cite{KRS} and the null structure of the Einstein equations. 
\subsection{The wave parametrix formalism of Klainerman-Rodnianski-Szeftel} We recall from \cite{KRS} that the connection $1$-form components $(A_i)_{lm} := (\A_i)_{lm}$ satisfy the following structural equations (see Lemmas 6.5 and 13.1 in \cite{KRS})
\begin{align*}
A =& \Curl B + E, \\
\pr A=& \Curl (\pr B) + E',
\end{align*}
where 
\begin{itemize}
\item the vectorfields $B_i$ and $\pr B_i$ satisfy wave equations which exhibit a null structure (see Sections 7.2 and 13.2 in \cite{KRS}),
\item $E$ and $E'$ are error terms with better regularity. 
\end{itemize}

In our case we applied the bounded $L^2$ curvature theorem on the maximal hypersurface $\wt{\Si}_{t^\ast}$ which is constructed by application of Theorem \ref{THMextensionConstraintsCZ1} to the maximal hypersurface $\Si_{t^\ast}\subset \MM$, see also Section \ref{sec:BAimprovBackgroundFoliation}. In particular, we have by Theorem \ref{THMextensionConstraintsCZ1} that on $\wt{\Si}_{t^\ast}$,
\begin{align*}
\Vert \wt{\RRRic} \Vert_{L^2(\wt{\Si}_{t^\ast})} + \Vert \wt{\nab} \wt{k} \Vert_{L^2(\wt{\Si}_{t^\ast})} \les \CMD,
\end{align*}
and
\begin{align*} \begin{aligned}
&\Vert \wt{\nab} \wt{\RRRic} \Vert_{L^2(\wt{\Si}_{t^\ast})}+\Vert \wt{\nab}^2 \wt{k} \Vert_{L^2(\wt{\Si}_{t^\ast})} + \Vert \wt{\RRRic} \Vert_{L^2(\wt{\Si}_{t^\ast})}+\Vert \wt{\nab} \wt{k} \Vert_{L^2(\wt{\Si}_{t^\ast})} \\ 
\les& \Vert \nab \RRRic \Vert_{L^2(\Si_{t^\ast})}+\Vert \nab^2 k \Vert_{L^2(\Si_{t^\ast})} +\CMD.
\end{aligned}
 \end{align*}
Hence in our situation, the wave parametrix is controlled at the level of $m=0$ by the quantity $\CMD$, and at the level of $m=1$ by
\begin{align*}
\Vert \nab \RRRic \Vert_{L^2(\Si_{t^\ast})}+\Vert \nab^2 k \Vert_{L^2(\Si_{t^\ast})} +\CMD.
\end{align*}

Taking into account the above, we cite the next estimates proved in \cite{KRS}.

\begin{enumerate}

\item At the level of $m=0$, it holds by Proposition 7.4 and Lemma 8.3 in \cite{KRS} (see also the bottom of page 186 therein) that
\begin{align}\label{EQparametrixEstimate0}
\sup\limits_{\om \in \SSS^2}\norm{\Nd \pr B}_{L^\infty_{{}^\om u} L^2(\HH_{{}^\om u})} \les \CMD.
\end{align}

\item At the level of $m=1$, it holds by Proposition 13.2 in \cite{KRS} that
\begin{align} \label{EQparametrixEstimate2}
\sup\limits_{\om \in \SSS^2}\norm{\Nd_C (\pr^2 B)}_{L^\infty_{{}^\om u} L^2(\HH_{{}^\om u})} \les& \Vert \nab \RRRic \Vert_{L^2(\Si_{t^\ast})}+\Vert \nab^2 k \Vert_{L^2(\Si_{t^\ast})} +\CMD.
\end{align}

\end{enumerate}

In the following, we connect the most important terms of the integrands in $\EE_1$ and $\EE_2$ to the wave parametrix estimates of \cite{KRS}. For ease of presentation, we leave away the tilde-notation in the next sections.

\subsection{Estimation of $\EE_1$} The Weyl tensor $\Lieh_T \R$ has the same symmetries as $\R$ and thus, an inspection of the proof of the trilinear estimate for $m=0$ (see Section 11 in \cite{KRS}) directly yields the trilinear estimate
\begin{align*} \begin{aligned}
\EE_1 \les& \,\CMD \Vert \Lieh_{T} \R \Vert^2_{L^\infty_{t} L^2(\Si_t)} + \CMD \Vert \Lieh_{T} \R \Vert_{L^2(\MM_{t^\ast})} \sup\limits_{\om \in \mathbb{S}^2} \Vert \Lieh_T \Rbf \cdot L \Vert_{L^\infty_{{}^\om u}L^2(\MM_{t^\ast} \cap \HH_{{}^\om u})} \\
\les&\, \CMD \Vert \Lieh_{T} \R \Vert^2_{L^\infty_{t} L^2(\Si_t)} + \CMD \sup\limits_{\om \in \mathbb{S}^2} \Vert \Lieh_{T} \Rbf \cdot L \Vert^2_{L^\infty_{{}^\om u}L^2(\MM \cap \HH_{{}^\om u})}.
\end{aligned} \end{align*}

\subsection{Estimation of $\EE_2$} By direct calculation (see Propositions 7.1.1 and 7.1.2 in \cite{ChrKl93}) and expanding in an orthonormal frame $(T,e_i), i=1,2,3$, we get
\begin{align} \begin{aligned}
&\D^\a Q(\Lieh_{T} \R)_{\a TTT}  \\
=& (\Lieh_{T} \R)_{T \,\,\, T }^{\,\,\,\, a \,\,\,\, b} \left( \pi^{\a \be} \D_\a \R_{\be a T b} + (\mathbf{div} \pi)^\be \R_{\be a T b} \right)  \\
&+ (\Lieh_{T} \R)_{T \,\,\, T }^{\,\,\,\, a \,\,\,\, b} \left((\D_a \pi_{\a \be} - \D_\be \pi_{\a a}) \R^{\a\be}_{\,\,\,\,\,\, T b} + (\D_{T} \pi_{\a \be}- \D_\be \pi_{\a T} ) \R^{\a \,\,\,\, \be}_{\,\,\,\, a \,\,\,\, b} \right) \\
&+ (\Lieh_{T} \R)_{T \,\,\, T }^{\,\,\,\, a \,\,\,\, b} \left(   (\D_b \pi_{\a \be} - \D_\be \pi_{\a b}) \R^{\a \,\,\,\,\,\,\, \be}_{\,\,\, a T} \right) + \text{ dual terms} \\
=&(\Lieh_{T} \R)_{T \,\,\, T }^{\,\,\,\, a \,\,\,\, b} \lrpar{ \pi^{\a \be} \D_\a \R_{\be a T b}-2 \D_{T} \pi_{T j} \Rbf^j_{\,\,\, aT b} + \D_T \pi_{ij} \Rbf^{i \,\,\,\, j}_{\,\,\,\,a \,\,\,\, b}  - \D_T \pi_{ia} \Rbf^{iT}_{\,\,\,\,\,\,\,\, T b} } \\
&+(\Lieh_{T} \R)_{T \,\,\, T }^{\,\,\,\, a \,\,\,\, b} \lrpar{ \D_i \pi^{iT} \Rbf_{T a T b} + \D_i \pi_{j T} \Rbf^{i \,\,\,\, j}_{\,\,\,\, a \,\,\,\, b} + \D_b \pi_{T j} \Rbf^{T \,\,\,\,\,\,\,\,\, j}_{\,\,\,\, a T}+2 \D_i \pi_{T a} \Rbf_{T \,\,\,\, T b}^{\,\,\,\,\, i} } \\
&+ (\Lieh_{T} \R)_{T \,\,\, T }^{\,\,\,\, a \,\,\,\, b}\lrpar{ \D_i \pi^{ij} \Rbf_{jaT b}  + \D_b \pi_{ij}\Rbf^{i \,\,\,\,\,\,\,\,\, j}_{\,\,\, aT} - \D_j \pi_{ib} \Rbf^{i \,\,\,\,\,\,\,\,\, j}_{\,\,\, aT} - \D_j \pi_{ia} \Rbf^{ij}_{\,\,\,\,\,\,\,\, T b} } \\
&+ \text{ dual terms.}
\end{aligned}  \label{eq:exactTermsTrilinearEstimate} \end{align}

We claim that the terms in \eqref{eq:exactTermsTrilinearEstimate} are bounded by the estimates of Sections 11-13 of \cite{KRS}. In the following, we discuss only the following three terms of \eqref{eq:exactTermsTrilinearEstimate} 
\begin{align*}
\TT_1:=& (\Lieh_{T} \R)_{T \,\,\, T }^{\,\,\,\, a \,\,\,\, b} \pi^{\a \be} \D_\a \R_{\be a T b}, \\
 \TT_2 :=& (\Lieh_{T} \R)_{T \,\,\, T }^{\,\,\,\, a \,\,\,\, b}\D_\be \pi_{\a a} \R^{\a\be}_{\,\,\,\,\,\, T b}, \\
  \TT_3:=& (\Lieh_{T} \R)_{T \,\,\, T }^{\,\,\,\, a \,\,\,\, b} \D_\be \pi_{\a T} ) \R^{\a \,\,\,\, \be}_{\,\,\,\, a \,\,\,\, b}.
\end{align*}
Indeed, the other terms in \eqref{eq:exactTermsTrilinearEstimate} are readily related to the formalism of \cite{KRS}.\\

\textbf{Discussion of $\TT_1$.} Using that 
\begin{align*} 
\pi_{T T} =0, \,\, \pi_{T j }= n^{-1} \nab_j n, \,\, \pi_{ij} = -2  k_{ij},
\end{align*}
the most critical term in $\TT_1$ is given by
\begin{align*}
 (\Lieh_{T} \R)_{T \,\,\, T }^{\,\,\,\, A \,\,\,\, B} k^{i j} \D_i \R_{j A T B}.
\end{align*}
By the parametrix representation for $B$ (see Section 11 in \cite{KRS}), it follows that, up to lower order terms, 
\begin{align*}
k^{i j} \D_i \R_{j A T B} \sim \in_{ilm} N^l \D^m \R^j_{\,\,\, A T B} \sim \D_C \R_{j A T B} \sim \Nd_C \pr^2 B.
\end{align*}
By Section 11.1 in \cite{KRS} and \eqref{EQparametrixEstimate2}, it holds that
\begin{align*}
&\int\limits_{\MM_{t^\ast}} \lrpar{ \Lieh_T \Rbf }_{T \,\,\,\, T}^{\,\,\,\, A \,\,\,\, B} \lrpar{k^{ij} \D_i \R_{j A T B}} \\
\les& \CMD \sup\limits_{\om \in \SSS^2 }\norm{\Lieh_T \Rbf \cdot \Nd_C(\pr^2 B)}_{L^2_{{}^\om u}L^1(\HH_{{}^\om u})} + \CMD^2 \\
 \les& \CMD \lrpar{ \norm{ \Lieh_T \Rbf }_{L^2(\MM_{t^\ast})}^2 + \sup\limits_{\om \in \SSS^2 }\norm{\Nd_C (\pr^2 B)}_{L^\infty_{{}^\om u} L^2(\HH_{{}^\om u})}^2} + \CMD^2 \\
 \les& \CMD \norm{ \Lieh_T \Rbf }_{L^\infty_t L^2(\Si_t)}^2 + \CMD \lrpar{\Vert \nab \RRRic \Vert_{L^2(\Si_{t^\ast})}+\Vert \nab^2 k \Vert_{L^2(\Si_{t^\ast})} +\CMD}^2 + \CMD^2.
 \end{align*}
This finishes our outline of the estimation $\TT_1$.\\

\textbf{Discussion of $\TT_2$.} The most important term of $\TT_2$ is given by
\begin{align*}
(\Lieh_{T} \R)_{T \,\,\, T }^{\,\,\,\, A \,\,\,\, B}\D_j \pi_{i A} \R^{ij}_{\,\,\,\,\,\, T B},
\end{align*}
which can be rewritten as (up to bounded terms)
\begin{align*}
(\Lieh_{T} \R)_{T \,\,\, T }^{\,\,\,\, A \,\,\,\, B}\R_{iTjA} \R^{ij}_{\,\,\,\,\,\, T B},
\end{align*}
where we used that for $a,b,c=1,2,3$,
\begin{align*}
\nab_a k_{bc} - \nab_{b}k_{ac} = \R_{cTab}.
\end{align*}
At first glance, this seems to lead to the desastrous $(\Lieh_{T} \R)_{\Lb \cdot \Lb \cdot }\R_{\Lb \cdot \Lb \cdot} \R_{\Lb \cdot \Lb \cdot}$. However, in the following, we show that due to the null structure of the Einstein vacuum equations, this is not the case. Indeed, on the one hand, we note that
\begin{align*}
(\Lieh_{T} \R)_{T \,\,\, T }^{\,\,\,\, A \,\,\,\, B}\R_{iTjA} \R^{ij}_{\,\,\,\,\,\, T B} = (\Lieh_{T} \R)_{T \,\,\, T }^{\,\,\,\, A \,\,\,\, B}\R_{CTNA} \R^{CN}_{\,\,\,\,\,\, \,\,\,\Ttt B} + \lrpar{\Lieh_T \R} \lrpar{\R \cdot L} \R,
\end{align*}
where the second term is easily estimable. \\

On the other hand, using that $\Lieh_T \R$ is a Weyl tensor, it holds that
\begin{align*}
(\Lieh_{T} \R)_{T \,\,\, T }^{\,\,\,\, 1 \,\,\,\, 1} = - (\Lieh_{T} \R)_{T \,\,\, T }^{\,\,\,\, 2 \,\,\,\, 2} + (\Lieh_{T} \R) \cdot L.
\end{align*}
Therefore, the most difficult term is given by
\begin{align*}
&(\Lieh_{T} \R)_{T \,\,\, T }^{\,\,\,\, A \,\,\,\, B}\R_{CTNA} \R^{CN}_{\,\,\,\,\,\, T B} \\
=& (\Lieh_{T} \R)_{T \,\,\, T }^{\,\,\,\, 1 \,\,\,\, 2} \lrpar{\R_{CTN1} \R^{CN}_{\,\,\,\,\,\,\,\,\, T 2}+\R_{CTN2} \R^{CN}_{\,\,\,\,\,\, \,\,\,T 1}} + (\Lieh_{T} \R)_{T \,\,\, T }^{\,\,\,\, 1 \,\,\,\, 1} \lrpar{\R_{CTN1} \R^{CN}_{\,\,\,\,\,\, \,\,\,T 1} - \R_{CTN2} \R^{CN}_{\,\,\,\,\,\,\,\,\, T 2}} \\
=& (\Lieh_{T} \R)_{T \,\,\, T }^{\,\,\,\, 1 \,\,\,\, 2} \lrpar{\R_{1TN1} \R^{1N}_{\,\,\,\,\,\,\,\,\, T 2}+\R_{1TN2} \R^{1N}_{\,\,\,\,\,\, \,\,\,T 1}+ \R_{2TN1} \R^{2N}_{\,\,\,\,\,\,\,\,\, \Ttt 2}+\R_{2TN2} \R^{2N}_{\,\,\,\,\,\, \,\,\,T 1}} \\
&+ (\Lieh_{T} \R)_{T \,\,\, T }^{\,\,\,\, 1 \,\,\,\, 1} \lrpar{\R_{1TN1} \R^{1N}_{\,\,\,\,\,\, \,\,\,T 1} - \R_{1TN2} \R^{1N}_{\,\,\,\,\,\,\,\,\, T 2}+\R_{2TN1} \R^{2N}_{\,\,\,\,\,\, \,\,\,T 1} - \R_{2TN2} \R^{2N}_{\,\,\,\,\,\,\,\,\, T 2}}.
\end{align*}
Given that $\R_{\mu \nu} = 0$, it holds that
\begin{align*}
&\R_{1TN1} \R^{1N}_{\,\,\,\,\,\,\,\,\, T 2}+\R_{1TN2} \R^{1N}_{\,\,\,\,\,\, \,\,\,T 1}+ \R_{2TN1} \R^{2N}_{\,\,\,\,\,\,\,\,\, T 2}+\R_{2TN2} \R^{2N}_{\,\,\,\,\,\, \,\,\,T 1} \\
=& \R_{1NT1} \lrpar{\R_{1NT2}+ \R_{1TN2}} + \R_{2TN2} \lrpar{\R_{2TN1} + \R_{2NT1}} \\
=& \R_{1TN1} \lrpar{\R_{2TN1} + \R_{1TN2}} - \R_{1TN1} \lrpar{\R_{2TN1} + \R_{2NT1}} \\
=&0, 
\end{align*}
and
\begin{align*}
&\R_{1TN1} \R^{1N}_{\,\,\,\,\,\, \,\,\,T 1} - \R_{1TN2} \R^{1N}_{\,\,\,\,\,\,\,\,\, T 2}+\R_{2TN1} \R^{2N}_{\,\,\,\,\,\, \,\,\,T 1} - \R_{2TN2} \R^{2N}_{\,\,\,\,\,\,\,\,\, T 2} \\
=& (-\R_{2TN2})(-\R_{2TN2}) + \R_{2TN1} \R_{2NT1} - \R_{1TN2} \R_{1NT2} - \R_{2TN2} \R_{2TN2} \\
=&0.
\end{align*}
Therefore, the possibly dangerous term turns out to vanish, that is,
\begin{align*}
(\Lieh_{T} \R)_{T \,\,\, T }^{\,\,\,\, A \,\,\,\, B}\R_{CTNA} \R^{CN}_{\,\,\,\,\,\,\,\,\, T B} =0.
\end{align*}
This finishes our discussion $\TT_2$. \\

\textbf{Discussion of $\TT_3$.} The only critical term of $\TT_3$ is
\begin{align*}
(\Lieh_{T} \R)_{T \,\,\, T }^{\,\,\,\, A \,\,\,\, B} \nab_N \nab_N n \R^{N \,\,\,\, N}_{\,\,\,\, A \,\,\,\, B}.
\end{align*}
However, using that $\triangle n = n \vert k \vert^2$ on $\Si_t$, this can be rewritten as
\begin{align*}
(\Lieh_{T} \R)_{T \,\,\, T }^{\,\,\,\, A \,\,\,\, B} \nab_N \nab_N n \R^{N \,\,\,\, N}_{\,\,\,\, A \,\,\,\, B} =& (\Lieh_{T} \R) \lrpar{\Nd \Nd n }\R + \text{ l.o.t.}\\
=& (\Lieh_{T} \R) \lrpar{\Nd \pr B}\R + \text{ l.o.t.}.
\end{align*}
By writing $\Rbf = \pr (\pr B)$, we get by Sections 11.2 and 13.2 in \cite{KRS} and \eqref{EQparametrixEstimate2} that
\begin{align*}
&\int\limits_{\MM_{t^\ast}} (\Lieh_{T} \R) \lrpar{\Nd \pr B}\R \\
\les& \lrpar{\Vert \nab \RRRic \Vert_{L^2(\Si_{t^\ast})}+\Vert \nab^2 k \Vert_{L^2(\Si_{t^\ast})} +\CMD} \sup\limits_{\om \in \SSS^2 }\norm{(\Lieh_{T} \R) \lrpar{\Nd \pr B }}_{L^2_{{}^\om u} L^1(\HH_{{}^\om u})} \\
\les& \lrpar{\Vert \nab \RRRic \Vert_{L^2(\Si_{t^\ast})}+\Vert \nab^2 k \Vert_{L^2(\Si_{t^\ast})} +\CMD} \norm{\Lieh_{T} \R }_{L^2(\MM_{t^\ast})}\sup\limits_{\om \in \SSS^2 } \norm{\Nd \pr B}_{L^\infty_{{}^\om u} L^2(\HH_{{}^\om u})} \\
\les& \lrpar{\Vert \nab \RRRic \Vert_{L^2(\Si_{t^\ast})}+\Vert \nab^2 k \Vert_{L^2(\Si_{t^\ast})} +\CMD} \norm{\Lieh_{T} \R }_{L^\infty_t L^2(\Si_t)} \CMD \\
\les& \CMD \lrpar{\Vert \nab \RRRic \Vert_{L^2(\Si_{t^\ast})}+\Vert \nab^2 k \Vert_{L^2(\Si_{t^\ast})} +\CMD}^2 + \CMD\norm{\Lieh_{T} \R }_{L^\infty_t L^2(\Si_t)}^2.
\end{align*}
This finishes our discussion of $\TT_3$, and hence of $\EE_2$.



\begin{thebibliography}{99}

\bibitem{Adams}
R.~Adams, J.~Fournier. 
\newblock {\em Sobolev Spaces}. 
\newblock Pure and Applied Mathematics (140), Academic Press, Second Edition, 2003

\bibitem{LukAn}
X.~An, J.~Luk.
\newblock {\em Trapped surfaces in vaccum arising from mild incoming radiation}.
\newblock Adv. Theo. Math. Phys., 21(1), 2017, 1-120.

\bibitem{BartnikMaximal}
R.~Bartnik,
\newblock {\em Existence of maximal surfaces in asymptotically flat spacetimes}.
\newblock  Comm. Math. Phys. 94 (1984), no. 2, 155-175.

\bibitem{BruhatExistence}
Y.~Bruhat,
\newblock{Th\'eor\`eme d'existence pour certains syst\`emes d'\'equations aux d\'eriv\'ees partielles non-lin\'eaires}.
\newblock Acta Math. 88, (1952). 141-225. 

\bibitem{PerturbationBruhat}
Y.~Bruhat.
\newblock {\em Maximal submanifolds and submanifolds with constant mean extrinsic curvature of a Lorentzian manifold}.
\newblock Ann. Scuola Norm. Sup. Pisa Cl. Sci. (4), 3 (1976), no. 3, 361-376.

\bibitem{BruhatChruscielIVP}
Y.~Bruhat, P.T.~Chru\'sciel, J.M.~Martin-Garcia.
\newblock{The Cauchy problem on a characteristic cone for the Einstein equations in arbitrary dimensions}.
\newblock Ann. Henri Poincar\'e 12 (2011), no. 3, 419-482. 

\bibitem{Chr9}
D.~Christodoulou.
\newblock {\em  The instability of naked singularities in the gravitational collapse of a scalar field}.
\newblock Ann. of Math. 149 (1999), 183-217.

\bibitem{ChrFormationNonSpherical}
D.~Christodoulou.
\newblock {\em The Formation of Black Holes in General Relativity}.
\newblock EMS Monographs in Mathematics, 2009.

\bibitem{ChrKl93}
D.~Christodoulou, S.~Klainerman.
\newblock {\em The global nonlinear stability of the Minkowski space}. 
\newblock Princeton Mathematical Series, 41. Princeton University Press, Princeton, NJ, 1993. x+514 pp.

\bibitem{ChruscielPaetz} 
P.T.~Chru\'sciel, T.-T.~Paetz.
\newblock {\em The many ways of the characteristic Cauchy problem}.
\newblock Classical Quantum Gravity 29 (2012), no. 14, 145006, 27 pp. 

\bibitem{ChruscielPaetz2} 
P.T.~Chru\'sciel, T.-T.~Paetz.
\newblock {\em Characteristic initial data and smoothness of Scri. I. Framework and results}.
\newblock Ann. Henri Poincaré 16 (2015), no. 9, 2131-2162.

\bibitem{Czimek1}
S.~Czimek.
\newblock {\em An extension procedure for the constraint equations}.
\newblock Ann. PDE 4 (2018), no. 1, Art. 2, 122 pp. 

\bibitem{Czimek21}
S.~Czimek.
\newblock {\em Boundary harmonic coordinates on manifolds with boundary in low regularity}.
\newblock Commun. Math. Phys. (2019), https://doi.org/10.1007/s00220-019-03430-7, 47 pages.

\bibitem{Czimek22}
S.~Czimek.
\newblock {\em The localised bounded $L^2$ curvature theorem}.
\newblock Commun. Math. Phys. (2019), https://doi.org/10.1007/s00220-019-03458-9, 20 pages.

\bibitem{CzimekGraf1}
S.~Czimek, O.~Graf.
\newblock {\em The canonical foliation on null hypersurfaces in low regularity}.
\newblock arXiv, 2019, 69 pp.

\bibitem{KlLukRod}
S.~Klainerman, J.~Luk, I.~Rodnianski.
\newblock {\em A fully anisotropic mechanism for formation of trapped surfaces in vacuum}.
\newblock Invent. Math. 198 (2014), no. 1, 1-26. 

\bibitem{KlRod1}
S.~Klainerman, I.~Rodnianski.
\newblock {\em Causal geometry of Einstein-vacuum spacetimes with finite curvature flux}.
\newblock Invent. Math. 159 (2005), no. 3, 437-529. 

\bibitem{KlRod2}
S.~Klainerman, I.~Rodnianski.
\newblock {\em A geometric approach to the Littlewood-Paley theory}. 
\newblock Geom. Funct. Anal. 16 (2006), no. 1, 126-163.

\bibitem{KlRod3}
S.~Klainerman, I.~Rodnianski.
\newblock {\em Sharp trace theorems for null hypersurfaces on Einstein metrics with finite curvature flux}. 
\newblock Geom. Funct. Anal. 16 (2006), no. 1, 164-229.

\bibitem{KlRodBreakdown}
S.~Klainerman, I.~Rodnianski.
\newblock {\em On the breakdown criterion in general relativity}.
\newblock J. Amer. Math. Soc. 23 (2010), no. 2, 345-382.

\bibitem{KlRodTrapped}
S.~Klainerman, I.~Rodnianski.
\newblock {\em On the formation of trapped surfaces}.
\newblock Acta Math., 208 (2012), 211-333. 

\bibitem{KRS}
S.~Klainerman, I.~Rodnianski, J.~Szeftel.
\newblock {\em  The bounded $L^2$ curvature conjecture}.
\newblock Invent. Math. 202 (2015), no. 1, 91-216.

\bibitem{LukChar}
J.~Luk.
\newblock {\em On the local existence for the characteristic initial value problem in general relativity}.
\newblock Int. Math. Res. Not. 2012, no. 20, 4625-4678.

\bibitem{LukRod1}
J.~Luk, I.~Rodnianski.
\newblock {\em Local propagation of impulsive gravitational waves}.
\newblock Comm. Pure Appl. Math. 68 (2015), no. 4, 511-624. 

\bibitem{LukRod2}
J.~Luk, I.~Rodnianski.
\newblock {\em Nonlinear interaction of impulsive gravitational waves for the vacuum Einstein equations}.
\newblock Camb. J. Math. 5 (2017), no. 4, 435-570. 

\bibitem{PenroseConjecture}
R.~Penrose.
\newblock {\em Gravitational Collapse: the Role of General Relativity}.
\newblock Rivista del Nuovo Cimento, Numero Speziale I (1969), 252-276.

\bibitem{Petersen}
P.~Petersen.
\newblock {\em Convergence Theorems in Riemannian Geometry}.
\newblock Comparison Geometry, 30 (1997), MSRI Publications, 167-202.

\bibitem{PetersenBook}
P.~Petersen.
\newblock {\em Riemannian geometry}.
\newblock Third edition. Graduate Texts in Mathematics, 171. Springer, Cham, 2016. xviii+499 pp.

\bibitem{Rendall}
A.~Rendall.
\newblock {\em Reduction of the Characteristic Initial Value Problem to the Cauchy Problem and Its Applications to the Einstein Equations.}
\newblock Proc. Roy. Soc. London Ser. A 427 (1990), no. 1872, 221-239.

\bibitem{ShaoBesov}
A.~Shao.
\newblock {\em New tensorial estimates in Besov spaces for time-dependent (2+1)-dimensional problems}.
\newblock J. Hyperbolic Differ. Equ. 11 (2014), no. 4, 821-908.

\bibitem{J1}
J.~Szeftel.
\newblock {\em Parametrix for wave equations on a rough background I: regularity of the phase at initial time}.
\newblock arXiv:1204.1768, 2012, 145 pp.

\bibitem{J2}
J.~Szeftel.
\newblock {\em Parametrix for wave equations on a rough background II: construction and control at initial time}.
\newblock arXiv:1204.1769, 2012, 84 pp.

\bibitem{J3}
J.~Szeftel.
\newblock {\em Parametrix for wave equations on a rough background III: space-time regularity of the phase}.
\newblock Ast\'erisque 401, 2018, 321 pp.

\bibitem{J4}
J.~Szeftel.
\newblock {\em Parametrix for wave equations on a rough background IV: control of the error term}.
\newblock arXiv:1204.1771, 2012, 284 pp.

\bibitem{J5}
J.~Szeftel.
\newblock {\em Sharp Strichartz estimates for the wave equation on a rough background}.
\newblock Annales Scientifiques de l'\'{E}cole Normale Sup\'{e}rieure 49 (2016), no. 6, 1279-1309.

\bibitem{Wald}
R.~Wald.
\newblock {\em General relativity}.
\newblock University of Chicago Press, 1984, 506 pages.

\end{thebibliography}
\end{document}